\pgfplotsset{compat=1.7}
\definecolor{lightgray}{rgb}{.85,.85,.85}
\newtheorem{theorem}{Theorem}[section]
\newtheorem{lemma}[theorem]{Lemma}
\newtheorem{proposition}[theorem]{Proposition}
\newtheorem{corollary}[theorem]{Corollary}
\newtheorem{convention}[theorem]{Convention}
\theoremstyle{remark}
\newtheorem{example}[theorem]{Example}
\theoremstyle{remark}
\newtheorem{remark}[theorem]{Remark}
\theoremstyle{definition}
\newtheorem{definition}[theorem]{Definition}
\theoremstyle{remark}
\newtheorem{rmk/}{Remark}
\newcommand{\R}{\mathbb{R}}
\newcommand{\Q}{\mathbb{Q}}
\newcommand{\Z}{\mathbb{Z}}
\newcommand{\End}{\operatorname{End}}
\newcommand{\id}{\operatorname{id}}
\renewcommand{\l}{\ell}
\renewcommand{\sl}{\mathfrak{sl}}
\newcommand{\myrightleftarrows}[1]{\mathrel{\substack{\xrightarrow{#1} \\[-.15ex] \xleftarrow{#1}}}}
\newcommand{\origin}{\mathcal{O}} 
\newcommand{\Hull}{\mathrm{Hull}}
\newcommand{\A}{\mathbb{A}}
\newcommand{\DD}{\mathbb{D}}
\renewcommand{\SS}{\mathbb{S}}
\newcommand{\F}{\mathcal{F}}
\newcommand{\til}[1]{\widetilde{#1}}
\renewcommand{\b}[1]{\overline{#1}}
\newcommand{\h}[1]{\widehat{#1}}
\newcommand{\ggmod}{{\text -}\operatorname{ggmod}}
\newcommand{\qdeg}{\operatorname{qdeg}}
\newcommand{\adeg}{\operatorname{adeg}}
\newcommand{\rk}{\operatorname{rk}}
\renewcommand{\u}[1]{\underline{#1}}
\renewcommand{\o}{\otimes}
\newcommand{\eps}{\varepsilon}
\newcommand{\brak}[1]{\ensuremath{\left\langle #1\right\rangle}}
\newcommand{\Fr}{\operatorname{Fr}}
\newcommand{\st}{\ | \ }
\newcommand{\resp}{resp.\ }
\newcommand{\wt}{\operatorname{wt}} 
\renewcommand{\P}{\mathcal{P}} 
\newcommand{\gggmod}{{\text -}\operatorname{g}_3\operatorname{mod}}
\newcommand{\AFoam}{\operatorname{\mathbf{AFoam}}}
\newcommand{\ACob}{\operatorname{\mathbf{ACob}}} 
\newcommand{\ATan}{\operatorname{\mathbf{ATan}}} 
\newcommand{\AWeb}{\operatorname{\mathbf{AWeb}}} 
\newcommand{\len}{\operatorname{len}} 
\renewcommand{\line}{\mathcal{L}}
\newcommand{\Bmatchings}{B} 
\newcommand{\Bnoisotopy}{\widehat{B}} 
\newcommand{\Buptoisotopy}{\til{B}} 
\newcommand{\Imove}{\mathrm{I}}
\newcommand{\Hdegreeone}{\widetilde{H}} 
\newcommand{\gBimod}{\operatorname{\mathbf{gBimod}}}
\newcommand{\gggBimod}{{\mathbf{g}_3\mathbf{Bimod}}}
\newcommand{\Tw}{\operatorname{Tw}} 
\newcommand{\mcF}{\mathcal{F}}
\newcommand{\lra}{\longrightarrow}
\newcommand{\growalg}{\mathsf{G}}
\newcommand{\mcpalg}{\mathsf{M}}
\newcommand{\growthwebH}[4]{
    \begin{tikzpicture}[scale=.75]
        \draw (0,0) -- (1,0);
        \draw (0,0) -- +(120:1cm) node[label={90:$#1$}] {};
        \draw (1,0) -- +(60:1cm) node[label={90:$#2$}] {};
        \draw (0,0) -- +(240:1cm) node[label={-90:$#3$}] {};
        \draw (1,0) -- +(-60:1cm) node[label={-90:$#4$}] {};
    \end{tikzpicture}
}
\newcommand{\growthwebU}{
    \begin{tikzpicture}[scale=.75]
        \draw (0,0) node[label={90:$1$}] {} 
            arc (180:360:1cm) 
            node[label={90:$-1$}] {};
    \end{tikzpicture}
}
\newcommand{\growthwebY}[3]{
    \begin{tikzpicture}[scale=.75]
        \draw (0,0) -- (150:1cm) node[label={90:$#1$}] {};
        \draw (0,0) -- (30:1cm) node[label={90:$#2$}] {};
        \draw (0,0) -- (270:1cm) node[label={-90:$#3$}] {};
    \end{tikzpicture}
}
\newcommand{\smallgrowthwebH}[4]{
    \begin{tikzpicture}[scale=.5, baseline=0ex]
        \draw (0,0) -- (1,0);
        \draw (0,0) -- +(120:1cm) node[label={120:$#1$}] {};
        \draw (1,0) -- +(60:1cm) node[label={60:$#2$}] {};
        \draw (0,0) -- +(240:1cm) node[label={240:$#3$}] {};
        \draw (1,0) -- +(-60:1cm) node[label={-60:$#4$}] {};
    \end{tikzpicture}
}
\newcommand{\smallgrowthwebU}{
    \begin{tikzpicture}[scale=.5, baseline=-1ex]
        \draw[white] (0,1.5)--(0,-1.5); 
        \draw (0,0) node[label={90:$1$}] {} 
            arc (180:360:1cm) 
            node[label={90:$-1$}] {};
    \end{tikzpicture}
}
\newcommand{\smallgrowthwebY}[3]{
    \begin{tikzpicture}[scale=.5, baseline=-1ex]
        \draw (0,0) -- (150:1cm) node[label={150:$#1$}] {};
        \draw (0,0) -- (30:1cm) node[label={30:$#2$}] {};
        \draw (0,0) -- (270:1cm) node[label={270:$#3$}] {};
    \end{tikzpicture}
}
\newcommand{\sidecolor}{cyan!50}
\newcommand{\boundarycolor}{red}
\newcommand{\cutcolor}{Purple} 
\newcommand{\flowcolor}{pink}
  \newcommand*\circled[1]{\tikz[baseline=(char.base)]{
            \node[shape=circle,draw,inner sep=1pt] (char) {${#1}$};}} 
\title{Annular SL(2) and SL(3) web algebras}
\author{Rostislav Akhmechet, Mikhail Khovanov, and Melissa Zhang}
\date{August 7, 2025}
\begin{document}

\begin{abstract}
    We use annular foam TQFTs introduced by the first two authors to define equivariant $SL(2)$ and $SL(3)$ web algebras in the annulus. To a diagram of a tangle in the thickened annulus we assign a complex of bimodules over these algebras whose chain homotopy type is an invariant of the tangle. Several properties of algebras and bimodules are established. An essential technical part of the paper provides a bijective correspondence between non-elliptic annular $SL(3)$ webs and closed paths in the $SL(3)$ weight lattice. This generalizes an analogous bijection in the planar setting.    
\end{abstract}

\maketitle

\tableofcontents

\section{Introduction}

Extending link homology to tangles and tangle cobordisms \cite{Bar-Natan, KhFunctorValued, Kh_invt_of_tangle_cobordisms, StroppelTL, MazorchukStroppel, Robert-thesis, MackaayYonezawa}   leads to a variety of interesting categories, both abelian and triangulated, that appear all over in representation theory, algebraic and symplectic geometry, and mathematical physics~\cite{Cautis-Kamnitzer,MPTwebalgebra}.  
Tangles acts on these (triangulated) categories by exact functors and tangle cobordisms---by natural transformations between these functors. 
This viewpoint and structure is part of deep connections between several areas of mathematics and mathematical physics. 

In many cases these categories can be described by starting with appropriate algebras built out of TQFTs for surfaces and for foams (certain two-dimensional CW-complexes) embedded in $\R^3$. 
Foam TQFTs deliver a key approach to constructing link homology theories; see~\cite{sl3-link-homology,MSV,RoseWedrichdeformations,QueffelecRosefoams,RobertWagner,ETW} and other papers.  
Their relative versions, for foams with boundary and corners, can be described via these algebras, which we refer to as \emph{arc} or \emph{web algebras}. 
Web algebras in the planar case (to distinguish from our annular setting) were introduced in \cite{KhFunctorValued} and \cite{MPTwebalgebra, Robert-thesis} in the context of $SL(2)$ and $SL(3)$ link homology, respectively. 

We introduce equivariant annular $SL(2)$ and $SL(3)$ web algebras. 
Here \emph{equivariant} refers to the ground ring being the graded polynomial ring in $N$ variables 
\[
R_\alpha = \Z[\alpha_1, \ldots, \alpha_N]
\]
where each variable $\alpha_i$ is in degree $2$, for $N=2,3$. 
This ring is the $U(1)^N$-equivariant cohomology of a point. 
Our construction uses the annular foam TQFTs introduced in \cite{AK}. 
These TQFTs assign to a contractible circle the $U(1)^N$-equivariant cohomology of $\mathbb{CP}^{N-1}$.
Notably, the annular setting requires working over the full polynomial ring rather than its subring of symmetric polynomials, the latter of which corresponds to passing to $U(N)$-equivariant cohomology. Equivariant homology of links in $\R^3$ can be defined using the smaller ring, in contrast to the annular setting. 

For $SL(2)$ link homology, using the $U(1)\times U(1)$-equivariant framework is also crucial to Sano's fix of functoriality  \cite{Sano-functoriality} that avoids the use of foams or other decorations; see also Vogel \cite{Vogel}. Moreover, specializing variables $\alpha_1, \alpha_2$ to $0$ recovers the APS (Asaeda-Przytycki-Sikora) annular TQFT \cite{APS}. 

Annular $SL(2)$ arc algebras were first studied by Anno and Nandakumar~\cite{Anno-Nandakumar}, motivated by consideration of appropriate Ext groups of coherent sheaves on the quiver variety for the $(n,n)$ nilpotent Springer fiber. 
Ehrig and Tubbenhauer~\cite{EhrigTubbenhauer} introduced a different version based on the APS annular TQFT. This latter version is closely related to the annular $SL(2)$ algebras in this paper; see Remark \ref{rmk_Anno_Tubbenhauer}. Both of these previously defined algebras are non-equivariant, in the sense that their ground ring is either $\Z$ or a field, living entirely in degree $0$. As far as we know, our $SL(2)$ and  $SL(3)$  (equivariant) annular web algebras are new. 

The organization of this paper is as follows. In Section \ref{sec:annular arc algebras and bimodules} we define annular $SL(2)$ web\footnote{In the $SL(2)$ setting these may also be called \emph{arc} algebras.} algebras $H^n_\A$ for $n\geq 0$, bimodules associated to flat annular tangles $T\subset \A:= \SS^1 \times [0,1]$, and chain complexes of bimodules associated to diagrams of tangles in the thickened annulus $\A \times  [0,1]$ whose chain homotopy type is an invariant of the ambient isotopy class of the tangle.  

As a summary of the algebras, for a fixed $n$ we consider $n$ disjoint embedded unoriented arcs in the annulus $\A:= \SS^1 \times [0,1]$ whose boundary is $2n$ points in $\SS^1\times \{1\}$. Pairing two such annular webs $a,b$ together by gluing the reflection of $b$ through $\SS^1\times \{1/2\}$ to $a$ along their common $2n$ endpoints results in a collection of disjoint circles $\b{b}a \subset \A$. Applying the TQFT from \cite{AK} to $\b{b}a$ gives a finitely-generated free graded module $\brak{\b{b}a}$ over $\Z[\alpha_1, \alpha_2]$. The algebra is a direct sum over all such $a,b$, and multiplication is given by canonical foam cobordisms.  The construction is analogous to that in \cite{KhFunctorValued}.

In Section \ref{sec:annular sl3 webs} we move to the $SL(3)$ setting. In this case, we consider a fixed \emph{sign string} $S = (s_1, \ldots, s_n) \in \{\pm 1\}^n$ and (oriented) non-elliptic\footnote{Sometimes these are called \emph{irreducible}, but we follow the terminology in \cite{Kup-spiders}.} $SL(3)$ webs in $\A$ whose boundary in $\SS^1\times \{1\}$ is given by $S$. Pairing two such webs $w,v$ results in a closed web $\b{v}w \subset \A$, and applying the TQFT from \cite{AK} yields a finitely-generated free graded $\Z[\alpha_1,\alpha_2, \alpha_3]$-module $\brak{\b{v}w}$. 

Unlike the $SL(2)$ setting, describing the set of non-elliptic annular $SL(3)$ webs with boundary $S$ is less straightforward and is a key technical part of this paper. 
Planar $SL(3)$ webs (in a disk) have been extensively studied, starting with Kuperberg \cite{Kup-spiders}. Of particular importance for us is \cite{KhKup}, which gives a bijection between non-elliptic webs in a disk and paths in the $SL(3)$ weight lattice from the origin to itself that are confined to the dominant Weyl chamber. Mutually inverse maps in each direction are explicit, given by algorithms called $\growalg$ (grow) and $\mcpalg$ (minimal cut path): 
\[
    \{ 
    \text{Dominant $SL(3)$ lattice paths from $0$ to $0$}
    \}
    \mathrel{\mathop{\myrightleftarrows{\rule{.75cm}{0cm}}}^{\growalg}_{\mcpalg}} \{
    \text{Non-elliptic $SL(3)$ webs in a disk}
    \}
\]

Our main result is an extension of this to annular webs. There are many more non-elliptic annular webs with boundary $S$ than there are non-elliptic planar webs with boundary $S$.  
We drop the dominant condition on lattice paths and establish the following, which is stated as Theorem \ref{thm:MCP-GROW-inverses} in Section \ref{sec:bijection between webs and paths}. 

\begin{theorem}
\label{thm:main sl3 thm}
    There are well-defined maps 
    \[
\{ 
\text{$SL(3)$ lattice paths from $0$ to $0$}
\}
\mathrel{\mathop{\myrightleftarrows{\rule{.75cm}{0cm}}}^{\growalg}_{\mcpalg}} \{
\text{Non-elliptic $SL(3)$ webs in an annulus}
\}
\]
which are mutually inverse bijections. 
\end{theorem}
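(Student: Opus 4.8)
The assertion has two halves: that $\growalg$ and $\mcpalg$ are each well defined, and that $\mcpalg\circ\growalg=\id$ and $\growalg\circ\mcpalg=\id$. I would fix a basepoint $*\in\SS^1$, together with the radial arc $\{*\}\times[0,1]\subset\A$, and put all annular webs in general position with respect to it; the sign string $S$ on $\SS^1\times\{1\}$ and the number $n$ of boundary points are then read off from the lattice path itself (the $i$-th step is a weight of the standard $SL(3)$ representation or of its dual according to $s_i$), so no boundary data must be fixed in advance. Cutting $\A$ along this radial arc identifies an annular web with a planar web in a rectangle whose top edge carries $S$ and whose two vertical edges carry a pair of sign strings that match under regluing; the overall plan is to run both algorithms through this cut, invoking the planar Khovanov--Kuperberg bijection on the cut-open web and then verifying compatibility with the regluing identification.

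\textbf{Well-definedness of $\growalg$.} This reduces to three claims: (i) the prescribed sequence of local growth moves --- insertions of the elementary growth pieces (the $H$-, $Y$-, and cup/cap-shaped local webs) --- can always be applied, so the algorithm never stalls; (ii) the resulting web does not depend on the order in which moves are performed, which I would obtain from a local-confluence argument in the style of Newman's lemma, resolving each overlapping pair of applicable moves to a common diagram; and (iii) the output is non-elliptic, i.e.\ bounds no contractible closed loop, bigon, or square. Claim (iii) is the essential point of departure from the planar theory: in a disk one needs the lattice path to stay in the dominant Weyl chamber precisely so that no elliptic face is ever created, whereas here a non-dominant excursion of the path is permitted because the region it would ``enclose'' can instead be absorbed into a component winding nontrivially around $\A$ --- an essential circle, bigon, or square is not an elliptic face. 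So the proof of (iii) should carry along a bookkeeping of how far the current partial boundary has wound, and show that any face with fewer than six sides produced by GROW is forced to be non-contractible.

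\textbf{Well-definedness of $\mcpalg$.} Given a non-elliptic annular web $w$, a \emph{minimal cut path} is a simple closed curve winding once around $\A$, based on the radial arc $\{*\}\times[0,1]$, and meeting $w$ transversally in the fewest possible points; traversing it and recording a weight step at each crossing (signed according to the orientation of the edge crossed) produces a lattice path that returns to its starting value $0$ because the curve returns to its basepoint. I would prove existence of such a curve and independence of the resulting path from its choice: two minimal cut curves in the same isotopy class differ by finger moves across faces, and a move across a face with at least six sides leaves the recorded path unchanged --- which is exactly where the absence of contractible sub-hexagonal faces enters. That no minimal cut can wind more or fewer than once, and that the path genuinely closes up, should follow from the same face analysis.

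\textbf{Mutual inverseness, and the main obstacle.} For $\mcpalg\circ\growalg=\id$, running GROW on a path $p$ produces not merely a web but a radial layering of it, and the minimal cut reads the step sequence of $p$ back off layer by layer; this I would do by a bookkeeping induction on $\len(p)$. For $\growalg\circ\mcpalg=\id$, given $w$ with minimal cut $\gamma$ I would peel off the outermost layer of faces of $w$ along $\gamma$, identify this operation with undoing the last growth move, and induct on the number of trivalent vertices of $w$; cutting $\A$ along $\{*\}\times[0,1]$ reduces each inductive step to the planar Khovanov--Kuperberg statement, leaving only the check that GROW and MCP commute with the regluing identification. I expect the main obstacle to be exactly the phenomenon flagged in claim (iii): with the dominant-chamber hypothesis gone, one must show directly that GROW applied to an arbitrary closed path --- however wildly non-dominant --- still terminates in a legitimate non-elliptic annular web, and dually that MCP never returns a non-closed or otherwise inadmissible path. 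Everything else should be a careful but essentially routine annular upgrade of the disk arguments.
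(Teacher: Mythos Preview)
Your overall decomposition into well-definedness plus the two inverse directions is the same as the paper's, and your treatment of $\growalg$ (termination, local confluence for order-independence, and a check that any small face produced must enclose the puncture) matches the paper quite closely. However, two substantial points diverge.

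\medskip

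\textbf{Your definition of $\mcpalg$ is not the one that works.} You propose a single simple closed curve winding once around $\A$ and recording a weight step at each crossing. That sequence has length equal to the number of intersections of the curve with $w$, not length $n$, so it does not directly produce a state string $J\in\{-1,0,1\}^n$ associated to the boundary points $B_1,\ldots,B_n$. The paper instead uses $n$ \emph{radial} minimal cut paths $\gamma_1,\ldots,\gamma_n$, each running from the puncture to a point $Q_k$ on the outer boundary between $B_{k-1}$ and $B_k$; the state $j_k$ is determined by the weight difference $\wt(\gamma_{k+1})-\wt(\gamma_k)\in\Lambda^{s_k}$. Independence of the choice of $\gamma_k$ is then a statement about any two minimal radial paths to $Q_k$ being related by $\Imove$-moves across internal edges, which requires an analysis of the bigon between them (all inner faces there are forced to be balanced hexagons). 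Your finger-move argument across faces of $\ge 6$ sides is in the right spirit but is attached to the wrong object.

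\medskip

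\textbf{The cut-open reduction to the planar case does not go through cleanly.} When you slice $\A$ along $\{*\}\times[0,1]$ you obtain a web in a rectangle with boundary on three sides; the Khovanov--Kuperberg planar bijection, however, is stated for webs with boundary on a single arc and for \emph{dominant} lattice paths. Neither hypothesis holds after cutting, so invoking it as a black box for each inductive step is circular: you would need exactly the non-dominant extension you are trying to prove. The paper avoids this by working intrinsically in the annulus. For $\mcpalg\circ\growalg=\id$ it builds, alongside the grown web, an explicit monotone-rightward flow together with a compatible system of radial cut paths, and uses a max-flow/min-cut argument to certify minimality. For $\growalg\circ\mcpalg=\id$ it inducts on the number of vertices, using an Euler-characteristic lemma guaranteeing an outer $U$, $Y$, or $H$ face, and then carries out a lengthy case analysis (with several technical lemmas ruling out forbidden configurations of minimal cut paths at a boundary $H$) to show that removing that face is exactly one step of $\growalg$. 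Your ``peel off the outermost layer'' sketch gestures at this induction but does not engage with the cases where the obvious outer face is an $H$ whose state labels under $\mcpalg$ are not one of the $H$-growth patterns; those cases are where the real work lies.
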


In particular, for a fixed boundary condition $S$ (a sign string), the set of non-elliptic annular $SL(3)$ webs with boundary $S$ is finite, explicitly in bijection with a certain (evidently finite) collection of lattice paths.  With this in hand, we apply the annular $SL(3)$ TQFT from \cite{AK} to define the algebra $H^S_\A$ (guaranteed to be finitely-generated over $\Z[\alpha_1, \alpha_2, \alpha_3]$ by Theorem \ref{thm:main sl3 thm}). Multiplication is, as usual, given by canonical foam cobordisms. We then define bimodules for flat annular webs and complexes of bimodules for tangles in the thickened annulus in the natural way.  

We show in Proposition~\ref{prop:2-functor sl(2)} and Proposition~\ref{prop:2-functor sl(3)} that our constructions give $2$-functors from the $2$-categories of cobordisms between annular $SL(2)$ and $SL(3)$ webs, respectively, to the 2-category of bimodules for the annular web rings. 
This construction is then extended to invariants of annular tangles given by complexes of bimodules over these rings in the chain homotopy category (Theorems~\ref{thm_isotopy_invariance},~\ref{thm_isotopy_invariance sl3}). 
We hope to establish functoriality of our constructions under annular tangle cobordisms in $\A \times [0,1]^2$ in future work. 

\vspace{0.1in} 

\subsection*{Acknowledgements}
M.K. was partially supported by NSF grants DMS-2204033, DMS-2446892  and the Simons Collaboration Award 994328.

\section{Annular \texorpdfstring{$SL(2)$}{SL(2)} arc algebras and bimodules} 
\label{sec:annular arc algebras and bimodules}

\subsection{Equivariant annular \texorpdfstring{$SL(2)$}{SL(2)} homology}
\label{sec:Equivariant annular SL(2) homology}

We review the construction of equivariant annular $SL(2)$ homology via anchored foam evaluation introduced in \cite{AK}. 
Consider the graded ring $R_\alpha= \Z[\alpha_1, \alpha_2]$ with $\deg(\alpha_1) = \deg(\alpha_2)=2$.  

Let $\line \subset \R^3$ denote the $z$-axis. A \emph{closed anchored surface} is a closed, smoothly embedded surface $S\subset \R^3$ which intersects $\line$ transversely. 
The surface $S$ may be decorated by finitely many dots which are disjoint from $\line$ and can float freely on components of $S$. 
Intersection points of $S$ with $\line$ are called \emph{anchor points}, and we denote the set of anchor points by $p(S) = S\cap \line$. 
The surface $S$ moreover comes with a labeling of each anchor point by $1$ or $2$; that is, there is a fixed function $\ell : p(S) \to \{1,2\}$. 
Evaluation of anchored surfaces $\brak{S}\in R_\alpha$ was defined in \cite[Section 2.1]{AK}. 

Let $\P = \R^2\setminus \{0,0\}$ denote the punctured plane. 
Using universal construction and the evaluation $\brak{-}$, one can associate an $R_\alpha$-module, called the \emph{state space}, to every collection of finitely many disjoint simple closed curves in $\P$, as follows. 
Let $C_0, C_1 \subset \P$ be two collections of finitely many disjoint simple closed curves.  
An \emph{anchored cobordism from $C_0$ to $C_1$} is a smoothly and properly embedded dotted compact surface $S\subset \R^2\times [0,1]$ with boundary $\partial S = C_0 \sqcup C_1$, such that $C_i \subset \R^2\times \{i\}$ for $i=0,1$. Intersection points of $S$ with the line segment $\line_{[0,1]} := \{(0,0)\} \times [0,1]$ are still called anchor points and are denoted by $p(S)$. 
These intersections are required to be transverse and come equipped with a labeling $\ell: p(S) \to \{1,2\}$. 
We view $S$ as a morphism in an appropriate category to be defined shortly and write $S: C_0 \to C_1$. 
A closed anchored surface may be viewed as an anchored cobordism $\varnothing \to \varnothing$.  
Given $S: C_0\to C_1$, let $\b{S} : C_1\to C_0$ denote the anchored cobordism obtained by reflection through the horizontal plane $\R^2\times \{1/2\}$.  

Let $\Fr(C_0)$ denote the free $R_\alpha$-module generated by all anchored cobordisms $U : \varnothing \to C_0$. Given basis elements $U, V\in \Fr(C_0)$, let $\b{U} V$ denote the closed anchored foam obtained by gluing $\b{U}$ to $V$ along their common boundary $C$. 
Define the bilinear form 
\[
    (-,-) : \Fr(C_0) \times \Fr(C_0) \to R_\alpha
\]
by $(U,V) = \brak{\b{U}V}$. The definition of $\brak{-}$ readily implies that $(-,-)$ is symmetric. 
The \emph{state space of $C_0$}, denoted $\brak{C_0}$, is defined to be the quotient
\[
    \Fr(C_0)/\ker((-,-))
\]
where 
\[
    \ker((-,-)) = \{x\in \Fr(C_0) \mid (x,y) = 0 \text{ for all } y\in \Fr(C_0)\}
\]
is the kernel of $(-,-)$. For a basis element $U\in \Fr(C_0)$, we write $[U]$ to be its equivalence class in $\brak{C_0}$. 

The state space $\brak{C_0}$ is bigraded via the \emph{quantum grading} $\qdeg$ and the \emph{annular grading} $\adeg$, defined as follows. 
Given an anchored cobordism $S:C_0\to C_1$ with $d(S)$ dots, define 
\begin{equation}
\label{eq:qdeg sl2}
    \qdeg(S) = -\chi(S) + 2d(S) + \vert p(S) \vert.
\end{equation}
Write the anchor points of $S$ as $p_1, \ldots, p_m$, ordered from bottom to top, and let $n$ denote the number of non-contractible circles in $C_0$. Define 
\begin{equation}
\label{eq:adeg sl2}
    \adeg(S) = (-1)^n\sum_{i=1}^m (-1)^{i+ \ell(p_i)}.
\end{equation}
Both gradings are additive under composition of anchored cobordisms by \cite[Lemma 2.13]{AK}. 

We define a trivial annular grading on $R_\alpha$ by setting $\adeg(\alpha_1) = \adeg(\alpha_2) = 0$. 
Note that if $S$ is a closed anchored surface then bigradings are compatible with $\brak{-}$, in the sense that $\qdeg(S) = \deg(\brak{S})$ and $\adeg(S) = \adeg(\brak{S}) = 0$.  
As shown in \cite[Section 2.2]{AK}, these gradings descend to well-defined gradings on state spaces. 
In particular, consider an anchored cobordism $U: \varnothing \to C_0$ with anchor points $p_1, \ldots, p_m$ ordered from top to bottom. 
Then $p_i$ for odd $i$ contributes contributes  $1$ to $\adeg([U])$ if $\ell(p_i) = 1$ and contributes $-1$ if $\ell(p_i) = 2$; if $i$ is even then $p_i$ contributes $-1$  to $\adeg([U])$ if $\ell(p_i) = 1$ and contributes $1$ if $\ell(p_i) = 2$.
 
For a bigraded module $M = \oplus_{i,j} M_{i,j}$ over some commutative domain such that each $M_{i,j}$ has finite rank, define the \emph{graded rank} of $M$ to be $\sum_{i,j} \operatorname{rank}(M_{i,j}) q^i a^j$. 

\begin{theorem}\cite[Theorem 2.11]{AK}
\label{thm:state spaces SL(2)}
    Let $C\subset \P$ consist of $n$ contractible circles and $m$ non-contractible circles. 
    Then $\brak{C}$ is a free graded $R_\alpha$-module of graded rank $(q+q^{-1})^n (a+a^{-1})^m$. Moreover, a homogeneous basis (standard basis) for $\brak{C}$ is given by anchored cobordisms $\varnothing \to C$ such that 
    \begin{itemize}
        \item each component is a disk,
        \item each component with contractible boundary carries at most one dot and is disjoint from $\line$, and
        \item each component with non-contractible boundary intersects $\line$ exactly once. 
    \end{itemize}
\end{theorem}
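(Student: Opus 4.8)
\emph{Proof plan.} This is \cite[Theorem 2.11]{AK}; I sketch how one would prove it. Since $\brak{C}$ is produced by the universal construction as the quotient of the free module $\Fr(C)$ by the radical of the form $(U,V)=\brak{\b U V}$, it is enough to exhibit a spanning set, show it is linearly independent, and then compute gradings. Concretely the plan is: (i) reduce an arbitrary anchored cobordism $\varnothing\to C$ to an $R_\alpha$-combination of the proposed standard generators; (ii) check that the Gram matrix of $(-,-)$ on these generators is non-degenerate; (iii) read off the graded rank.

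For (i) I would use local relations on anchored cobordisms that descend to $\brak{C}$ because the corresponding identities hold among evaluations of closed anchored surfaces. Away from $\line$ these are the familiar two-variable equivariant $SL(2)$ relations: neck-cutting $\mathrm{id}=x\otimes 1+1\otimes x-(\alpha_1+\alpha_2)\,1\otimes 1$, dot reduction $x^2=(\alpha_1+\alpha_2)x-\alpha_1\alpha_2$, and deletion of a closed dotted sphere disjoint from $\line$ using $\brak{S^2}$ equal to $0,1,\alpha_1+\alpha_2$ for $0,1,2$ dots respectively. The genuinely anchored inputs are a ``neck-cutting across $\line$'' relation that lowers by $2$ the number of intersection points of a component with $\line$, together with the identity that a dot slid toward an anchor point labeled $i$ is absorbed into the scalar $\alpha_i$. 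Iterating, every component of $U$ becomes a disk; a contractible one is pushed off $\line$ and left with at most one dot, while a non-contractible one becomes a dotless disk meeting $\line$ exactly once (the parity being forced since a non-contractible curve in $\P$ has linking number $\pm1$ with $\line$) and carrying a label in $\{1,2\}$. These are exactly the standard generators.

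For (ii), since $U$ and $V$ are disjoint unions of one disk per circle of $C$ and $\brak{-}$ is multiplicative under disjoint union, the Gram matrix of $(-,-)$ on the standard generators is a tensor product over the circles of $C$ of $2\times 2$ blocks. A contractible circle gives the block $\left(\begin{smallmatrix}0&1\\1&\alpha_1+\alpha_2\end{smallmatrix}\right)$ (glueing the two caps produces a dotted sphere), with determinant $-1$. A non-contractible circle gives the block obtained by glueing disks with labels $i,j$ into a sphere meeting $\line$ in two points labeled $i,j$; evaluating this via \cite[Section 2.1]{AK} yields a non-degenerate block — by an $\adeg$ count the diagonal entries vanish, so the block is anti-diagonal with unit entries. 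Hence the full Gram matrix has determinant a nonzero element of the domain $R_\alpha$, so the standard generators are linearly independent over $\Frac(R_\alpha)$, hence over $R_\alpha$; with (i) they form a free $R_\alpha$-basis. For (iii), by \eqref{eq:qdeg sl2} a dotless cap has $\qdeg=-1$ and a one-dot cap has $\qdeg=+1$, giving a factor $(q+q^{-1})$ per contractible circle; by \eqref{eq:qdeg sl2} and \eqref{eq:adeg sl2} a cap meeting $\line$ once has $\qdeg=0$ and $\adeg=\pm1$ according to its label, giving a factor $(a+a^{-1})$ per non-contractible circle. Multiplying over all circles gives graded rank $(q+q^{-1})^n(a+a^{-1})^m$.

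The main obstacle is the anchored input: establishing the neck-cutting-across-$\line$ relation and pinning down the evaluations of anchored spheres directly from the foam evaluation of \cite[Section 2.1]{AK}, since both (i) and (ii) depend on these values. The $\adeg$ bookkeeping — with its global sign $(-1)^n$ and its dependence on the top-to-bottom order of anchor points — also needs care, but should be routine given \cite[Lemma 2.13]{AK}.
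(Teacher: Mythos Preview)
This theorem is not proved in the paper under review; it is quoted verbatim from \cite[Theorem 2.11]{AK}, so there is no in-paper argument to compare against. Your sketch is the natural universal-construction approach one expects to find in \cite{AK}: reduce to the putative basis via local relations, then verify non-degeneracy of the Gram matrix circle-by-circle.

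Two points in your execution are off. First, your $\adeg$ argument for the non-contractible block is inverted. Reflection through $\R^2\times\{1/2\}$ swaps source and target, so the parity prefactor $(-1)^n$ in \eqref{eq:adeg sl2} flips and $\adeg(\b{U_i}) = -\adeg(U_i)$. Hence $\adeg(\b{U_i}U_j) = \adeg(U_j)-\adeg(U_i)$, which vanishes exactly when $i=j$. Equivalently, in a coloring-style evaluation the anchor label pins the color of the facet, and a connected sphere admits a coloring only when both anchor labels coincide. Thus the $2\times 2$ block is \emph{diagonal}, not anti-diagonal; the needed input from \cite{AK} is that the diagonal entries $\brak{S^2_{i,i}}$ are units. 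Second, a dot on a cup anchored with label $i$ is absorbed as $\alpha_{\tau(i)}$, not $\alpha_i$ (compare $x_r\cdot y^i_{r,s}=\alpha_{\tau(i)}y^i_{r,s}$ in Example~\ref{ex:H for two points}). Neither slip affects the structure of the argument, but both should be corrected before you claim the Gram matrix has unit determinant.
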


For example, if $C$ consists of a single circle then the standard basis of $\brak{C}$ is shown in Figure \ref{fig:basis for circle SL(2)}. 

If $C$ is a single contractible circle, then $\brak{C}$ is a commutative Frobenius algebra which we denote by $A_\alpha$. We can identify 
\begin{equation}\label{eq_A_R}
    A_{\alpha} \ \cong \ R_\alpha[X]/((X-\alpha_1)(X - \alpha_2)),
\end{equation}
where $1$ and $X$ correspond to an undotted and once-dotted cup, respectively. 
This Frobenius algebra was studied in \cite{KRfrobext}. 

\begin{figure}
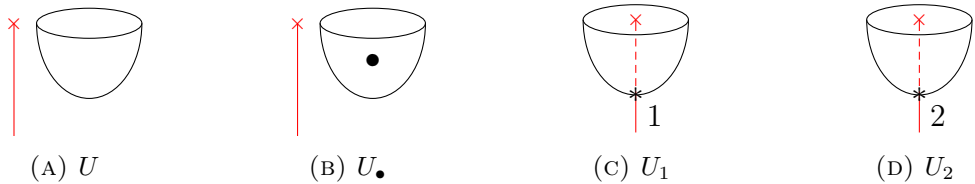

\centering
\subcaptionbox{ $U$
\label{fig:cup}}[.22\linewidth]
{\includestandalone{images/cup}
}
\subcaptionbox{ $U_\bullet$
\label{fig:dotted cup}}[.22\linewidth]
{\includestandalone{images/dotted_cup}
}
\subcaptionbox{ $U_1$
\label{fig:cup1}}[.22\linewidth]
{\includestandalone{images/cup_1}
}
\subcaptionbox{ $U_2$
\label{fig:cup2}}[.22\linewidth]
{\includestandalone{images/cup_2}
}
\caption{The standard basis elements for the state space of a single circle $C$. The first two surfaces $U, U_\bullet$ form a basis if $C$ is contractible, and the last two $U_1, U_2$ form a basis if $C$ is non-contractible.}\label{fig:basis for circle SL(2)}
\end{figure}

We denote by $\ACob$ the category whose objects are collections of finitely many disjoint simple closed curves in $\P$ and whose morphisms are anchored cobordisms considered up to ambient isotopy of $\R^2\times I$ which fixes $\partial (\R^2\times I)$ pointwise and maps $\line_{[0,1]}$ to itself. 
The composition $S_1 S_0$ of $S_0 :C_0 \to C_1$ and $S_1:C_1\to C_2$ is given by gluing $S_1$ to $S_0$ along their common boundary $C_1$ and rescaling in the thickening direction. 
The identity element $\id_C$ is the product cobordism $C\times I$. 
The universal construction assembles into a functor 
\begin{equation}\label{eq_funct}
    \brak{-} : \ACob \to R_\alpha \ggmod
\end{equation}
where $R_\alpha\ggmod$ denotes the category of bigraded $R_\alpha$-modules. 

An anchored cobordism $S:C_0\to C_1$ induces a map $\brak{S} :\brak{C_0} \to \brak{C_1}$, given  by $\brak{S}([U]) = [SU]$ for a basis element $U\subset \Fr(C_0)$. This assignment is evidently functorial, in the sense that $\brak{S_1 S_0} = \brak{S_1} \circ \brak{S_0}$ for $S_0 :C_0 \to C_1$ and $S_1: C_1\to C_2$, and $\brak{\id_C} = \id_{\brak{C}}$. 

Both $\ACob$ and $R_\alpha\ggmod$ are bigraded categories, and  \cite[Lemma 2.13]{AK} implies $\brak{-}$ is graded in the the sense that if $S:C_0\to C_1$ is an anchored cobordism, then $\brak{S}$ is a map of bidegree $(\qdeg(S), \adeg(S))$. 
Moreover, both $\qdeg$ and $\adeg$ are additive under composition. 

\begin{remark}
\label{rmk:ACob0}
    The construction in \cite{Akh} yields a functor $\ACob_0 \to R_\alpha \ggmod$, where $\ACob_0 $ denotes the subcategory of $\ACob$ consisting of anchored cobordisms which are disjoint from $\line$, i.e., cobordisms in $\P \times [0,1]$.  
    This functor and the restriction $\brak{-}\vert_{\ACob_0}$ (and hence the resulting link homologies) are naturally isomorphic by \cite[Theorem 2.20]{AK}. 

    The evaluation of anchored surfaces takes values in $R_\alpha$ rather than in its subring of symmetric polynomials. 
    Consequently, the construction of functor \eqref{eq_funct} (or of its restriction to $\ACob_0$) requires $R_{\alpha}$ as the ground ring, or a commutative ring $R$ equipped with a homomorphism $R_{\alpha}\lra R$. 
    In particular, functor \eqref{eq_funct} does not have an obvious modification with $R_{\alpha}$ replaced by its subring $R_E$ of symmetric polynomials, $R_E\cong \Z[\alpha_1+\alpha_2,\alpha_1\alpha_2]$, nor with the quotient ring of $R_E$ given by setting $\alpha_1+\alpha_2=0$. 
    Both of the latter rings  are commonly used ground rings for $SL(2)$ link homology theories for links in $\R^3$.  
    Similarly there is no obvious modification of the annular TQFT where the Frobenius algebra corresponding to a contractible circle is given by 
    $R_E[X]/(X^2 -E_1 X+ E_2)$. 
\end{remark}

It is useful to introduce the \emph{shifted dot} decoration $\circled{i}$ for $i=1,2$, originally considered in \cite{KRfrobext}. 
Shifted dots are disjoint from $\line$ and are allowed to float freely along the connected component on which they lie. 
A shifted dot $\circled{i}$ is the difference between a dot and $\alpha_i$, 
\begin{equation}
\label{eq:shifted dot}
\begin{aligned}
    \includestandalone{shifted_dot}.
\end{aligned}
\end{equation} 

There are four types of \emph{elementary saddles}, shown in Figure \ref{fig:elementary saddles}. Note that each of them is disjoint from $\line$. 
We  recall the corresponding maps from \cite[Examples 2.14--2.17]{AK} in terms of the standard basis from Theorem \ref{thm:state spaces SL(2)}.

\begin{figure}
\centering 
\subcaptionbox{Type A \label{fig:el cob A}}[.22\linewidth]
{\includegraphics{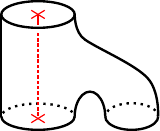}}
\subcaptionbox{Type B \label{fig:el cob B}}[.22\linewidth]
{\includegraphics{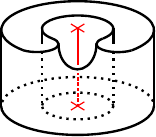}
}
\subcaptionbox{Type C \label{fig:el cob C}}[.22\linewidth]
{\includegraphics{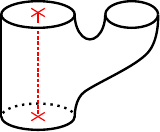}
}
\subcaptionbox{Type D \label{fig:el cob D}}[.22\linewidth]
{\includegraphics{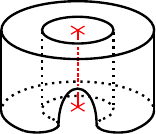}
}
\caption{Elementary saddles involving non-contractible circles.}\label{fig:elementary saddles}
\end{figure}

Figure \ref{fig:el cob A} map:
\begin{equation}
\begin{aligned}
\label{eq:type A}
     \includestandalone{type_A}   
\end{aligned}
\end{equation}

Figure \ref{fig:el cob B} map:
\begin{equation}
\begin{aligned}
\label{eq:type B}
     \includestandalone{type_B}   
\end{aligned}
\end{equation}

Figure \ref{fig:el cob C} map:
\begin{equation}
\begin{aligned}
\label{eq:type C}
     \includestandalone{type_C}   
\end{aligned}
\end{equation}

Figure \ref{fig:el cob D} map:
\begin{equation}
\begin{aligned}
\label{eq:type D}
     \includestandalone{type_D}   
\end{aligned}
\end{equation}

\subsection{Annular \texorpdfstring{$SL(2)$}{SL(2)} arc algebras} 

Let $\A = \SS^1\times I$ denote the annulus. For $i=0,1$, we set $\partial_i \A = \SS^1 \times \{i\}$. 
For each $n\geq 0$,  fix a collection of $2n$ marked points in $\SS^1$.  By a \emph{flat annular $(n,m)$-tangle} we mean a smooth and proper embedding of a compact $1$-manifold $T$ into $\A$ such that exactly $2m$ points of $\partial T$ are mapped to the $2m$ fixed points in $\SS^1\cong \partial_0 \A$ and $2n$ points of $\partial T$ are mapped to the $2n$ fixed points in $\SS^1\cong \partial_1 \A$.  
Two flat annular tangles $T_1$ and $T_2$ are isotopic if there is an orientation-preserving diffeomorphism of $\A$ which fixes $\partial \A$ pointwise, is isotopic to the identity, and takes $T_1$ to $T_2$. 

\begin{definition}
\label{def:various sets of annular tangles}
    Let $\Bnoisotopy^n_m$ be the space of flat annular $(n,m)$-tangles, and let $\Buptoisotopy^n_m$ denote a set of representatives for each planar isotopy class. 
    Write $\Bmatchings^n$ to be a set of representatives for isotopy classes of $(n,0)$-tangles which do not contain closed components (note that $\Bmatchings^n \neq \Buptoisotopy^n_0$, since tangles in the latter may contain closed components).  
\end{definition}
  
We will draw elements of $\Bmatchings^n$ in a punctured disk, with the boundary of the punctured disk corresponding to $\partial_1 \A$. For example, $\Bmatchings^1$ consists of two elements, shown in \eqref{eq:H1_tangles}. 
\begin{equation}
\label{eq:H1_tangles}
    \begin{aligned}
        \includestandalone{images/H1_tangles}
    \end{aligned}
\end{equation}

\begin{lemma}
    The cardinality of $\Bmatchings^n$ is $\binom{2n}{n}$. 
\end{lemma}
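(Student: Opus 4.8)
The plan is to reduce the count to Catalan numbers via a bijection
\[
\Bmatchings^n \;\xrightarrow{\ \sim\ }\; \bigl\{(m,F)\bigr\},
\]
where $m$ ranges over crossingless matchings of the $2n$ marked points (i.e.\ non-crossing perfect matchings of $2n$ points on a circle) and, for each $m$, the symbol $F$ ranges over the connected regions of the complement of $m$ in the disk in which it is drawn. Cap off the inner boundary $\partial_0\A$ of the annulus with a disk $\Delta$, obtaining a genuine disk $D = \A \cup_{\partial_0\A} \Delta$ with $\partial D = \partial_1\A$ carrying the $2n$ marked points. A representative of a class in $\Bmatchings^n$ is a system of $n$ pairwise disjoint embedded arcs in $\A$ with exactly these endpoints and no closed components; since $\Delta$ is disjoint from the arcs, this system is also a crossingless matching $m$ in $D$, and I let $F$ be the region of $D\setminus m$ containing $\Delta$. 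An isotopy of flat annular tangles fixes $\partial\A$ pointwise, hence caps off to an isotopy of $D$ fixing $\partial D$ pointwise and fixing $\Delta$; such an isotopy preserves both the combinatorial type of $m$ and the region $F$, so $T\mapsto (m_T,F_T)$ descends to isotopy classes.

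Next I would check this map is a bijection. Surjectivity is immediate: given $(m,F)$, draw $m$ in $D$ with endpoints at the marked points and delete an interior point of $F$ to obtain a flat annular $(n,0)$-tangle without closed components mapping to $(m,F)$. For injectivity, suppose two such tangles give the same pair $(m,F)$. After capping off, I can first isotope $D$ rel $\partial D$ so the two arc systems coincide (crossingless matchings of the same combinatorial type are isotopic rel $\partial D$); the two capping disks then lie in the same region $F$, so a further ambient isotopy supported in $F$ carries one onto the other; removing a capping disk recovers an isotopy of $\A$, supported away from $\partial_0\A$ and isotopic to the identity, identifying the two tangles.

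Finally I would count $\{(m,F)\}$. The number of crossingless matchings of $2n$ points is the Catalan number $C_n=\frac{1}{n+1}\binom{2n}{n}$. For any fixed $m$, its $n$ chords together with the $2n$ arcs into which the marked points divide $\SS^1$ form a connected graph with $V=2n$ vertices and $E=3n$ edges, drawn in the plane without crossings, so Euler's formula gives $F=2-V+E=n+2$ faces including the unbounded one; hence $D\setminus m$ has exactly $n+1$ regions, a number independent of $m$. Combining,
\[
\lvert \Bmatchings^n \rvert \;=\; \bigl\lvert \{(m,F)\} \bigr\rvert \;=\; C_n\cdot(n+1) \;=\; \binom{2n}{n}.
\]

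The main obstacle is the bijection itself, i.e.\ the assertion that isotopy classes of flat annular $(n,0)$-tangles without closed components are classified by the pair (combinatorial crossingless matching, region containing $\Delta$). This rests on two standard facts I would invoke rather than reprove: a properly embedded arc in the annulus with both endpoints on one boundary circle is separating, cutting the annulus into a disk and an annulus; and a crossingless matching in a disk carries no isotopy moduli beyond its combinatorial type. Once these are granted, well-definedness, injectivity, and the enumeration are all routine.
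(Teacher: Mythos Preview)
Your proof is correct and follows essentially the same approach as the paper: both establish a bijection between $\Bmatchings^n$ and pairs (crossingless matching of $2n$ points, choice of one of its $n+1$ complementary regions for the puncture), giving $C_n\cdot(n+1)=\binom{2n}{n}$. You supply more detail than the paper does, in particular justifying the bijection carefully and deriving the region count $n+1$ via Euler's formula rather than asserting it, but the underlying argument is the same.
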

\begin{proof}
    The number of crossingless matchings of $2n$ points on the boundary of a disk is the $n$-th Catalan number $\frac{1}{n+1}\binom{2n}{n}$. 
    Each such crossingless matching cuts the disk into $n+1$ regions. 
    Placing the puncture in one of these regions produces an element of $\Bmatchings^n$, and moreover every element of $B^n$ is uniquely obtained in this way. 
\end{proof}

Denote by $\id_n \in \Bnoisotopy_n^n$ the flat annular tangle which is the direct product of the $2n$ marked points on $\SS^1$ and the interval $I$. 
For $T_1\in \Bnoisotopy_n^m$, let $\b{T_1} \in \Bnoisotopy_m^n$ denote the tangle obtained by reflecting $T_1$ through $\SS^1\times \{1/2\} \subset \A$. 
Given $T_2 \in \Bnoisotopy_m^k$, let $T_2 T_1 \in \Bnoisotopy_n^k$ denote the tangle obtained by stacking $T_2$ on top of $T_1$. 
This composition descends to a composition on the corresponding isotopy classes (rel boundary) of annular flat tangles. 

In what follows, we consider the state space $\brak{\b{b}a}$ for $a,b\in \Bmatchings^n$ as graded only via $\qdeg$. 
Let $\{k\}$ denote an upwards grading shift by $k\in \Z$. 

\begin{definition}
    Define $H^n_\A := \bigoplus\limits_{a,b \in B^n} \brak{\b{b} a } \{ n\}$. 
    Multiplication  $H^n_\A \o_{R_\alpha} H^n_\A \to H^n_\A$ on the direct summand $\brak{\b{d}c}\{n\} \o_{R_\alpha} \brak{\b{b} a} \{n\} $ is zero if $c\neq b$.  
    For $a,b,c\in B^n$, there is an evident minimal cobordism $\b{c} b \b{b} a\to \b{c} a$ in $\A\times I$ that is the union of the cobordism $b\b{b} \to \id_n$ shown in Figure \ref{fig:multiplication cobordism} and the product cobordisms $\b{c} \times I$ and $a \times I$. 
    Since $\brak{\b{c} b \b{b} a} \cong \brak{\b{c}b} \o_{R_\alpha} \brak{\b{b}a}$ canonically, this cobordism yields a degree-zero map 
    \[
    \brak{\b{c}b}\{n\} \o_{R_\alpha} \brak{\b{b}a} \{n\} \to \brak{\b{c}a} \{n\}
    \]
    which defines the multiplication on the direct summand $\brak{\b{c}b}\{n\} \o_{R_\alpha} \brak{\b{b} a}\{n\}$ of $H^n_\A \o_{R_\alpha} H^n_\A$. 
    That multiplication is associative follows from far-commutativity of cobordisms. 

    For each $a\in B^n$, $\b{a}a$ consists of $n$ contractible circles. 
    Let $1_a \in \brak{\b{a}a}\{n\}$ denote the basis element consisting of a disjoint union of undotted cup cobordisms which are all disjoint from $\line$. 
    It is straightforward to see that $\{1_a \mid a\in B^n\}$ consists of pairwise orthogonal idempotents, and that the unit for $H^n_\A$ is given by $1 :=\sum_{a\in B^n} 1_a$.
\end{definition}

\begin{figure}
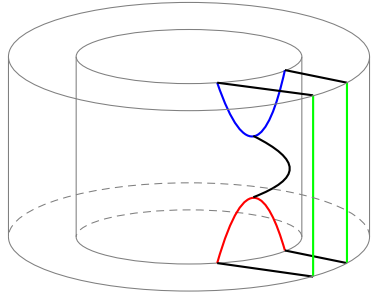

    \centering
    \includestandalone{multiplication_cobordism}
    \caption{The annular cobordism (with corners) from $b\b{b}$ to $\id_1$ when $b$ consists of a single component. We draw $\A$ vertically; $b$ is blue, $\b{b}$ is red, and $\id_1$ is green. When $b$ has multiple components, the cobordism is a disjoint union of saddle cobordisms of this form.}
    \label{fig:multiplication cobordism}
\end{figure}

The algebra $H^n_\A$ is a free graded (with respect to $\qdeg$) $R_\alpha$-module  supported in non-negative gradings. 
Note that each $1_a$ is in quantum grading zero. Moreover, the idempotents $1_a$ over all $a\in B^n$ form an $R_\alpha$-basis for the subalgebra $\left( H^n_\A \right)_0$ of $H^n_\A$ consisting of elements in quantum grading zero. 
Consequently, $\left( H^n_\A \right)_0$, viewed as a ring, is a finite direct product of copies of $R_\alpha$. We also have $1_b H^n_\A  1_a = \brak{\b{b} a}\{n\}$. 
By a \emph{standard basis element of $H^n_\A$} we mean a standard basis element in some summand $\brak{\b{b}a}\{n\}$. 

\begin{remark}
\label{rmk_Anno_Tubbenhauer}
The first version of the annular arc algebras was introduced by Anno and Nandakumar~\cite{Anno-Nandakumar}. 
Ehrig and Tubbenhauer~\cite{EhrigTubbenhauer} introduced and studied a related annular arc algebra based on the APS (Asaeda-Przytycki-Sikora) TQFT \cite{APS}. 

In the ring $R_{\alpha}$ there is the ideal $I_{\alpha}$ of positive degree elements, and $R_{\alpha}/I_{\alpha}\cong \Z$. Our annular TQFT, when restricted to $\ACob_0$ (see Remark \ref{rmk:ACob0}) and modded out by this ideal of the ground ring $R_{\alpha}$, is isomorphic to the APS TQFT \cite{APS} by \cite[Theorem 1.1]{Akh}. 
This leads to an isomorphism of graded rings between the quotient $H^n_{\A}/I_{\alpha}H^n_{\A}$ and the Ehrig-Tubbenhauer annular arc ring in~\cite[Section 5]{EhrigTubbenhauer}. 
\end{remark}

\begin{example}
\label{ex:H for two points}
The set $B^1$ consists of two elements $a$ and $b$, shown in \eqref{eq:H1_tangles}.

The closures $\b{a}a$ and $\b{b}b$ each consist of one contractible circle, and the closures $\b{b}a$ and $\b{a}b$ each consist of one non-contractible circle.  
Let 
$1_r$ and $x_r$ denote the standard basis elements in $\brak{\b{r}r}$, for $r\in \{a,b\}$, consisting of an undotted and once-dotted cup, respectively, and let $y_{r,s}^i$ for $r,s\in \{a,b\}$, $r\neq s$, and $i=1,2$, denote the standard basis element in $\brak{\b{r}s}$ consisting of a cup with one anchor point labeled $i$ (note that $\b{r}s$ is a non-contractible circle in the annulus). 
We have 
\begin{align*}
    1_r \cdot 1_r &= 1_r, \\ 
    1_r \cdot x_r &= x_r \cdot 1_r = x_r, \\ 
    x_r \cdot x_r &= (\alpha_1 + \alpha_2) x_r - \alpha_1\alpha_2 1_r,
\end{align*}
where the last equation can be rewritten as $(x_r-\alpha_1 1_r)(x_r - \alpha_2 1_r)=0$.   
These can be deduced from the multiplication in the Frobenius algebra $A_\alpha$, see \eqref{eq_A_R}. 

Let $\tau$ denote the nontrivial involution of $\{1,2\}$. Then by \eqref{eq:type A} and \eqref{eq:type B},
\begin{align*}
    1_r \cdot y^i_{r,s} & = y^i_{r,s} = y^i_{r,s}\cdot 1_s, \\
    x_r \cdot y^i_{r,s} &= \alpha_{\tau(i)} y^i_{r,s} = y^i_{r,s} \cdot x_s, \\
    y^i_{s,r} \cdot y^i_{r,s} & = x_s - \alpha_i 1_s, \\
    y^i_{s,r} \cdot y^j_{r,s} &= 0 \text{ if } i\neq j,
\end{align*}
giving us the multiplication table for the standard basis of the $R_{\alpha}$-algebra $H^1_\A$, which is a free $R_{\alpha}$-module of rank $8$.  
\end{example}

The degree shifts $\{n\}$ ensure that the quantum grading of $H^n_\A$ is additive under multiplication. 
Multiplication changes the annular grading as follows. 
For $x\in \brak{\b{b}a}$, let $w(x)$ denote the number of non-contractible circles in $\b{b}a$. 
Given $x\in \brak{\b{c}b}$  and $y\in \brak{\b{b}a}$, we have  
\[
\adeg(xy) = \adeg(y) + (-1)^{w(y)} \adeg(x).
\]
The sign is due to the alternating nature of how $\adeg$ is computed. 
Note that $w(xy) \equiv w(x) + w(y) \mod 2$.

For $a\in B^n$, let $\eps_a : \brak{\b{a}a}\{n\}\to R_\alpha$ be the map that sends a standard basis element $x$ to $0$ if $x$ contains an undotted cup, and otherwise $\eps_a(x) = 1$. 
Extend this to $\eps : H^n_\A\to R_\alpha$ by setting $\eps(x) = 0$ if $x\in \brak{\b{b}a}\{n\}$ with $b\neq a$. 
Note that $\deg(\eps)=-2n$.

\begin{proposition} The trace map $\eps$ turns $H^n_\A$ into a graded symmetric Frobenius $R_{\alpha}$-algebra. 
\end{proposition}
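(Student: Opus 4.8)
The plan is to verify the three properties that constitute "graded symmetric Frobenius $R_\alpha$-algebra": (1) $\eps$ is a graded $R_\alpha$-module map (of degree $-2n$); (2) the associated bilinear form $(x,y) \mapsto \eps(xy)$ is nondegenerate over $R_\alpha$; and (3) this bilinear form is symmetric, $\eps(xy) = \eps(yx)$. Since $H^n_\A$ is a free graded $R_\alpha$-module with the standard basis, all three can be checked on standard basis elements, and the key geometric input is that $\eps(xy)$ is computed by a foam evaluation.

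First I would reinterpret $\eps$ TQFT-theoretically. For $x\in \brak{\b b a}\{n\}$ and $y\in \brak{\b a c}\{n\}$ represented by anchored cobordisms $\varnothing \to \b b a$ and $\varnothing \to \b a c$, the product $xy$ lives in $\brak{\b b c}\{n\}$, and $\eps(xy)$ is nonzero only when $b=c$, in which case $\b b b$ is $n$ contractible circles and $\brak{\b b b}\cong A_\alpha^{\otimes n}$; the map $\eps_b$ is (up to the degree shift) the $n$-fold tensor power of the Frobenius trace on $A_\alpha = R_\alpha[X]/((X-\alpha_1)(X-\alpha_2))$, which sends $1\mapsto 0$, $X\mapsto 1$. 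Equivalently, capping off all $n$ circles of $\b b b$ with disks, $\eps_b([U]) = \brak{\widehat U}$ where $\widehat U$ is the closed anchored surface obtained by gluing the capping disks to $U$. Thus $\eps(xy) = \brak{\b x\, (\text{product foam})\, y \text{ capped off}}$ is a genuine closed anchored-foam evaluation. That $\eps$ is $R_\alpha$-linear is then immediate from $R_\alpha$-linearity of $\brak{-}$, and the degree count $\deg(\eps) = -2n$ follows from \eqref{eq:qdeg sl2} together with the grading shift $\{n\}$ on each summand (the $n$ capping disks contribute $-\chi = -n$, and $\brak{\b b b}$ carries shift $\{n\}$, so the trace drops degree by $2n$).

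Next, symmetry $\eps(xy)=\eps(yx)$: when $b\neq c$ both sides vanish, so assume $b=c$. The closed anchored surface computing $\eps(yx)$ is obtained from the one computing $\eps(xy)$ by the ambient isotopy of $\R^2\times[0,1]$ that cyclically rotates the composition $\b x \cdot(\text{s)addle}\cdot y$ — geometrically this is the standard "the trace of a Frobenius algebra is cyclic" move, realized by sliding one factor around. Because $\brak{-}$ depends only on the isotopy class of the closed anchored surface (rel $\line_{[0,1]}$, via \eqref{eq_funct} and the definition of $\ACob$), the two evaluations agree. One must check the rotation can be performed keeping the surface transverse to $\line$ and carrying $\line_{[0,1]}$ to itself; the anchor points just get permuted cyclically, and since the closed evaluation is insensitive to the ordering of anchor points up to the sign encoded by $\adeg$, which is $0$ on closed surfaces, this is harmless. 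Alternatively, and more cleanly, symmetry reduces to symmetry of the Frobenius trace on $A_\alpha^{\otimes n}$ combined with the fact that $\brak{\b b a}$ and $\brak{\b a b}$ are naturally dual via the pairing $(-,-)$ from Section~\ref{sec:Equivariant annular SL(2) homology}, which is symmetric by construction.

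Finally, nondegeneracy: I would exhibit, for each standard basis element $x\in\brak{\b b a}\{n\}$, a standard basis element $x^\vee\in\brak{\b a b}\{n\}$ with $\eps(x x^\vee)=1$, and more precisely show that the Gram matrix of $\eps(-\cdot-)$ in the standard basis is invertible over $R_\alpha$ (in fact unitriangular up to a permutation, with entries $1$ on the relevant anti-diagonal and higher-filtration corrections). Concretely: on each contractible circle of $\b b a$ one uses the self-duality of $A_\alpha$ pairing $1$ with $X$; on each non-contractible circle one uses the relations in Example~\ref{ex:H for two points}, namely $y^i_{s,r}\cdot y^i_{r,s} = x_s - \alpha_i 1_s$ and $y^i_{s,r}\cdot y^j_{r,s}=0$ for $i\neq j$, together with $\eps_s(x_s - \alpha_i 1_s) = 1$, so $\eps(y^i_{s,r} y^i_{r,s}) = 1$ and $\eps(y^i_{s,r} y^j_{r,s})=0$ for $i\ne j$ — exactly the nondegeneracy one needs on the non-contractible part, with the labels $i\in\{1,2\}$ playing the role of the two-dimensional fiber. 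Assembling over all circles via the canonical tensor decomposition $\brak{\b b a}\cong \bigotimes(\text{per-circle state spaces})$, and noting that dotted-cup/shifted-dot ambiguities only produce a change of basis over $R_\alpha$ with determinant $\pm 1$, gives invertibility of the Gram matrix.

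\medskip

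The main obstacle I anticipate is not any single computation but organizing the bookkeeping on the non-contractible circles uniformly: the bilinear form $\eps(xy)$ only pairs $1_b H^n_\A 1_a$ with $1_a H^n_\A 1_b$, and one must argue that the closure $\b b a \cup \b a b$ (with the product saddles inserted) is, circle-by-circle, either a contractible circle or a non-contractible circle to which the Example~\ref{ex:H for two points} computation applies verbatim — i.e. that the combinatorics of how the arcs of $a$ and $b$ match up never produces a configuration outside the two local models. This is a statement about crossingless matchings in the punctured disk and should follow from the same region-counting used in the cardinality lemma for $\Bmatchings^n$, but it is the place where some care is genuinely required.
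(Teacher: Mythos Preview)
Your argument is correct, but it takes a different route from the paper. The paper's proof is a one-liner: it writes down the comultiplication $\Delta : H^n_\A \to H^n_\A \otimes_{R_\alpha} H^n_\A$ as the sum of maps $\brak{\b b a} \to \brak{\b b c} \otimes_{R_\alpha} \brak{\b c a}$ induced by the \emph{reverse} of the multiplication cobordism (the one that unwraps $\id_n$ into $c\b c$), observes that this $\Delta$ is the $\eps$-dual of multiplication, and concludes nondegeneracy from the existence of this copairing; symmetry is declared clear. By contrast you compute the Gram matrix of the pairing $\eps(xy)$ explicitly, circle-by-circle, and show it is invertible over $R_\alpha$. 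The paper's argument is slicker and more structural (the zig-zag identity for the cobordisms $b\b b \leftrightarrow \id_n$ does all the work), while yours is more hands-on and has the merit of actually exhibiting the dual basis; both are perfectly valid.

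Your stated ``main obstacle'' is not an obstacle. The closed anchored surface computing $\eps(xy)$ for $x\in\brak{\b b a}$ and $y\in\brak{\b a b}$ is the result of gluing $x$, $y$, the multiplication saddles (contracting $a\b a$), and the $\eps$-caps (contracting $\b b b$). Since every arc of $\b b a$ belongs either to $a$ or to $\b b$, and each is matched with its partner in $\b a b$, this closed surface \emph{is} a disjoint union indexed by the circles of $\b b a$, with the piece over each circle being exactly a sphere (contractible case) or an anchored sphere (non-contractible case) capped by the corresponding components of $x$ and $y$. Monoidality of $\brak{-}$ then gives the tensor factorization of the pairing immediately; no further combinatorics of crossingless matchings is required.
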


\begin{proof}
    The comultiplication map $\Delta:H^n_\A \lra H^n_\A\otimes_{R_\alpha} H^n_\A$ is the sum of maps 
    \[
    \brak{\b{b}a}\lra \brak{\b{b}c}\otimes_{R_\alpha} \brak{\b{c}a}
    \]
    over all $a,b,c\in B^n$ given by the standard cobordisms that unwrap $2n$ parallel lines annular flat tangle into the composition $c\b{c}$. 
    This map is dual to the multiplication via $\eps$, implying that $\eps$ is nondegenerate. 
    It is clear that $\eps(xy)=\eps(yx)$ for all $x,y\in H^n_\A$, i.e., the trace is symmetric.    
\end{proof}

We list some symmetries of $H^n_\A$ below.

\begin{itemize}

\item Natural isomorphisms of state spaces $\brak{\overline{b}a}\cong \brak{\overline{a}b}$ induce an isomorphism of graded symmetric Frobenius $R_{\alpha}$-algebras $(H^n_\A)^{\mathsf{op}}\cong H^n_{\A}.$ 

\item Graded $R_{\alpha}$-algebra $H^n_\A$ admits an automorphism of order $2n$ for $n>0$ (order $1$ for $n=0$) given by rotating each diagram $a\in B^n$ by $\pi/n$ counterclockwise.

\item Additionally, $H^n_\A$ admits a reflection involution given by picking a pair of opposite boundary points on the circle (other than the $2n$ marked points) and reflecting each diagram in $B^n$ via this involution. 
Jointly with the rotation automorphisms, one obtains an action of the $4n$-element dihedral group on the $R_{\alpha}$-algebra $H^n_\A$. 

\item Let $\tau$ be the involution of $R_\alpha$ which interchanges $\alpha_1$ and $\alpha_2$.  
Define $\tau_n : H^n_\A \to H^n_\A$ to be the ring automorphism induced by the map which interchanges the labels $1$ and $2$ of each anchor point of a cobordism. 
Automorphism $\tau_n$ of $H^n_\A$ intertwines the involution  $\tau$ of $R_{\alpha}$, in the sense that $\tau_n(xy)=\tau(x)\tau_n(y)$ for $x\in R_{\alpha}$ and $y\in H^n_\A$. 

\end{itemize}

\subsection{Generation by degree-one elements }

Our goal in this subsection is to establish Theorem \ref{thm:generated by degree 1}, which says that $H^n_\A$ is generated elements of degree zero and one. 
Note, the planar arc algebras $H^n$ from \cite{KhFunctorValued} do not have this property.  

\begin{definition}
    Given $a\in B^n$ and a component $\gamma$ of $a$, let 
    \[
    d_a(\gamma) = \min\{ \operatorname{int}(r) \cap a \mid r \text{ is an embedded arc joining a point on } \gamma \text{ to } \times \}
    \] 
    be the number of components of $a$ that separate $\gamma$ from $\times$. 
    We say that $\gamma$ is \emph{outermost} if $d_a(\gamma) = 0$. 
\end{definition}

Let $a\in B^n$ and consider a component $\gamma$ of $a$. 
There is a unique interval $I_\gamma \subset \SS^1 \times \{1\}$ such that $\gamma \cup I_\gamma$ bounds a disk in $\A$. 
Note that if $\gamma'$ is another component of $a$, either $I_\gamma \cap I_{\gamma'} = \varnothing$ or one of $I_\gamma, I_{\gamma'}$ is properly contained in the other. 
If $I_\gamma \subset I_{\gamma'}$, then we write $\gamma < \gamma'$. 
This defines a partial order on the components of $a$. We say $\gamma,\gamma'$ are \emph{nested} if $\gamma < \gamma'$ or $\gamma' < \gamma$. Note that $d_a(\gamma)$ is equal to the maximum among all $k$ such that there exists a chain $\gamma < \gamma_1  < \cdots < \gamma_k$; in particular, $\gamma$ is outermost if and only if it is maximal with respect to $<$. 

By a \emph{(surgery) arc} in $a$ we will always mean an embedded closed interval $r\subset \A$ whose interior is disjoint from $a$ and whose endpoints lie on distinct components $\gamma_1, \gamma_2$ of $a$. 
In this case we say that $\gamma_1$ and $\gamma_2$ are \emph{adjacent}. Note that surgery along $r$ results in another element of $B^n$. 
We also note that if $\gamma_1, \gamma_2$ are components of $a$ with $\gamma_1<\gamma_2$, then $\gamma_1$ and $\gamma_2$ are adjacent if and only if there does not exist a component $\gamma$ of $a$ with $\gamma_1<\gamma<\gamma_2$. 

There are two ways that degree $1$ elements can appear in $H^n_\A$. 

\begin{definition}
\label{def:flip}
    Let $a\in B^n$ and suppose $\gamma$  is an outermost component of $a$. The \emph{flip of $a$ along $\gamma$}, denoted $f_\gamma(a)\in B^n$, is the flat annular tangle obtained by sliding $\gamma$ across the puncture and keeping all other components of $a$ unchanged.\footnote{Equivalently, draw a small non-contractible circle $z$ around $\times$ and join $\gamma$ to $z$ via an arc; then $f_\gamma(a)$ is obtained by surgering along this arc.}    
\end{definition} 

This is depicted in \eqref{eq:flip ex}.
\begin{equation}
\label{eq:flip ex}
    \begin{aligned}
        \includestandalone{flip_ex}
    \end{aligned}
\end{equation}

Setting $b=f_\gamma(a)$, we see that $\b{b}a$ has $n-1$ contractible circles and one non-contractible circle. 
Then $\brak{\b{b}a}\{n\}$ has exactly two standard basis elements of degree $1$, which we denote by $y_{a, \gamma}^i$ for $i\in \{1,2\}$, where $i$ corresponds to the label of the unique non-contractible circle. 
If $\gamma' \subset b = f_\gamma(a)$ is the component obtained by moving $\gamma$, then we also have degree $1$ elements $y_{b,\gamma'}^j \in \brak{\b{a} b}\{n\}$, for $j\in \{1,2\}$. Let $x_{a,\gamma} \in \brak{\b{a}a}\{n\}$ denote the standard generator in which the circle containing $\gamma$ is dotted and all other circles are undotted. 
Then \eqref{eq:type B} yields
\begin{equation}
    \label{eq:product of essential degree 1 generators}
     y_{b,\gamma'}^j \cdot y_{a,\gamma}^i = \delta_{ij} (x_{a,\gamma} - \alpha_ i 1_a). 
\end{equation}

\begin{definition}
\label{def:swap}
Let $\gamma_1$ and $\gamma_2$ be two components of $a$, and suppose there is an arc $r$ which joins a point on $\gamma_1$ to a point on $\gamma_2$. 
The \emph{swap of $a$ along $r$}, denoted $s_{r}(a) \in B^n$, is the flat annular tangle obtained by surgery along $r$. 
\end{definition}

This local modification is shown in \eqref{eq:swap ex}.
\begin{equation}
\label{eq:swap ex}
    \begin{aligned}
        \includestandalone{swap_ex}
    \end{aligned}
\end{equation}

\begin{remark}
    If $\gamma_1$ and $\gamma_2$ are adjacent and nested then there is essentially one such arc $r$. 
    Surgery along $r$ results in two components which are not nested. 
    On the other hand, if $\gamma_1$ and $\gamma_2$ are adjacent but not nested, then there are essentially two choices of $r$. 
    Surgery along either choice results in nested components.
\end{remark}

Setting $b=s_r(a)$, we see that $\b{b}a$ has exactly $n-1$ contractible circles and no non-contractible circles. 
Then $\brak{\b{b}a}\{n\}$ has exactly one standard generator of degree $1$ consisting of an undotted cup on each circle; we denote this element by $y_{a,r}$. 
There is a dual surgery arc $r'$ in $b$ which joins the two components resulting from surgery along $r$, so that $s_{r'}(b) = a$. 
As above, let $x_{a,\gamma_i}$ for $i=1,2$ denote the standard generator of $\brak{\b{a} a}\{n\}$ which is a dotted cup on the circle containing $\gamma_i$ and is an undotted cup on all other circles.  
We have 
\begin{equation}
\label{eq:product of contractible degree 1 generators}
    y_{b,r'} \cdot y_{a,r} = x_{a, \gamma_1} + x_{a, \gamma_2}  - (\alpha_1+\alpha_2) 1_a.
\end{equation}

\begin{lemma}
The set of all possible elements of the form $y_{a,\gamma}^i$ and $y_{a,r}$ is a basis for $\left( H^n_\A\right)_1$, the subgroup of $H^n_\A$ consisting of degree-one elements. 
\end{lemma}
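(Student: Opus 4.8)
The plan is to compute a basis for $\left(H^n_\A\right)_1$ by decomposing it according to the summand structure $H^n_\A = \bigoplus_{a,b\in B^n} \brak{\b{b}a}\{n\}$ and determining which summands contribute degree-one elements. First I would recall that $\brak{\b{b}a}\{n\}$ is a free graded $R_\alpha$-module whose graded rank is $(q+q^{-1})^c(a+a^{-1})^m$ (by Theorem \ref{thm:state spaces SL(2)}, forgetting the annular grading) shifted up by $n$, where $\b{b}a$ consists of $c$ contractible circles and $m$ non-contractible circles, and where necessarily $2c + m \cdot (\text{something}) $ is constrained; more precisely, since $\b{b}a$ is the closure of $2n$ arcs, we have $c + m/2 \leq n$ with the "defect" measuring how far $a$ and $b$ are from each other. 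Because $R_\alpha$ lives in non-negative even degrees and the shift is by $+n$, a summand $\brak{\b{b}a}\{n\}$ contains a nonzero degree-one element only if its minimal degree (before accounting for $R_\alpha$) is exactly $1 - 0 = 1$ over $R_\alpha$-degree zero; i.e. the lowest $q$-power appearing in $(q+q^{-1})^c(a+a^{-1})^m \cdot q^n$ must be $q^1$. The lowest power is $q^{-c-m}\cdot q^n = q^{n-c-m}$, so we need $n - c - m = 1$, i.e. $c + m = n-1$.

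The combinatorial heart of the argument is then to classify pairs $(a,b)$ with $c(\b{b}a) + m(\b{b}a) = n-1$. I would argue that $\b{b}a$ always has $c + m/2 \le n$ circles with a parity/counting constraint, and that the configurations with $c + m = n - 1$ are exactly: (i) $m = 0$ and $c = n-1$, forcing $b$ to be obtained from $a$ by a single swap surgery $b = s_r(a)$ along an arc $r$ (equivalently, $a$ and $b$ differ by one "elementary move" not involving the puncture), and (ii) $m = 1$ and $c = n-1$, forcing $b$ to be obtained from $a$ by a single flip $b = f_\gamma(a)$ along an outermost component $\gamma$. (The case $m=2$, $c=n-2$ gives $c+m = n$, not $n-1$, and larger $m$ only decreases $c+m$ below $n$ when the total number of circles drops; one must check carefully that no pair produces $c+m = n-1$ in some other way — this is the place to be careful.) For each such pair, the degree-one part of $\brak{\b{b}a}\{n\}$ is a free rank-one (case i, generator $y_{a,r}$) or rank-two (case ii, generators $y^1_{a,\gamma}, y^2_{a,\gamma}$) $R_\alpha$-module in $R_\alpha$-degree zero, since the degree-one graded piece of $(q+q^{-1})^{n-1}q^n$ picks out exactly the $q^{-(n-1)}$ term (coefficient $1$) for case (i), and of $(q+q^{-1})^{n-1}(a+a^{-1})q^n$ the $q^{-n}$ term (coefficient $1$, but now two standard basis elements indexed by the label $i\in\{1,2\}$ of the non-contractible circle) for case (ii). Since $\left(H^n_\A\right)_1 = \bigoplus_{(a,b)} \left(\brak{\b{b}a}\{n\}\right)_1$ and the listed elements are precisely the standard basis elements spanning these pieces, they form a basis.

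Concretely the steps are: (1) recall the graded rank and standard basis of $\brak{\b{b}a}\{n\}$ from Theorem \ref{thm:state spaces SL(2)} and the shift convention; (2) observe $R_\alpha$ sits in non-negative even degrees so degree-one elements of any summand lie in the bottom graded piece over $R_\alpha$-degree $0$; (3) extract the numerical condition $c + m = n-1$ on the closure $\b{b}a$; (4) classify the pairs $(a,b)$ meeting this condition as swaps and flips, using the partial-order/nesting language of the subsection and the fact that surgery along a single arc changes the number of circles by exactly one (merging two circles, or splitting one, depending on orientation of the arc relative to the puncture); (5) read off from the standard-basis description that each swap summand contributes exactly $y_{a,r}$ and each flip summand exactly $y^1_{a,\gamma}, y^2_{a,\gamma}$, all with unit coefficient and no $\alpha$-decoration; (6) conclude by the direct-sum decomposition. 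The main obstacle will be step (4): showing rigorously that the only ways to achieve $c + m = n-1$ are a single swap or a single flip — i.e. ruling out "accidental" coincidences where $a$ and $b$ differ by something more complicated but the closure still happens to have $n-1$ circles total. This requires a clean lemma relating the number of circles in $\b{b}a$ (counted with contractible/non-contractible type) to the combinatorial distance between $a$ and $b$ in $B^n$, which I would prove by an isotopy/surgery argument or by induction on $n$ along the nesting structure.
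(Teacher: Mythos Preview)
Your overall strategy matches the paper's: compute the quantum degree of a standard basis element in $\brak{\b{b}a}\{n\}$, extract a numerical constraint on the circle count of $\b{b}a$, and then classify pairs $(a,b)$ meeting that constraint as swaps or flips. However, step~(3) contains a genuine error that makes the argument internally inconsistent.

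The standard basis element corresponding to a non-contractible circle (a cup with a single anchor point) has quantum degree $0$, not $-1$: in the formula $\qdeg(S) = -\chi(S) + 2d(S) + |p(S)|$ the anchor point contributes $+1$ and cancels the $-1$ from $-\chi$. Equivalently, in the graded rank $(q+q^{-1})^c(a+a^{-1})^m$ the variable $a$ tracks the \emph{annular} grading, not the quantum grading, so the minimal $q$-power after the shift is $q^{\,n-c}$, not $q^{\,n-c-m}$. The correct condition is therefore $c = n-1$ (and $d=0$); the constraint $m\in\{0,1\}$ then follows from $c+m\le n$. This is exactly what the paper derives. Notice that your own case~(ii), with $c=n-1$ and $m=1$, has $c+m=n$ and hence does not satisfy the condition $c+m=n-1$ you claimed --- so your listed cases are right, but they contradict your stated criterion.

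For step~(4), which you correctly flag as the main obstacle, the paper proceeds directly rather than via a distance lemma: when $m=1$ and $c=n-1$ the closure $\b{b}a$ has $n$ circles total, forcing $a$ and $b$ to induce the same matching of the $2n$ boundary points, and then the single non-contractible circle forces exactly one outermost arc to have been flipped. When $m=0$ and $c=n-1$, having one fewer circle than the maximum forces exactly four boundary points to be matched differently by $a$ and $b$, and contractibility of the resulting circle forces $b = s_r(a)$. This is short and avoids the need for a general ``combinatorial distance'' lemma.
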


\begin{proof}
    Let $x\in \brak{\b{b}a}\{n\}$ be a standard basis element with $\qdeg(x) = 1$. Let $k_0$ and $k_1$ be the number of contractible and non-contractible circles in $\b{b}a$, respectively. If $d\geq 0$ is the number of dotted cups with contractible boundary in $x$, then $1= \qdeg(x) = n-k_0+2d$, which implies $k_0=n-1$ and $d=0$. Since $k_0  + k_1 \leq n$, we also have $k_1 = 0$ or $k_1=1$.
    
    If $k_1 = 1$ then $\b{b}a$ contains $n$ circles, so if two boundary points are joined by a component of $a$ then they are also joined by a component of $b$. For $\b{b}a$ to contain exactly one non-contractible circle, we must have that $b$ is the flip of $a$ along some outermost component $\gamma$ of $a$ and that $x$ is of the form $y^i_{a,\gamma}$ for some $i \in \{1,2\}$.
    
    If $k_0 = 0$ then $\b{b}a$ contains $n-1$ circles, all of which are contractible. Then there are components $\gamma_1, \gamma_2$ of $a$, components $\gamma_1', \gamma_2'$ of $b$, and four boundary points $p_1, p_2, p_3, p_4$ such that $\gamma_1$ joins $p_1$ and $p_2$, $\gamma_2$ joins $p_3$ and $p_4$, $\gamma_1'$ joins $p_1$ and $p_3$, and $\gamma_2'$ joints $p_2$ and $p_4$. Moreover, $\gamma_1$ and $\gamma_2$ are adjacent, $\gamma_1'$ and $\gamma_2'$ are adjacent, and $a$ and $b$ are otherwise identical. Since the circle $\gamma_1 \cup \gamma_2 \cup \gamma_1' \cup \gamma_2'$ is contractible, we see that $b=s_r(a)$ for some arc $r$ joining $\gamma_1$ to $\gamma_2$. 
\end{proof}

\begin{definition}
    Let $\Hdegreeone^n_\A$ denote the $R_\alpha$-subalgebra of $H^n_\A$ generated by the degree-zero and degree-one elements.
\end{definition}

\begin{lemma}
\label{lem:aa is generated by degree 1}
   For every $a\in B^n$, the $R_\alpha$-submodule $\brak{\b{a}a}\{n\} \subset H^n_\A$ is contained in $\Hdegreeone^n_\A$. 
\end{lemma}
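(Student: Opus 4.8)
The plan is to reduce the statement to a claim about a single ``dot'' generator and then argue by induction on nesting depth. First I would invoke Theorem \ref{thm:state spaces SL(2)}: since $\b{a}a$ is a disjoint union of $n$ contractible circles, one for each component of $a$, the module $\brak{\b{a}a}\{n\}$ has a standard $R_\alpha$-basis indexed by subsets $T$ of the set of components of $a$, where $x_{a,T}$ is a once-dotted cup on the circles indexed by $T$ and an undotted cup on all others; in particular $x_{a,\varnothing}=1_a$ and $x_{a,\{\gamma\}}=x_{a,\gamma}$. The multiplication on the summand $1_a H^n_\A 1_a=\brak{\b{a}a}\{n\}$ is induced by a disjoint union of merge cobordisms, so it is the $n$-fold tensor power of the Frobenius algebra $A_\alpha$ of \eqref{eq_A_R} (one factor per component of $a$), with $x_{a,\gamma}$ corresponding to $X$ in the slot of $\gamma$; hence $x_{a,T}=\prod_{\gamma\in T}x_{a,\gamma}$ in $H^n_\A$. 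Thus $\brak{\b{a}a}\{n\}$ is generated as an $R_\alpha$-algebra by $1_a$ (which has degree $0$ and hence lies in $\Hdegreeone^n_\A$) together with the $x_{a,\gamma}$, so it suffices to prove $x_{a,\gamma}\in\Hdegreeone^n_\A$ for each component $\gamma$ of $a$.

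I would prove this by induction on $d_a(\gamma)$. If $d_a(\gamma)=0$, then $\gamma$ is outermost; setting $b=f_\gamma(a)$ and letting $\gamma'$ be the moved component, relation \eqref{eq:product of essential degree 1 generators} with $i=j=1$ gives $x_{a,\gamma}=y^1_{b,\gamma'}\cdot y^1_{a,\gamma}+\alpha_1 1_a$, and since the preceding lemma identifies $y^1_{b,\gamma'}$ and $y^1_{a,\gamma}$ as degree-one elements, the right-hand side lies in $\Hdegreeone^n_\A$. If $d_a(\gamma)=k\geq 1$, then $\gamma$ is not maximal under $<$; the components lying strictly above $\gamma$ form a finite chain (any two are comparable, their intervals both containing $I_\gamma$), so there is a unique minimal one $\gamma_2$, which by the adjacency criterion recorded before the lemma is adjacent to $\gamma$ and satisfies $d_a(\gamma_2)<d_a(\gamma)$ (indeed $d_a(\gamma_2)=k-1$). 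Choosing a surgery arc $r$ joining $\gamma$ to $\gamma_2$ and setting $b=s_r(a)$, relation \eqref{eq:product of contractible degree 1 generators} reads $y_{b,r'}\cdot y_{a,r}=x_{a,\gamma}+x_{a,\gamma_2}-(\alpha_1+\alpha_2)1_a$, whence $x_{a,\gamma}=y_{b,r'}\cdot y_{a,r}-x_{a,\gamma_2}+(\alpha_1+\alpha_2)1_a$; here $y_{b,r'}\cdot y_{a,r}$ and $(\alpha_1+\alpha_2)1_a$ lie in $\Hdegreeone^n_\A$, and $x_{a,\gamma_2}\in\Hdegreeone^n_\A$ by the induction hypothesis, completing the step.

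The real work is in the bookkeeping that makes the induction well-founded rather than in any computation: one needs the ``swap'' relation to trade the inner generator $x_{a,\gamma}$ for the generator $x_{a,\gamma_2}$ attached to a strictly shallower component, with the outermost case serving as the base where the ``flip'' relation applies instead. The facts needed for this — that the set of components above $\gamma$ is totally ordered, that a nested pair is adjacent exactly when nothing lies strictly between them, and that $x_{a,\gamma_2}$ for the immediate successor $\gamma_2$ is precisely the second summand appearing in \eqref{eq:product of contractible degree 1 generators} — are all recorded in the text preceding the lemma, so the argument should go through cleanly. The only other point I would be careful to state explicitly is the identity $x_{a,T}=\prod_{\gamma\in T}x_{a,\gamma}$ underlying the reduction, which is immediate from the tensor-product Frobenius structure on $1_a H^n_\A 1_a$ but should not be left implicit.
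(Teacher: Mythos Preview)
Your proposal is correct and follows essentially the same approach as the paper: reduce to showing each $x_{a,\gamma}\in\Hdegreeone^n_\A$ via the product formula $x_{a,T}=\prod_{\gamma\in T}x_{a,\gamma}$, then induct on $d_a(\gamma)$ using the flip relation \eqref{eq:product of essential degree 1 generators} for the base case and the swap relation \eqref{eq:product of contractible degree 1 generators} for the inductive step. Your write-up is in fact slightly more explicit than the paper's in justifying that the components above $\gamma$ form a chain (so the immediate successor $\gamma_2$ exists and has strictly smaller depth), but the argument is the same.
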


\begin{proof}
    Note that $\b{a} a$ consists of $n$ contractible circles and that each circle corresponds uniquely to a component of $a$. 
    For a component $\gamma$ of $a$,  let $x_{a,\gamma} \in \brak{\b{a}a}\{n\}$ denote the standard generator which is a dotted cup on the circle containing $\gamma$ and is an undotted cup on every other circle. 
    It suffices to show that each $x_{a,\gamma}\in \Hdegreeone^n_\A$, since if $x\in \brak{ \b{a} a}\{n\}$ is a standard generator in  which the circles corresponding to the components $\gamma_1, \ldots, \gamma_k$ are dotted\footnote{If no circles in $x$ are dotted then $x=1_a$ is in degree zero and trivially an element of $\Hdegreeone^n_\A$.} and the other circles are undotted, then  $x$ is the product
    \[
        x = x_{a,\gamma_1} \cdots x_{a,\gamma_k}.
    \]

    We show that  $x_{a,\gamma}\in \Hdegreeone^n_\A$, by induction on $d_a(\gamma)$. 
    If $d_a(\gamma) =0$ then $\gamma$ is outermost, and we write $b= f_{\gamma}(a)$ and let $\gamma'\subset b$ be the component obtained by flipping $\gamma$. 
    Then \eqref{eq:product of essential degree 1 generators} yields
    \[
        x_{a,\gamma} = y_{b,\gamma'}^1 \cdot y_{a,\gamma}^1 + \alpha_1 1_a \in \Hdegreeone^n_\A.
    \]

    For the inductive step, suppose $d_a(\gamma) >0$, and let $\gamma'$ be a component of $a$ with $\gamma < \gamma'$ and such $\gamma$ and $\gamma'$ are adjacent. 
    Let $b=s_r(a)$ be the flip of $a$ along an arc $a$ joining $\gamma$ to $\gamma'$ and let $r'$ be the dual arc in $b$. By \eqref{eq:product of contractible degree 1 generators}, 
    \[
        x_{a,\gamma} =  y_{b,r'}\cdot y_{a,r} - x_{a,\gamma'} + (\alpha_1+\alpha_2) 1_a,
    \]
    where $y_{b,r'}, y_{a,r}, 1_a\in \Hdegreeone^n_\A$ by definition and $x_{a,\gamma'}\in \Hdegreeone^n_\A$ by inductive hypothesis. 
    \end{proof}

A circle $C$ in $\b{b}a$ is the union of some components $\gamma_1, \ldots, \gamma_k \subset a$ and $\gamma_1',\ldots, \gamma_k' \subset b$, with $k\geq 1$. We call these components \emph{segments} of $C$.

\begin{corollary}
\label{cor:adding dots}
    Let $a,b\in B^n$ and let $C\subset \b{b}a$ be a contractible circle. 
    Suppose $y\in \brak{\b{b}{a}}\{n\}$ is a standard generator in which $C$ is undotted. Let $y' \in \brak{\b{b}{a}}\{n\}$ denote the standard generator which agrees with $y$ except on $C$, where $y'$ is dotted cup. 
    If $y\in \Hdegreeone^n_\A$ then $y'\in \Hdegreeone^n_\A$ as well. 
\end{corollary}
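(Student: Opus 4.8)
The plan is to reduce the statement to \Cref{lem:aa is generated by degree 1} and \eqref{eq:product of contractible degree 1 generators} by ``pushing'' the dot off the circle $C$ onto one of its segments and then moving that segment into a summand of the form $\brak{\b aa}\{n\}$ or $\brak{\b bb}\{n\}$, where the result is already known. Concretely, write $C$ as a union of segments $\gamma_1,\ldots,\gamma_k\subset a$ and $\gamma_1',\ldots,\gamma_k'\subset b$. Since a dot may float freely along a component, the generator $y'$ is unchanged if we think of the dot as lying on any chosen segment; there is no loss in assuming the dot sits on a segment $\gamma_1\subset a$.

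The key step is to realize multiplication by a degree-one ``swap'' or ``flip'' element that turns the non-simple circle $C$ into a configuration containing a contractible circle which is a single cup on a component of $a$. More precisely, pick a segment $\gamma_1\subset a$ of $C$ and let $r$ be the surgery arc in $a$ dual to the arc $\gamma_1'\cup\gamma_1\subset C$, i.e. the arc in $a$ that, together with the adjacent segment, bounds the same region; set $c = s_r(a)$ (or $c=f_{\gamma_1}(a)$ if $\gamma_1$ is outermost). Then $\b b c$ and $\b c a$ are obtained from $\b b a$ by ``cutting'' $C$ along this arc, and a degree-one generator $y_{a,r}\in \brak{\b c a}\{n\}$ (respectively $y_{a,\gamma_1}^i$) multiplies $y$ into $\brak{\b b c}\{n\}$ in a way that records the dot on $\gamma_1$ inside a \emph{contractible} circle that is just a dotted cup on a single component. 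One then invokes \Cref{lem:aa is generated by degree 1}, or rather an iterate: after finitely many such cuts the circle $C$ is completely decomposed into cups, at which point the generator lies in a summand $\brak{\b aa}\{n\}$ whose every standard generator is in $\Hdegreeone^n_\A$ by that lemma, and multiplying back by the dual degree-one generators returns us to $\brak{\b b a}\{n\}$ with the dot intact.

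An alternative, and probably cleaner, route avoids decomposing $C$ entirely: pick any segment $\gamma_1\subset a$ of $C$, use \Cref{lem:aa is generated by degree 1} to get $x_{a,\gamma_1}\in\Hdegreeone^n_\A$, and observe via \eqref{eq:product of essential degree 1 generators} or \eqref{eq:product of contractible degree 1 generators} that multiplying $y$ by an appropriate degree-one element $z$ (a flip or swap supported near $\gamma_1$) and then by its dual $z'$ produces $z'\cdot z\cdot y = y'' $ where $y''$ differs from $y$ only by a dot on the segment $\gamma_1$, i.e.\ $y''=y'$ after sliding the dot along $C$. Here one uses that $z'\cdot z$ acts on the summand $1_b H^n_\A 1_a$ as ``add a dot on the circle through $\gamma_1$, up to lower-dot-number terms'' — exactly as in the proof of \Cref{lem:aa is generated by degree 1} — and that $y$ and the correction terms are in $\Hdegreeone^n_\A$ by hypothesis. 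Since all generators involved lie in $\Hdegreeone^n_\A$ and $\Hdegreeone^n_\A$ is a subalgebra, $y'\in\Hdegreeone^n_\A$.

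The main obstacle is bookkeeping: when $C$ is not a simple circle in $a\cup b$, a single flip/swap dual to one segment need not immediately land the dot on an honest cup, and one must check that the degree-one element used really does exist (that the relevant arc $r$ is a legitimate surgery arc in $a$) and that the identity of the form ``$z'z = (\text{dot on the }\gamma_1\text{-circle}) + (\text{correction in }\Hdegreeone^n_\A)$'' holds with the correction terms controlled — i.e.\ that every term other than $y'$ appearing in the expansion of $z'\cdot z\cdot y$ has strictly fewer dots on $C$ (so induction on the number of dotted circles, as in \Cref{lem:aa is generated by degree 1} and the preceding paragraph about products, applies) or is manifestly in $\Hdegreeone^n_\A$. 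Making the choice of arc canonical (say, always cutting off the segment $\gamma_1$ whose interval $I_{\gamma_1}$ is innermost among the segments of $C$) should make the induction go through cleanly.
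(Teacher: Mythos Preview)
You are massively overcomplicating this. The paper's proof is one sentence: pick any segment $\gamma\subset a$ of $C$, note that $x_{a,\gamma}\in\Hdegreeone^n_\A$ by Lemma~\ref{lem:aa is generated by degree 1}, and observe that $y'=y\cdot x_{a,\gamma}$. That last equality is immediate from the definition of multiplication: the minimal cobordism $\b{b}a\b{a}a\to\b{b}a$ merges each circle of $\b{a}a$ into the circle of $\b{b}a$ containing the same segment, so the single dot on the $\gamma$-circle of $\b{a}a$ lands on $C$.

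You actually had the key ingredient in your ``alternative route'': you invoke Lemma~\ref{lem:aa is generated by degree 1} to get $x_{a,\gamma_1}\in\Hdegreeone^n_\A$. But then instead of multiplying $y$ by $x_{a,\gamma_1}$ directly, you decompose $x_{a,\gamma_1}$ back into products $z'\cdot z$ of degree-one elements and try to compute those products with $y$. This re-introduces correction terms (for a swap, $z'\cdot z=x_{a,\gamma_1}+x_{a,\gamma_2}-(\alpha_1+\alpha_2)1_a$) that you then need to control, and your handling of them is vague. In particular, if $\gamma_2$ also lies on $C$ you get $2y'$ plus a multiple of $y$, and over $\Z$ you cannot divide by $2$; if $\gamma_2$ lies on a different undotted contractible circle you are back to the very statement you are trying to prove. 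None of this is necessary: once you know $x_{a,\gamma}\in\Hdegreeone^n_\A$, just use it as a single element of the subalgebra and multiply.

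Also, your first two paragraphs (cutting $C$ into cups via repeated surgeries and then ``multiplying back'') are not a proof sketch so much as a hope; the claim that there is always a legitimate surgery arc in $a$ peeling off a single cup from $C$ is not obvious and is in fact the content of Lemmas~\ref{lem:reducing arcs exist} and~\ref{lem:reducing arcs reduce} later in the paper, which are not available at this point.
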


\begin{proof}
    Let $\gamma \subset a$ be a segment of $C$, and let $z\in \brak{\b{a}a}\{n\}$ be the standard generator in which the circle containing $\gamma$ is dotted and every other circle is undotted. 
    By Lemma \ref{lem:aa is generated by degree 1}, $z\in \Hdegreeone^n_\A$.  
    Then $y' = y \cdot z \in \Hdegreeone^n_\A$. 
\end{proof}

\begin{definition}
    A \emph{reducing arc} for a contractible circle $C$ in $\b{b}a$ is an arc $r$ in either $a$ or in $b$ which joins two segments $\gamma_1, \gamma_2$ of $C$, such that surgery along $r$ splits $C$ into two contractible circles. 
    We refer to $(r,\gamma_1, \gamma_2, C)$ as a reducing system. 
\end{definition}

\begin{example}
Consider $a,b\in B^4$ as shown in \eqref{eq:reducing arc}.
\begin{equation}
\label{eq:reducing arc}
    \begin{aligned}
        \includestandalone{reducing_arc}
    \end{aligned}
\end{equation}
We see that $\b{b}a$ consists of two nested contractible circles. 
Denote them by $C_1$ and $C_2$, with $C_2$ innermost. 
Then $r_1$ is a reducing arc for $C_1$ and $r_2$ is a reducing arc for $C_2$. 
Note that $r_3$ is not a reducing arc, since surgery along $r_3$ splits $C_1$ into two non-contractible circles. 
\end{example}

\begin{lemma}
\label{lem:reducing arcs exist}
    If $\b{b}a$ contains a contractible circle consisting of more than two segments then $\b{b}a$ contains a reducing arc. 
\end{lemma}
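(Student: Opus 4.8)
The plan is to argue by a careful case analysis on the combinatorics of how the segments of a contractible circle $C$ sit inside the annulus, producing a surgery arc that separates off a proper sub-collection of segments into its own contractible circle. First I would fix notation: let $C \subset \b{b}a$ be contractible with segments $\gamma_1, \ldots, \gamma_k \subset a$ and $\gamma_1', \ldots, \gamma_k' \subset b$, with $k \geq 3$, cyclically arranged along $C$. Since $C$ is contractible, it bounds a disk $D \subset \A$ on one side, and one of the two complementary regions of $C$ contains the puncture $\times$; call the puncture-free region the ``inside'' (this is $D$). The key structural observation is that $C$ together with its $2k$ boundary points on $\SS^1 \times \{1\}$ divides the picture so that the arcs of $a$ other than the $\gamma_i$, and likewise those of $b$ other than the $\gamma_i'$, lie either wholly inside $D$ or wholly outside; and inside $D$ the arcs of $a$ form a crossingless matching of the ``gaps'' between consecutive segments of $C$, similarly for $b$.

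The main step is then to find the reducing arc. Consider the segments $\gamma_1, \ldots, \gamma_k$ of $a$ appearing on $C$; between consecutive ones (cyclically, along $C$) there is an endpoint region. An arc $r \subset a$ joining $\gamma_i$ to $\gamma_j$ does surgery on $C$; the resulting two circles are both contractible precisely when $r$ lies on the disk side $D$ and neither of the two resulting loops encircles $\times$ — equivalently, $r$ together with the two sub-arcs of $C$ it cuts off bounds a sub-disk of $D$. So I need: inside $D$, two segments of $C$ (from the same side, $a$ or $b$) that are joined by an arc of the ambient matching lying in $D$, or that can be joined by an embedded arc in $D$ meeting $a$ (resp. $b$) only at its endpoints. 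If $k \geq 3$, the ``innermost'' structure of the crossingless matching of segments inside $D$ on the $a$-side forces the existence of such a pair: take an innermost arc of this nesting structure; it cuts off a sub-disk of $D$ containing a proper, nonempty sub-collection of the $\gamma_i$, and surgering $C$ along it yields two contractible circles. If no arc of $a$ lies in $D$ (so all of $a$'s non-$C$ arcs are outside), then I instead use the $b$-side inside $D$, and the same argument applies; at least one of the two sides must have a segment structure permitting the cut, since $C$ is assembled by alternately following $a$-segments and $b$-segments and $k \geq 3$ means there are at least two ``turns'' on each side.

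The step I expect to be the main obstacle is verifying rigorously that surgery along the chosen arc splits $C$ into \emph{two contractible} circles rather than two non-contractible ones — i.e., controlling which side of $C$ the puncture is on and ensuring the cut-off sub-disk is genuinely a disk in $\A$ not wrapping the puncture. The example in \eqref{eq:reducing arc} with $r_3$ is exactly the warning case: a topologically valid surgery arc on $C$ can still produce non-contractible circles if it is chosen on the wrong side or cuts off a region containing $\times$. To handle this cleanly I would phrase everything in terms of the interval $I_C \subset \SS^1 \times \{1\}$ (as in the discussion of $I_\gamma$ preceding the lemma) so that ``contractible after surgery'' becomes the concrete statement that the new boundary interval is a sub-interval of $I_C$ disjoint from the $\times$-region, and then it is a matter of bookkeeping with the nesting partial order $<$ on components to locate an innermost joinable pair. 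I would also remark that ``more than two segments'' is necessary: with exactly two segments $\gamma_1,\gamma_1'$, surgering along the unique available arc reproduces a single circle or splits off a contractible circle paired with a non-contractible one, matching the $r_3$ phenomenon.
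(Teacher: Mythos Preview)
Your proposal has the right overall shape---work inside the disk $D$ bounded by $C$---but there is a genuine gap. A reducing arc is a \emph{surgery} arc: its interior must be disjoint from $a$ (or $b$). Inside $D$ there may well be components of $a$ that are not segments of $C$ (they belong to other circles of $\b{b}a$), and any such component obstructs an arc you try to draw between two $a$-segments of $C$. Your sketch never addresses this; phrases like ``joined by an arc of the ambient matching lying in $D$'' also suggest some conflation of surgery arcs with components of the matching. The paper resolves the obstruction by first choosing $C$ to be \emph{innermost} among contractible circles with more than two segments: then any component $\gamma$ of $a$ caught between the two chosen segments would force a strictly smaller such circle inside $C$, a contradiction.

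The paper also supplies the mechanism you are groping for in locating the right pair of segments and in handling the obstacle you flagged. Among the $a$-segments of $C$ (or the $b$-segments) at least one side must contain a nested pair, since otherwise $\b{b'}a'$ could not be a single contractible circle. From a nested pair one extracts adjacent $\gamma_1 < \gamma_2$ with $\gamma_2$ \emph{outermost} among the segments; this outermost condition is exactly what places the joining arc inside $D$ (on the side of $\gamma_2$ away from $\times$), so both circles after surgery are contractible. Your ``innermost arc of the nesting structure'' and ``$k \ge 3$ means at least two turns on each side'' do not isolate this structure. (A minor point: ``more than two segments'' means $2k>2$, i.e.\ $k\ge 2$, not $k\ge 3$.)
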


\begin{proof}
    Consider all contractible circles in $\b{b}a$ which are comprised of more than two segments, and pick an innermost one $C$ among them. Let $a'$ (resp. $b'$) denote the flat annular tangle obtained by deleting all components of $a$ (resp. of $b$) which are not contained in $C$. 

    Note that $a'$ and $b'$ each have at least two components. Moreover, at least one of $a'$ or $b'$ must contain a pair of nested components, otherwise $\b{b'}a'=C$ cannot consist of a single contractible circle. Without loss of generality, assume $a'$ contains a pair of nested components. We may then find adjacent components $\gamma_1, \gamma_2$ of $a'$ with $\gamma_1< \gamma_2$ and with $\gamma_2$ outermost. Let $r$ be an arc in $a'$ joining $\gamma_1$ to $\gamma_2$. Note that $C$ bounds a disk $D$ in $\A$, and that $\gamma_2$ is outermost implies that $r \subset D$; see Figure \ref{fig:reducing arc exists schematic 1}. It follows that surgery along $r$ splits $C$ into two trivial circles. 
    
    Finally, we show that $\gamma_1$ and $\gamma_2$ are adjacent in $a$, which completes the proof. Suppose that $\gamma_1 < \gamma < \gamma_2$ for some component $\gamma \subset a$. Then $D$ intersects $\gamma$ (see Figure \ref{fig:reducing arc exists schematic 2} for a schematic), so  the circle $C'\subset \b{b}a$ containing $\gamma$ is contractible. 
    Moreover, $C'$ must be nested inside $C$ and cannot consist of only two segments, which contradicts the innermost assumption on $C$. 
\end{proof}

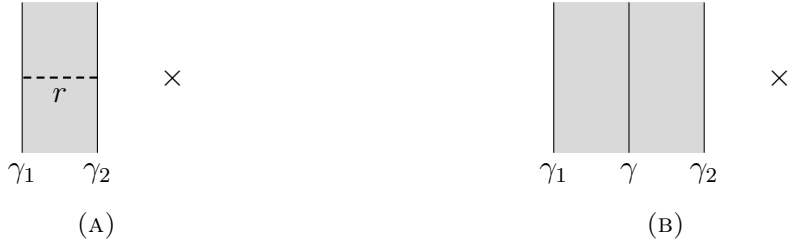
\begin{figure}
\centering 
\subcaptionbox{\label{fig:reducing arc exists schematic 1}}[.45\linewidth]
{
    \begin{tikzpicture}
        \node at (0,0) {$\times$};

\fill[gray,opacity=0.3] (-1,-1) -- (-2,-1) -- (-2,1) -- (-1,1) -- cycle;
        \draw (-1,-1) -- (-1,1);

        \draw (-2,-1) -- (-2,1); 

\draw[thick, densely dashed] (-1,0) -- (-2,0);

        \node[below] at (-1,-1) {$\gamma_2$};
        \node[below] at (-2,-1) {$\gamma_1$};
        \node[below] at (-1.5,0) {$r$};
        
    \end{tikzpicture}
}
\subcaptionbox{\label{fig:reducing arc exists schematic 2}}[.45\linewidth]
{
    \begin{tikzpicture}
        \node at (0,0) {$\times$};
\fill[gray,opacity=0.3] (-1,-1) -- (-3,-1) -- (-3,1) -- (-1,1) -- cycle;
        \draw (-1,-1) -- (-1,1);

        \draw (-2,-1) -- (-2,1); 

        \draw (-3,-1) -- (-3,1);

        \node[below] at (-1,-1) {$\gamma_2$};
        \node[below] at (-2,-1) {$\gamma$}; 
        \node[below] at (-3,-1) {$\gamma_1$};
    \end{tikzpicture}
}
\caption{Schematics for the proof of Lemma \ref{lem:reducing arcs exist}. The shaded regions depict portions of the disk $D$. }\label{fig:reducing arcs exist}
\end{figure}

\begin{lemma} 
\label{lem:reducing arcs reduce}
Suppose $(r,\gamma_1, \gamma_2, C)$ is a reducing system in $\b{b}a$, and let $z\in \brak{\b{b}a}\{n\}$ be a standard generator in which the cup on $C$ is undotted. Then $z = wy$ for some standard generators $w,y\in H^n_\A$ such that at least one of $w$ or $y$ is in degree $1$. 
\end{lemma}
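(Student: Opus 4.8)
The plan is to peel off the distinguished degree-one generator attached to the reducing arc $r$, writing $z = w\cdot y_{a,r}$ for a suitable standard generator $w$ supported on a closed web in which the circle $C$ has been split. Assume first that $r$ is an arc in $a$; the case $r\subset b$ is handled the same way (or reduced to it via the isomorphism $(H^n_\A)^{\mathsf{op}}\cong H^n_\A$ from the list of symmetries, which interchanges the roles of $a$ and $b$). Put $a' := s_r(a)\in B^n$. Since $(r,\gamma_1,\gamma_2,C)$ is a reducing system, surgery along $r$ replaces $C$ by two contractible circles $C_1,C_2$ and affects no other circle of $\b b a$; thus the circles of $\b b a'$ are exactly $C_1$, $C_2$, together with the circles of $\b b a$ other than $C$. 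As recorded in the discussion before the lemma, $\b{a'}a$ consists of $n-1$ contractible circles, and $\brak{\b{a'}a}\{n\}$ contains the degree-one standard generator $y := y_{a,r}$ given by an undotted cup on each of these $n-1$ circles.

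Next I would take $w\in\brak{\b b a'}\{n\}\subseteq H^n_\A$ to be the standard generator that coincides with $z$ on every circle shared by $\b b a'$ and $\b b a$ --- equivalently, on every circle other than $C_1$ and $C_2$ --- and carries an undotted cup on each of $C_1$ and $C_2$. This $w$ is well defined, since surgery along $r$ alters neither a non-contractible circle nor the dots away from $C$. It has the same dot count as $z$ but one more contractible circle, so $\qdeg(w) = \qdeg(z) - 1$; here $\qdeg(z)\geq 1$, because $C$ has more than two segments and hence $a\neq b$, forcing $\b b a$ to have strictly fewer than $n$ circles. Thus the bidegrees are consistent with $z = w\cdot y_{a,r}$, and $y_{a,r}$ is a degree-one standard generator, exactly as the statement demands.

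It then remains to verify $w\cdot y_{a,r} = z$. I would argue that multiplication by $y_{a,r}$, viewed as a map $\brak{\b b a'}\{n\}\to\brak{\b b a}\{n\}$, is the cobordism that merges $C_1$ and $C_2$ into $C$ and is a product cobordism on every other circle. The multiplication cobordism $\b b a'\b{a'}a\to\b b a$ resolves the middle $a'\b{a'}$ by $n$ elementary saddles; after substituting the all-undotted-cups generator $y_{a,r}$ on the second factor, the $n-2$ saddles that meet a two-segment circle of $\b{a'}a$ form cancelling pairs with the corresponding undotted cups (a cup followed by a merging saddle is an isotopy), while the remaining two saddles merge $C_1$, $C_2$, and the four-segment circle of $\b{a'}a$ into $C$. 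Since the Frobenius multiplication of $A_\alpha$ sends $1\otimes 1\mapsto 1$ and all three of these circles are undotted, this merge carries $w$ to the standard generator of $\brak{\b b a}\{n\}$ that is undotted on $C$ and agrees with $w$, hence with $z$, on all other circles --- that is, to $z$.

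The step I expect to be the main obstacle is this last identification: making the multiplication cobordism $\b b a'\b{a'}a\to\b b a$ concrete enough to see that substituting $y_{a,r}$ produces precisely the single surgery along $r$ (equivalently, that $y_{a,r}$ acts as the merge map associated to the reducing arc), with no other circle touched and no additional terms appearing. This amounts to a careful bookkeeping of the $n$ saddles of the multiplication cobordism against the $n-1$ cups of $y_{a,r}$, best carried out with a picture; once that identification is in hand, the Frobenius computation $1\otimes 1\mapsto 1$ concludes the argument at once.
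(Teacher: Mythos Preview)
Your proposal is correct and follows essentially the same approach as the paper: set $c=s_r(a)$ (your $a'$), take $y=y_{a,r}$ and $w$ the standard generator on $\b{b}c$ agreeing with $z$ away from $C_1,C_2$ and undotted there, and conclude $z=wy$. The paper's proof is terser and simply asserts $z=wy$ without justification; your additional bookkeeping of the $n$ saddles against the $n-1$ cups of $y_{a,r}$, and the Frobenius computation $1\otimes 1\mapsto 1$, is exactly the verification a careful reader would want.
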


\begin{proof}
 Without loss of generality, suppose $\gamma_1, \gamma_2$ lie in $a$, and let $c= s_r(a)$. Note that $\b{b}a$ is identical to $\b{b}c$ except that $C$ has been split into two contractible circles $C_1, C_2$. Let $w \in \brak{\b{b}c}\{n\}$ be the standard generator which agrees with $z$ on all the common circles and is an undotted cup on $C_1$ and $C_2$. Let $y = y_{a,r} \in \brak{\b{c}a}\{n\}$ be the unique degree $1$ standard generator. Then $z=wy$. 
\end{proof}

\begin{theorem}
\label{thm:generated by degree 1}
The $R_{\alpha}$-algebra $H^n_\A$ is generated by degree-zero and degree-one elements. That is, $H^n_\A = \Hdegreeone^n_\A$. 
\end{theorem}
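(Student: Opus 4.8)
The overall strategy is an induction that, given a standard basis element $x$ of a summand $\brak{\b{b}a}\{n\}$ for $a,b\in B^n$, factors it as $x=x'y$ with $y$ of degree one and $x'$ a standard basis element of strictly smaller complexity, the induction terminating at the summands $\brak{\b{a}a}\{n\}$ handled by Lemma~\ref{lem:aa is generated by degree 1}. First I would apply Corollary~\ref{cor:adding dots} to reduce to the case in which every contractible circle of $\b{b}a$ carries an undotted cup in $x$, since the dots can be reinstated afterwards. Writing $k$ for the number of circles of $\b{b}a$ (so $k\le n$, as $\b{b}a$ has $2n$ segments and each circle uses at least two of them), I would then induct on $\mu(\b{b}a):=(2n-2k)+w(x)$, where $w(x)$ is the number of non-contractible circles. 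When $\mu=0$ we have $k=n$ and $w(x)=0$, so every circle of $\b{b}a$ is a two-segment contractible circle; one checks this forces $a$ and $b$ to realize the same crossingless matching with the puncture in the same region, i.e.\ $a=b$, and then $x\in\brak{\b{a}a}\{n\}\subseteq\Hdegreeone^n_\A$ by Lemma~\ref{lem:aa is generated by degree 1}.

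For $\mu>0$ there are two cases. If some circle $C$ of $\b{b}a$ has more than two segments: when $C$ is contractible, Lemma~\ref{lem:reducing arcs exist} supplies a reducing arc and Lemma~\ref{lem:reducing arcs reduce} gives $x=x'y$ with $y$ of degree one and $x'$ a standard basis element (again undotted on contractible circles) of a summand with strictly smaller $\mu$, since the surgery raises $k$ by one and leaves $w$ fixed; when $C$ is non-contractible, I would first establish the non-contractible analogues of Lemmas~\ref{lem:reducing arcs exist} and~\ref{lem:reducing arcs reduce} — pick two segments of $C$ lying on the same side that are adjacent along $C$, surger along a short arc parallel to the segment between them to split off a contractible two-segment circle and shorten $C$ by two, and factor $x=x'y$ via the elementary saddle maps exactly as in the contractible case — so that again $k$ increases by one and $\mu$ drops. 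In either situation $x'\in\Hdegreeone^n_\A$ by induction and $y\in\Hdegreeone^n_\A$ by the definition of $\Hdegreeone^n_\A$, hence $x=x'y\in\Hdegreeone^n_\A$. If instead every circle of $\b{b}a$ has exactly two segments, then $2n-2k=0$ and $\mu=w(x)>0$, so $a$ and $b$ realize the same matching but have punctures in distinct regions, with $w(x)$ equal to their distance in the adjacency tree of the matching. The arc $\gamma_*$ of $a$ bordering its puncture region along the path toward $b$'s puncture region is outermost, so the flip $a_1:=f_{\gamma_*}(a)$ is defined, has the same matching, and satisfies $\mu(\b{b}{a_1})=\mu-1$; taking $i$ to be the label of the non-contractible circle of $\b{b}a$ through $\gamma_*$ and $x_1\in\brak{\b{b}{a_1}}\{n\}$ the standard basis element agreeing with $x$ on every other circle and carrying an undotted cup on the now-contractible circle through the moved arc, the type-B saddle relation $1_r\cdot y^i_{r,s}=y^i_{r,s}$ of Example~\ref{ex:H for two points} yields $x=x_1\cdot y^i_{a,\gamma_*}$, and induction finishes this case.

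The step I expect to be the main obstacle is the non-contractible subcase of the reducing step, which the lemmas already proved do not cover: one must show that a non-contractible circle with at least four segments always admits a reducing arc of the stated type — genuinely shortening the circle and peeling off a contractible bigon, rather than producing two non-contractible circles or failing to decrease the segment count — and that the resulting factorization $x=x'y$ holds on the nose, with no spurious $\alpha_i$-corrections. Verifying the latter amounts to recomputing the relevant elementary saddle maps from~\cite{AK}, in the spirit of the proof of Lemma~\ref{lem:reducing arcs reduce}. By contrast the flip argument in the remaining case is routine, modulo the easy isotopy fact that sliding an arc across the puncture moves the puncture region exactly one step along the adjacency tree.
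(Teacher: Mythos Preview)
Your overall architecture is sound and close in spirit to the paper's, but the paper avoids precisely the obstacle you flag. Rather than proving a non-contractible analogue of Lemmas~\ref{lem:reducing arcs exist} and~\ref{lem:reducing arcs reduce}, the paper inducts directly on $\qdeg(z)$ (for undotted $z$, this equals $n-k_0$ with $k_0$ the number of contractible circles) and, once a non-contractible circle is present, splits into two cases: (A) some \emph{outermost} component $\gamma$ of $a$ or $b$ lies on a non-contractible circle $C$; then the flip $c=f_\gamma(a)$ turns $C$ into a contractible circle in $\b{b}c$, and $z = w\cdot y^i_{a,\gamma}$ with $\qdeg(w)=\qdeg(z)-1$; (B) every outermost component of both $a$ and $b$ lies on a contractible circle; then picking any $\gamma_1$ on a non-contractible circle and a maximal $\gamma_2>\gamma_1$, the contractible circle through $\gamma_2$ must have more than two segments (else it would bound a disk containing $\gamma_1$ but not the puncture, forcing the non-contractible circle through $\gamma_1$ to cross it), and the existing contractible reducing lemmas apply. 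Thus no new lemma about surgering non-contractible circles is needed.

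Your proposed non-contractible reducing step is not wrong in principle, but it carries real work you have not done. The arc ``parallel to the $b$-segment between $\gamma_1$ and $\gamma_2$'' lives in $a$'s half of the annulus, where other components of $a$ may obstruct it, so $\gamma_1,\gamma_2$ need not be adjacent in $a$; and even when an arc exists, you must argue that the peeled-off two-segment circle is contractible rather than the other way around, which depends on the position of the puncture relative to the chosen arc. These issues can likely be handled by an innermost argument and careful case analysis, but the paper's observation that a flip already converts a non-contractible circle to a contractible one (whenever an outermost segment lies on it) makes all of this unnecessary. Your all-two-segment case, by contrast, is essentially a special instance of the paper's case~(A) and is fine.
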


\begin{proof}
Let $a,b\in B^n$ and let $z \in \brak{\b{b}a}\{n\}$ be a standard generator. By Corollary \ref{cor:adding dots} we may assume every contractible circle in $z$ is undotted. We show that $z\in \Hdegreeone^n_\A$ by induction on $\qdeg(z)$. If $\qdeg(z) \leq 1$ then there is nothing to show. Suppose then that $\qdeg(z)>1$.  

If $\b{b}a$ consists entirely of contractible circles, then necessarily $\b{b}a$ contains a circle comprised of more than two segments. Lemma \ref{lem:reducing arcs exist}, Lemma \ref{lem:reducing arcs reduce}, and the inductive hypothesis imply that $z\in \Hdegreeone^n_\A$. We may then assume that $\b{b}a$ contains at least one non-contractible circle.

We consider two cases. First, suppose there is a  component $\gamma$ of either $a$ or $b$ which is both outermost and contained in a non-contractible circle $C$ in $\b{b}a$. Without loss of generality, assume $\gamma\subset a$. Let $c=f_\gamma(a)$ and let $y^i_{a,\gamma} \in \brak{\b{c}a}\{n\}$ denote the corresponding degree-one generator, where $i\in \{1,2\}$ is the label on $C$ in $z$. Let $\gamma'\subset c$ be the component corresponding to flipping $\gamma$. Note that $\b{b}a$ and $\b{b}c$ are identical except the circle $C$ in $\b{b}a$ has transformed into a contractible circle $C'$ in $\b{b}c$. Let $w\in \brak{\b{b}c}\{n\}$ denote the standard generator which is an undotted cup on $C'$ and agrees with $z$ on all the common circles between $\b{b}a$ and $\b{b}c$. Then $\qdeg(w) = \qdeg(z)-1$, so $w\in \Hdegreeone^n_\A$ by inductive hypothesis. Moreover, $z=w\cdot y^i_{a,\gamma}$, which completes the proof in this case. 

Finally we consider the case where every outermost component in both $a$ and $b$ is contained in a contractible circle in $\b{b}a$. Since $\b{b}a$ contains a non-contractible circle, there exist components $\gamma_1, \gamma_2$ of $a$ such that $\gamma_1 < \gamma_2$, $\gamma_1$ lies on a non-contractible circle, and $\gamma_2$ lies on a contractible circle. Then the circle in $\b{b}a$ containing $\gamma_2$ must be comprised of more than two segments. Applying Lemma \ref{lem:reducing arcs exist}, Lemma \ref{lem:reducing arcs reduce}, and the inductive hypothesis again yields $z\in \Hdegreeone^n_\A$. 
\end{proof}

\subsection{Some relations} 

Below we list some relations, all quadratic, on the generating elements of $H^n_\A$. 
It is a natural question whether these generate all relations and whether $H^n_\A$ is a Koszul graded $R_{\alpha}$-algebra. 

We will use the following notation. If $b\in B^n$ is obtained from $a\in B^n$ by a flip, then we write $a\xrightarrow{i} b$ to denote the standard generator of $\brak{\b{b}a}\{n\}$ that is degree $1$ whose unique non-contractible circle is labeled $i\in \{1,2\}$ (see Definition \ref{def:flip}). 
Similarly, if $b$ is obtained from $a$ by a swap, then we write $a\to b$ to mean the unique degree $1$ standard generator of $\brak{\b{b}{a}}\{n\}$ (see Definition \ref{def:swap}). 
We consider length $2$ paths $a\to b \to c$ (with each arrow possibly decorated by an element of $\{1,2\}$) where $b$ (resp. $c$) is obtained from $a$ (resp. $b$) by a flip or a swap, and such a path corresponds to the product $(b\to c) \cdot (a\to b)$.

\begin{itemize}

\item There are far-commutativity relations of two types. First, suppose $r_1$ and $r_2$ are two disjoint surgery arcs in $a$ with endpoints on four distinct components, and set $b_1 = s_{r_1}(a)$, $b_{1,2}=s_{r_2}(b_1)$,  $b_2= s_{r_2}(a)$, and $b_{2,1} = s_{r_1}(b_2)$. 
Then $a\to b_1 \to b_{1,2} = a \to b_2 \to b_{2,1}$. Second, suppose $r$ is a surgery arc in $a$ and $\gamma\subset a$ is a component disjoint from  $r$ and that can be joined to the puncture via an arc disjoint from $r$ (in particular, $\gamma$ is outermost), and set $b= s_{r}(a)$, $c= f_{\gamma}(b)$, $b' = f_\gamma(a)$, $c' = s_{r}(b') = c$. Then $a\to b \xrightarrow{i} c = a\xrightarrow{i} b'\to c'$ for $i=1,2$. 

\item Consider three components $\gamma_1, \gamma_2, \gamma_3$ of $a\in B^n$ which are all pairwise adjacent. Let $\{i,j,k\} = \{1,2,3\}$. 
Performing surgery along an arc joining $\gamma_i$ to $\gamma_j$ results in two new arcs $\gamma_i'$ and $\gamma_j'$, exactly of which, say $\gamma_j'$, is adjacent to $\gamma_k$. 
Performing surgery along an arc joining $\gamma_j'$ to $\gamma_k$ results in a diagram $b\in B^n$; see below
\begin{equation*}
    \begin{aligned}
        \includestandalone{three_arcs}
    \end{aligned}
\end{equation*}
We can instead first perform surgery along an arc joining $\gamma_i$ and $\gamma_k$, or first perform surgery along an arc joining $\gamma_j$ and $\gamma_k$. All three paths are equal. 

\item Let $\gamma\subset a$ be outermost, set $b=f_\gamma(a)$, and  set $c = f_{\gamma'}(b)$ where $\gamma'\subset b$ is the outermost component corresponding to moving $\gamma$. 
Then $a\xrightarrow{i} b \xrightarrow{j} c = 0$ if $i\neq j$. Note, this is a special case of \eqref{eq:product of essential degree 1 generators}.

\item The four paths below are equal. These modifications are local, applied to adjacent outermost components. 
\begin{center}
    \begin{tikzpicture}[scale=.6]
\node at (0,0) {$\mathsmaller{\times}$};

\draw (-1,1) .. controls (-.25,0) .. (-1,-1);
\draw (1,1) .. controls (.25,0) .. (1,-1);

\node[below] at (0,-1.25) {$a$}; 
\draw[->] (1.5,0) -- (2.5,0);


\begin{scope}[shift={(4,0)}]

\node at (0,0) {$\mathsmaller{\times}$};

\draw (-1,1) .. controls (0,.5) .. (1,1);
\draw (-1,-1) .. controls (0,.75) .. (1,-1);

\draw[->] (1.5,0) -- (2.5,0)  node[above,midway] {$i$};
\end{scope}


\begin{scope}[shift={(8,0)}]

\node at (0,0) {$\mathsmaller{\times}$};

\draw (-1,1) .. controls (0,.25) .. (1,1);
\draw (-1,-1) .. controls (0,-.25) .. (1,-1);

\node[below] at (0,-1.25) {$b$}; 
\end{scope}

\end{tikzpicture}
\hskip4em
  \begin{tikzpicture}[scale=.6]
\node at (0,0) {$\mathsmaller{\times}$};

\draw (-1,1) .. controls (-.25,0) .. (-1,-1);
\draw (1,1) .. controls (.25,0) .. (1,-1);

\node[below] at (0,-1.25) {$a$}; 
\draw[->] (1.5,0) -- (2.5,0);


\begin{scope}[shift={(4,0)}]

\node at (0,0) {$\mathsmaller{\times}$};

\draw (-1,1) .. controls (0,-.75) .. (1,1);
\draw (-1,-1) .. controls (0,-.5) .. (1,-1);

\draw[->] (1.5,0) -- (2.5,0)  node[above,midway] {$i$};
\end{scope}


\begin{scope}[shift={(8,0)}]

\node at (0,0) {$\mathsmaller{\times}$};

\draw (-1,1) .. controls (0,.25) .. (1,1);
\draw (-1,-1) .. controls (0,-.25) .. (1,-1);

\node[below] at (0,-1.25) {$b$}; 
\end{scope}

\end{tikzpicture}

\vskip1em

    \begin{tikzpicture}[scale=.6]
\node at (0,0) {$\mathsmaller{\times}$};

\draw (-1,1) .. controls (-.25,0) .. (-1,-1);
\draw (1,1) .. controls (.25,0) .. (1,-1);

\node[below] at (0,-1.25) {$a$}; 
\draw[->] (1.5,0) -- (2.5,0)  node[above,midway] {$i$};


\begin{scope}[shift={(4,0)}]

\node at (0,0) {$\mathsmaller{\times}$};

\draw (-1,1) .. controls (.75,0) .. (-1,-1);
\draw (1,1) .. controls (.5,0) .. (1,-1);

\draw[->] (1.5,0) -- (2.5,0); 
\end{scope}


\begin{scope}[shift={(8,0)}]

\node at (0,0) {$\mathsmaller{\times}$};

\draw (-1,1) .. controls (0,.25) .. (1,1);
\draw (-1,-1) .. controls (0,-.25) .. (1,-1);

\node[below] at (0,-1.25) {$b$}; 
\end{scope}

\end{tikzpicture}
\hskip4em
  \begin{tikzpicture}[scale=.6]
\node at (0,0) {$\mathsmaller{\times}$};

\draw (-1,1) .. controls (-.25,0) .. (-1,-1);
\draw (1,1) .. controls (.25,0) .. (1,-1);

\node[below] at (0,-1.25) {$a$}; 
\draw[->] (1.5,0) -- (2.5,0) node[above,midway] {$i$};


\begin{scope}[shift={(4,0)}]

\node at (0,0) {$\mathsmaller{\times}$};

\draw (-1,1) .. controls (-.5,0) .. (-1,-1);
\draw (1,1) .. controls (-.75,0) .. (1,-1);

\draw[->] (1.5,0) -- (2.5,0); 
\end{scope}


\begin{scope}[shift={(8,0)}]

\node at (0,0) {$\mathsmaller{\times}$};

\draw (-1,1) .. controls (0,.25) .. (1,1);
\draw (-1,-1) .. controls (0,-.25) .. (1,-1);

\node[below] at (0,-1.25) {$b$}; 
\end{scope}

\end{tikzpicture}
\end{center}

\item The sum of the first two below equals the third (the third one can have the surgery arc be on either side of $\times$). This can be seen by using the relations \cite[Equations (18) and (19)]{AK}. 
\begin{center}
    \begin{tikzpicture}[scale=.6]
\node at (0,0) {$\mathsmaller{\times}$};

\draw (-1,1) .. controls (-.25,0) .. (-1,-1);
\draw (1,1) .. controls (.25,0) .. (1,-1);

\node[below] at (0,-1.25) {$a$}; 

\draw[->] (1.5,0) -- (2.5,0) node[above,midway] {$1$};

\begin{scope}[shift={(4,0)}]

\node at (0,0) {$\mathsmaller{\times}$};

\draw (-1,1) .. controls (.75,0) .. (-1,-1);
\draw (1,1) .. controls (.5,0) .. (1,-1);

\draw[->] (1.5,0) -- (2.5,0) node[above,midway] {$1$};
\end{scope}


\begin{scope}[shift={(8,0)}]

\node at (0,0) {$\mathsmaller{\times}$};

\draw (-1,1) .. controls (-.25,0) .. (-1,-1);
\draw (1,1) .. controls (.25,0) .. (1,-1);
\node[below] at (0,-1.25) {$a$}; 
\end{scope}

\end{tikzpicture} 
\hskip4em
\begin{tikzpicture}[scale=.6]
\node at (0,0) {$\mathsmaller{\times}$};

\draw (-1,1) .. controls (-.25,0) .. (-1,-1);
\draw (1,1) .. controls (.25,0) .. (1,-1);

\node[below] at (0,-1.25) {$a$}; 

\draw[->] (1.5,0) -- (2.5,0) node[above,midway] {$2$};


\begin{scope}[shift={(4,0)}]

\node at (0,0) {$\mathsmaller{\times}$};

\draw (-1,1) .. controls (-.5,0) .. (-1,-1);
\draw (1,1) .. controls (-.75,0) .. (1,-1);

\draw[->] (1.5,0) -- (2.5,0) node[above,midway] {$2$};
\end{scope}


\begin{scope}[shift={(8,0)}]

\node at (0,0) {$\mathsmaller{\times}$};

\draw (-1,1) .. controls (-.25,0) .. (-1,-1);
\draw (1,1) .. controls (.25,0) .. (1,-1);
\node[below] at (0,-1.25) {$a$}; 
\end{scope}
\end{tikzpicture}  
\vskip1em
\begin{tikzpicture}[scale=.6]
\node at (0,0) {$\mathsmaller{\times}$};

\draw (-1,1) .. controls (-.25,0) .. (-1,-1);
\draw (1,1) .. controls (.25,0) .. (1,-1);

\node[below] at (0,-1.25) {$a$}; 

\draw[->] (1.5,0) -- (2.5,0);


\begin{scope}[shift={(4,0)}]

\node at (0,0) {$\mathsmaller{\times}$};

\draw (-1,1) .. controls (0,.5) .. (1,1);
\draw (-1,-1) .. controls (0,.75) .. (1,-1);

\draw[->] (1.5,0) -- (2.5,0);
\end{scope}


\begin{scope}[shift={(8,0)}]

\node at (0,0) {$\mathsmaller{\times}$};

\draw (-1,1) .. controls (-.25,0) .. (-1,-1);
\draw (1,1) .. controls (.25,0) .. (1,-1);

\node[below] at (0,-1.25) {$a$}; 
\end{scope}

\end{tikzpicture}
\end{center}

\item The sum of the first two below equals the third. This follows from the relation \cite[Equation (17)]{AK}. 

\begin{center}
 \begin{tikzpicture}[scale=.6]
\node at (0,0) {$\mathsmaller{\times}$};

\draw (-1,1) .. controls (-.5,0) .. (-1,-1);
\draw (1,1) .. controls (-.75,0) .. (1,-1);

\draw[->] (1.5,0) -- (2.5,0) node[above,midway] {$1$};


\begin{scope}[shift={(4,0)}]

\node at (0,0) {$\mathsmaller{\times}$};

\draw (-1,1) .. controls (-.25,0) .. (-1,-1);
\draw (1,1) .. controls (.25,0) .. (1,-1);

\draw[->] (1.5,0) -- (2.5,0) node[above,midway] {$1$};
\end{scope}


\begin{scope}[shift={(8,0)}]

\node at (0,0) {$\mathsmaller{\times}$};

\draw (-1,1) .. controls (.75,0) .. (-1,-1);
\draw (1,1) .. controls (.5,0) .. (1,-1);

\end{scope}
\end{tikzpicture}
\hskip4em
 \begin{tikzpicture}[scale=.6]
\node at (0,0) {$\mathsmaller{\times}$};

\draw (-1,1) .. controls (-.5,0) .. (-1,-1);
\draw (1,1) .. controls (-.75,0) .. (1,-1);

\draw[->] (1.5,0) -- (2.5,0) node[above,midway] {$2$};


\begin{scope}[shift={(4,0)}]

\node at (0,0) {$\mathsmaller{\times}$};

\draw (-1,1) .. controls (-.25,0) .. (-1,-1);
\draw (1,1) .. controls (.25,0) .. (1,-1);

\draw[->] (1.5,0) -- (2.5,0) node[above,midway] {$2$};
\end{scope}


\begin{scope}[shift={(8,0)}]

\node at (0,0) {$\mathsmaller{\times}$};

\draw (-1,1) .. controls (.75,0) .. (-1,-1);
\draw (1,1) .. controls (.5,0) .. (1,-1);
 
\end{scope}
\end{tikzpicture} \vskip1em

 \begin{tikzpicture}[scale=.6]
\node at (0,0) {$\mathsmaller{\times}$};

\draw (-1,1) .. controls (-.5,0) .. (-1,-1);
\draw (1,1) .. controls (-.75,0) .. (1,-1);

\draw[->] (1.5,0) -- (2.5,0);


\begin{scope}[shift={(4,0)}]

\node at (0,0) {$\mathsmaller{\times}$};

\draw (-1,1) .. controls (0,.25) .. (1,1);
\draw (-1,-1) .. controls (0,-.25) .. (1,-1);

\draw[->] (1.5,0) -- (2.5,0);
\end{scope}


\begin{scope}[shift={(8,0)}]

\node at (0,0) {$\mathsmaller{\times}$};

\draw (-1,1) .. controls (.75,0) .. (-1,-1);
\draw (1,1) .. controls (.5,0) .. (1,-1);

\end{scope}
\end{tikzpicture} 

\end{center}
\end{itemize}

\subsection{Annular \texorpdfstring{$SL(2)$}{SL(2)} bimodules} 

For $a\in \Bmatchings^n$, consider the graded left $H^n_\A$-module
\[
    P_a := \bigoplus_{b\in \Bmatchings^n} \brak{\b{b}a}\{n\}.
\]
Viewing $H^n_\A$ a left module over itself, we have the direct sum decomposition $
H^n_\A = \bigoplus_{a \in \Bmatchings^n} P_a$. 
Consequently, each $P_a$ is a projective left $H^n_\A$-module. 
Moreover, $P_a$ is indecomposable since $\End(P_a)_0$, the degree-zero part of its endomorphism ring, is isomorphic to $\Z$ (generated by right multiplication by $1_a$). 
Analogously, we have indecomposable projective graded right $H^n_\A$-modules 
\[
{}_a P = \bigoplus_{b\in \Bmatchings^n} \brak{\b{a} b}\{n\}.
\]

Let $T$ be an  annular flat $(n,m)$-tangle. We define the $(H^n_\A, H^m_\A)$-bimodule $\F(T)$ as follows. As a graded $R_\alpha$-module, 
\[
\F(T) = \bigoplus_{a\in B^m, b\in B^n} \brak{\b{b} T a} \{m\}.
\]
The left $H^n_\A$ action comes from a map 
\[
\brak{\b{c} b} \o_{R_\alpha} \brak{\b{b} T a} \to \brak{\b{c} T a}
\]
induced by the minimial cobordism from $\b{c} b
\b{b} T a$ to $\b{c} T a$. 
The right action is defined similarly via a map $\brak{\b{b} T a} \o_{R_\alpha} \brak{\b{a} d} \to \brak{\b{b}T d}$. 
The grading shifts ensure that $\F(T)$ is a graded $(H^n_\A, H^m_\A$)-bimodule.  
Note that if $T=\id_n$ is the identity tangle, then $\F(\id_n) \cong H^n_\A$ as graded $(H^n_\A, H^n_\A)$-bimodules. 

For $a\in \Bmatchings^m$, consider the $(n,0)$-tangle $Ta$. 
Removing all circles from $Ta$ results in an element $c\in \Bmatchings^n$. 
As a left $H^n_\A$-module, $\F(Ta)$ is isomorphic to $2^k$ copies of $P_c$, with grading shifts, where $k$ is the number of circles in $Ta$. Since 
\[
    \F(T) = \bigoplus_{a\in \Bmatchings^m} \F(Ta)\{m\},
\]
it follows that $\F(T)$ is a projective left $H^n_\A$-module. 
Similarly, for $b\in \Bmatchings^n$,  $\F(\b{b}T)$ is a direct sum of right $H^m_\A$-modules ${}_d P$, with grading shifts, where $d\in \Bmatchings^m$ is obtained by removing all circles from $\b{b}T$. 
We have 
\[
\F(T) = \bigoplus_{b\in \Bmatchings^n} \F(\b{b}T),
\]
so $\F(T)$ is also a projective right $H^m_\A$-module. 

It is evident that if two annular flat $(n,m)$-tangles $T$ and $T'$ are isotopic, then $\F(T) \cong \F(T')$ as graded $(H^n_\A,H^m_\A)$-bimodules. 

\begin{proposition}
\label{prop:composition of tangles}
    Given an annular flat $(n,m)$-tangle $T_1$ and an annular flat $(k,n)$-tangle $T_2$, there is a canonical isomorphism of graded $(H^k_\A, H^m_\A)$-bimodules 
    \[
\F(T_2) \o_{H^n_\A} \F(T_1) \cong \F(T_2 T_1).
    \]
\end{proposition}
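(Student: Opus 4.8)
The plan is to construct the isomorphism summand-by-summand using the direct sum decompositions of the bimodules, and then to identify the tensor product over $H^n_\A$ with a single state space via the canonical gluing isomorphism for annular state spaces. First I would write out the definitions: as graded $R_\alpha$-modules,
\[
\F(T_2)\o_{H^n_\A}\F(T_1) = \Bigl(\bigoplus_{b\in B^k,\, c\in B^n} \brak{\b{b}\,T_2\, c}\{n\}\Bigr) \o_{H^n_\A} \Bigl(\bigoplus_{c'\in B^n,\, a\in B^m} \brak{\b{c'}\,T_1\, a}\{m\}\Bigr),
\]
while $\F(T_2T_1) = \bigoplus_{b\in B^k,\, a\in B^m} \brak{\b{b}\,T_2T_1\,a}\{m\}$ (the two grading shifts $\{n\}$ and $\{m\}$ combine correctly: stacking $T_2$ on $T_1$ uses $n$ marked points in the middle, and one checks the net shift matches $\{m\}$ as in the planar case). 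The key algebraic input is that $H^n_\A = \bigoplus_{c\in B^n} 1_c H^n_\A = \bigoplus_{c} {}_cP$, with the $1_c$ pairwise orthogonal idempotents summing to $1$; hence tensoring over $H^n_\A$ collapses the double sum over $c,c'$ to a single sum over $c=c'$, because $\F(T_2)1_c \o_{H^n_\A} 1_{c'}\F(T_1)$ vanishes unless $c=c'$ and for $c=c'$ is computed against the "local" algebra $1_c H^n_\A 1_c$.

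Next I would set up the comparison isomorphism on each summand. Fix $b\in B^k$, $a\in B^m$. For each $c\in B^n$ there is a canonical isomorphism of graded $R_\alpha$-modules
\[
\brak{\b{b}\,T_2\, c}\o_{R_\alpha}\brak{\b{c}\,T_1\, a} \;\xrightarrow{\ \cong\ }\; \brak{\b{b}\,T_2\, c\,\b{c}\,T_1\, a},
\]
coming from the fact that the closed $1$-manifold $\b{b}\,T_2\,c\,\b{c}\,T_1\,a$ is the disjoint union of $\b{b}\,T_2\,c$ and $\b{c}\,T_1\,a$ (glued along the middle $2n$ points, but in the ambient annulus these pieces sit in disjoint sub-annuli up to isotopy, so the state space of the union is the tensor product of the state spaces — this is exactly the property used to define multiplication in $H^n_\A$ and the bimodule actions, so I may invoke it). Then the minimal foam cobordism $\b{b}\,T_2\,c\,\b{c}\,T_1\,a \to \b{b}\,T_2T_1\,a$ that caps off the $c\,\b{c}$ turnbacks (a disjoint union of the saddle cobordism of Figure~\ref{fig:multiplication cobordism}, together with product cobordisms on the rest) induces a map $\brak{\b{b}\,T_2\,c\,\b{c}\,T_1\,a}\to\brak{\b{b}\,T_2T_1\,a}$. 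Summing over $c$ and using that these maps are exactly the ones defining the $H^n_\A$-balanced structure, the map descends to the tensor product over $H^n_\A$ and yields a well-defined graded map
\[
\Phi:\F(T_2)\o_{H^n_\A}\F(T_1)\longrightarrow \F(T_2T_1).
\]

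To see $\Phi$ is an isomorphism of bimodules, I would argue as follows. Bimodule-linearity is a far-commutativity / functoriality check: the multiplication maps defining the left $H^k_\A$-action and right $H^m_\A$-action are again induced by minimal cobordisms, and these commute with the capping cobordism because the relevant pieces of the foams are supported in disjoint regions of $\A\times I$ (this is the same "far-commutativity of cobordisms" used to prove associativity of $H^n_\A$). For the isomorphism itself, the cleanest route is projectivity: by the discussion preceding the proposition, $\F(T_1)$ is a projective left $H^n_\A$-module, in fact a direct sum of shifted copies of the indecomposables $P_c$ (namely, for $a\in B^m$, $\F(T_1 a)\cong \bigoplus 2^{k(a)}$ shifted copies of $P_{c(a)}$ where $c(a)\in B^n$ is obtained by deleting circles from $T_1 a$). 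Since $\F(T_2)\o_{H^n_\A} P_c \cong \F(T_2)1_c = \bigoplus_b \brak{\b{b}\,T_2\,c}\{n\}$ canonically, the tensor product is computed explicitly, and one matches it term-by-term with the decomposition of $\F(T_2T_1)$ obtained by deleting circles from $T_2T_1 a$: deleting circles commutes with stacking, and the circles of $T_2 T_1 a$ are (the circles of $T_2 c(a)$) $\sqcup$ (the circles of $T_1 a$) $\sqcup$ (new circles created by the gluing $c(a)\b{c(a)}$-style turnbacks), which is exactly the bookkeeping encoded in $\brak{\b{b}T_2c}\o_{R_\alpha}\brak{\b{c}T_1 a}\to\brak{\b{b}T_2T_1 a}$ being an isomorphism onto the corresponding summand after capping. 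Tracking graded ranks via Theorem~\ref{thm:state spaces SL(2)} then confirms $\Phi$ is a bijection in each degree.

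The main obstacle I anticipate is the careful verification that $\Phi$ is genuinely well-defined on the tensor product over $H^n_\A$ — i.e.\ that the image of $x h \o y$ equals that of $x\o h y$ for $h\in H^n_\A$ — and that it is an isomorphism rather than merely surjective or injective. Both reduce to the statement that "capping off a turnback" is associative and compatible with the local multiplication $1_c H^n_\A 1_c$, which is a foam-cobordism identity: two ways of composing the capping cobordisms differ by an isotopy of foams in $\A\times I$ rel boundary (again far-commutativity), so they induce equal maps on state spaces. The annular subtlety — that a circle can be non-contractible and that gradings $\adeg$ interact with composition with signs — does not actually enter here because we have dropped the $\adeg$-grading (as stated before the definition of $H^n_\A$, state spaces in this section are graded only via $\qdeg$), so the argument is formally identical to the planar case of \cite{KhFunctorValued}, with "closed curves in a disk" replaced by "closed curves in $\P$" and the Frobenius algebra $A_\alpha$ in place of $\Z[X]/(X^2)$. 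I would therefore present the proof as: (1) reduce to summands, (2) exhibit $\Phi$ via capping cobordisms, (3) check bimodule-linearity by far-commutativity, (4) check bijectivity by the explicit projective-module computation plus graded rank count.
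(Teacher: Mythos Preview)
Your approach is essentially the paper's: construct $\Phi$ from the capping cobordisms, use far-commutativity for well-definedness and bimodule linearity, then reduce the isomorphism check summand-by-summand via projectivity. The one place you diverge is the final step. The paper strips circles from \emph{both} sides: with $d\in B^n$ the arcs of $T_1 a$ and $\b{e}$ the arcs of $\b{c}\,T_2$, it reduces to showing $\F(\b{e})\o_{H^n_\A}\F(d)\to\brak{\b{e}d}$ is an isomorphism, and this is literally $1_e H^n_\A \o_{H^n_\A} H^n_\A 1_d \to 1_e H^n_\A 1_d$, i.e.\ the canonical isomorphism $H\o_H H\cong H$ sandwiched by idempotents. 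That one line finishes the proof.

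Your bookkeeping route is fine in spirit but, as written, has a small gap: matching graded ranks does not by itself show that \emph{your map} $\Phi$ is a bijection. You would need either to verify that under the identifications $\F(T_2)\o_{H^n_\A}P_c\cong \F(T_2)1_c$ the map $\Phi$ becomes the identity (which amounts to exactly the paper's idempotent-sandwich observation), or to note separately that $\Phi$ is surjective (take $c=c(a)$ and use the unit of the Frobenius algebra on each contractible circle of $\b{c}c$) and then invoke that a surjection between free $R_\alpha$-modules of equal finite graded rank is an isomorphism. Also, your remark about ``new circles created by the gluing $c(a)\b{c(a)}$-style turnbacks'' is off: in $T_2T_1a$ there is no $\b{c}$, so the circles are exactly (circles of $T_1a$) $\sqcup$ (circles of $T_2c(a)$), nothing more.
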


\begin{proof}
The argument is similar to the proof of \cite[Theorem 1]{KhFunctorValued}. We will omit grading shifts throughout the proof. For $a\in \Bmatchings^m, b\in \Bmatchings^n$, and $c\in \Bmatchings^k$, consider the $R_\alpha$-linear map
\[
    \brak{\b{c}T_2b} \o_{R_\alpha} \brak{\b{b}T_1a} \to \brak{\b{c} T_2 T_1 a}
\]
induced by the minimal cobordism $b \b{b} \to 1_{2n}$. These assemble into  $\F(T_2) \o_{R_\alpha} \F(T_1) \to \F(T_2 T_1)$, which evidently factors through $\F(T_2) \o_{H^n_\A} \F(T_1)$  due to far-commutativity of the cobordisms involved. Thus we obtain
\[
    \psi: \F(T_2) \o_{H^n_\A} \F(T_1) \to \F(T_2 T_1),
\]
which is a map of $(H^k_\A, H^m_\A)$-bimodules, again due to far-commutativity. It is straightforward to verify that $\psi$ is degree-preserving after introducing the relevant grading shifts. To show that $\psi$ is an isomorphism, it suffices to show that its restriction 
\[
    \F(\b{c}T_2) \o_{H^n_\A} \F(T_1 a) \to \brak{\b{c}T_2 T_1 a}
\]
is an isomorphism, for each $a\in \Bmatchings^m,$ $c\in \Bmatchings^k$.
There are unique elements $d,e\in \Bmatchings^n$ such that $d$ is isotopic to $T_1a$ with all circles removed and $\b{e}$ is isotopic to $\b{c}T_2$ with all circles removed. 
We can then reduce to showing that $\F(\b{e})\o_{H^n_\A} \F(d) \to \brak{\b{e}d}$ is an isomorphism, which follows from multiplying the natural isomorphism $H^n_\A \o_{H^n_\A} H^n_\A \cong H^n_\A$ on the left and right, respectively, by the idempotents $1_e$ and $1_d$. 
\end{proof}

\begin{definition}
\label{def:ATan}
    Define the  $2$-category of annular flat tangles $\ATan$ as follows. Objects of $\ATan$ are numbers $n\in \Z_{\geq 0}$. Recall that we have fixed $2n$ points on $\SS^1$ for each $n\geq 0$; denote this set by $\u{2n}$. The set of $1$-morphisms from $m$ to $n$ is $\Bnoisotopy^n_m$, the space of annular flat $(n,m)$-tangles (see Definition \ref{def:various sets of annular tangles}). 
    
    For $T_0, T_1 \in \Bnoisotopy^n_m$, a $2$-morphism from $T_0$ to $T_1$ is a smoothly and properly embedded orientable surface $S \subset \A\times [0,1]$ whose boundary decomposes into the following four pieces:
\begin{itemize}
    \item $\partial S \cap \left( \A \times \{i\}  \right) = T_i$ for $i=0,1$,
    \item $\partial S \cap \left( \SS^1 \times \{0\} \times [0,1]\right) = \u{2m} \times \{0\} \times [0,1]$,
    \item $\partial S \cap \left( \SS^1 \times \{1\} \times [0,1]\right) = \u{2n} \times \{1\} \times [0,1]$.
\end{itemize}
Surface $S$ has $4(n+m)$ corner points. It may carry dots and will be referred to as a \emph{tangle cobordism} from $T_0$ to $T_1$. We consider $2$-morphisms up to ambient isotopy of $\A\times [0,1]$ which fixes $\partial(\A\times[0,1])$ pointwise.     
\end{definition}

Consider also the $2$-category $\gBimod$ whose objects are $\Z$-graded $R_\alpha$-algebras, $1$-morphisms are graded bimodules, and $2$-morphisms are homogeneous homomorphisms of graded bimodules.

\begin{remark}
    It may be more appropriate to call $\ATan$ and $\gBimod$ \emph{weak} $2$-categories, since composition of 1-morphisms in them is associative and unital only up to canonical 2-isomorphisms.
\end{remark}

Given $T_0, T_1\in \Bnoisotopy^n_m$ and a tangle cobordism $S: T_0 \to T_1$, define the map of bimodules 
\begin{equation}
\label{eq:map of bimodules}
    \F(S) : \F(T_0) \to \F(T_1)
\end{equation}
as follows. 
For $a\in \Bmatchings^m, b\in \Bmatchings^n$, there is an annular cobordism ${}_b S_a : \b{b}T_0 a \to \b{b}T_1 a$ which is obtained by gluing together $S$ and the identity cobordisms $\b{b}\times [0,1]$ and $a\times [0,1]$  along their common  boundary intervals. 
Applying the functor \eqref{eq_funct} yields an $R_\alpha$-linear map 
\[
    \brak{ {}_b S_a } : \brak{\b{b}T_0 a} \to \brak{\b{b}T_1 a},
\]
and \eqref{eq:map of bimodules} is the direct sum of these over all $a$ and $b$. Moreover,
\begin{equation}\label{eq_qdeg_S}
    \qdeg(\F(S)) \ = \ -\chi(S) +2d(S) + n+m, \ \ \adeg(\F(S))\ = \ 0,
\end{equation}
where $d(S)$ is the number of dots on $S$.

Our results and observations imply the following. 

\begin{proposition}
\label{prop:2-functor sl(2)}
    The above construction assembles into a $2$-functor 
\begin{equation}\label{eq_2_func}
    \F : \ATan \to \gBimod
\end{equation}
sending an object $n\geq 0$ to the algebra $H^n_\A$, a $1$-morphism $T\in \Bnoisotopy^n_m$ to the bimodule $\F(T)$, and a $2$-morphism $S: T_0 \to T_1$ to the map $\F(S) : \F(T_0) \to \F(T_1)$.
\end{proposition}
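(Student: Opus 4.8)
The plan is to verify the defining axioms of a (weak) $2$-functor for the assignment $\F$, namely that it respects identities and composition of $1$-morphisms (up to the canonical $2$-isomorphisms already recorded in Proposition~\ref{prop:composition of tangles}) and respects both compositions of $2$-morphisms. Most of the structural work has in fact already been done: the formula $\F(\id_n) \cong H^n_\A$ is noted after the definition of $\F(T)$, and the coherent family of isomorphisms $\F(T_2)\o_{H^n_\A}\F(T_1)\cong \F(T_2T_1)$ is exactly Proposition~\ref{prop:composition of tangles}. So the remaining content is about $2$-morphisms, i.e.\ tangle cobordisms, and the compatibilities among them.

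First I would check that $\F(S)$ is well defined on $2$-morphisms: a tangle cobordism $S$ is considered up to ambient isotopy of $\A\times[0,1]$ fixing the boundary pointwise, and since gluing the product cobordisms $\b b\times[0,1]$ and $a\times[0,1]$ is natural, such an isotopy of $S$ induces an isotopy of each ${}_bS_a$; functoriality of $\brak{-}$ on $\ACob$ (equation \eqref{eq_funct}) then shows $\brak{{}_bS_a}$, hence $\F(S)$, depends only on the isotopy class. The degree computation \eqref{eq_qdeg_S} follows from the additivity of $\qdeg$ and $\adeg$ under composition together with the grading formulas \eqref{eq:qdeg sl2}, \eqref{eq:adeg sl2}, and the observation that the glued-on product pieces contribute no anchor points and the Euler-characteristic/dot count of ${}_bS_a$ differs from that of $S$ only by the contribution of the $2n+2m$ product strips (which is accounted for by the shift $n+m$); the annular degree vanishes because ${}_bS_a$ is disjoint from $\line$ away from $S$ itself and one checks the anchor-point contributions cancel, as in the closed case. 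Next, $\F$ respects vertical composition: if $S': T_1\to T_2$ then ${}_b(S'S)_a = {}_bS'_a \circ {}_bS_a$ as anchored cobordisms (gluing is associative and the product strips match up), so functoriality of $\brak{-}$ gives $\F(S'S) = \F(S')\circ\F(S)$, and $\F(\id_{T}) = \id_{\F(T)}$ since ${}_b(\id_T)_a$ is the product cobordism on $\b bTa$.

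The substantive point is horizontal composition: for $S_1:T_0\to T_1$ an $(n,m)$-tangle cobordism and $S_2: T_0'\to T_1'$ a $(k,n)$-tangle cobordism, one must show that under the isomorphisms of Proposition~\ref{prop:composition of tangles} the map $\F(S_2)\o_{H^n_\A}\F(S_1)$ corresponds to $\F(S_2 * S_1)$, where $S_2*S_1$ is the horizontally-composed cobordism from $T_0'T_0$ to $T_1'T_1$. I would argue this summand by summand: fixing $a\in\Bmatchings^m$, $b\in\Bmatchings^n$, $c\in\Bmatchings^k$, the closed anchored cobordism computing the composite is obtained by stacking ${}_bS_1{}_a$ and ${}_cS_2{}_b$ and then gluing in the minimal cobordism $b\b b\to \id_{2n}$; by far-commutativity of cobordisms (the same principle used for associativity of multiplication and in the proof of Proposition~\ref{prop:composition of tangles}) this composite is isotopic rel boundary to the cobordism ${}_c(S_2*S_1)_a$ composed with the same minimal cobordism, and applying $\brak{-}$ gives the claimed commuting square. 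Finally I would note the interchange law $\F(S_1'S_1)*\F(S_2'S_2)=(\F(S_1')*\F(S_2'))\circ(\F(S_1)*\F(S_2))$ follows formally once vertical and horizontal compatibility and the coherence isomorphisms are in place, since it already holds at the level of cobordisms.

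The main obstacle is the horizontal-composition step: one has to be careful that the family of isomorphisms in Proposition~\ref{prop:composition of tangles} is the \emph{canonical} one and that the two ways of building the relevant closed anchored cobordism (stack-then-cap versus cap-then-stack, and sliding the cap past the strips $\b b\times[0,1]$, $a\times[0,1]$) are genuinely isotopic through anchored cobordisms respecting $\line_{[0,1]}$ — the product strips are disjoint from the axis so this is fine, but it must be said. Everything else is a routine unwinding of functoriality of $\brak{-}$ and the additivity of the two gradings.
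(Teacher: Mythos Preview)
Your proposal is correct and matches the paper's approach: the paper gives no explicit proof, merely stating that ``our results and observations imply'' the proposition, and you have spelled out exactly the routine verifications (well-definedness on isotopy classes, preservation of vertical composition via functoriality of $\brak{-}$, and compatibility with horizontal composition via the far-commutativity argument underlying Proposition~\ref{prop:composition of tangles}) that the paper leaves implicit. One minor simplification: since a $2$-morphism $S$ in $\ATan$ lives in $\A\times[0,1]$ and $\A$ already avoids the origin, the entire closed cobordism ${}_bS_a$ is disjoint from $\line$, so $\adeg(\F(S))=0$ holds trivially rather than by any cancellation.
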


\begin{corollary}
    For $T\in \Bnoisotopy^n_m$ the functors of tensor product with bimodules $\mcF(T)$ and $\mcF(\overline{T})$ are biadjoint, up to overall grading shifts.  
\end{corollary}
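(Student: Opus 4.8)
The plan is to deduce biadjunction from the structural facts already assembled: Proposition~\ref{prop:composition of tangles} (tensor product of bimodules realizes composition of flat tangles), the fact that $\F(\id_n)\cong H^n_\A$, the isotopy invariance of $\F(-)$, and the 2-functoriality of $\F$ from Proposition~\ref{prop:2-functor sl(2)}. The key geometric input is that $T$ and $\b{T}$ are related by the obvious ``cup'' and ``cap'' tangle cobordisms: stacking $T$ on top of $\b{T}$ (or $\b{T}$ on top of $T$) and then capping off is isotopic, rel boundary, to the identity tangle. More precisely, I would first record the two elementary annular flat tangle cobordisms: a ``birth-and-merge'' cobordism $\eta_T\colon \id_m \Rightarrow \b{T}T$ in $\A\times[0,1]$ obtained by pushing a collar, and a ``merge-and-death'' cobordism $\varepsilon_T\colon T\b{T}\Rightarrow \id_n$, together with their mirror analogues $\eta'_T\colon \id_n\Rightarrow T\b{T}$ and $\varepsilon'_T\colon \b{T}T\Rightarrow \id_m$. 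These are the standard (co)evaluation cobordisms; each is a disjoint union of product cobordisms and elementary saddles, with the saddles taking place near $\SS^1\times\{1/2\}$.

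Next I would verify the two snake (zig-zag) identities at the level of tangle cobordisms: the composite
\[
T \;\xrightarrow{\ \id_T * \eta_T\ }\; T\b{T}T \;\xrightarrow{\ \varepsilon'_T * \id_T\ }\; T
\]
is isotopic rel boundary in $\A\times[0,1]^2$ to the identity cobordism on $T$, and symmetrically for $\b{T}$. This is the usual ``straightening a zig-zag'' isotopy, performed fiberwise over $\A$; nothing annular-specific obstructs it since all the relevant saddles happen in collar neighborhoods away from the puncture. Applying the 2-functor $\F$ of Proposition~\ref{prop:2-functor sl(2)} and using Proposition~\ref{prop:composition of tangles} to identify $\F(T\b{T}T)\cong \F(T)\o_{H^m_\A}\F(\b{T})\o_{H^n_\A}\F(T)$ (and $\F(\id)\cong H$), the images $\F(\eta_T),\F(\varepsilon_T)$ and the snake identities become exactly the unit/counit and triangle identities exhibiting $\F(T)\o_{H^m_\A}(-)$ as left adjoint to $\F(\b{T})\o_{H^n_\A}(-)$; running the mirror cobordisms gives the adjunction the other way, hence biadjunction. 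The grading shifts enter only through \eqref{eq_qdeg_S}: the (co)evaluation cobordisms $\eta_T,\varepsilon_T$ are not degree-zero in general, so the adjunction isomorphisms $\Hom(\F(T)\o M,N)\cong \Hom(M,\F(\b{T})\o N)$ hold only after an overall shift, which is precisely the ``up to overall grading shifts'' caveat in the statement; I would compute this shift from $\chi$ and the number of endpoints of the (co)evaluation cobordisms.

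Concretely the argument runs: (i) define $\eta_T,\varepsilon_T,\eta'_T,\varepsilon'_T$ and check they are legitimate 2-morphisms in $\ATan$; (ii) prove the four zig-zag isotopies in $\ATan$ (two for each of the two adjunctions); (iii) apply $\F$ and rewrite everything via Proposition~\ref{prop:composition of tangles}, obtaining bimodule maps $H^m_\A\to \F(\b{T})\o_{H^n_\A}\F(T)$ etc.\ satisfying the triangle identities up to the grading shift dictated by \eqref{eq_qdeg_S}; (iv) conclude by the standard fact that a unit/counit pair satisfying the triangle identities yields an adjunction of the tensor functors, and that having such data on both sides yields biadjointness. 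The main obstacle is step (ii): one must be careful that the chain of isotopies straightening the zig-zag can be carried out inside $\A\times[0,1]^2$ fixing the boundary pointwise and respecting the annular structure (i.e.\ not crossing the puncture line in a way that changes isotopy type). This is intuitively clear because the zig-zag and its straightening both live in a collar of $\partial_1\A$, but writing it carefully — and bookkeeping the resulting Euler characteristic/shift — is where the real work lies; everything else is formal once Propositions~\ref{prop:composition of tangles} and~\ref{prop:2-functor sl(2)} are in hand.
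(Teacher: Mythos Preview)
Your proposal is correct and follows essentially the same approach as the paper: construct the canonical (co)evaluation cobordisms between $T\b{T}$ and $\id_n$ (and between $\b{T}T$ and $\id_m$), observe that they satisfy the zig-zag isotopy relations in $\ATan$, and apply the 2-functor $\F$ together with Proposition~\ref{prop:composition of tangles} to obtain the unit/counit data characterizing biadjoint functors. The paper states this in one sentence (citing the characterization in~\cite{KhFunctorValued}), whereas you have spelled out the steps and the role of the grading shift in more detail.
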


\begin{proof}
    This follows from having canonical cobordisms between $T\overline{T}$ and the identity tangle $\id_{n}$ and likewise $\overline{T}T$ and $\id_{m}$ that satisfy isotopy relations matching those in the characterization of biadjoint functors~\cite{KhFunctorValued}. 
\end{proof}

The rotation automorphism of $H^n_\A$ discussed earlier is realized by the bimodule $\mcF(T)$ for the $(n,n)$-tangle $T$ which traces out the rotation of $2n$ points on the circle by $\pi/n$ (for $n\geq 1$). 

Let $\Tw$ denote the positive Dehn twist of $\A$. 
This is an orientation-preserving diffeomorphism of $\A$ which fixes $\partial \A$ pointwise. We view $\Tw$ as acting on the space of annular flat tangles. 

\begin{proposition}
\label{prop:Dehn twist}
    For any $T\in \Bnoisotopy^n_m$, there is an isomorphism of $(H^n_\A, H^m_\A)$-bimodules $\F(\Tw (T)) \cong \F(T)$. 
\end{proposition}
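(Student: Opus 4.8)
The plan is to reduce to the identity tangle via the composition formula and the rotation automorphism of $H^n_\A$. \emph{First}, I would isotope $T$ rel $\partial\A$ so that inside a thin collar $\SS^1\times[1-\delta,1]$ of $\partial_1\A$ it is the union of the $2n$ product arcs $\u{2n}\times[1-\delta,1]$ (if $n=0$, instead arrange that $T$ misses such a collar, which is possible since $T$ has no endpoints on $\partial_1\A$). Since any two noncontractible simple closed curves in $\A$ are isotopic rel $\partial\A$, the positive Dehn twist $\Tw$ is isotopic rel $\partial\A$ to one supported in this collar; applying it leaves $T$ unchanged outside the collar and puts a full twist on the $2n$ product strands. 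Hence $\Tw(T)$ is isotopic, rel $\partial\A$, to the composite $\Tw(\id_n)\cdot T$, where $\Tw(\id_n)\in\Bnoisotopy^n_n$ is the full-twist flat tangle (for $n=0$ this reads $\Tw(T)\cong T$). As $\F$ is an isotopy invariant of flat annular tangles, Proposition~\ref{prop:composition of tangles} gives
\[
\F(\Tw(T))\ \cong\ \F(\Tw(\id_n)\cdot T)\ \cong\ \F(\Tw(\id_n))\o_{H^n_\A}\F(T)
\]
as graded $(H^n_\A,H^m_\A)$-bimodules.

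\emph{Next}, I would note that $\Tw(\id_n)$ is isotopic to $R_n^{\,2n}$, the $2n$-fold self-composition of the $(n,n)$-tangle $R_n$ tracing the rotation of the $2n$ marked points by $\pi/n$: iterating this rotation $2n$ times is a full revolution and turns $\id_n$ into a full twist of $2n$ parallel strands. The bimodule $\F(R_n)$ realizes the rotation automorphism $\psi$ of $H^n_\A$, and $\psi$ has order $2n$ (it is induced by rotating each diagram of $B^n$ by $\pi/n$). Iterating Proposition~\ref{prop:composition of tangles}, $\F(\Tw(\id_n))\cong\F(R_n)^{\o 2n}$ (tensor powers over $H^n_\A$), and since this realizes $\psi^{2n}=\id$ it is isomorphic to $H^n_\A$ as a graded $(H^n_\A,H^n_\A)$-bimodule (grading shifts are immaterial because $\psi$ is degree-preserving). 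Substituting into the previous display,
\[
\F(\Tw(T))\ \cong\ H^n_\A\o_{H^n_\A}\F(T)\ \cong\ \F(T),
\]
as desired.

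The main obstacle is the first step. One cannot realize $\F(\Tw(T))\cong\F(T)$ by a single tangle cobordism in $\A\times[0,1]$ as in Definition~\ref{def:ATan}, because $\Tw$ is not isotopic to $\id_\A$ rel $\partial\A$: any movie of the twist winds once around a side wall and violates the boundary condition. So the work is to check carefully that, after bringing $T$ into product form near $\partial_1\A$, the noncompactly supported ambient twist can genuinely be absorbed into the collar and identified with stacking the full-twist tangle $\Tw(\id_n)$, and to handle the degenerate cases $n=0$ and $m=0$ (where the twist slides off $T$ entirely). A secondary point is to make precise that ``$\F(R_n)$ realizes $\psi$'' means its tensor powers implement the powers of $\psi$, so that the $2n$-th power, realizing $\psi^{2n}=\id$, is the identity bimodule $H^n_\A$.
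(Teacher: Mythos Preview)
Your proof is correct, and the first reduction step (writing $\Tw(T)\simeq\Tw(\id_n)\cdot T$ and invoking Proposition~\ref{prop:composition of tangles}) is exactly what the paper does. The difference is in how you handle $\F(\Tw(\id_n))$.

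The paper argues more directly: for any $a,b\in B^n$, the closed curve system $\b{b}\,\Tw(\id_n)\,a$ lives in the punctured plane $\P$, which has no boundary, so the Dehn twist on a sub-annulus is isotopic to the identity there. Hence $\b{b}\,\Tw(\id_n)\,a$ and $\b{b}a$ have the same components (with matching contractibility), giving a canonical $R_\alpha$-module isomorphism $\brak{\b{b}\,\Tw(\id_n)\,a}\cong\brak{\b{b}a}$; summing over $a,b$ produces a bimodule isomorphism $\F(\Tw(\id_n))\cong H^n_\A$ in one stroke.

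Your route---factoring $\Tw(\id_n)\simeq R_n^{2n}$ and using that $\F(R_n)$ realizes the order-$2n$ rotation automorphism $\psi$---is conceptually appealing but strictly longer. The claim ``$\F(R_n)$ realizes $\psi$'' is asserted in the paper (just before this proposition) without proof; to verify it you would need to show $\brak{\b{b}R_n a}\cong\brak{\b{\rho^{-1}(b)}a}$ compatibly with the bimodule structure, which is the \emph{same} untwisting argument the paper applies, just for a $\tfrac{\pi}{n}$ rotation rather than a full twist. So you have inserted $2n$ intermediate steps where one suffices. Your approach does buy something: it explains the Dehn-twist invariance as a consequence of the finite order of the rotation symmetry, tying two observations together. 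But if the goal is just this proposition, the paper's single untwisting is cleaner and avoids having to make precise what ``realizes $\psi$'' means at the bimodule level.
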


\begin{proof}
    By Proposition \ref{prop:composition of tangles}, we may also reduce to the case where $T= \id_n$, since up to isotopy $\Tw(T)$ can be written as $\Tw(T) = \Tw(\id_n) T$. 

    For any $a,b\in \Bmatchings^n$, there is a bijection between the components of $\b{b}\id_n a$ and $\b{b} \Tw(\id_n) a$ by untwisting. This gives an isomorphism of graded $R_\alpha$-modules $\brak{\b{b}a} \cong \brak{\b{b} \Tw(\id_n) a}$, and assembling these together gives an isomorphism of $R_\alpha$-modules $\F(\id_n)  \cong \F(\Tw(\id_n))$ which is evidently a map of $(H^n_\A, H^n_\A)$-bimodules. 
\end{proof}

Note, the above also holds for the negative Dehn twist $\Tw^{-1}$.

\begin{definition}
    \label{def:annular tangle}
    An \emph{annular $(n,m)$-tangle} is a smoothly and properly embedded compact $1$-manifold $T\subset \A\times [0,1]$ containing exactly $n+m$ interval components, such that $2n$ points of $\partial T$ are the points $\u{2n} \subset \SS^1 \cong \SS^1 \times \{1/2\} \times \{1\}$ and the remaining $2m$ points of $\partial T$ are the points $\u{2m} \subset \SS^1 \cong \SS^1 \times \{1/2\} \times \{0\}$. Taking a generic projection of $T$ onto $\A\times \{1/2\} \cong \A$ yields an annular $(n,m)$-tangle \emph{diagram} $D$.
\end{definition}

We may form the cube of resolutions of an annular tangle diagram $D$ in the usual way, with all smoothings given by annular flat tangles and all edge cobordisms given by tangle cobordisms. Applying the functor \eqref{eq_2_func} to this cube yields a complex $C(D)$ of graded $(H^n_\A, H^m_\A)$-bimodules. 

Two diagrams representing ambiently isotopic annular tangles are related by a sequence of Reidemeister moves supported in disks in the interior of $\A$ \cite{APStangles}. Standard arguments \cite{KhovanovJones,Bar-Natan} imply the following.

\begin{theorem}\label{thm_isotopy_invariance}
    The chain homotopy type of the complex of graded bimodules $C(D)$ depends only on the ambient isotopy class of $T$. 
\end{theorem}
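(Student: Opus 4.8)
\textbf{Proof plan for Theorem \ref{thm_isotopy_invariance}.}
The plan is to follow the now-standard template for proving chain-homotopy invariance of cube-of-resolutions complexes, adapted to the annular setting. First I would invoke the cited fact \cite{APStangles} that any two diagrams of ambiently isotopic annular tangles in $\A \times [0,1]$ are related by a finite sequence of the three Reidemeister moves, each supported inside an embedded disk in the interior of $\A$ (together with planar isotopies of the diagram, which do not change the complex up to the canonical identifications). Because all the Reidemeister moves take place in a disk disjoint from the puncture, the local pictures involved are \emph{identical} to those in the planar (disk) setting treated in \cite{Bar-Natan,KhovanovJones,KhFunctorValued}: the only difference is that the strands entering and leaving the disk are then closed up through the annulus rather than through a disk. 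This is precisely the situation handled by the composition formula in Proposition \ref{prop:composition of tangles}, so the general invariance reduces to a purely local computation.

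Next, for each Reidemeister move I would exhibit the required chain homotopy equivalence. For R1 and R2 one uses Gaussian elimination (delooping / cancellation of acyclic subcomplexes) exactly as in \cite{Bar-Natan}: the relevant simplifications are the cobordism relations recorded in Section \ref{sec:Equivariant annular SL(2) homology} (in particular the neck-cutting / delooping relation coming from \eqref{eq_A_R} and the behaviour of the elementary saddle maps \eqref{eq:type A}--\eqref{eq:type D}), all applied to the \emph{contractible} circles created in the middle of the disk, so the equivariant ground ring $R_\alpha$ and the anchored structure play no role there. For R3, after reducing both sides with R2-type cancellations one checks that the two resulting size-$2$ complexes are isomorphic, again by a local diagrammatic argument identical to the planar case. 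The key formal input making all of this legitimate in the bimodule category is Proposition \ref{prop:2-functor sl(2)}: $\F$ is a $2$-functor, so a chain homotopy equivalence built from tangle cobordisms in the disk is sent to a chain homotopy equivalence of complexes of $(H^n_\A, H^m_\A)$-bimodules, and Proposition \ref{prop:composition of tangles} lets us tensor this local equivalence with the fixed complement of the diagram to obtain the global statement.

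Concretely, the steps in order are: (1) reduce, via \cite{APStangles}, to invariance under a single Reidemeister move supported in a disk $\DD \subset \A$ disjoint from the puncture; (2) using Proposition \ref{prop:composition of tangles}, write $C(D) \cong \F(\text{rest of } D) \o_{H} C(D_{\DD})$ where $D_\DD$ is the local tangle diagram in the disk and $H$ is the appropriate intermediate web algebra, so it suffices to produce a homotopy equivalence $C(D_\DD) \simeq C(D'_\DD)$ of complexes of bimodules for the two local pictures; (3) for each of R1, R2, R3 produce this local equivalence by the Bar-Natan-style argument (delooping contractible circles and Gaussian elimination), citing the cobordism relations from Section \ref{sec:Equivariant annular SL(2) homology}; (4) conclude by functoriality (Proposition \ref{prop:2-functor sl(2)}) and exactness of $\F(\text{rest})\o_H -$ on the relevant projective bimodules, as established in the discussion preceding Proposition \ref{prop:composition of tangles}. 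I would also note that since $\Tw$-invariance (Proposition \ref{prop:Dehn twist}) and rotation-invariance are already in hand, and Reidemeister moves in a disk suffice for \emph{ambient} (not just regular) isotopy in the thickened annulus by \cite{APStangles}, no further global moves need separate treatment.

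The main obstacle I anticipate is essentially bookkeeping rather than conceptual: one must be careful that the local Reidemeister pictures genuinely sit inside a disk avoiding the puncture (so that no non-contractible circles or anchor points are created during the local simplifications), and that the grading shifts $\{k\}$ — both the quantum shift in the definition of $H^n_\A$ and the shifts in the cube of resolutions — line up so that the homotopy equivalences are degree-preserving; the annular grading $\adeg$ also needs to be tracked, but since all edge cobordisms in the cube and all cobordisms used in the local homotopies are disjoint from $\line$ (hence $\adeg = 0$ by \eqref{eq_qdeg_S}), it imposes no new constraint. Verifying that delooping still works — i.e. that a contractible circle appearing in a resolution contributes a direct sum $\{1\}\oplus\{-1\}$ compatibly with the bimodule structure — is immediate from \eqref{eq_A_R} and Theorem \ref{thm:state spaces SL(2)}, so the only real work is the careful local casework, which is identical to \cite{Bar-Natan} and which I would present compactly by reference.
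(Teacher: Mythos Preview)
Your proposal is correct and follows essentially the same approach as the paper: reduce via \cite{APStangles} to Reidemeister moves supported in disks in the interior of $\A$, then invoke the standard local arguments of \cite{KhovanovJones,Bar-Natan}. The paper's own proof is a two-sentence reference to exactly these ingredients, whereas you have spelled out the details (localization via Proposition~\ref{prop:composition of tangles}, functoriality via Proposition~\ref{prop:2-functor sl(2)}, delooping of contractible circles) that the paper leaves implicit.
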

In light of this, we can denote this chain homotopy type by $C(T)$.

\section{Annular \texorpdfstring{$SL(3)$}{SL(3)} web algebras and bimodules}
\label{sec:annular sl3 webs}

\subsection{Annular \texorpdfstring{$SL(3)$}{SL(3)} webs with endpoints on only one boundary component}

\begin{definition}
\label{def:sl3 web}
   An \emph{annular $SL(3)$ web} is a finite oriented graph $w$, which may contain closed oriented loops with no vertices, that is embedded in $\A$ subject to the following conditions. 
    \begin{itemize} 
        \item Every degree-one vertex of $w$ is in $\partial \A$. We call such a point a \emph{boundary point} of $w$. The set of boundary points is denoted $\partial w$. All other points of $w$ lie in the interior of $\A$.
        \item Every other vertex of $w$, called an internal vertex, has degree three and is either a source or a sink, as shown in Figure \ref{fig:trivalent vertex}.
    \end{itemize}
    An \emph{inner face} of $w$ is a connected component of $\A\setminus w$ which is disjoint from $\partial \A$.   
\end{definition}

We will often write \emph{web} in place of annular $SL(3)$ web. The orientation of a web $w$ gives a sign assignment $S:\partial w\to \{+,-\}$ according to Figure \ref{fig:edge orientation at boundary}, where $\partial_i \A := \SS^1\times \{i\}$, for $i=0,1$. 

\begin{figure}
\centering 
\subcaptionbox{The orientations of the edges meeting at an internal vertex.\label{fig:trivalent vertex}}[1\linewidth]
{\includestandalone{images/trivalent_vertex}}\\  \vskip1em
\subcaptionbox{The possible orientations of $w$ near $\partial \A$ and the corresponding sign assignments. \label{fig:edge orientation at boundary}}[1\linewidth]
{\includestandalone{images/orientation_at_boundary}
}
\caption{}\label{fig:orientations of a web}
\end{figure}

\begin{definition}
\label{def:non-elliptic web}
    An inner face of a web $w\subset \A$ is \emph{elliptic} if its boundary is 
    \begin{itemize}
        \item a circle, possibly enclosing the puncture, or
        \item a bigon or a square, not enclosing the puncture. 
    \end{itemize} 
    Say $w$ is \emph{non-elliptic} if it does not contain any elliptic faces. See Figure \ref{fig:reducible regions}.
\end{definition}

\begin{remark}
    A non-elliptic web cannot have any closed components by \cite[Lemma 3.21]{AK}. 
\end{remark}

\begin{figure}
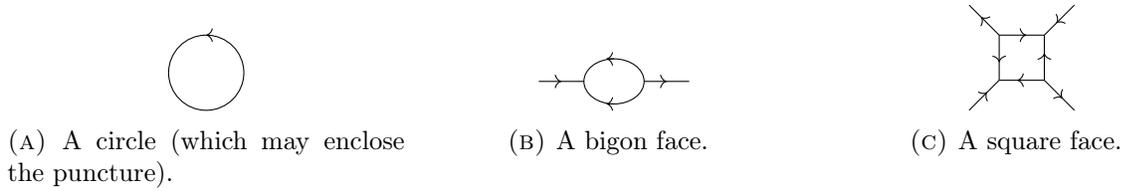

\centering 
\subcaptionbox{A circle (which may enclose the puncture). \label{fig:circle}}[.32\linewidth]
{\includestandalone{images/circle}
}
\subcaptionbox{A bigon face. \label{fig:bigon}}[.32\linewidth]
{\includestandalone{images/bigon}
}
\subcaptionbox{A square face. \label{fig:square}}[.32\linewidth]
{\includestandalone{images/square}}
\caption{ Elliptic faces in a web.}
\label{fig:reducible regions}
\end{figure}

We will first focus on annular $SL(3)$ webs whose boundary lies entirely in $\SS^1\times \{1\}$. In this case, we view $w$ as embedded in $\SS^1\times (0,1]$. It is useful to  model $\SS^1\times (0,1]$ as a punctured disk
\[
    \A_0 := \{ p \in \R^2 \st 0< \vert p \vert \leq 1\} 
\]
with $\times$ denoting the puncture at the origin. We also pick a basepoint $*\in \partial \A_0$, as shown on the left of Figure \ref{fig:annulus-models}.

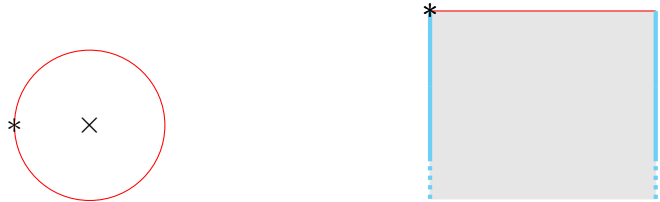
\begin{figure}
    \centering
    \begin{tikzpicture}
        \draw[\boundarycolor] (0,0) circle (1cm);
        \node at (0,0) {$\times$};
        \node at (-1,0) {$*$};
    \end{tikzpicture}
    \hspace{3cm}
    \begin{tikzpicture}[xscale=3]
        \filldraw[black!10!white] (0,0) rectangle (1,-2.5);
        \draw[\boundarycolor] (0,0) -- (1,0);
        \foreach \x in {0,1}{
            \draw[ultra thick, \sidecolor] (\x,-2) -- (\x,-1);
            \draw[ultra thick, \sidecolor] (\x,-1) -- (\x,0);
            \draw[dotted, \sidecolor, ultra thick] (\x, -2.5) -- (\x, -2);
        \node at (0,0) {$*$};
        }
    \end{tikzpicture}
    \caption{Left: The punctured disk model $\A_0$ for the annulus. Right: The infinite cylinder model $\A_0'$ for the annulus.}
    \label{fig:annulus-models}
\end{figure}

There is a homeomorphism from $\A_0$ to the  infinite cylinder
\[
    \A_0' :=  \{ (x,y) \in \R^2 \st 0 \leq x \leq 1,\  y \leq 0\} / \sim, 
    \qquad
    \text{where}
    \qquad
    (0,y) \sim (1,y)
    \quad \text{for all $y$},
\]
depicted on the right of Figure \ref{fig:annulus-models}. 

Notice that in this latter model, there is an implicit `basepoint' $*$ given by $(0,0) \sim (1,0)$ in the boundary $\partial \A_0'$.
We will use this model in our adaptation of the growth algorithm from \cite{KhKup}.
Under a homeomorphism $f: \A_0' \to \A_0$, $\lim_{y \to -\infty} f(x,y) = \times$, when the homeomorphism is extended to one-point compactifications, and moreover $f$ identifies the two basepoints.  For a web $w\subset \A_0$ (equivalently $w\subset \A_0'$), we always assume that $\partial w$ does not contain the basepoint. 

Let us begin by collecting some properties of webs. First, if $w$ is an annular web (potentially with elliptic faces), then every edge in $w$ has distinct regions on each of its two sides. To see this, consider the reflection $\b{w}$ of $w$ through $\SS^1\times \{1/2\}$. Gluing $\b{w}$ to $w$ along their common endpoints results in a closed $SL(3)$ web, which is in particular a bipartite trivalent graph and hence bridgeless.

A \emph{face} of $w$ is a connected component of $\A \setminus w$. We say a face $F$ is inner (resp. outer) if $F\cap \partial \A = \varnothing$ (resp. $F\cap \partial \A \neq \varnothing$).  Let $v_i$ and $v_b$ be the number of inner and boundary vertices of $w$, respectively, and let $e_i$  and $e_b$ be the number of inner and boundary edges of $w$, respectively. Inner and boundary vertices of $w$ are trivalent and univalent, respectively. Note that $v_b = e_b$. We have 
   \begin{equation}
       \label{eq:v and e identity}
       3v_i = 2 e_i + e_b.
   \end{equation}
   
Let $f_i$ be the number of inner faces and let $f_b$ be the number of outer faces.  If $w$ is connected, then every outer face of $w$ has exactly one of its sides on $\partial \A$, so in particular $f_b = v_b$.  An Euler characteristic computation for the $2$-disk $\DD^2$ gives  
   \[
       \chi(\DD^2) = 1 = v_i + v_b - (e_i + e_b + f_b) + f_i + f_b = v_i - e_i + f_i
    \]
which implies 
    \begin{equation}
    \label{eq:euler char result}
        e_i = v_i + f_i - 1.
    \end{equation}
Combining \eqref{eq:euler char result} and \eqref{eq:v and e identity} implies 
    \begin{equation}
    \label{eq:inner vertices}
        v_i = 2f_i + f_b -2.
    \end{equation}
     
   Let $F$ be the set of faces of $w$, and for $f\in F$ let $s(f)$ denote the number of sides of $f$ (including those that lie inside $\partial \A$). We have 
   \begin{equation}
   \label{eq:sum of sides}
        \sum_{f\in F} s(f) = 2(e_i + e_b) + f_b = 2 e_i + 3 f_b.
    \end{equation}
Note that every inner face has an even number of sides.

We will sometimes consider webs in the disk $\DD^2$. There are clear analogous notions of inner and outer vertices, edges, and faces. 
As in the annular case, by non-elliptic we mean that no inner face is a circle, a bigon, or a square. 
The above properties, in particular  equations \eqref{eq:inner vertices} and \eqref{eq:sum of sides} hold  for a connected web $w\subset \DD^2$. 

An outer face which does not contain the puncture and which has $2$, $3$, or $4$ sides will be called a $U$, a $Y$, or an  $H$, in that order. 
See \eqref{eq:U Y and H faces}.
 \begin{equation}
 \label{eq:U Y and H faces}
 \begin{aligned}
        \begin{tikzpicture}
            \draw[\boundarycolor] (0,0) -- (1.5,0);
            \draw (1.25,0) arc (0:-180:.5 and .5);
            \begin{scope}[shift={(4,0)}]
                   \draw[\boundarycolor] (0,0) -- (1.5,0);
            \draw (1.25,0) -- (.75,-.5);
            \draw (.25,0) -- (.75,-.5);
            \draw (.75,-.5) -- (.75,-.75);
            \end{scope}
            \begin{scope}[shift={(8,0)}]
                \draw[\boundarycolor] (0,0) -- (1.5,0);
            \draw (1.25,0) -- (1.25,-.75);
            \draw (.25,0) -- (.25,-.75);
            \draw (.25,-.5) -- (1.25,-.5);
            \end{scope}
        \end{tikzpicture}
       \end{aligned}     
    \end{equation}
The following two results will be useful later. 

\begin{lemma}
\label{lem:at least 3 small regions} 
    Let $w\subset \DD^2$ be a connected non-elliptic web with at least one inner vertex. Then $w$ has at least three outer faces such that each one is either a $Y$ or an $H$. 
\end{lemma}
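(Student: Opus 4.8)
### Proof proposal

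\textbf{Setup and strategy.} The plan is to run a counting argument using the face-vector identities \eqref{eq:inner vertices} and \eqref{eq:sum of sides}, combined with the non-elliptic hypothesis which forbids inner faces with fewer than $6$ sides and forbids outer $U$-faces (a $U$-face has $2$ sides, two of which... wait — a $U$ is an outer face with $2$ sides; since $w\subset\DD^2$ has no puncture, every outer face is a $U$, $Y$, $H$, or has $\geq 5$ sides). Actually the cleanest route is a discharging/averaging estimate: assign to each outer face its side-count $s(f)$, note $s(f)\geq 2$ always, $s(f)=2$ only for $U$-faces, and bound the total $\sum_{f} s(f)$ from below in terms of $f_b$ and the number of $U$, $Y$, $H$ faces, then compare with the exact value $2e_i + 3f_b$ from \eqref{eq:sum of sides} after eliminating $e_i$ and $v_i$ via \eqref{eq:euler char result} and \eqref{eq:inner vertices}.

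\textbf{Key steps.} First I would record that, because $w$ is non-elliptic and connected with at least one inner vertex, every inner face satisfies $s(f)\geq 6$ (inner faces have an even number of sides, and circles, bigons, squares are excluded), and every outer face satisfies $s(f)\geq 2$ with the outer faces naturally partitioned into $U$'s ($s=2$), $Y$'s ($s=3$), $H$'s ($s=4$), and ``large'' outer faces ($s\geq 5$). Write $u,y,h,\ell$ for the numbers of these, so $f_b = u+y+h+\ell$ and the goal is $y+h\geq 3$. Second, split $\sum_{f\in F}s(f) = \sum_{\text{inner}} s(f) + \sum_{\text{outer}}s(f)$. For the inner sum use $s(f)\geq 6$ to get $\sum_{\text{inner}}s(f)\geq 6f_i$; combined with \eqref{eq:inner vertices}, i.e. $2f_i = v_i - f_b + 2$, this gives $\sum_{\text{inner}}s(f)\geq 3(v_i - f_b + 2) = 3v_i - 3f_b + 6$. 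For the outer sum, $\sum_{\text{outer}}s(f) \geq 2u + 3y + 4h + 5\ell$. Third, plug into \eqref{eq:sum of sides}: $2e_i + 3f_b = \sum_F s(f) \geq (3v_i - 3f_b+6) + (2u+3y+4h+5\ell)$, and use \eqref{eq:euler char result} $e_i = v_i + f_i - 1$ together with \eqref{eq:inner vertices} again to rewrite $2e_i$ purely in terms of $v_i, f_b$: from $f_i = (v_i - f_b + 2)/2$ we get $2e_i = 2v_i + (v_i - f_b + 2) - 2 = 3v_i - f_b$. So the left side is $3v_i - f_b + 3f_b = 3v_i + 2f_b$.

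\textbf{Finishing the inequality.} Now the inequality reads $3v_i + 2f_b \geq 3v_i - 3f_b + 6 + 2u + 3y + 4h + 5\ell$, i.e. $5f_b \geq 6 + 2u + 3y + 4h + 5\ell$. Substituting $f_b = u+y+h+\ell$ on the left gives $5u + 5y + 5h + 5\ell \geq 6 + 2u + 3y + 4h + 5\ell$, hence $3u + 2y + h \geq 6$. This does not immediately give $y+h\geq 3$; the remaining work is to rule out the offending configurations where $U$-faces carry the slack — i.e. show one cannot have (say) two $U$'s and no $Y,H$, etc. Here I would invoke the structure of $U$-faces: a $U$-face is bounded by a single edge of $w$ meeting $\partial\DD^2$ in two boundary vertices, so each $U$ corresponds to a ``turnback'' arc; an argument (essentially the classical fact that a non-elliptic web in a disk with an inner vertex cannot be all turnbacks, or a direct local analysis cutting along the $U$-arc and inducting) should show that the presence of a $U$ does not help. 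Concretely: if a $U$-face exists, cut the disk along that arc's near-boundary region to remove it, reducing to a web with strictly fewer boundary edges (possibly disconnected — take the component containing an inner vertex), and induct; the base of the induction is a connected non-elliptic web with one inner vertex and no $U$-faces, for which $3u+2y+h\geq 6$ with $u=0$ forces $2y+h\geq 6$ and in particular $y+h\geq 3$.

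\textbf{Main obstacle.} The arithmetic above is routine; the real obstacle is handling $U$-faces cleanly. The bound $3u+2y+h\geq 6$ is weaker than needed precisely because each $U$ contributes as much as one $Y$ plus a bit, so a naive count allows, e.g., two $U$'s and a $Y$ but no $H$ — giving $y+h=1$. I expect the correct fix is either (i) a more refined counting that tracks boundary vertices of degree-one arcs (turnbacks) and uses the fact that the two boundary vertices of a $U$ are ``consumed'', so one can induct by removing a $U$ and its arc, or (ii) observing that the boundary word of a non-elliptic disk web, read around $\partial\DD^2$, is a reduced word in the sense of Kuperberg's spider, and that the number of ``outermost'' $Y$ and $H$ faces is bounded below by $3$ by a planar/Jordan-curve argument on the innermost structure. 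I would pursue route (i): induct on $e_b = v_b$, with the inductive step removing one $U$-face, and the base case being the $U$-free case where the clean inequality $2y+h\geq 6$ does the job.
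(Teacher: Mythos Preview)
Your counting argument is exactly the paper's approach, and your arithmetic is correct: one does arrive at $3u + 2y + h \geq 6$. The ``main obstacle'' you identify, however, is a phantom. A $U$-face is bounded (on the web side) by a single edge of $w$ whose two endpoints are boundary vertices; boundary vertices have degree one, so that edge is an entire connected component of $w$. Since $w$ is assumed connected with at least one inner vertex, it cannot equal a single boundary-to-boundary arc, and hence $u = 0$. Your inequality then reads $2y + h \geq 6$, which immediately yields $y + h \geq 3$ (indeed $y+h \geq \tfrac{1}{2}(2y+h) \geq 3$). No induction or cutting is needed.

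The paper's proof is the same argument phrased contrapositively: assuming at most two outer faces are $Y$'s or $H$'s, it bounds the remaining $f_b - 2$ outer faces below by $5$ sides each---which is exactly the implicit use of $u=0$---and reaches a contradiction with \eqref{eq:inner vertices}. So once you observe that connectedness plus an inner vertex forces $u=0$, your proposal and the paper's proof coincide.
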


\begin{proof}
    Note the only connected web with no inner vertices is an arc connecting two points on $\partial \DD^2$. Suppose that at most two outer faces of $w$ are $Y$'s or $H$'s. Then \eqref{eq:sum of sides} gives 
    \[
        2e_i + 3 f_b \geq 6 + 5(f_b - 2) + 6 f_i
    \]
    which can be rewritten as $ e_i  \geq 3 f_i +  f_b -2. $
   Substituting \eqref{eq:euler char result} gives 
   \[
        v_i \geq 2f_i + f_b -1 
   \]
  which contradicts \eqref{eq:inner vertices}.
\end{proof}

\begin{lemma}
\label{lem:U Y or H}
    Let $w$ be a non-elliptic annular web. Then at least one outer face of $w$ is a $U$, a $Y$, or an $H$.
\end{lemma}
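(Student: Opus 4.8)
The plan is to split on whether $w$ is connected, handle the connected case by the same Euler-characteristic count that proves Lemma~\ref{lem:at least 3 small regions} (adjusted for the one face containing the puncture), and reduce the disconnected case to the connected case by passing to a suitably chosen ``innermost'' component. Throughout I assume $w$ is nonempty (the statement is only applied to nonempty webs); then $\partial w\neq\varnothing$, and by the remark after Definition~\ref{def:non-elliptic web} the web $w$ has no closed components.

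For connected $w$: if $w$ has no internal vertex then $w$ is a single arc with both endpoints on $\partial\A_0$, which cuts $\A_0$ into a disk and a punctured disk, and the disk is a $U$. Otherwise the identities \eqref{eq:v and e identity}, \eqref{eq:euler char result}, \eqref{eq:inner vertices}, \eqref{eq:sum of sides} are all available, and I would argue by contradiction. Assuming $w$ has no $U$, $Y$, or $H$, every puncture-free inner face has at least $6$ sides (non-elliptic, so not a circle, bigon, or square, and it has an even number of sides), and every puncture-free outer face has at least $5$ sides (a $2$-sided outer face would be a boundary-parallel arc forming its own connected component, impossible here; $Y$'s and $H$'s are excluded by hypothesis). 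Let $F_0$ be the face containing the puncture, inner or outer, noting $s(F_0)\geq 2$ in either case. Substituting these bounds into \eqref{eq:sum of sides} and eliminating $e_i$ with \eqref{eq:euler char result}, exactly as in the proof of Lemma~\ref{lem:at least 3 small regions}, yields $v_i\geq 2f_i+f_b-1$ (with even more slack when $F_0$ is outer), contradicting \eqref{eq:inner vertices}.

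For disconnected $w=w_1\sqcup\cdots\sqcup w_k$ with $k\geq 2$ and each $w_j$ nonempty and connected: for each $j$ let $E_j$ be the (unique, since $w_j$ is connected) face of $w_j$ containing the puncture, and let $N(w_j)$ be the number of edges of $w$ lying in the puncture-free faces of $w_j$. Choose $w_0$ minimizing $N(w_0)$. If $N(w_0)>0$, then a whole component $w'$ lies in a puncture-free face of $w_0$, and one checks that the puncture-face $E'$ of $w'$ contains $E_0$ and also contains every edge of $w_0$, so the puncture-free faces of $w'$ sit strictly inside those of $w_0$ and are disjoint from $w_0$; this gives $N(w')<N(w_0)$, a contradiction. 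Hence $N(w_0)=0$: every component of $w$ other than $w_0$ lies in $E_0$. Then each puncture-free face of $w_0$ is a face of $w$, hence — $w$ being non-elliptic — is not a circle, bigon, or square, and $E_0$ is not a circle face of $w_0$ since $w_0\subseteq w$ has no closed component. So the connected-case count applies verbatim to $w_0$ — the only possible failure of non-ellipticity, $E_0$ being an inner bigon or square, being harmless because only $s(E_0)\geq 2$ was used — producing a puncture-free outer face $F$ of $w_0$ with at most $4$ sides. Since no other component of $w$ meets $F$ and no boundary point of another component lies on the arcs $\partial F\cap\partial\A_0$, the face $F$ is a face of $w$ with the same number of sides, i.e.\ a $U$, $Y$, or $H$ of $w$.

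The main obstacle I anticipate is the disconnected case, for two intertwined reasons: the Euler-characteristic relations \eqref{eq:euler char result}--\eqref{eq:sum of sides} are established only for connected webs, and a connected component of a non-elliptic web need \emph{not} itself be non-elliptic — a bigon, square, or circle face of one component may simply be ``filled in'' by another component. The innermost-component choice is designed precisely to neutralize both: it forces the puncture-free faces of $w_0$ to be genuine faces of $w$, so they inherit non-ellipticity. The remaining delicate point throughout is keeping track of the puncture-containing face (whether it is inner or outer and that it still contributes at least $2$ to the side-count in \eqref{eq:sum of sides}).
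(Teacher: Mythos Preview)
Your argument is correct, and the connected case is essentially the paper's argument (the paper farms out the ``puncture in an outer face'' subcase to \cite[Proposition~1]{KhKup}, whereas you absorb it into the same inequality by only using $s(F_0)\geq 2$, which works fine and is arguably cleaner).

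The disconnected case, however, is handled quite differently. The paper picks an \emph{arbitrary} component $w'$, finds a $U/Y/H$ face $F$ of $w'$, and if $F$ is not a face of $w$ it passes to the union $w''$ of components lying inside $F$, views $w''$ as a non-elliptic web in a half-plane, and invokes \cite[Proposition~1]{KhKup} once more to locate a $U/Y/H$ face of $w''$ that is then a face of $w$. Your approach instead selects a \emph{specific} component $w_0$ by minimizing $N(w_0)$, forces $N(w_0)=0$, and then runs the connected Euler-characteristic count directly on $w_0$, using that every puncture-free face of $w_0$ is already a face of $w$. What this buys you is complete self-containment: you never appeal to the planar bijection of \cite{KhKup}, and you sidestep the (minor but real) issue you flagged, that a component of a non-elliptic annular web need not itself be non-elliptic. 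The paper's route is shorter on the page because it outsources work to the planar theory; yours is longer but stays entirely within the annular Euler-characteristic framework and makes explicit why the component you choose inherits just enough non-ellipticity for the count to go through. Both are valid; yours would be preferable in a context where one wants to avoid the dependency on \cite{KhKup}.
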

\begin{proof}
  
Assume first that $w$ is connected. 
We may also assume that the puncture is contained in an inner face, since otherwise $w$ is contained in a disk and  the statement of the present lemma follows from \cite[Proposition 1]{KhKup}. 
Now, assume to the contrary that every outer face of $w$ has at least $5$ sides. 
Adding up the sides of the faces for the face with the puncture, inner (but not punctured) faces, and outer faces respectively, \eqref{eq:sum of sides} implies
   \[
       2 e_i + 3 f_b \geq  2 + 6(f_i-1) + 5 f_b 
   \] 
which can be rewritten as $ e_i  \geq 3 f_i +  f_b -2. $
Substituting \eqref{eq:euler char result} gives 
   \[
        v_i \geq 2f_i + f_b -1 
   \]
which contradicts \eqref{eq:inner vertices}. 
  
Finally, we consider the case where $w$ is not necessarily connected. Let $w'$ be a component of $w$. By the above, some outer face $F$ of $w'$ is a $U$, a $Y$, or an $H$. If $F$ is also a face of $w$ then we are done. Otherwise, let $w''$ be the union of all components of $w$ that are contained in the interior of $F$. We may view $w''$ as a non-elliptic web in the lower half-plane, and as a consequence of \cite[Proposition 1]{KhKup} $w''$ contains an outer $U$, $Y$, or $H$ face, which is then also a face of $w$. 
\end{proof}

\subsection{Sign strings and state strings}

Each of our basis webs will be in bijection with admissible (sign string, state string) pairs, which we define below. 

Given $n$  points in $\partial \A \setminus \{*\}$, our convention is to order them $\{1,2,\ldots, n\}$ by reading clockwise from the basepoint $*$. Similarly, given $n$ points in $\partial \A'\setminus \{*\} \cong (0,1)$, we order them according to the standard ordering of $(0,1)$ (from left to right). 

\begin{definition}
A \emph{sign string of length $n$} is a sequence $S: \{1,2,\ldots, n\} \to \{+,-\}$.
We typically denote $S$ by $S=(s_1,\ldots, s_n)$, where each $s_i\in \{+,-\}$. 
The number of $\pm$ entries in $S$ is denoted $n_\pm(S)$. If $n_+(S) \equiv n_-(S) \mod 3$ then we say $S$ is \emph{admissible}. 
\end{definition}

Once again, the orientation of a web $w$ in $\A_0'$  gives a sign string. It is immediate to verify that a sign string $S$ is the boundary of a web if and only if $S$ is admissible. 

\begin{definition}
    Let $B^S$ denote a set of representatives for ambient isotopy classes of annular webs with boundary $S$. 
\end{definition}

We now turn to state strings. Let $\Lambda$ denote the weight lattice of the Lie algebra $\sl_3$. It has a basis given by the two fundamental dominant weights $\mu^+$ and $\mu^-$. Set 
\[
    \Lambda^+ = \{\mu^+, \mu^- - \mu^+, -\mu^-\}, \ \ \Lambda^- = \{\mu^-, \mu^+ - \mu^-, -\mu^+\}, \ \ \Lambda^\pm = \Lambda^+ \cup \Lambda^-. 
\]
Let $V^+$ denote the fundamental representation of $\sl_3$ and let $V^- = (V^+)^*$ denote its dual. Then $\Lambda^+$ and $\Lambda^-$ are the weights of $V^+$ and $V^-$, respectively. This is summarized in Figure \ref{fig:weight lattice}. 

\begin{figure}
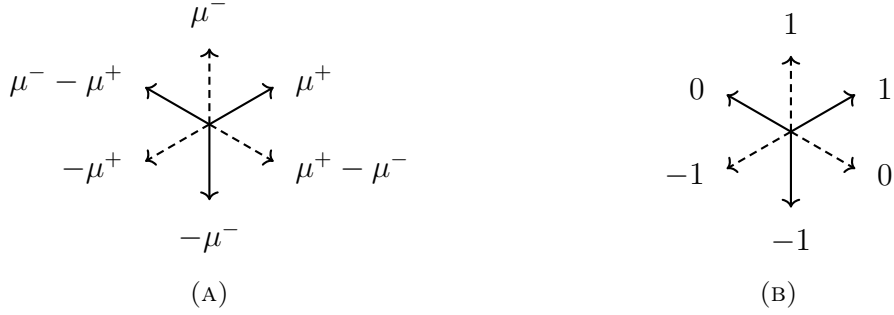

\centering 
\subcaptionbox{ \label{fig:weight lattice}}[.45\linewidth]
{\includestandalone{images/weight_lattice}
}
\subcaptionbox{ \label{fig:weight states}}[.45\linewidth]
{\includestandalone{images/weight_state}}
\caption{Left: the weights $\Lambda^+$ of the fundamental representation $V^+$ (drawn solid) and the weights $\Lambda^-$ of its dual $V^-$ (drawn dashed). Right: the assignment of a state to each element of $\Lambda^\pm$.}\label{fig:lattice and states}
\end{figure}

\begin{definition}
    Given a sign string $S=(s_1,\ldots, s_n)$, an \emph{$S$-path} is a sequence of lattice points $\pi_0 = 0, \pi_1,\ldots, \pi_n \in \Lambda$ such that $\pi_i - \pi_{i-1} \in \Lambda^{s_i}$ for all $1\leq i \leq n$; we say it is \emph{closed} if $\pi_n = 0$.
\end{definition}

In other words, a closed $S$-path consists of a sequence $v_1\ldots, v_n \in \Lambda^{\pm}$ such that $v_i \in \Lambda^{s_i}$ and $\sum_{i=1}^n v_i = 0$. In the notation of the previous paragraph, $v_i = \pi_i - \pi_{i-1}$.

A \emph{state string of length $n$} is a sequence $J: \{1,2,\ldots, n\} \to \{ -1, 0, 1\}$. We write $J$ as $(j_1,\ldots, j_n)$. 
We fix a bijection between the set of $S$-paths and the set $\{-1,0,1\}^n$ of state strings of length $n$ where $j_i$ is given by identifying 
$\pi_i - \pi_{i-1}$ 
with $-1$, $0$, or $1$ according  to the diagrams in Figure \ref{fig:lattice and states}. 

\begin{definition}
    Given a sign string $S$, a (sign, state) string pair $(S,J)$ is \emph{admissible} if the corresponding path in the weight lattice is closed.
\end{definition}

\begin{remark} 
We record some comments. 
\begin{itemize}
    \item If $(S,J)$ is admissible, then so is $S$. 
    \item  If $(S,J)$ is admissible, then $\sum_{i} j_i = 0$, but the converse is not true. Consider for instance $S = (+,+,+)$ and $J=(0,0,0)$. 
    \item The planar case \cite{KhKup} restricts to \emph{dominant paths}, which must lie in a chosen Weyl chamber. We do not impose this restriction. 
\item    The choice of basepoint $*$ fixes a linear ordering of the boundary  and is largely for convenience. We could instead consider $n$ points $P_n \subset \partial \A_0$ (or $P_n\subset \partial \A_0'$) and define sign and state strings as functions $S: P_n \to \{+,-\}$ and $J:P_n \to \{-1, 0,1\}$, respectively. Without choosing a basepoint, $(S,J)$ does not naturally correspond to a lattice path. However, whether or not $(S,J)$ is admissible is independent of the choice of basepoint.
\end{itemize}
\end{remark}

\subsection{Growth algorithm}

Let $(S,J)$ be an admissible  pair. 
The following `growth algorithm' $\growalg$ builds a web from this boundary condition. 
We follow \cite{KhKup}, where the convention is for webs to grow downward (see also \cite{sl4-webs} for a related construction in the case of planar $SL(4)$ webs). 
In our case, webs naturally grow downward in $\A_0'$. 
The only significant difference between \cite{KhKup} and our annular setting is that the growth rules apply to states that are cyclically ordered.

The web can be iteratively propagated downward using the following set of rules. 

\begin{itemize}
\item 
If the signs (i.e.\ orientations) differ at the top:
\renewcommand{\arraystretch}{2}
\begin{center}
    \begin{tabular}{c c c c c}
    \centering
        \growthwebH{1}{0}{0}{1} 
        &
        \growthwebH{0}{0}{-1}{1}
        &
        \growthwebH{0}{-1}{-1}{0} 
        &
        \phantom{tomato}
        &
        \begin{tikzpicture}[scale=.5, baseline=-10ex]
        \draw (0,0) node[label={90:$1$}] {} 
            arc (180:360:1cm) 
            node[label={90:$-1$}] {};
    \end{tikzpicture}
    \\
        $H_1$ & $H_0$ & $H_{-1}$ & & $U_0$
    \end{tabular}
\end{center}
We refer to the first three as \emph{$H$-type growth webs} and the fourth as the \emph{$U$-type growth web}.

\item 
If the signs (i.e.\ orientations) agree at the top:
\begin{center}
    \begin{tabular}{c c c}
        \growthwebY{1}{0}{1}
        &
        \growthwebY{0}{-1}{-1}
        &
        \growthwebY{1}{-1}{0}  \\
        $Y_1$ & $Y_{-1}$ & $Y_{0}$
    \end{tabular}
\end{center}
We refer to these three as \emph{$Y$-type growth webs}. 
\end{itemize}
The names for these growth webs are inspired by the sum of the state labels at the top (or bottom).

\begin{example}
    \label{ex:length 2}
Let us consider $S= (+,-)$. In this case, the admissible state strings are $(-1,1)$, $(0,0)$, and $(1,-1)$. The corresponding webs are shown in \eqref{eq:length two ex}, using both the model $\A_0'$ (top row) and $\A_0$ (bottom row). 

\begin{equation}
    \begin{aligned}
    \label{eq:length two ex}
        \includestandalone{length_two_ex}
    \end{aligned}
\end{equation}

\end{example}

\begin{figure}
    \centering

\renewcommand{\arraystretch}{1.5}
\begin{tabular}{| c | c | c |}
\hline
$H$-type growth web 
    & path replacement ($+ \ - \ \to\  - \ +$) 
    & path replacement ($- \ + \ \to\  + \ -$)
    \\ \hline
\smallgrowthwebH{1}{0}{0}{1}
&
    \begin{tikzpicture}[baseline=1ex]
        \filldraw (0,0) circle (2pt);
        \draw[->] (0,0) -- (.866, .5);
        \draw[->, dashed] (.866, .5) -- (1.732, 0);
    \end{tikzpicture}
    $\quad \to \quad$
    \begin{tikzpicture}[baseline=1ex]
        \filldraw (0,.5) circle (2pt);
        \draw[->, dashed] (0,.5) -- (.866, 0);
        \draw[->] (.866, 0) -- (1.732, .5);
    \end{tikzpicture}
&
    \begin{tikzpicture}[baseline=5ex]
        \filldraw (0,0) circle (2pt);
        \draw[->, dashed] (0,0) -- (0,1);
        \draw[->] (0,1) -- +(150:1cm);
    \end{tikzpicture}
    $\quad \to \quad$
    \begin{tikzpicture}[baseline=5ex]
        \filldraw (0,0) circle (2pt);
        \draw[->] (0,0) -- (-.866, .5);
        \draw[->, dashed] (-.866, .5) -- +(0,1);
    \end{tikzpicture}
\\ \hline
\smallgrowthwebH{0}{0}{-1}{1}
&
    \begin{tikzpicture}[baseline=1ex]
        \filldraw (0,0) circle (2pt);
        \draw[->] (0,0) -- (-.866, .5);
        \draw[<-, dashed] (0,.2) -- +(-.866, .5);
    \end{tikzpicture}
    $\quad \to \quad$
    \begin{tikzpicture}[baseline=-1ex]
        \filldraw (0,0) circle (2pt);
        \draw[->, dashed] (0,0) -- (-.866, -.5);
        \draw[<-] (0,.2) -- +(-.866, -.5);
    \end{tikzpicture}
&
    \begin{tikzpicture}[baseline=1ex]
        \filldraw (-.866,.5) circle (2pt);
        \draw[<-] (0,0) -- (-.866, .5);
        \draw[->, dashed] (0,.2) -- +(-.866, .5);
    \end{tikzpicture}
    $\quad \to \quad$
    \begin{tikzpicture}[baseline=-4ex]
        \filldraw (0,0) circle (2pt);
        \draw[->] (0,0) -- (0,-1);
        \draw[->, dashed] (.2,-1) -- +(0,1);
    \end{tikzpicture}
\\ \hline
\smallgrowthwebH{0}{-1}{-1}{0}
&
    \begin{tikzpicture}[baseline=1ex, xscale=-1]
        \filldraw (0,0) circle (2pt);
        \draw[->] (0,0) -- (.866, .5);
        \draw[->, dashed] (.866, .5) -- (1.732, 0);
    \end{tikzpicture}
    $\quad \to \quad$
    \begin{tikzpicture}[baseline=1ex, xscale=-1]
        \filldraw (0,.5) circle (2pt);
        \draw[->, dashed] (0,.5) -- (.866, 0);
        \draw[->] (.866, 0) -- (1.732, .5);
    \end{tikzpicture}
&
    \begin{tikzpicture}[baseline=-5ex, xscale=-1, rotate=120]
        \filldraw (0,0) circle (2pt);
        \draw[->, dashed] (0,0) -- (0,1);
        \draw[->] (0,1) -- +(150:1cm);
    \end{tikzpicture}
    $\quad \to \quad$
    \begin{tikzpicture}[baseline=-5ex, xscale=-1, rotate=120]
        \filldraw (0,0) circle (2pt);
        \draw[->] (0,0) -- (-.866, .5);
        \draw[->, dashed] (-.866, .5) -- +(0,1);
    \end{tikzpicture}
\\ \hline
$U$-type growth web 
    & path replacement ($+ \ - \ \to\   \varnothing$) 
    & path replacement ($- \ + \ \to\  \varnothing$)
\\ \hline
\smallgrowthwebU
&
    \begin{tikzpicture}[baseline=1ex]
        \filldraw (0,0) circle (2pt);
        \draw[->] (0,0) -- (.866, .5);
        \begin{scope}[yshift=.2cm]
            \draw[->, dashed] (.866, .5) -- (0, 0);
        \end{scope}
    \end{tikzpicture}
    $\quad \to \quad$
    \begin{tikzpicture}
        \filldraw (0,.5) circle (2pt);
    \end{tikzpicture}
&
    \begin{tikzpicture}[baseline=2ex]
        \filldraw (0,0) circle (2pt);
        \draw[->, dashed] (0,0) -- (0,1);
        \begin{scope}[xshift=.2cm]
            \draw[->] (0,1) -- (0,0);
        \end{scope}
    \end{tikzpicture}
    $\quad \to \quad$
    \begin{tikzpicture}
        \filldraw (0,0) circle (2pt);
    \end{tikzpicture}
\\ \hline
$Y$-type growth web 
    & path replacement ($+ \ + \ \to\   \ -$) 
    & path replacement ($- \ - \ \to\  +$)
\\ \hline
\smallgrowthwebY{1}{0}{1}
&
    \begin{tikzpicture}[baseline=2ex]
        \filldraw (0,0) circle (2pt);
        \draw[->] (0,0) -- (.866, .5);
        \draw[->] (.866, .5) -- (0, 1);
    \end{tikzpicture}
    $\quad \to \quad$
    \begin{tikzpicture}[baseline=2ex]
        \filldraw (0,0) circle (2pt);
        \draw[->, dashed] (0,0) -- (0, 1);
    \end{tikzpicture}
&
    \begin{tikzpicture}[baseline=2ex]
        \filldraw (0,0) circle (2pt);
        \draw[->, dashed] (0,0) -- (0,1);
        \draw[->, dashed] (0,1) -- +(.866, -.5);
    \end{tikzpicture}
    $\quad \to \quad$
    \begin{tikzpicture}[baseline=2ex]
        \filldraw (0,0) circle (2pt);
        \draw[->] (0,0) -- (.866, .5);
    \end{tikzpicture}
\\ \hline
\smallgrowthwebY{0}{-1}{-1}
&
    \begin{tikzpicture}[baseline=0ex]
        \filldraw (0,0) circle (2pt);
        \draw[->] (0,0) -- (-.866, .5);
        \draw[->] (-.866, .5) -- (-.866, -.5);
    \end{tikzpicture}
    $\quad \to \quad$
    \begin{tikzpicture}[baseline=-1ex]
        \filldraw (0,0) circle (2pt);
        \draw[->, dashed] (0,0) -- (-.866, -.5);
    \end{tikzpicture}
&
    \begin{tikzpicture}[baseline=-4ex, rotate=-120]
        \filldraw (0,0) circle (2pt);
        \draw[->, dashed] (0,0) -- (0,1);
        \draw[->, dashed] (0,1) -- +(.866, -.5);
    \end{tikzpicture}
    $\quad \to \quad$
    \begin{tikzpicture}[baseline=-4ex, rotate=-120]
        \filldraw (0,0) circle (2pt);
        \draw[->] (0,0) -- (.866, .5);
    \end{tikzpicture}
\\ \hline
\smallgrowthwebY{1}{-1}{0}
&
    \begin{tikzpicture}[baseline=0ex]
        \filldraw (0,0) circle (2pt);
        \draw[->] (0,0) -- (.866, .5);
        \draw[->] (.866, .5) -- (.866, -.5);
    \end{tikzpicture}
    $\quad \to \quad$
    \begin{tikzpicture}[baseline=-2ex]
        \filldraw (0,0) circle (2pt);
        \draw[->, dashed] (0,0) -- (.866, -.5);
    \end{tikzpicture}
&
    \begin{tikzpicture}[baseline=2ex]
        \filldraw (0,0) circle (2pt);
        \draw[->, dashed] (0,0) -- (0,1);
        \draw[->, dashed] (0,1) -- +(-.866, -.5);
    \end{tikzpicture}
    $\quad \to \quad$
    \begin{tikzpicture}[baseline=0ex]
        \filldraw (0,0) circle (2pt);
        \draw[->] (0,0) -- (-.866, .5);
    \end{tikzpicture}
\\ \hline
\end{tabular}
    \caption{The dictionary between growth rules and modifications of lattice paths.}
    \label{fig:growth dictionary}
\end{figure}

Before proceeding to discuss the growth algorithm in more detail, let us show that we have found all the elements of $B^{(+,-)}$. 

\begin{lemma}
    \label{lem:2 boundary points}
    When $S = (+, -)$, the set $B^S$ consists of the three webs in Example \ref{ex:length 2}.
\end{lemma}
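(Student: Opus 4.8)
The plan is to classify all non-elliptic annular webs $w$ with $\partial w = (+,-)$ by hand and check the count is exactly three. Write $p_1, p_2$ for the two boundary points, of signs $+$ and $-$. Since a non-elliptic web has no closed components (the remark after Definition~\ref{def:non-elliptic web}) and any component of $w$ containing no boundary point would be closed, $w$ is connected and has no closed components. I split on whether $w$ has an internal vertex. If it has none, then $w$ is a single embedded arc from $p_1$ to $p_2$, and I claim there are exactly two isotopy classes of such arcs, the two boundary-parallel ones. Indeed, any properly embedded arc in $\A$ with both endpoints on $\partial_1\A$ must be separating: were it non-separating, cutting $\A$ along it would yield a \emph{connected} orientable surface of Euler characteristic $\chi(\A)+1 = 1$ but with at least two boundary circles (the untouched circle $\partial_0\A$, plus one more), which is impossible; and a separating such arc cobounds a disk with one of the two arcs of $\partial_1\A\setminus\{p_1,p_2\}$, giving the two classes. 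These are the two arc-webs among the three in Example~\ref{ex:length 2}.

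So assume $w$ has an internal vertex. By Lemma~\ref{lem:U Y or H}, $w$ has an outer face that is a $U$, $Y$, or $H$. It is not a $Y$: a $Y$-face is bounded by a boundary segment and two web edges meeting at a source or sink, which forces the two endpoints of that segment to receive the \emph{same} sign, whereas the only boundary segments of $w$ join $p_1$ to $p_2$, of opposite signs. It is not a $U$: a $U$-face would be bounded by a boundary segment and a single web edge joining its endpoints, i.e.\ an edge from $p_1$ directly to $p_2$, which (no closed components) would make $w$ that arc. Hence $w$ has an $H$-face; in particular the two boundary edges $e_1$ at $p_1$ and $e_2$ at $p_2$ run to internal vertices $u$ and $v$. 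One also checks the puncture lies in an \emph{inner} face: otherwise $w$ sits in a disk, i.e.\ is a planar non-elliptic web with two boundary points and an internal vertex, contradicting Lemma~\ref{lem:at least 3 small regions} (it would need $\geq 3$ outer $Y/H$ faces, but with only two boundary segments it has at most two outer faces). Now the counting finishes: with $f_b = v_b = 2$, equation~\eqref{eq:inner vertices} gives $v_i = 2f_i$, hence $e_i = 3f_i - 1$ by~\eqref{eq:euler char result}, so~\eqref{eq:sum of sides} reads $\sum_f s(f) = 6f_i + 4$. By non-ellipticity the unique inner face $F_0$ containing the puncture has $s(F_0)\geq 2$ and every other inner face has $\geq 6$ sides, contributing $\geq 6f_i-4$ in total; and since no boundary-touching face has fewer than four sides (a face with $\leq 3$ sides would be a forbidden $Y$ or would force an edge from $p_1$ to $p_2$) while there are at most two outer faces, the outer faces contribute $\geq 8$. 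Equality forces: exactly two outer faces, both quadrilaterals (hence $H$-faces, as they miss the puncture), and $F_0$ a bigon. Each $H$-face uses exactly one of the two boundary segments, so is bounded by that segment together with $e_1$, a single $u$–$v$ edge, and $e_2$; a local argument at $u$ (its third edge would have to interrupt one of the $H$-faces) shows the two $H$-faces cannot use the same $u$–$v$ edge, so $u$ and $v$ are joined by a double edge. As $u,v$ then have full valence and $w$ is connected, $w$ is exactly this ``theta'' web with $F_0$ enclosing the puncture, and its orientation is forced by $\partial w = (+,-)$. This is the third web of Example~\ref{ex:length 2}, so $|B^{(+,-)}| \leq 3$.

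The main obstacle is precisely this Case~2 analysis: after ruling out $U$- and $Y$-faces on sign/structure grounds and locating the puncture in an inner face, one must feed the two Euler-type identities~\eqref{eq:inner vertices} and~\eqref{eq:sum of sides} against each other to pin the face vector down to ``two outer $H$-faces and a bigon at the puncture,'' and then run the local valence argument collapsing $w$ to the theta web (alternatively, one can iterate the reverse growth/reduction supplied by Lemma~\ref{lem:U Y or H}, but that is essentially the same work). Minor points needing care are the Euler-characteristic obstruction in Case~1 (equivalently, that there are exactly two arc classes) and verifying that the three webs of Example~\ref{ex:length 2} are pairwise non-isotopic and visibly non-elliptic, so that ``at most three'' becomes an equality.
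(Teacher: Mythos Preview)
Your proof is correct and reaches the same conclusion by a genuinely different route than the paper. The paper's argument is an iterated reduction: apply Lemma~\ref{lem:U Y or H}, rule out $Y$ on orientation grounds, and if one sees an $H$, strip it off to get a web in $B^{(-,+)}$; iterating shows $w$ is a stack of $H$'s capped by a $U$, and two or more $H$'s force an elliptic bigon or square, so only the arc and the single-$H$ (theta) cases survive. Your argument instead splits on the presence of an internal vertex, handles the arc case by a separating--arc/Euler-characteristic argument, and in the trivalent case feeds the identities~\eqref{eq:inner vertices}--\eqref{eq:sum of sides} against the lower bounds coming from non-ellipticity and the absence of $U/Y$ faces to force the face vector (two outer $H$'s and a punctured bigon), then finishes with a local valence argument at $u$. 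The paper's approach is shorter and more in the spirit of the growth algorithm; yours is more structural and makes transparent exactly which Euler-type constraint is doing the work, which is a nice illustration of how tight those inequalities are for very small boundaries.

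One small point to tighten: from ``no closed components'' you conclude $w$ is connected, but strictly speaking you need the parity observation that a component of an $SL(3)$ web cannot have exactly one boundary point (a finite graph has an even number of odd-degree vertices, and all non-boundary vertices are trivalent). With that one line added, your connectedness claim is justified and the rest of the argument goes through as written.
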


\begin{proof}
    Let $w\in B^{(+,-)}$. Lemma \ref{lem:U Y or H} says that $w$ has a $U$ or an $H$ at its boundary (note that a $Y$ is excluded for orientation reasons). If $w$ has a boundary $U$ then we are done. Suppose then that $w$ has a boundary $H$. Removing this boundary $H$ yields a web $w'\in B^{(-,+)}$. If $w'$ has a boundary $U$ then since $w$ is non-elliptic it must be the web shown in the middle of \eqref{eq:length two ex}. Otherwise, $w'$ has a boundary $H$. Repeating this, we see that $w$ consists of some finite number of $H$'s stacked on top of each other, necessarily in an alternating manner as shown in \eqref{eq:stacked Hs}, followed by a $U$. However, there is no way to close up two or more $H$'s with a $U$ to form a non-elliptic web. 
    \begin{equation}
    \label{eq:stacked Hs}
    \begin{aligned}
        \begin{tikzpicture}
        \draw[very thick,\sidecolor] (-.5,0) -- (-0.5,-1.7);
        \draw[very thick,\sidecolor] (1.5,0) -- (1.5,-1.7);
        \draw[\boundarycolor] (-.5,0) -- (1.5,0);
        \draw (0,0) -- (0,-1);
        \draw (1,0) -- (1,-1);
        \draw (0,-.25) -- (1,-.25);
        \draw(0,-.5) -- (-.5,-.5);
        \draw (1,-.5) -- (1.5,-.5);
        \draw (0,-.75) -- (1,-.75);
        \node at (.5,-1.25)  {$\vdots$};
        \end{tikzpicture}
\end{aligned}
    \end{equation}
\end{proof}

The remainder of this subsection is dedicated to showing that the growth algorithm is well-defined; that is, it assigns an element of $B^S$ to every admissible pair $(S,J)$. 

\begin{lemma}
Let $(S,J)$ be admissible. 
If $(S',J')$ is obtained from $(S,J)$ by one of the moves from the growth algorithm, then $(S',J')$ is also admissible. 
\end{lemma}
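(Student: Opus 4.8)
The plan is to reduce the statement to a single additive identity about the step vectors of the associated lattice path, and then verify that identity case by case against Figure~\ref{fig:growth dictionary}. By definition, $(S,J)$ is admissible exactly when its associated $S$-path $\pi_0 = 0, \pi_1, \ldots, \pi_n$ is closed, i.e. $\sum_{i=1}^{n} v_i = 0$ in $\Lambda$, where $v_i = \pi_i - \pi_{i-1} \in \Lambda^{s_i}$ is the lattice vector attached to the pair $(s_i, j_i)$ through Figure~\ref{fig:lattice and states}. So it is enough to show that each growth move leaves the total sum $\sum_i v_i$ unchanged; since being closed is exactly the vanishing of this sum, admissibility of $(S',J')$ will follow.

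The second step is to record that a single growth move is purely local: it acts on a cyclically consecutive pair of boundary points, with signs $(s_k, s_{k+1})$ and states $(j_k, j_{k+1})$, and replaces that pair by (i) a pair with the two signs interchanged ($H$-type, when $s_k \neq s_{k+1}$), (ii) the empty string ($U$-type, when $s_k \neq s_{k+1}$), or (iii) a single boundary point of the opposite sign ($Y$-type, when $s_k = s_{k+1}$), leaving every other boundary point untouched. Correspondingly the sequence of step vectors has its cyclically consecutive block $(v_k, v_{k+1})$ replaced by $(v_k', v_{k+1}')$, by the empty block, or by a single vector $v'$. It then remains to check the identities $v_k' + v_{k+1}' = v_k + v_{k+1}$ for the three $H$-type webs, $v_k + v_{k+1} = 0$ for the $U$-type web, and $v' = v_k + v_{k+1}$ for the three $Y$-type webs, in both orientations each. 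These are exactly the contents of the rows of Figure~\ref{fig:growth dictionary}: each row exhibits the relevant segment of lattice path before and after the move with identical endpoints, so the net displacement is unchanged, hence so is $\sum_i v_i$.

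The one point that needs a word of care is that the pair a move acts on may straddle the basepoint $*$, so the modification of $(S,J)$ is cyclic rather than a genuine prefix/suffix operation. This causes no difficulty: $\sum_i v_i$ is invariant under cyclic rotation of $(v_1, \ldots, v_n)$, and whether the path is closed depends only on this sum and not on where the cyclic sequence is cut to be written linearly; so one may first rotate the relevant pair to positions $(n-1,n)$, apply the move there, and rotate back. I do not expect any genuine obstacle in this lemma: the substance is the finite table comparison with Figure~\ref{fig:growth dictionary} together with the cyclic-wraparound observation just made, both of which are routine.
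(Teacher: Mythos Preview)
Your proposal is correct and takes essentially the same approach as the paper: both observe that each growth rule corresponds to a local replacement in the lattice path (as in Figure~\ref{fig:growth dictionary}) that preserves the endpoints of the affected subpath, hence preserves closedness. Your version is simply more explicit about the cyclic wraparound and the reduction to $\sum_i v_i$ being invariant, which the paper leaves implicit in its one-sentence proof.
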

\begin{proof}
    Each growth rule modifies the corresponding lattice path according to the dictionary in Figure \ref{fig:growth dictionary}. It follows that if $(S,J)$ corresponds to a closed path, so does $(S',J')$. 
\end{proof}

We will make a small modification to Lemma 3 of \cite{KhKup}, as our situation is in a sense more lax.

\begin{lemma}
\label{lem:growth alg terminates}
    If $(S,J)$ is an admissible (sign, state) string pair, then the growth algorithm terminates only when the strings have been reduced to length 0. 
\end{lemma}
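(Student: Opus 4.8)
The plan is to follow the proof of \cite[Lemma 3]{KhKup}, with the linear boundary there replaced by the cyclic one here. First I would record the structural fact that each growth rule acts on a pair of cyclically consecutive strands, and that the $U$-rule deletes both strands (length drops by $2$), each $Y$-rule replaces them by a single strand (length drops by $1$), and each $H$-rule replaces them by two strands carrying the two reversed orientations (length unchanged). Hence ``the algorithm terminates'' is the assertion that no rule is applicable, and the content of the lemma is: whenever $(S,J)$ is admissible with $\operatorname{len}(S)=n\geq 1$, some growth rule is applicable at some cyclically consecutive pair of strands. (Since $0\notin\Lambda^\pm$, admissibility already forces $n\neq 1$, and monochromatic strings of positive even length do arise during the algorithm, so that case is not vacuous.)

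The argument would split according to whether the sign string $S$ is constant. If $S$ is not constant, some cyclically consecutive pair $(s_i,s_{i+1})$ has $s_i\neq s_{i+1}$; at such a pair the orientations differ at the top, and a direct inspection of the possible state pairs $(j_i,j_{i+1})$---identical to the one in \cite{KhKup}---shows that one of $H_1,H_0,H_{-1}$, or $U_0$, in the appropriate orientation, applies there. If $S$ is constant, say $S=(+,\dots,+)$ (the all-$-$ case is obtained by reversing all orientations and negating all states), let $v_1,\dots,v_n\in\Lambda^+$ be the weights recorded by the states $j_1,\dots,j_n$; admissibility says $\sum_{i=1}^n v_i=0$. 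Were all $v_i$ equal to a common $v$, then $nv=0$ in the torsion-free lattice $\Lambda$ with $v\neq 0$, forcing $n=0$, a contradiction; hence the $j_i$ are not all equal. Since $\sum_{i=1}^n(j_{i+1}-j_i)=0$ telescopes around the cycle and the summands are not all zero, some summand is negative, i.e.\ $j_i>j_{i+1}$ for some $i$; and every strictly decreasing state pair on two adjacent $+$-strands is matched by one of the three $Y$-type growth webs ($Y_1,Y_0,Y_{-1}$ handling $(1,0),(1,-1),(0,-1)$). So a $Y$-rule applies. In all cases a rule is applicable as long as $n\geq 1$, which is the lemma.

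I expect the constant-sign case to carry the real weight of the ``small modification'' over \cite{KhKup}: in the planar setting the boundary string has a first and a last letter, which \cite{KhKup} exploits to locate a reducible spot, whereas here the only ingredient is the telescoping identity together with torsion-freeness of $\Lambda$, used to force a strict descent (resp.\ ascent) in any non-constant state string on a monochromatic boundary. The non-constant-sign case is essentially local and identical to \cite{KhKup}; the only thing to verify there is that every orientation-changing adjacency is reducible regardless of the states it carries, and this is precisely the sense in which the annular picture is ``more lax'' than the planar one---there are no endpoints to obstruct applying a rule. Finally, the possibility that the algorithm runs forever by cycling through $H$-moves is irrelevant to the present statement (which only asserts that a nonempty string is never a stopping configuration) and would be handled separately via the monotone effect of $H$-moves on the arrangement of orientation changes around the circle.
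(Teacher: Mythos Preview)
Your Case~1 contains a genuine gap. The claim that ``every orientation-changing adjacency is reducible regardless of the states it carries'' is false: the $H$- and $U$-type rules apply only when the top state pair $(j_i,j_{i+1})$ is one of $(1,0)$, $(0,0)$, $(0,-1)$, or $(1,-1)$. At a sign-changing pair with states $(-1,1)$, $(0,1)$, $(-1,0)$, $(1,1)$, or $(-1,-1)$, no growth rule applies. For a concrete admissible example take $S=(+,-)$, $J=(-1,1)$: at the pair $(s_1,s_2)=(+,-)$ with states $(-1,1)$ nothing applies, though the \emph{other} cyclic pair $(s_2,s_1)$ carries states $(1,-1)$ and $U_0$ does apply there. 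Your argument picks an arbitrary sign-changing pair and asserts a rule applies at it, which fails.

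The repair is to run your telescoping argument uniformly rather than splitting on whether $S$ is constant. Suppose no rule applies anywhere. At same-sign pairs the absence of a $Y$-rule forces $j_i\le j_{i+1}$; at sign-changing pairs the absence of an $H$- or $U$-rule forces $(j_i,j_{i+1})$ into the five pairs listed above, all of which also satisfy $j_i\le j_{i+1}$. Telescoping around the cycle then gives $J$ constant. If $J\equiv 0$ and $S$ is not constant, $H_0$ would apply at any sign-change; so either $S$ is constant (and your torsion-freeness argument finishes) or $J\equiv\pm 1$. In the latter case each $v_i\in\{\mu^+,\mu^-\}$ (respectively $\{-\mu^+,-\mu^-\}$), and $\sum v_i=0$ together with linear independence of $\mu^+,\mu^-$ forces $n_+=n_-=0$.

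The paper proceeds differently: it fixes the total order $\prec$ on state strings (length first, then lexicographic) and shows by induction on length that from any nonempty admissible $(S,J)$ one can apply a move that either shortens $S$ or produces a $\prec$-smaller state string; when $J$ is nondecreasing it exhibits the required sequence of moves explicitly. This also yields actual termination (no infinite run of $H$-moves), a point you defer to a separate argument. Once repaired as above, your route is a clean alternative for the lemma as stated.
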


\begin{proof}
There is a total ordering $\prec$ on state strings as follows (\cite[proof of Lemma 1]{KhKup}). Let $K, K'$ be two state strings. 
\begin{itemize}
    \item If $\len(K) < \len(K')$, then $K \prec K'$.
    \item If $\len(K) = \len(K')$, then use the lexicographic ordering where $-1 \prec 0 \prec 1$. 
\end{itemize}

We proceed by induction on the length $n$ of $S$. The base case $n=2$ was covered in Example \ref{ex:length 2}.  
For the inductive step, if $J=(j_1,\ldots, j_n)$ with some $j_i > j_{i+1}$, then we may apply one of the $Y$ or $U_0$ moves to either reduce the length, or we may apply an $H$ move to arrive at $(S', J')$ of the same length and with $(S', J')\prec (S, J)$. Thus we may assume that $J$ is non-decreasing, so it is of the form
\[
    J= (\underbrace{-1,\ldots, -1}_a, \underbrace{0,\ldots, 0}_{b}, \underbrace{1, \ldots, 1}_c)
\]
with $a,b,c\geq 0$ and $a+b+c=n$. Since $J$ is admissible, either $a,c>0$ or  $a=c=0$. In the case $a,c>0$, we may apply either an annular $U_0$ move or an annular $Y$ move between the first and last boundary points to decrease the length. In the case $a=c=0$, we must also have that $n_-(S) = n_+(S)$ for $J$ to correspond to a closed path. 
Therefore a subsequence $+ -$ occurs in $S$, and by cyclic reordering we may assume $S = (+,-, s_3, \ldots, s_n)$. 
We apply $H_0$ at the first two points to arrive at $S'' = (-, +, s_3, \ldots, s_n)$ and $J'' = (-1,1,j_3,\ldots, j_n)$. If $s_3 = +$ then we may apply a $Y_1$ move to decrease the length, or otherwise if $s_3=-$ then we apply an $H_1$ move:
\begin{center}
    \begin{tikzpicture}
        \node[above] at (0,0) {$+$};
        \node[above] at (1,0) {$-$};
        \node[above] at (2,0) {$+$};
        
        \node[left] at (0,0) {$0$};
        \node[right] at (1,0) {$0$};
        \node[right] at (2,0) {$0$};

        \node[left] at (0,-1) {$-1$};
        \node[left] at (1,-1) {$1$};
        \node[right] at (2,-1) {$0$};
       \draw (1,0) -- (1,-1);
       \draw (2,0) -- (2,-1);
       \draw(0,-.5) -- (1,-.5);
       \draw (0,0) -- (0,-1);

       \draw(1,-1) -- (1.5,-1.5);
       \draw (2,-1) -- (1.5,-1.5);
       \draw (1.5,-1.5) -- (1.5,-2);
       \draw (0,-1) -- (0,-2);

\begin{scope}[shift = {(5,0)}]
     \node[above] at (0,0) {$+$};
        \node[above] at (1,0) {$-$};
        \node[above] at (2,0) {$-$};
        
        \node[left] at (0,0) {$0$};
        \node[right] at (1,0) {$0$};
        \node[right] at (2,0) {$0$};

        \node[left] at (0,-1) {$-1$};
        \node[left] at (1,-1) {$1$};
        \node[right] at (2,-1) {$0$};

         \node[left] at (1,-2) {$0$};
        \node[right] at (2,-2) {$1$};
        
          \draw (0,0) -- (0,-2);
       \draw (1,0) -- (1,-2);
       \draw (2,0) -- (2,-2);
       \draw(0,-.5) -- (1,-.5);
       \draw(1,-1.5) -- (2,-1.5);
\end{scope}
\node at (3.5,-1) {or};
    \end{tikzpicture} .
\end{center}
Continuing this by examining $s_4$, we will either eventually reduce the length or arrive at the state string $(-1,0,\ldots, 0,1)$, which can be reduced by an annular $Y_0$ or $U_0$ move.
\end{proof}

If $(S,J)$ is not admissible then the growth algorithm may not terminate at an empty pair; for instance, consider $S= (+,-)$ and $J = (1,1)$, where no growth rule can be applied. Moreover, the growth algorithm may continue indefinitely: if $S= (+,-)$ and $J=(1,0)$, then $\growalg$ would produce an infinite sequence of $H$'s as in \eqref{eq:stacked Hs}. 
The following is our analogue of \cite[Lemma 1]{KhKup}, and we include the admissibility condition only to rule out this latter situation. 

\begin{lemma}
\label{lem:growth-algorithm-order-independence}
Let $(S,J)$ be an admissible (sign, state) string pair.
The web resulting from applying the growth algorithm does not depend on the choice of order in which we apply the growth rules.
\end{lemma}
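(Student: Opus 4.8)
The plan is to regard the growth algorithm as an abstract rewriting system and deduce order-independence from confluence, following \cite[Lemma 1]{KhKup}. A \emph{state} of the system is a pair consisting of a partially grown web $w$ together with its lower boundary $(\mathrm{sign},\mathrm{state})$-string $(T,K)$, read cyclically; a single step glues one $H$-, $U$-, or $Y$-type growth web along a pair of adjacent boundary points and updates $(T,K)$ by the corresponding replacement of Figure \ref{fig:growth dictionary}. The initial state is $(\varnothing,(S,J))$, and Lemma \ref{lem:growth alg terminates} guarantees that for admissible $(S,J)$ the system is terminating and a state is terminal precisely when its string is empty. By Newman's lemma (the diamond lemma for terminating rewriting systems) it then suffices to establish \emph{local confluence}: whenever two distinct steps $A$ and $B$ apply to a state, the two resulting states can be driven to a common state, \emph{with matching partial webs}. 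Unique normal forms follow, and since the normal form has empty string, the terminal web is a function of $(S,J)$ alone, which is the assertion.

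For local confluence I would first record that at any fixed pair of adjacent boundary points at most one growth rule is applicable, since the rule ($H_{1},H_{0},H_{-1}$, $U_0$, or $Y_{1},Y_{-1},Y_{0}$) is determined by the two signs together with the two state labels; hence $A$ and $B$ never act on the same pair, and their supports are either disjoint or share exactly one boundary point. If they are disjoint, the two growth webs are attached in disjoint regions of the annulus and the path replacements commute, so $A$ then $B$ gives the same state as $B$ then $A$. If the supports overlap they occupy three consecutive boundary points, and there are finitely many \emph{critical pairs}, indexed by the signs of these three points and the compatible state labels; for each I would exhibit explicit sequences of moves from the two one-step descendants to a common state, checking that the sub-web glued in along each of the two orders is the same web up to ambient isotopy. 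These local configurations and their resolutions are exactly those of the planar case in \cite{KhKup}: the only annular feature is that the three consecutive points may straddle the basepoint $*$, but the growth rules and the isotopy classes of the glued-in pieces are entirely local and the basepoint obstructs no rule, so no genuinely new critical pair appears. (The smallest instance, $\lvert S\rvert = 2$, is already transparent from Example \ref{ex:length 2} and Lemma \ref{lem:2 boundary points}, where every run consists of at most two forced steps.)

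The main obstacle I anticipate is bookkeeping rather than conceptual: one must propagate the partial web through each confluence diagram and verify, case by case, that the two competing orders of moves glue in the \emph{same} sub-web up to isotopy, not merely that the two resulting boundary strings coincide; it is this web-level equality, fed into Newman's lemma, that upgrades confluence of strings to order-independence of the final web. Checking the complete finite list of critical pairs is somewhat lengthy but each case is a direct local computation, and the annular structure contributes only the observation that wrap-around configurations reduce to the planar ones. The remaining hypothesis of Newman's lemma, termination, is exactly Lemma \ref{lem:growth alg terminates}; this is where admissibility of $(S,J)$ is used, ruling out an infinite sequence of $H$-moves of the kind noted in the remark following that lemma.
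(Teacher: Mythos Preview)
Your approach is essentially the paper's: the paper frames the argument as a minimal-counterexample reduction rather than invoking Newman's lemma by name, but the substance is identical---reduce to two moves $M_1,M_2$ with overlapping support on three consecutive boundary points and verify local confluence by a finite case analysis (the paper tabulates the eighteen candidate pairs, separates the nine with no choice from the nine requiring an isotopy check, and makes the length-$3$ wraparound check you allude to). One small caveat: Lemma~\ref{lem:growth alg terminates} as stated asserts only that the algorithm halts at the empty string \emph{when} it halts, whereas Newman's lemma needs strong termination of every run; this deserves a sentence (for instance, each $H$-move shifts $\sum_i\pi_i$ by $-\mu^{\pm}$ in the weight lattice, so only finitely many $H$-moves can occur between length-reducing moves).
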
 

\begin{proof}
Suppose there were an admissible (sign, state) string pair $(S,J)$, and that there exist two sequences of moves (i.e.\ applications of the growth rules) that result in nonequivalent webs.

Without loss of generality, we may assume $(S,J)$ is a minimal counterexample. That is, by applying two inequivalent rules (`moves') $M_1$ and $M_2$ to $(S,J)$, we obtain new admissible (sign, state) string pairs $(S_1, J_1)$ and $(S_2, J_2)$, respectively, and both of these are \emph{not} counterexamples, i.e.\ the growth algorithm determines a unique web $w_i$ for the pair $(S_i, J_i)$, where $i = 1,2$. 
As $(S,J)$ is a counterexample, we are assuming here that $w_1$ and $w_2$ are nonequivalent webs. 

If $M_1$ and $M_2$ are disjoint from each other (i.e.\ they operate on a total of four distinct endpoints), then $w_1$ and $w_2$ are obviously isotopic, simply by exchanging the heights at which the moves $M_1$ and $M_2$ are applied. 
Therefore we need only consider situations where the the moves $M_1, M_2$ are not disjoint, and then compare $(S_1,J_1)$ and $(S_2, J_2)$. To show that $(S,J)$ cannot be a counterexample, we show that, when there is a choice to be made, any choice will lead to an isotopic web. 

First, let us categorize the possible moves (growth rules) by how they can be combined with the other moves. 
We will consider admissible pairs of length-3 (sign, state) substrings where two rules can be applied consecutively. 
Then, we can consider whether there are any other possible sequences of moves we can execute.

The chart on the left organizes the seven growth webs by their right-most label on the bottom row. 
This label will then become the top left label of a composable growth web. 
The chart on the right organizes the seven growth webs by the left-most label on the top row. 

    \begin{center}
    \renewcommand{\arraystretch}{1.5}

    \begin{tabular}{|r l|}
        \hline
        bottom right label = $1$: & $H_1, H_0, Y_1$ \\
        bottom right label = $0$: & $H_{-1}, Y_0$ \\
        bottom right label = $-1$: & $Y_{-1}$ \\ 
        \hline
    \end{tabular}
    \hspace{1cm}
    \begin{tabular}{|r l|}
        \hline
        top left label = $1$: & $H_1, U_0, Y_1, Y_0$ \\
        top left label = $0$: & $H_0, H_{-1}, Y_{-1}$ \\
        top left label = $-1$: & none \\ 
        \hline
    \end{tabular}
    \end{center}

    A priori, we need to check the 18 pairs 
    \[
        \{H_1, H_0, Y_1 \} \times  \{H_1, U_0, Y_1, Y_0\} 
        \quad
        \cup 
        \quad
        \{H_{-1}, Y_0\} \times \{ H_0, H_{-1}, Y_{-1}\}.
    \]

    \textit{When there is no choice to be made.}
    For some pairs, there is no choice to be made. 
    For example, for $H_1$ followed by $H_1$, the top length-3 (sign, state) substrings must be one of the following:
    \[
        \text{either} 
        \qquad
        \begin{tabular}{c c c}
            1 & 0 & 0 \\
            $+$ & $-$ & $-$ 
        \end{tabular}
        \qquad
        \text{or}
        \qquad
        \begin{tabular}{c c c}
            1 & 0 & 0 \\
            $-$ & $+$ & $+$ 
        \end{tabular}.
    \]

    In either case, one must apply $H_1$ on the left first. 
    The resulting propagated web looks like the web in Figure \ref{fig:no choice ex}.

    Table \eqref{table:no choice} lists the pairs of moves where there is no choice for which move to apply first. 
    The first column shows the pair of moves, written in the form (move on left two endpoints, move on remaining right two remaining endpoints); 
    the second column shows the starting (sign, state) substring, so that the reader may verify the lack of choice. 
    (Note that for the second column, one may also reverse all the signs in the sign substring.)

    \begin{equation}    
    \label{table:no choice}
    \renewcommand{\arraystretch}{1.5}
    \begin{tabular}{|c|c|}
        \hline
        moves & (sign, state) substrings \\
        \hline
        $H_1, H_1$ & $+-+, \quad (1,0,0)$ \\
        $H_0, H_1$ & $+--, \quad (0,0,0)$ \\
        $Y_1, H_1$ & $+++, \quad (1,0,0)$ \\
        $H_{-1}, H_0$ & $+--, \quad (0,-1,0)$ \\
        $H_{-1}, H_{-1}$ & $+--, \quad (0,-1,-1)$ \\
        $H_{-1}, Y_{-1}$ & $+-+, \quad (0,-1,-1)$ \\
        $Y_0, H_0$ & $+++, \quad (1,-1,0)$ \\
        $Y_0, H_{-1}$ & $+++, \quad (1,-1,-1)$ \\
        $Y_0, Y_{-1}$ & $++-, \quad (1,-1,-1)$ \\
        \hline
    \end{tabular}
    \end{equation}

    \textit{Additional check for annular case.}
    Since we are working in the annulus $\A_0'$, there is one more thing to check. 
    If the substring being considered is indeed the entire boundary of the web, then there is a priori a third possible move to consider first: a rule that is active on the first and third endpoints.
    By admissibility of the \textit{sign} string, 
    the only cases to consider are those where all signs are the same, namely the cases $Y_1, H_1$ and $Y_0, H_{-1}$. 
    But in either of these cases, the third state label is strictly less than the first state label, so indeed there is no rule we can apply. 

    \textit{When a choice can be made.}
    There are nine remaining cases, shown in \eqref{table:choice can be made}. In all of these, choosing to perform the `other' move first (i.e.\ operate on the right two endpoints first) yields an equivalent web.
    In the chart below, we record the remaining nine cases to check in the planar setting. 
    For example, the first row
    \begin{center}
    \begin{tabular}{|c|c|c|}
        \hline
        $H_1, U_0$ & $Y_{-1}, Y_0$ 
        & 
        $+--, \quad (1,0,-1)$
            \\
        \hline
    \end{tabular}
    \end{center}
    indicates there is an equivalence of webs
    \[
        (\id \sqcup U_0) \circ (H_1 \sqcup \id)  \cong 
        Y_0 \circ (\id \sqcup Y_{-1}),
    \]
    as shown in the Figure \ref{fig:HU-YY-example-homotopy}.

    \begin{figure}
        \centering
        
          \subcaptionbox{ \label{fig:no choice ex}}[.35\linewidth]
{
 \begin{center}
    \begin{tikzpicture}[scale=.75,
        decoration={
        markings,
        mark=at position 0.5 with {\arrow{>}}}
        ] 
        \draw (0,0)  -- (1,0);
        \draw (0,0) -- +(120:1) node[label={90:$1$}] {};
        \draw (0,0) to[out=240, in=90] (-.5, -1.732) node[label=-90:$0$] {};
        \draw (1,0) -- +(60:1) node[label={90:$0$}] {};
        \draw (1,0) -- +(-60:1) 
            -- +(-90:1.732) node[label={-90:$0$}] {};
        \draw (1.5, -0.866) -- +(1,0);
        \draw (2.5, -0.866) to[out=60, in=-90] (3, .866) node[label={90:$0$}] {};
        \draw (2.5, -0.866) -- + (-60:1) node[label={-90:$1$}] {};
    \end{tikzpicture}
    \end{center}
}
        \subcaptionbox{ \label{fig:HU-YY-example-homotopy}}[.6\linewidth]
{\includestandalone{images/HU-YY-example-homotopy}
}
        \caption{(A): An example of when there is no choice to be made. At the middle level, the state labels are $(0,1,0)$. (B): The equivalent webs $(\id \sqcup U_0) \circ (H_1 \sqcup \id)  \cong 
        Y_0 \circ (\id \sqcup Y_{-1})$.}  
    \end{figure}
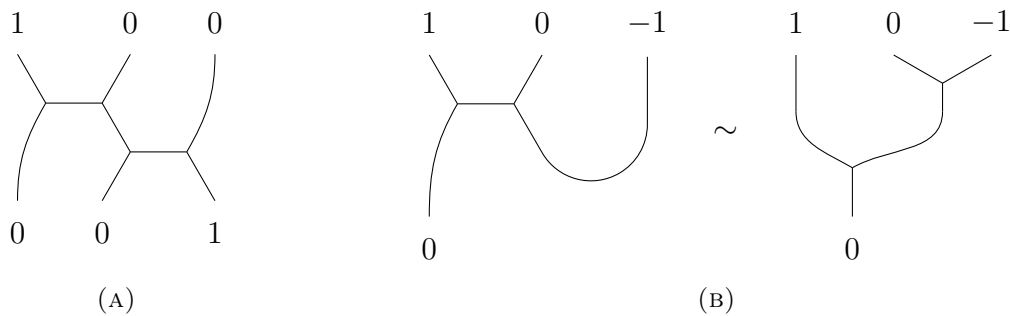

    \begin{equation}
    \label{table:choice can be made}
        \renewcommand{\arraystretch}{1.5}
        \begin{tabular}{|c|c|c|}
            \hline
            \makecell{start with left \\ two endpoints} & \makecell{start with right \\ two endpoints} & (sign, state) substrings \\
            \hline
            $H_1, U_0$ & $Y_{-1}, Y_0$      & 
            $+--, \quad (1,0,-1)$
            \\
            $H_1, Y_1$ & $H_0, Y_0$         & 
            $+-+, \quad (1,0,0)$
            \\
            $H_1, Y_0$ & $H_{-1}, Y_0$      &
            $+-+, \quad (1,0,-1)$
            \\
            $H_0, U_0$ & $Y_{-1}, Y_{-1}$   & 
            $+--, \quad (0,0,-1)$
            \\
            $H_0, Y_1$ & $H_0, Y_{-1}$      & 
            $+-+, \quad (0,0,0)$
            \\
            $H_0, Y_0$ & $H_{-1}, Y_{-1}$   & 
            $+-+, \quad (0,0,-1)$
            \\
            $Y_1, U_0$ & $Y_{-1}, U_0$      & 
            $+++, \quad (1,0,-1)$
            \\
            $Y_1, Y_1$ & $H_0, U_0$         & 
            $++-, \quad (1,0,0)$
            \\
            $Y_1, Y_0$ & $H_{-1}, U_0$      & 
            $++-, \quad (1,0,-1)$
            \\
            \hline
        \end{tabular}
    \end{equation}

    \textit{Additional check for annular case.}
    Once again, we need to check the case $Y_1, U_0$, the only case where the sign substring is an admissible sign string. Again, $-1 < 1$ so no rule can be applied to the third and first endpoints. 
    
\end{proof}

\begin{lemma}
    Any annular SL(3) web produced by the growth algorithm is non-elliptic.
\end{lemma}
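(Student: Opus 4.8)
The plan is to induct on the number $N$ of growth moves used to produce $w$, using order‑independence (Lemma~\ref{lem:growth-algorithm-order-independence}) and termination (Lemma~\ref{lem:growth alg terminates}) to keep the induction well‑behaved. Write $(S,J)$ for the admissible pair, let $M$ be the first growth move applied, let $(S_1,J_1)$ be the resulting admissible pair, and set $w_1=\growalg(S_1,J_1)$, which is non‑elliptic by the inductive hypothesis. Then $w$ is obtained by gluing the small growth web $\mathrm{piece}(M)$ — one of the seven pictures in Figure~\ref{fig:growth dictionary} — on top of $w_1$ along the legs corresponding to the points processed by $M$. Any inner face of $w$ that is not already an inner face of $w_1$ must be adjacent to $\mathrm{piece}(M)$ (faces strictly below the newly added strip are unchanged, and faces meeting only the new boundary circle are outer, hence not elliptic), so it suffices to check that attaching any one of these seven webs on top of a non‑elliptic web creates no elliptic inner face. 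The base cases $N\le 1$ are immediate: $w$ is either empty or a single boundary‑to‑boundary arc.

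Next I dispose of elliptic faces of ``circle'' type, i.e.\ inner faces bounded by a vertexless closed loop. Here the key invariant is that \emph{every edge of $w$ lies on a path in $w$ to $\partial w$}: this holds after the first move (the legs of $\mathrm{piece}(M)$ attach to strands that reach $\partial w$ and no new components are created), and it is preserved by each subsequent move, since a $U_0$ caps two strands already reaching $\partial w$ while $Y$‑ and $H$‑type moves attach their new edges to such strands. Hence $w$ has no closed components at all (compare \cite[Lemma~3.21]{AK}), so no inner face of $w$ is a circle — whether or not it would enclose the puncture. Note also that bigon and square inner faces \emph{that enclose the puncture} are explicitly permitted by Definition~\ref{def:non-elliptic web}, so these require no separate attention; only bigons and squares not enclosing the puncture remain.

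Such a face $F$ is bounded by a simple closed curve not winding around the puncture, so in the downward‑growing picture it has a well‑defined topmost feature — either a trivalent vertex or an $H$‑crossbar — and a well‑defined bottommost vertex; its two side arcs contain no vertex (bigon) or exactly one vertex each (square). Let $M_v$ be the move that creates the topmost feature of $\partial F$. Since the only trivalent vertices produced by the growth rules are the merge vertex of a $Y$‑web (two legs above, one below) and the two end vertices of an $H$‑web (one leg above, one below, one along the crossbar), the states carried on the downward edges of $F$ immediately below $M_v$ are sharply constrained by $M_v$ together with the state dictionary of Figure~\ref{fig:growth dictionary}. Closing $F$ quickly into a bigon or square would force the next one or two moves acting on those downward edges to be of a very specific type — an immediate $Y$‑ or $U_0$‑merge, or a short alternating chain of $H$'s — and one checks that the states emerging from $M_v$ never permit the required rule to apply. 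This is a finite inspection of composable pairs (and a handful of triples) of growth webs, entirely parallel to the planar verification in \cite{KhKup} and to the case analysis already carried out in the proof of Lemma~\ref{lem:growth-algorithm-order-independence}; the only genuinely new point is that, because our states are cyclically ordered, one must also admit rules acting on the first and last active strand, which is handled exactly as in the ``additional check for annular case'' steps there (admissibility of the sign string forces all signs equal, and the relevant state inequality then rules the move out).

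The main obstacle is the bookkeeping in this last step: enumerating, for each of the seven growth webs playing the role of $M_v$, the configurations of adjacent legs and states that could conceivably close up into a bigon or a square, and confirming that the growth rules forbid each one. This is elementary but requires care, and the observation that a face may be assumed to have a unique topmost feature — valid precisely because circle faces and puncture‑enclosing faces have already been handled — is what keeps this list finite and short.
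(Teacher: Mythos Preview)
Your inductive setup is reasonable and the reduction to the single new inner face below an $H$-crossbar is correct: only an $H$-type first move caps an outer face of $w_1$ into an inner face of $w$. However, the argument has a genuine gap in its treatment of the annular case.

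You claim that the cyclic (wraparound) moves are ``handled exactly as in the `additional check for annular case' steps'' of Lemma~\ref{lem:growth-algorithm-order-independence}, where ``the relevant state inequality then rules the move out.'' This is not what happens here. In that lemma the annular check concerned a \emph{third} competing move and was indeed vacuous. In the present lemma the situation is the opposite: there \emph{are} state-compatible wraparound closures. For the bigon, the bottom states of $H_0$ are $(-1,1)$; read cyclically as $(1,-1)$ this is exactly the top of a $U_0$, so the wraparound $U$ is permitted. For the square, several of the paper's sub-cases (in particular the analogue of case~(d), and the $(-1,1)$ branches of cases (a)--(c)) likewise admit a state-compatible wraparound $U$. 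The point is not that these moves are forbidden, but that the resulting bigon or square \emph{necessarily encloses the puncture}, hence is not elliptic by Definition~\ref{def:non-elliptic web}. Your proof needs to identify these wraparound configurations explicitly and verify that the closed face winds around $\times$; asserting that the move is ruled out is simply false.

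A second issue is that the square case is not a ``handful'' of checks parallel to \cite{KhKup}. The paper organizes this into four structural cases depending on whether the top and side edges of the square come from $H$- or $Y$-moves (or a top $U$), each with sub-cases, and in each one must trace the states backward from the closing $U$ to reach a contradiction or a puncture-enclosing configuration. You should at minimum indicate this structure; ``one checks'' does not suffice, since the planar argument from \cite{KhKup} does not cover the new annular branches. Relatedly, your inductive framing forces you to argue about the shape of the outer face of $w_1$ between the two legs of $H$, which is awkward: you would need a lemma of the form ``if that outer face is a $U$ (resp.\ an $H$), then a $U$ (resp.\ $H$) rule is applicable there,'' and this is not available at this point. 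The paper's approach---analyzing the \emph{closing} $U$ move directly rather than inducting on the first move---sidesteps this, since the states immediately above the final $U$ are exactly the bottom states of the move(s) that created the edges just above it.
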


\begin{proof}
    This is analogous to \cite[Lemma 2]{KhKup} but requires considering additional cases. Clearly a circle can never be produced. A bigon can be produced only if an $H$-web is closed up by a $U$-web. By examining the state string at the bottom boundary of $H$, this can only occur if the last applied move is the following $U$-web
    \begin{center}
        \begin{tikzpicture}
            \draw[very thick,\sidecolor] (-.5,0) -- (-0.5,-2);
            \draw[very thick,\sidecolor] (1.5,0) -- (1.5,-2);
            \draw[\boundarycolor] (-.5,0) -- (1.5,0);
            
            \draw (0,0) -- (0,-1);
            \draw (1,0) -- (1,-1);
            \draw (0,-.5) -- (1,.-.5);
            \draw (0,-1) arc (0:-90:.5 and .5);
            \draw (1,-1) arc (180:270:.5 and .5);
        \end{tikzpicture}
    \end{center}
    which does create a bigon, but the bigon contains the puncture.

    If a square face is produced then it must have been closed off by a $U$-web. We organize the check into four cases:
    \begin{equation*}
        \begin{aligned}
            \includestandalone{images/square_face_cases}.
        \end{aligned}
    \end{equation*}
    For cases $(a)$, $(b)$, and $(c)$, the horizontal edges may be part of $H$- or $Y$-webs. Case $(a)$ has the following four sub-cases:
     \begin{equation*}
        \begin{aligned}
            \includestandalone{images/square_face_case1}.
        \end{aligned}
    \end{equation*}
    If the state at the top of the $U$-web is $1,-1$, read from left to right (which must hold for the first three of these sub-cases), then working backwards, we see that the first $H$-web cannot occur, since its bottom boundary points must have state $0,0$:
     \begin{equation*}
        \begin{aligned}
            \includestandalone{images/square_face_case1_states}.
        \end{aligned}
    \end{equation*}
    The remaining check in case $(a)$ is when the state at the top of the $U$-web is $-1,1$, which occurs in only the fourth sub-case and will produce the web
   \begin{center}
        \begin{tikzpicture}
            \draw[very thick,\sidecolor] (-1.5,0) -- (-1.5,-2.5);
            \draw[very thick,\sidecolor] (2.5,0) -- (2.5,-2.5);
            \draw[\boundarycolor] (-1.5,0) -- (2.5,0);
            
                          \draw (0,0) -- (0,-1); 
            \draw (1,0) -- (1,-1);
            \draw (0,-.5) -- (1,.-.5);
            \draw (0,-1) .. controls ( 0,-2) .. (-1.5,-2);
            \draw (1,-1) .. controls ( 1,-2) .. (2.5,-2);
            
            \draw (2,0) .. controls (2,-1) .. (1,-1); 
            \draw (-1,0) .. controls (-1,-1) .. (0,-1);
            
            \node[below left] at (0,-1) {$\mathsmaller{-1}$};
            
            \node[below right] at (1,-1) {$\mathsmaller{1}$};
        \end{tikzpicture},
    \end{center}
    which does contain a square face, but the square face contains the puncture.

    Case (b) has the following two sub-cases:
     \begin{equation*}
        \begin{aligned}
            \includestandalone{images/square_face_case2}.
        \end{aligned}
    \end{equation*}
    If the state at the top of the $U$-web is $1,-1$, then working backwards we again see that the $H$-web is not possible:
     \begin{equation*}
        \begin{aligned}
        \includestandalone{images/square_face_case2_states}.
        \end{aligned}
    \end{equation*}
    If the state at the top of the $U$-web is $-1,1$, which can only occur in the first of the two sub-cases, then the web produced is 
    \begin{center}
        \begin{tikzpicture}
            \draw[very thick,\sidecolor] (-2.5,0) -- (-2.5,-2.5);
            \draw[very thick,\sidecolor] (2,0) -- (2,-2.5);
            \draw[\boundarycolor] (-2.5,0) -- (2,0);
            
                        \draw (0,0) -- (0,-1.5);
            \draw (1,0) -- (1,-1.5);
            \draw (0,-.5) -- (1,-.5);
            \draw (-1,0) .. controls (-1,-1) .. (0,-1);
            \draw (-2,0) .. controls  (-2,-1.5) .. (0,-1.5);
            
            \draw (0,-1.5) .. controls (0,-2.) .. (-2.5,-2); 
            \draw (1,-1.5) .. controls (1,-2) .. (2,-2);
            
            \node[below left] at (0,-1.5) {$\mathsmaller{-1}$};

            \node[below right] at (1,-1.5) {$\mathsmaller{1}$};
        \end{tikzpicture},
    \end{center}
    which contains a square face with a puncture.

    Case (c) is similar and is left to the reader. Case (d) contains two $U$-webs. The state at the boundary of the top $U$-web must be $1,-1$. If the state at the bottom $U$-web is also $1,-1$, then this is readily ruled out. Otherwise, the state at the bottom $U$-web is $-1,1$, which produces the web 
     \begin{center}
        \begin{tikzpicture}
            \draw[very thick,\sidecolor] (-.5,0) -- (-.5,-2.5);
            \draw[very thick,\sidecolor] (3.5,0) -- (3.5,-2.5);
            \draw[\boundarycolor] (-.5,0) -- (3.5,0);
            
            \draw (0,0) -- (0,-1);
            \draw (1,0) -- (1,-1);
            \draw (2,0) -- (2,-1);
            \draw (3,0) -- (3,-1);
            
            \draw (0,-.5) -- (1,-.5);
            \draw (2,-.5) -- (3,-.5);
            
            \draw (2,-1) arc (0:-180:.5 and .5);
            
            \draw (0,-1) .. controls (0,-2) .. (-.5,-2);
            
            \draw (3,-1) .. controls (3,-2) .. (3.5,-2);
            
            \node[below right] at (0,-1.5) {$\mathsmaller{-1}$};
            
            \node[below left] at (3,-1.5) {$\mathsmaller{1}$};
        \end{tikzpicture},
    \end{center}
    which contains a square face with a puncture. 
\end{proof}

\subsection{Minimal cut paths}

We will extend the minimal cut path algorithm from \cite{KhKup} to the annular setting.
We will see in Theorem \ref{thm:MCP-GROW-inverses} that this algorithm, denoted by $\mcpalg$, provides an inverse to the growth algorithm $\growalg$. 
It may be helpful to refer to Example \ref{eg:example-correspondence} while reading through this section and the next. Here is a summary of the two algorithms.
\renewcommand{\arraystretch}{1.25}
\begin{center}
\begin{tabular}{|c | c | c |}
\hline
    & $\growalg$ 
    & $\mcpalg$
    \\
\hline
    input: & $(S,J)$ an admissible (sign, state) pair 
    & a nonelliptic $SL(3)$ web $w$ in $\A_0'$
    \\
\hline 
    output: & a nonelliptic $SL(3)$ web $w^S_J$ in $\A_0'$ 
    &  a state string $J^w$, the sign string $S^w$
    \\
\hline
\end{tabular}
\end{center}
\renewcommand{\arraystretch}{1}

Here are some definitions specific to the situation we are working with (webs in $\A_0'$). 
Fix a non-elliptic web $w$ in $\A_0'$.
Let $n$ be the number of endpoints of $w$, and label them from left to right as $B_1, B_2, \ldots, B_n$. 
Let $S^w$ denote the sign string representing the sequence of $\pm$ boundary orientations at the endpoints $(B_k)_{1\leq k \leq n}$.

Let $P$ denote a point in the infinite region of $\A_0'$, below the web $w$. 
Along $\partial \A_0'$, place points $Q_1, Q_2, \ldots, Q_n$ such that $Q_k$ is on the interval between the boundary point indexed $k-1$ and the boundary point indexed $k$, with indices interpreted modulo $n$ so that $B_0 = B_n$.

\begin{definition}
A \emph{cut path from $P$ to $Q_k$} is an oriented simple path $\gamma$ from $P$ to $Q_k$ that intersects $w$ transversely. 

\begin{itemize}
    \item Let $I^+(\gamma,w)$ (\resp $I^-(\gamma,w)$) be the number of positive (\resp negative) intersections between $\gamma$ and $w$. 

    \begin{center}
        \begin{tikzpicture}[scale=.7]
            \draw[<-] (0,0) -- (2,0);
            \draw[->, densely dashed] (1,-1) -- (1,1);
            \node[right] at (1,1) {$\gamma$};
            \node[above] at (0,0) {$w$};
            \node at (1,-1.5) {\textsmaller{positive}};

            \begin{scope}[shift={(5,0)}]
                   \draw[->] (0,0) -- (2,0);
            \draw[->, densely dashed] (1,-1) -- (1,1);
            \node[right] at (1,1) {$\gamma$};
            \node[above] at (2,0) {$w$};
            \node at (1,-1.5) {\textsmaller{negative}};
            
            \end{scope}
        \end{tikzpicture}
    \end{center}
    \item The \emph{weight} of $\gamma$ is defined as 
    \[
        \wt(\gamma) := I^+(\gamma, w) \mu^+ 
        + I^-(\gamma,w) \mu^- \in \Lambda.
    \]
    \item Let 
    \[
        I(\gamma,w) := I^+(\gamma,w) + I^-(\gamma,w) 
    \]
    be the total number of intersections.
\end{itemize}

A cut path $\gamma: P \leadsto Q_k$ is \emph{minimal} if it
intersects $w$ at a minimal number of points. 

\end{definition}

When using the model $\A_0$, we will view $\A_0$ as a disk with a marked point $\times$ which will play the role of $P$ above, in that minimal cut paths start at $\times$. 

The following modification of \cite[Lemma 6.5]{Kup-spiders} describes how to relate minimal cut paths with the same endpoint. 

\begin{lemma}
\label{lem:min-cut-weight-invariance}
    Let $w$ be a non-elliptic web and let $\gamma$ and $\gamma'$ be minimal cut paths from $P$ to $Q_k$. If $\gamma$ and $\gamma'$ are disjoint except at their endpoints, then they are related by a sequence of $\Imove$-moves:\footnote{Note, this is called an $\mathrm{H}$-move in \cite{Kup-spiders,KhKup}.}
\begin{equation}
\label{eq:H move}
\begin{aligned}
    \begin{tikzpicture}[scale=.5]
        \draw (0,0) -- (0,-1);
        \draw (-1,1) -- (0,0);
        \draw (1,1) -- (0,0);
        \draw (-1,-2) -- (0,-1);
        \draw (1,-2) -- (0,-1);
        \draw[densely dashed] (0,1) .. controls (-1,-.5) ..  (0,-2);

\draw[<->] (1.5,-.5) -- (2.5,-.5);
        \begin{scope}[shift={(4,0)}]
                    \draw (0,0) -- (0,-1);
        \draw (-1,1) -- (0,0);
        \draw (1,1) -- (0,0);
        \draw (-1,-2) -- (0,-1);
        \draw (1,-2) -- (0,-1);
        \draw[densely dashed] (0,1) .. controls (1,-.5) ..  (0,-2);
        \end{scope}
    \end{tikzpicture}
    \end{aligned}
\end{equation}    
More generally, $\gamma$ and $\gamma'$ are related by $\Imove$-moves and the following three moves.
  \begin{center}
    \begin{tikzpicture}
        \draw[densely dashed] (0,0) .. controls (-.5,-.5) .. (0,-1);
        \draw[densely dashed] (-.5,0) .. controls (0,-.5) .. (-.5,-1);
        \draw[<->] (.5,-.5) -- (1,-.5);
        \draw[densely dashed]  (1.5,0) -- (1.5,-1);
        \draw[densely dashed] (2,0) -- (2,-1);
        \node[above] at (0,0) {$\gamma'$};
        \node[above] at (-.5,0) {$\gamma$};
        \node[above] at (2,0) {$\gamma'$};
        \node[above] at (1.5,0) {$\gamma$};
    \end{tikzpicture}
    \hskip4em
    \begin{tikzpicture}
        \draw (0,0) -- (1,0);
        \draw[densely dashed] (.5,.5) -- (.5,-.5);
        \draw[densely dashed] (1,.5) .. controls  (.25,.25) .. (0,-.5);
        \node[above] at (.5,.5) {$\gamma$};
        \node[above] at (1,.5) {$\gamma'$};
         \draw[<->] (1.5,0) -- (2,0);
         \begin{scope}[shift={(2.5,0)}]
           \draw (0,0) -- (1,0);
           \node[above] at (.5,.5) {$\gamma$};
        \node[above] at (1,.5) {$\gamma'$};
        \draw[densely dashed] (.5,.5) -- (.5,-.5);
        \draw[densely dashed] (1,.5) .. controls  (.75,-.25) .. (0,-.5);
        \end{scope}
    \end{tikzpicture}
     \hskip4em
    \begin{tikzpicture}
         \draw[densely dashed] (0,0) .. controls (.75,-.5) .. (.5,-1);
        \draw[densely dashed] (1,0) .. controls (.25,-.5) .. (.5,-1);
        \node[above] at (0,0) {$\gamma$};
        \node[above] at (1,0) {$\gamma'$};
        \node at (.5,-1) {$\times$};
        \draw[<->] (1.5,-.5) -- (2,-.5);
        \begin{scope}[shift={(2.5,0)}]
        \draw[densely dashed]  (0,0) -- (.5,-1);
        \draw[densely dashed] (1,0) -- (.5,-1);
         \node[above] at (0,0) {$\gamma$};
        \node[above] at (1,0) {$\gamma'$};
        \node at (.5,-1) {$\times$};
        \end{scope}
    \end{tikzpicture}
\end{center} 
In particular, the weight $\wt(\gamma)$ is independent of the choice of minimal cut path $P\leadsto Q_k$. 
\end{lemma}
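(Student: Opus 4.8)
The plan is to mimic Kuperberg's proof of \cite[Lemma 6.5]{Kup-spiders} (see also the discussion around \cite[Lemma 3]{KhKup}), the new feature being the presence of the puncture, which is what the three extra moves are for. I would perturb $\gamma$ and $\gamma'$ so that, apart from the shared endpoints $P$ and $Q_k$, they meet transversely in finitely many points, and argue by induction on $|\gamma\cap\gamma'|$. If this set is nonempty, pick an innermost disk $E$ whose boundary consists of a subarc $a\subset\gamma$ and a subarc $b\subset\gamma'$ meeting only at two common endpoints, at most one of which is $P$ (note $E$ never contains the puncture in its interior). Since $\gamma,\gamma'$ avoid the vertices of $w$ and are transverse to $w$, the intersection $w\cap E$ is a disjoint union of arcs together with a sub-web all of whose univalent vertices lie on $a\cup b$, and not at $\partial a$. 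Minimality of $\gamma$ and of $\gamma'$ forbids any arc of $w\cap E$ with both endpoints on $a$, or both on $b$: an innermost such arc would exhibit an isotopy of $\gamma$ (resp.\ $\gamma'$) reducing $I(\gamma,w)$ (resp.\ $I(\gamma',w)$).

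The crucial structural claim — and the step I expect to be the main obstacle — is that these constraints, together with the non-ellipticity of $w$, force $w\cap E$ to be a disjoint union of arcs crossing from $a$ to $b$ and of \emph{$\Imove$-configurations}, i.e.\ pairs of trivalent vertices joined by an edge arranged as in \eqref{eq:H move}. This is proved by the same Euler-characteristic bookkeeping as in Lemmas \ref{lem:at least 3 small regions} and \ref{lem:U Y or H} applied to the faces of the web $w\cap E$ lying inside $E$, once one verifies that cutting $w$ along $a$ and $b$ produces no new elliptic face. Granting the claim, one sweeps $a$ across $E$ onto $b$ one piece at a time: passing a bare arc is an isotopy through transverse cut paths; passing an $\Imove$-configuration is an $\Imove$-move; the collapse of the remaining web-free bigon is the first extra move; a crossing of $b$ past a web edge is the second extra move; and the local picture near $P=\times$ (the case that does not arise in \cite{KhKup}) is resolved by the third extra move. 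This strictly decreases $|\gamma\cap\gamma'|$, so the induction terminates at $\gamma\cap\gamma'=\{P,Q_k\}$.

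In that terminal (disjoint) case, $\gamma\cup\gamma'$ is an embedded circle bounding a disk $D\subset\A_0$, with $P=\times$ and $Q_k$ on $\partial D$ so that again $D$ does not contain the puncture in its interior, and the same structural claim applied to $w\cap D$ shows that $\gamma$ and $\gamma'$ differ by a sequence of $\Imove$-moves alone, giving the first assertion. For the last assertion it suffices to check that each of the four moves preserves $\wt$: the first and third extra moves are isotopies in the complement of $w$, hence change nothing; the second extra move merely slides a transverse intersection of the cut path along a single edge of $w$, leaving $I^+$ and $I^-$ unchanged; and an $\Imove$-move amounts to sweeping the cut path across the two trivalent vertices and the connecting edge of an $\Imove$-configuration, each elementary passage across a trivalent vertex leaving $I^+\mu^+ + I^-\mu^-$ fixed by the weight-conservation relation at the vertex (the sets $\Lambda^\pm$ of Figure \ref{fig:weight lattice} each sum to $0$; cf.\ \cite{Kup-spiders,KhKup}). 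Hence all minimal cut paths $P\leadsto Q_k$ carry the same weight.
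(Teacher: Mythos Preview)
Your overall strategy matches the paper's: induct on the number of interior intersection points of $\gamma$ and $\gamma'$, locate an innermost bigon $E$, handle the planar situation inside $E$, and eliminate two intersection points via the extra moves. The paper dispatches the bigon step by filling in the puncture to obtain a disk and then directly citing \cite[Lemma~6.5]{Kup-spiders}; you instead attempt to reprove that step via your ``crucial structural claim.''

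That claim is false as stated. The web $w\cap E$ need not be a disjoint union of arcs and isolated $\Imove$-configurations: it can contain internal (balanced) hexagonal faces, as in \eqref{eq:balanced hexagon with paths} and the analysis of Lemma~\ref{lem:region between cut paths}. Consequently your ``sweep $a$ across $E$ onto $b$ one piece at a time'' does not apply as written---there is no simple left-to-right decomposition into arcs and $\Imove$'s to sweep across. What the Euler-characteristic argument you allude to actually yields (and what Kuperberg proves) is the weaker but sufficient statement: $w\cap E$ always has an outer $H$-face along $a$ or along $b$, so an $\Imove$-move is available; performing it and iterating eventually reduces $w\cap E$ to parallel arcs, after which $a$ is isotopic to $b$. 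With this correction---or simply by invoking \cite[Lemma~6.5]{Kup-spiders} as the paper does---your induction goes through. The remaining ingredients (existence of an innermost bigon once the puncture is filled in, and invariance of $\wt$ under each of the four moves) are fine; the paper's more elaborate winding-number construction of the bigon is not strictly necessary.
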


\begin{proof}
For this proof, we replace $\A_0'$ by $\A_0$ and assume that $P$ is the puncture $\times$ so that $D^2:=\A_0\sqcup \{\times\}$ is a disk (with the puncture $\times$). 

We apply induction on the number of inner intersection points of $\gamma$ and $\gamma'$. Inner intersection points do not include the two boundary points $P$ (the marked point $\times$) and $Q_k$, which belong to both $\gamma$ and $\gamma'$.  

If there are no inner intersection points, then arcs $\gamma$ and $\gamma'$ separate the marked disk $D^2$ into a bigon $B$ and its complement, which has three sides. Consider the part $w\cap B$ of $w$ lying inside $B$. It is a non-elliptic web in a disk and $\gamma,\gamma'$, slightly pushed into $B$, are minimal cut paths for $w\cap B$ (if, say, $\gamma$ is not minimal for $w\cap B$ then this contradicts $\gamma$ being minimal for $w$). Next,  \cite[Lemma 6.5]{Kup-spiders}
implies that $\gamma$ and $\gamma'$  are related by a sequence of $\Imove$-moves
inside $B$, which completes the proof in this case. 

In the case of a nonempty set of inner intersection points, we want to find a bigon configuration inside the intersection system for $\gamma$ and $\gamma'$ so that these two curves each bound one side of the bigon and they are disjoint from the interior of the bigon. 

For that, deform $\gamma,\gamma'$ and $w$ without changing the combinatorics of their planar intersections so that $\gamma$ is the interval $[0,1]\in D^2$,  $P=\{0\}$  and $Q_k=\{1\}$. Pick a parametrization of $\gamma'$ and  consider $\gamma'$ as a function $(0,1]\lra D^2\setminus P\cong \SS^1\times (0,1]$. Note that $\gamma'(0)=P$, and indeed this is the only value $x$, $0\le x\le 1$, such that $\gamma'(x)=P$, since both $\gamma$ and $\gamma'$ are simple paths.  

Compose the function $\gamma'$ above with the projection to $\SS^1$ to get a function $f:(0,1]\lra \SS^1$. Pick a lift of this function to a function $\widetilde{f}:(0,1]\lra \R$, where we identify $\SS^1\cong \R/\Z$ and assume that $\widetilde{f}(1)=0$. Our isotopy can be set up so that $\widetilde{f}$ extends continuously to $0$, and necessarily $\widetilde{f}(0)\in \Z$. 

The path $\gamma'$ may travel away from $\gamma$ and then come back from the same side of the angular coordinate in $\SS^1$. This gives a non-minimal bigon, which may contain smaller bigons inside; take a minimal one. Otherwise, $\gamma'$ wraps around $\gamma$, always intersecting in the same direction. In this case we look at the bigon formed by the parts of $\gamma$ and $\gamma'$ from $P$ to their first intersection point.

More formally, 
consider all points $0<x_1<\dots <x_n<1$ such that $f(x_i)=0$ (equivalently, $\widetilde{f}(x_i)\in \Z$). Notice that $|\widetilde{f}(x_{i+1})-\widetilde{f}(x_i)|\le 1$  for all $i$ and if $\widetilde{f}(x_{i+1})=\widetilde{f}(x_i)$ then the corresponding intervals of $\gamma$ and $\gamma'$ form a minimal bigon.   

The only other possibility is for these numbers to always increase, either by $1$ or $-1$. Then the intervals of $\gamma$ and $\gamma'$ from $0$ to their first intersection point (common for both) form a bigon, which is shaded in Figure~\ref{fig:winding path}. 

\begin{figure}
    \centering
    \includestandalone{winding_path}
    \caption{}
    \label{fig:winding path}
\end{figure}

In either case, a bigon can be found and arcs of $\gamma$ and $\gamma'$ bounding it are minimal cut paths for this bigon, necessarily related by a sequence of $\Imove$-moves as argued in the first part of the proof. 
\end{proof}

For any non-elliptic annular web $w$, we can always find a system of minimal cut paths $\{\gamma_k\}$ which are pairwise disjoint. For instance, it can be built inductively by starting with the region containing the puncture and emanating outwards from it. In light of this, we adopt the following.
\begin{convention}
    Whenever considering minimal cut paths for a non-elliptic web, we will assume that they are pairwise disjoint. 
\end{convention}

As noted in the proof of \cite[Theorem 2]{KhKup}, the planar analogue of the following lemma follows from a curvature argument in \cite{Kup-spiders}. 

\begin{lemma}
\label{lem:adjacent cut paths}
    Let $w$ be a non-elliptic annular web. Consider minimal cut paths $\gamma_k$ and $\gamma_{k+1}$ from $P$ to adjacent points $Q_k$ and $Q_{k+1}$, respectively. Then after applying $\Imove$-moves (see \eqref{eq:H move}) the portion of the web between them is one of the three types shown in \eqref{eq:three options between adj paths}.
    \begin{equation}
    \label{eq:three options between adj paths}
    \begin{aligned}
        \begin{tikzpicture}[xscale=1.3]
            \draw[densely dashed] (0,0) -- (.4,-1.75);
            \draw[densely dashed] (1,0) -- (.6,-1.75);
            \draw (1,-.7) arc (60:120: 1 and .75);
             \draw (1,-.9) arc (60:120: 1 and .75);
             \node at (.5,-1.1) {$\vdots$};
           \node[above] at (0,0) {$\mathsmaller{Q_k}$};
             \node[above] at (1.2,0) {$\mathsmaller{Q_{k+1}}$};
             \node[above] at (.5,0) {$\mathsmaller{B_k}$};
             \draw (.5,0) arc (180:245:1 and .5);
        \end{tikzpicture}
        \hskip3em
            \begin{tikzpicture}[xscale=1.3]
            \draw[densely dashed] (0,0) -- (.4,-1.75);
            \draw[densely dashed] (1,0) -- (.6,-1.75);
            \draw (1,-.7) arc (60:120: 1 and .75);
             \draw (1,-.9) arc (60:120: 1 and .75);
             \node at (.5,-1.1) {$\vdots$};
               \node[above] at (0,0) {$\mathsmaller{Q_k}$};
             \node[above] at (1.2,0) {$\mathsmaller{Q_{k+1}}$};
             \node[above] at (.5,0) {$\mathsmaller{B_k}$};
             \draw (.5,0) -- (.5,-.25);
             \draw (.5,-.25) -- (-.1,-.45);
             \draw (.5,-.25) -- (1.1,-.45);
        \end{tikzpicture}
        \hskip3em
            \begin{tikzpicture}[xscale=1.3]
            \draw[densely dashed] (0,0) -- (.4,-1.75);
            \draw[densely dashed] (1,0) -- (.6,-1.75);
            \draw (1,-.7) arc (60:120: 1 and .75);
             \draw (1,-.9) arc (60:120: 1 and .75);
             \node at (.5,-1.1) {$\vdots$};
      \node[above] at (0,0) {$\mathsmaller{Q_k}$};
             \node[above] at (1.2,0) {$\mathsmaller{Q_{k+1}}$};
             \node[above] at (.5,0) {$\mathsmaller{B_k}$};
             \draw (.5,0) arc (0:-65:1 and .5);
        \end{tikzpicture}
    \end{aligned}
    \end{equation}
\end{lemma}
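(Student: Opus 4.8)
The plan is to reduce the annular statement to the planar one of \cite[Theorem 2]{KhKup}/\cite{Kup-spiders} by cutting the annulus open along one of the two cut paths. First I would use Lemma \ref{lem:min-cut-weight-invariance} and the disjointness convention to arrange $\gamma_k$ and $\gamma_{k+1}$ to be disjoint simple arcs from $P$ to the adjacent boundary marks $Q_k$, $Q_{k+1}$. Since $Q_k$ and $Q_{k+1}$ are adjacent (only the single boundary point $B_k$ lies between them on $\partial \A_0'$), the two arcs together with the boundary arc containing $B_k$ bound a region $R$ in $\A_0'$ whose closure is a disk: indeed $\gamma_k\cup\gamma_{k+1}$ cuts $\A_0'$ into two pieces, and the piece containing $B_k$ on its boundary is simply connected because any essential loop in $\A_0'$ would have to cross one of the cut paths (which it cannot, being contained in $R$), while the puncture/end $P$ is the common initial point of both arcs and hence on $\partial R$, not in the interior.

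Next I would observe that $w\cap R$ is a (possibly disconnected) web in the disk $R$ whose only boundary point on the ``web'' part of $\partial R$ is $B_k$, and whose other boundary points lie on the cut arcs $\gamma_k,\gamma_{k+1}$ — these intersection points are exactly $I(\gamma_k,w)$ and $I(\gamma_{k+1},w)$ in number. Minimality of $\gamma_k$ and $\gamma_{k+1}$ as cut paths for $w$ forces them, slightly pushed into $R$, to be minimal cut paths for the planar web $w\cap R$ as well (a shorter cut path inside $R$ would splice with the rest of $\gamma_k$ or $\gamma_{k+1}$ outside $R$ to give a shorter cut path for $w$, since nothing of $w$ outside $R$ is touched). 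Now $w\cap R$ is non-elliptic — an elliptic inner face of $w\cap R$ not meeting $\gamma_k\cup\gamma_{k+1}$ would be an elliptic inner face of $w$, and one can rule out elliptic faces created by the cutting using the $\Imove$-normal form; here I invoke the planar classification from \cite{KhKup} of the local picture near a boundary point with respect to minimal cut paths on either side, which says precisely that after $\Imove$-moves the web between two adjacent minimal cut paths at a single boundary point is a ``turnback'' arc, a trivalent ``$Y$'', or a family of parallel strands crossing both paths — i.e.\ one of the three pictures in \eqref{eq:three options between adj paths}.

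Finally I would translate back: the $\Imove$-moves performed inside $R$ are exactly $\Imove$-moves for $w$ in $\A_0'$ (they are supported in a disk), so the resulting normal form between $\gamma_k$ and $\gamma_{k+1}$ is one of the three listed types, completing the proof. The main obstacle I anticipate is the second paragraph: carefully verifying that $R$ really is a disk and, more delicately, that $w\cap R$ is non-elliptic and that $\gamma_k,\gamma_{k+1}$ remain \emph{minimal} for it — the subtlety is that cutting can a priori create a bigon or square straddling one of the arcs, and excluding this requires the same bigon-finding argument as in the proof of Lemma \ref{lem:min-cut-weight-invariance} together with the observation that such a bigon would exhibit non-minimality of the ambient cut path. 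Once $w\cap R$ is known to be a non-elliptic planar web with its two flanking arcs minimal, the conclusion is exactly the planar statement and no new work is needed.
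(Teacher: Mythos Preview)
Your overall strategy---cut out the disk $R$ between the two arcs, observe that $w\cap R$ is a non-elliptic planar web with $\gamma_k,\gamma_{k+1}$ (pushed in) still minimal, then invoke the known planar statement---is sound and is in fact the same opening move the paper makes.  The paper calls this disk $\DD$ and the restricted web $w_\DD$.  Where the two arguments diverge is in how the planar statement is obtained.  You treat it as a black box from \cite{KhKup}/\cite{Kup-spiders}; the paper instead reproves it on the spot via an Euler-characteristic count, using Lemma~\ref{lem:at least 3 small regions} to find an ``active'' $Y$ or $H$ face (one whose boundary interval lies entirely on a single cut arc) that can be pushed across by an $\Imove$-move.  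Concretely, the paper first reduces every component of $w_\DD$ not containing $B_k$ to a single arc crossing from $\gamma_k$ to $\gamma_{k+1}$, and then shows by a short counting argument (equations \eqref{eq:inner vertices}--\eqref{eq:sum of sides}) that the component $w_\DD^*$ containing $B_k$ is either an arc, a tripod, or has an active $H$ to remove.  Your route is shorter provided the planar lemma is available in the precise form you need; the paper's route is more self-contained and does not depend on extracting the right statement from \cite{Kup-spiders}.

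One correction to your anticipated obstacles: the worry that ``cutting can a priori create a bigon or square straddling one of the arcs'' is not an issue.  Inner faces of $w\cap R$ are by definition components of $R\setminus w$ disjoint from $\partial R$, hence they are inner faces of $w$ lying entirely inside $R$ and not containing the puncture; non-ellipticity is therefore inherited immediately from $w$.  Likewise minimality of the pushed-in paths is the easy direction you sketch.  What you do not flag, and what the paper handles explicitly, is the possibility that $w_\DD$ has several connected components: the planar curvature argument in \cite{Kup-spiders} is typically phrased for connected webs, so you either need to check your cited statement covers this or, as the paper does, first strip the extra components down to parallel arcs before analyzing the component through $B_k$.
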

\begin{proof}

Let $\DD$ be the component (topologically a disk) of $\A \setminus (\gamma_k \cup \gamma_{k+1})$ that contains $B_k$. Set $w_\DD = w \cap \DD$. We view $\gamma_k$ and $\gamma_{k+1}$ as arcs in $\partial \DD$ which intersect at $\times$. Call an outer face $F$ of $w_\DD$ \emph{active} if $F\cap \partial \DD$ is contained entirely in $\gamma_k$ or entirely in $\gamma_{k+1}$. Note, if $w_\DD$ is connected then this is the same as saying both boundary vertices of $F$ are contained in $\gamma_k$ or they are both contained in $\gamma_{k+1}$. 

We first show that by using $\Imove$-moves we can arrange it so that every component of $w_\DD$ that does not contain $B_k$ is an arc joining a point on $\gamma_k$ to a point on $\gamma_{k+1}$.
Note, any component of $w_\DD$ that does not contain $B_k$ must have boundary points on both $\gamma_k$ and on $\gamma_{k+1}$, otherwise one of the two cut paths is not minimal. Take one such component $w_\DD'$ that has at least one internal vertex. Lemma \ref{lem:at least 3 small regions} gives three outer faces of $w_\DD'$, each of which is a $Y$ or an $H$. At least one of these faces $F$ is an active face of $w_\DD'$. Note that $F$ is also a face of $w_\DD$, otherwise $F$ would contain a component of $w_\DD'$ in its interior, contradicting minimality of the cut paths. Therefore $F$ is also an active face of $w_\DD$, so we may apply an $\Imove$-move to remove it. We continue this until every component not containing $B_k$ is an arc, as desired. 
 
Next, let $w_\DD^*$ denote the component of $w_\DD$ that contains $B_k$. 
As above, every outer face of $w_\DD^*$ is also an outer face of $w_\DD$. 
We claim that $w_\DD^*$ is either a single arc with no interval vertices, a tripod with exactly one internal vertex, or $w_\DD^*$ has an active $H$. This would complete the proof: the first two cases correspond to the three figures in \eqref{eq:three options between adj paths}, and in the third case we can apply an $\Imove$-move to simplify the web. 

Let $F_1, F_2$, and $F_3$ denote the (outer) faces of $w_\DD^*$ which contain $Q_k, Q_{k+1}$, and $\times$, respectively. Every other outer face of $w_\DD^*$ is active and is not a $Y$ by minimality. Note that $s(F_3) \geq 3$. If $w_\DD^*$ is not an arc then $s(F_1) , s(F_2) \geq 3$, so 
\[
s(F_1) + s(F_2) +s(F_3) \geq 9.
\]
Equality holds exactly when each $s(F_i) = 3$, which corresponds to the tripod. We may then assume $s(F_1) + s(F_2) +s(F_3) \geq 10$. Suppose that $w_\DD^*$ does not contain an active $H$. 
Equation \eqref{eq:sum of sides} implies 
\[
2 e_i + 3 f_b \geq 10 + 5(f_b - 3) + 6 f_i
\]
which can be rewritten as $2e_i \geq 2 f_b + 6f_i - 5$. Equation \eqref{eq:euler char result} gives 
\[
2v_i \geq 2 f_b + 4f_i - 3.
\]
However, $2v_i = 2f_b + 4f_i - 4$ by \eqref{eq:inner vertices}, which is a contradiction. 
\end{proof}

We will describe in more detail what the region between two minimal cut paths with a common endpoint can look like. Note that such a region may contain internal faces: 
\begin{equation}
\label{eq:balanced hexagon with paths}
\begin{aligned}
    \includestandalone{balanced_hexagon_paths}
\end{aligned}
\end{equation}
As we shall see in Lemma \ref{lem:region between cut paths}, only certain hexagons can appear. 

\begin{definition}
    \label{def:depth}
    Let $w$ be a non-elliptic annular web. For each face $F$ of $w$, take a simple path $\gamma$ from $\times$ to a point in $F$ such that $\gamma$ intersects $w$ transversely in the interior of its edges and $\vert \gamma \cap w \vert $ is minimal among all such paths. Define the \emph{depth} of $F$ to be $d(F) = \vert \gamma \cap w \vert $. 
    
\end{definition}

\begin{definition}
    \label{def:balanced}
     An edge of $w$ is \emph{balanced} if the two faces on each side have the same depth. 
     An internal vertex is \emph{balanced} if it abuts exactly 1 balanced and 2 unbalanced edges. 
     An internal hexagon in a web is balanced if, up to rotation, the following bold edges are balanced and all other depicted edges are unbalanced: 

    \begin{center}
        \includestandalone{balanced_hexagon}
    \end{center}
\end{definition}

For example, if a minimal cut path $\gamma'$ is obtained from a minimal cut path $\gamma$ by an $\Imove$-move across an edge $e$, then $e$ is balanced
The two vertices of $e$ are also balanced, because depth strictly increases along minimal cut paths. 
The hexagon in \eqref{eq:balanced hexagon with paths} is balanced. 

Note that if $F_1$ and $F_2$ are adjacent faces, then $d(F_1)$ and $d(F_2)$ differ by at most $1$. It follows that for every trivalent vertex, either one or three of the edges incident to it is balanced.

\begin{lemma}
\label{lem:one-balanced-two-unbalanced}
In a non-elliptic web, every (internal) vertex is balanced. 
\begin{proof}
Pick an internal vertex $v$ in $w$ as well as a set of minimal cut paths $\{\gamma_j\}$. Suppose $v$ lies in the region $R$ between $\gamma_i$ and $\gamma_{i+1}$. 
By Lemma \ref{lem:adjacent cut paths}, we may replace $\gamma_i, \gamma_{i+1}$ with $\gamma_i', \gamma_{i+1}' \subset R$, respectively, such that for $j \in \{i, i+1\}$,  $\gamma_j'$ is obtained from $\gamma_j$  by $\Imove$-moves. 
If $v$ is a vertex in the region between $\gamma_j$ and $\gamma_j'$ for $j \in \{i,i+1\}$, then the $\Imove$-move across $v$  verifies that $v$ is balanced. 
If $v$ is in the region between $\gamma_i'$ and $\gamma_{i+1}'$, then by Lemma \ref{lem:adjacent cut paths}, $v$ is the vertex in shown in the middle figure of \eqref{eq:three options between adj paths}, and is clearly balanced. 
\end{proof}
\end{lemma}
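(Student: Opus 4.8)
The plan is to locate the unique balanced edge at each internal vertex by pushing a pair of adjacent minimal cut paths past it, using the structure theorem Lemma~\ref{lem:adjacent cut paths}. Note first the dichotomy recorded just before the statement: at any trivalent vertex either exactly one or all three incident edges are balanced, since the depths of the three surrounding faces pairwise differ by at most $1$. So it suffices to show that each internal vertex has at least one unbalanced incident edge; in fact the argument will exhibit exactly one balanced edge and two unbalanced ones.

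First I would fix an internal vertex $v$ and, invoking the Convention following Lemma~\ref{lem:min-cut-weight-invariance}, a system of pairwise disjoint minimal cut paths $\{\gamma_j\}$. Then $v$ lies in the closed region $R$ cut off by two cyclically adjacent paths $\gamma_i,\gamma_{i+1}$. By Lemma~\ref{lem:adjacent cut paths} there are sequences of $\Imove$-moves carrying $\gamma_i$ to a path $\gamma_i'\subset R$ and $\gamma_{i+1}$ to $\gamma_{i+1}'\subset R$ such that the portion of $w$ trapped between $\gamma_i'$ and $\gamma_{i+1}'$ is one of the three pictures of \eqref{eq:three options between adj paths}: a single arc, a tripod with one internal vertex, or an arc running around the far side of the basepoint. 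I then split into two cases according to where $v$ ends up relative to these moves.

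If $v$ lies in one of the two strips swept out by the move sequences $\gamma_i\leadsto\gamma_i'$ or $\gamma_{i+1}\leadsto\gamma_{i+1}'$, then, since each elementary $\Imove$-move sweeps across a single edge together with its two endpoints, $v$ is an endpoint of the stem edge $e$ of one of the elementary $\Imove$-pictures \eqref{eq:H move} in the sequence. The discussion after Definition~\ref{def:balanced} already records that such a stem edge $e$ is balanced and that both of its endpoints are balanced (the latter because depth strictly increases along a minimal cut path), so $v$ is balanced. If instead $v$ lies in the region between $\gamma_i'$ and $\gamma_{i+1}'$, then, as the two arc options contain no internal vertex, Lemma~\ref{lem:adjacent cut paths} forces that region to be the tripod and $v$ to be its center. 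Writing $F_0$ for the outer face of the tripod (the one toward $P$) and $F_1\ni Q_i$, $F_2\ni Q_{i+1}$ for the other two faces at $v$, minimality of $\gamma_i'$ forces it to pass through $F_0$ having made at least $d:=d(F_0)$ crossings and then to cross exactly one more edge to reach $F_1$, whence $d(F_1)=d+1$; likewise $d(F_2)=d+1$. Thus the edge of the tripod running to the boundary separates $F_1$ from $F_2$ (equal depth, so it is balanced) while each of the other two edges separates $F_1$ or $F_2$ from $F_0$ (so it is unbalanced), and $v$ is again balanced.

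The step I expect to require the most care is the reduction in the second and third paragraphs: one must be sure that $v$ really is caught either in one of the two swept strips or in the final tripod region, with nothing falling through the cracks, and that in the former case $v$ is genuinely an endpoint of a stem edge of some $\Imove$-move in the chosen sequence rather than merely lying somewhere in the swept region. Both follow from the facts that cut paths are simple and that a single $\Imove$-move sweeps exactly the two faces flanking its stem together with the two endpoints of the stem, but this is where the bookkeeping must be set up carefully; Case~2's depth count and the preliminary dichotomy are then immediate, and the rest is a direct appeal to Lemma~\ref{lem:adjacent cut paths}.
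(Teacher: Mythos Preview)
Your proof is correct and follows essentially the same approach as the paper: fix a system of minimal cut paths, use Lemma~\ref{lem:adjacent cut paths} to push the two adjacent paths $\gamma_i,\gamma_{i+1}$ inward by $\Imove$-moves, and then check that $v$ is balanced either because it is an endpoint of the stem of one of those $\Imove$-moves or because it is the tripod vertex in the middle picture of~\eqref{eq:three options between adj paths}. Your explicit depth count $d(F_1)=d(F_2)=d(F_0)+1$ in the tripod case just spells out what the paper calls ``clearly balanced,'' and your closing caveat about the swept-strip bookkeeping is exactly the point the paper leaves implicit.
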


Note that Lemma \ref{lem:one-balanced-two-unbalanced} is not true if we allow square faces. See Figure \ref{fig:elliptic-counterexample} for an example.
\begin{figure}
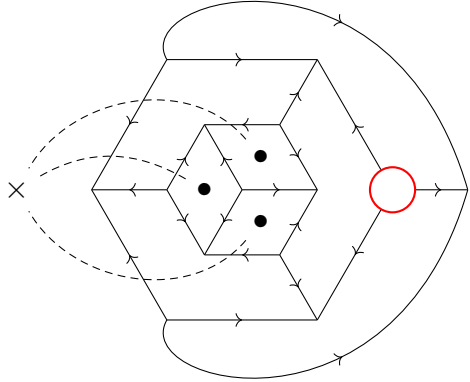

    \centering
    \includestandalone{elliptic-counterexample}
    \caption{An elliptic web containing a vertex adjacent to three balanced edges. The red circle is the outer boundary of the annulus, and the central vertex abuts three faces with the same depth.}
    \label{fig:elliptic-counterexample}
\end{figure}

\begin{lemma}
    \label{lem:region between cut paths}
    Let $\gamma$ and $\gamma'$ be disjoint minimal cut paths in a non-elliptic web $w$ ending at the same point $Q_k$. Let $B \subset \A_0$ be the bigon formed by $\gamma$ and $\gamma'$, and consider the non-elliptic web $w_B = w \cap B$. Then each internal face of $w_B$ has exactly six sides, and moreover each such internal hexagon is balanced.  
\end{lemma}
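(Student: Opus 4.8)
Here is my plan for the proof.

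\medskip

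\textbf{Setup and strategy.} The plan is to reduce to the already-established structural facts about the region between adjacent minimal cut paths, and then bootstrap them into a statement about the full bigon $B$. First I would observe that $\gamma$ and $\gamma'$ cut the annulus model $\A_0$ (with the puncture playing the role of $P = \times$) into the bigon $B$ together with a complementary three-sided region, and that the two arcs of $\partial B$ are each minimal cut paths for the non-elliptic web $w_B = w \cap B$. This is because a non-minimal cut path for $w_B$ would be non-minimal for $w$ as well, exactly as in the proof of Lemma \ref{lem:min-cut-weight-invariance}. So all of the depth/balancedness machinery applies to $w_B$ viewed inside the disk $\DD^2 = B \sqcup \{\times\}$ when we glue the two boundary arcs at $\times$; equivalently, we just work with $w_B$ as a non-elliptic web whose only "special" point is $Q_k$ at the far end.

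\medskip

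\textbf{Main steps.} The key step is a face-counting argument in the spirit of Lemma \ref{lem:at least 3 small regions} and Lemma \ref{lem:U Y or H}, but now run inside $B$. The point is that, because $\gamma$ and $\gamma'$ are \emph{minimal} cut paths, no outer face of $w_B$ that is "active" — i.e.\ whose boundary intersection with $\partial B$ lies entirely within $\gamma$ or entirely within $\gamma'$ — can be a bigon or triangle ($Y$): such a face would be removable by an $\Imove$-move, contradicting minimality (more precisely, it would exhibit a shorter cut path). Combining this with Lemma \ref{lem:adjacent cut paths}, which tells us that after $\Imove$-moves the region between the two cut paths is one of the three standard local pictures, we may first normalize so that every component of $w_B$ other than the one containing $Q_k$-adjacent structure is an arc from $\gamma$ to $\gamma'$, and then analyze the component carrying the interesting part. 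Then I would apply the Euler-characteristic relations \eqref{eq:inner vertices} and \eqref{eq:sum of sides} to $w_B$: if some internal face had more than six sides, or if the count of small outer faces were off, we would get a contradiction with \eqref{eq:inner vertices} exactly as in Lemma \ref{lem:at least 3 small regions}. Since every internal face of a web has an even number of sides and cannot be a bigon or square (non-ellipticity rules out $2$ and $4$ sides), the minimum is six, and the counting forces \emph{every} internal face to be exactly a hexagon.

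\medskip

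\textbf{Balancedness of the hexagons.} For the "moreover" part, I would use Lemma \ref{lem:one-balanced-two-unbalanced}: every internal vertex of the non-elliptic web $w$ (hence of $w_B$) is balanced, meaning it abuts exactly one balanced and two unbalanced edges. Given an internal hexagon $F$ of $w_B$, each of its six vertices is trivalent and balanced; tracking which of the three edges at each vertex is the balanced one as we go around $F$, and using that depths of adjacent faces differ by at most $1$ together with the fact that going around a face the depth must return to its starting value, forces exactly the alternating pattern of balanced/unbalanced edges in Definition \ref{def:balanced}. Concretely: the depth function restricted to the faces around the hexagon's boundary, read cyclically, must be of a form consistent with three balanced edges in the configuration shown, and the vertex-balancedness constraint eliminates every other possibility. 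I expect this combinatorial bookkeeping — reconciling the local vertex condition with a consistent global depth labeling around the hexagon — to be the main obstacle, though it is essentially the same argument that justifies the picture in \eqref{eq:balanced hexagon with paths}; the face-counting step is routine once Lemmas \ref{lem:adjacent cut paths} and \ref{lem:one-balanced-two-unbalanced} are in hand.
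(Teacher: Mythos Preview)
Your sketch has two genuine gaps, and the paper's argument takes a different, more concrete route.

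\textbf{Gap 1: hexagons from Euler characteristic.} The relations \eqref{eq:inner vertices} and \eqref{eq:sum of sides} do not force every internal face of $w_B$ to be a hexagon. They give only an inequality with slack: inner faces with more than six sides are permitted provided the active outer faces are correspondingly larger than $H$'s. Lemma~\ref{lem:at least 3 small regions}, which you invoke as the model, proves only that at least three small outer faces exist in a planar web; it says nothing about the size of inner faces. To force inner hexagons you would first need to know that every active outer face is exactly an $H$, and that is essentially the structural statement you are trying to prove. (Separately, your appeal to Lemma~\ref{lem:adjacent cut paths} is misplaced: that lemma concerns cut paths to \emph{adjacent} points $Q_k,Q_{k+1}$, whereas here $\gamma,\gamma'$ end at the same $Q_k$.)

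\textbf{Gap 2: balancedness from vertex-balancedness.} Lemma~\ref{lem:one-balanced-two-unbalanced} alone does not single out the pattern of Definition~\ref{def:balanced}. Writing $d_1,\dots,d_6$ for the depths of the six faces neighboring a hexagon at depth $d$, vertex-balancedness forces only that no two consecutive sides are balanced and that the spoke at each vertex is balanced iff both adjacent sides are unbalanced. Many depth configurations survive these local constraints: for instance $(d_i)\equiv d{-}1$ (all six spokes balanced, no side balanced), or the alternating $(d,\,d{-}1,\,d,\,d{+}1,\,d,\,d{-}1)$ (three non-adjacent sides balanced). Neither is the balanced-hexagon pattern, and nothing in your bookkeeping that uses only adjacent-depth differences rules them out. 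The specific pattern is forced by the bigon structure, not by vertex-balancedness.

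What the paper does instead is establish an explicit \emph{ladder} description of $w_B$. Repeatedly locating an active outer $H$ (via \cite[Lemma~6.5]{Kup-spiders}, or equivalently the Euler count of Lemma~\ref{lem:at least 3 small regions}) and removing it shows that $w_B$ consists of $m=I(\gamma,w)$ parallel arcs $L_1,\dots,L_m$ running from $\gamma$ to $\gamma'$, together with vertical rungs joining consecutive $L_i$'s. In this picture depth equals level, horizontal edges are unbalanced and vertical edges are balanced, and a short parity/adjacency argument on the rungs shows at once that every inner face is a hexagon in the configuration of Definition~\ref{def:balanced}. Both of your conclusions fall out of this concrete model; neither follows from the abstract counting you propose.
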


\begin{proof}
We assume that $w_B$ has internal vertices; otherwise there is nothing to show.
 It follows from\footnote{This can also be shown using Lemma \ref{lem:at least 3 small regions} by an approach similar to the proof of Lemma \ref{lem:adjacent cut paths}.}  \cite[Lemma 6.5]{Kup-spiders} that $w_B$ contains an outer $H$ face such that both of its boundary vertices are contained in $\gamma$ or they are both contained in $\gamma'$, as shown in Figure \ref{fig:active H}. 
Then $w_B$ can be formed by taking  $m$ horizontal lines $L_1, \ldots, L_m$ (Figure \ref{fig:horizontal ladder}) and placing some number of vertical intervals joining adjacent horizontal lines. Note that $m= I(\gamma, w)$. Every face of $w_B$ is then positioned between some pair of adjacent horizontal lines. If a face $F$ of $w_B$ lies between $L_i$ and $L_{i+1}$, then $d(F) = i$. All vertical edges are balanced, and all horizontal edges (formed by subdividing the horizontal lines by adding vertical edges) are unbalanced. 

Say a vertical edge is in level $i$ if it joins $L_i$ and $L_{i+1}$. Two vertical edges in level $i$ are adjacent if there is an embedded path joining them and which is otherwise disjoint from $w_B$.
If there are two adjacent vertical edges in level $i$, there must be a vertical edge in level $i-1$ or $i+1$ separating them, otherwise $w_B$ would contain a square face. In fact, there must be at least two such vertical edges since every inner face has an even number of sides, and moreover the two vertical edges are in different levels since otherwise we would obtain an infinite sequence of adjacent pairs. This rules out the possibility of inner faces with more than 6 sides and shows that the only hexagons are balanced, as shown in Figure \ref{fig:ladder hexagon}. 
\end{proof}

\begin{figure}
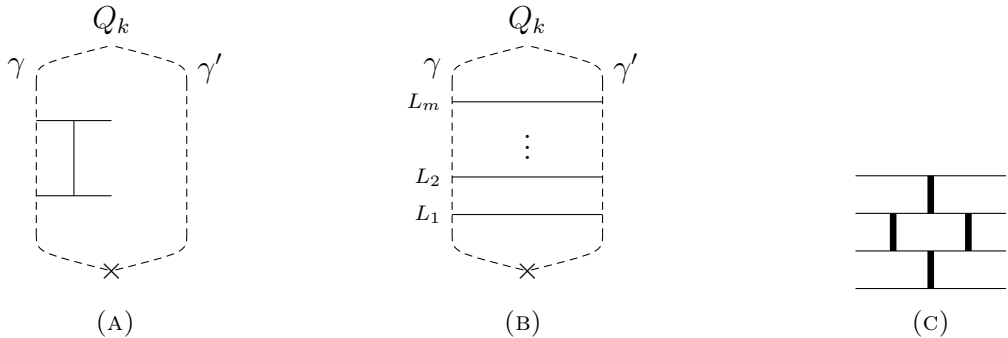

\centering 
\subcaptionbox{ \label{fig:active H}}[.32\linewidth]
{
\includestandalone{images/active_H}
}
\subcaptionbox{  \label{fig:horizontal ladder}}[.32\linewidth]
{\includestandalone{images/horizontal_ladder}
}
\subcaptionbox{ \label{fig:ladder hexagon}}[.32\linewidth]
{\includestandalone{images/ladder_hexagon}}
\caption{Figures in the proof of Lemma \ref{lem:region between cut paths}. }
\end{figure}

We are now ready to describe the minimal cut path algorithm $\mcpalg$.

\begin{definition}
Given a non-elliptic annular web $w$ with $n$ boundary points, define $\mcpalg(w) = (S^w, J^w)$ as follows. The sign sequence $S^w$ is read off from the orientations of the boundary points of $w$. Next, choose a sequence of minimal cut paths $\{\gamma_k: P \leadsto Q_k\}_{1 \leq k \leq n}$.
Define the length-$n$ state string $J^w = (j_1, j_2, \ldots, j_n)$ as follows. Lemma \ref{lem:adjacent cut paths} implies that
\[
\wt(\gamma_{k+1}) - \wt(\gamma_{k}) \in \Lambda^{S_k^w},
\]
and $j_k\in \{-1,0,1\}$ is given by $\wt(\gamma_{k+1}) - \wt(\gamma_{k})$ using the dictionary in Figure \ref{fig:lattice and states}. Indices are interpreted modulo $n$, so that $\gamma_{n+1} = \gamma_1$. 
\end{definition}

Lemma \ref{lem:min-cut-weight-invariance} implies that $J^w$ does not depend on the choice of minimal cut paths. Note that $J^w$ corresponds to a closed lattice path by construction. Moreover, Lemma \ref{lem:adjacent cut paths} implies that 
\[
j_k = I(\gamma_{k+1},w) - I(\gamma_k,w),
\]
which we will use frequently.

\subsection{Bijection between \texorpdfstring{$SL(3)$}{SL(3)} webs and closed lattice paths}
\label{sec:bijection between webs and paths}

Our goal in this section is to prove the following extension of \cite[Proposition 1]{KhKup} to the annular setting. 

\begin{theorem}
\label{thm:MCP-GROW-inverses}
The algorithms $\growalg$ and $\mcpalg$ are inverses. 
In particular, the set $B^S$ is finite for any sign string $S$. 
\end{theorem}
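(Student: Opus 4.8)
The strategy is to show that $\mcpalg \circ \growalg = \mathrm{id}$ and $\growalg \circ \mcpalg = \mathrm{id}$ on the appropriate sets, following the template of \cite[Proposition 1]{KhKup} but with the modifications forced by the annular setting. The "in particular" clause is then immediate: once $\growalg$ and $\mcpalg$ are mutually inverse bijections between $B^S$ and the set of closed $S$-paths, and since the set of closed $S$-paths is a subset of the finite set $\{-1,0,1\}^n$ (where $n = \len(S)$), the set $B^S$ is finite. So the entire content is in establishing the bijection.

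\emph{Step 1: $\mcpalg(\growalg(S,J)) = (S,J)$.} Given an admissible $(S,J)$, Lemma \ref{lem:growth alg terminates} and Lemma \ref{lem:growth-algorithm-order-independence} guarantee $\growalg$ produces a well-defined non-elliptic web $w = w^S_J$. I want to exhibit an explicit system of pairwise disjoint minimal cut paths for $w$ and compute their weights. The natural choice: run the growth algorithm and, at each stage, track a family of cut paths that descend "between" consecutive boundary points of the partially-grown web, extended straight down into the infinite region to $P$. The dictionary in Figure \ref{fig:growth dictionary} is precisely designed so that each growth move changes the weight $\wt(\gamma_{k+1}) - \wt(\gamma_k)$ in accordance with the state label $j_k$ via Figure \ref{fig:lattice and states}. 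One must check that the cut paths so produced are genuinely minimal — this is where an argument analogous to \cite{KhKup} is needed, using that the web is non-elliptic (each inner face is a hexagon by the ladder structure, cf.\ Lemma \ref{lem:region between cut paths}) so there is no way to shortcut a cut path. Then reading off $S^w$ from the boundary orientations recovers $S$ by construction, and reading off $J^w$ from the weight differences recovers $J$.

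\emph{Step 2: $\growalg(\mcpalg(w)) = w$.} Given a non-elliptic annular web $w$, set $(S,J) = \mcpalg(w)$; this is admissible since $J^w$ corresponds to a closed lattice path by construction. I then want to show $\growalg(S,J)$ reconstructs $w$. The idea is to induct on the number of boundary points (or on the total intersection number $\sum_k I(\gamma_k, w)$): by Lemma \ref{lem:U Y or H}, $w$ has an outermost $U$, $Y$, or $H$ face; after possibly applying $\Imove$-moves to the cut paths (Lemma \ref{lem:adjacent cut paths}), this outermost face is exactly the region controlled by the first growth move dictated by $(S,J)$ via Figure \ref{fig:growth dictionary} — here one uses that $j_k = I(\gamma_{k+1}, w) - I(\gamma_k, w)$ to match the state labels to the growth-web type. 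Peeling off this face reduces both $w$ (to a smaller non-elliptic web) and $(S,J)$ (to the shorter pair after the corresponding path replacement), and the inductive hypothesis finishes it. One subtlety specific to the annulus: the outermost face might be the one containing the puncture, so one has to check that the growth algorithm's $U_0$ and annular $Y$/$H$ moves correctly account for a cut-path configuration wrapping around $P$; this is handled by the same cyclic-reordering arguments appearing in the proofs of Lemmas \ref{lem:growth alg terminates} and \ref{lem:growth-algorithm-order-independence}.

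\emph{Main obstacle.} The hard part is Step 2's inductive peeling in the annular case — specifically, verifying that after cleaning up cut paths by $\Imove$-moves, the outermost $U/Y/H$ face guaranteed by Lemma \ref{lem:U Y or H} is always of the shape and carries the boundary states that match \emph{some} growth move, and that the order-independence of $\growalg$ (Lemma \ref{lem:growth-algorithm-order-independence}) lets us assume it is the \emph{first} move. In the planar case \cite{KhKup} this relies on the curvature/depth bookkeeping from \cite{Kup-spiders}; here one must instead lean on Lemma \ref{lem:one-balanced-two-unbalanced} and Lemma \ref{lem:region between cut paths} to control the local structure near the boundary, and separately dispose of the case where the puncture-containing face is the outermost one. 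Once that local analysis is pinned down, both inclusions are routine inductions, and the finiteness of $B^S$ drops out for free.
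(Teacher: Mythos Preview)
Your two-step plan is the same as the paper's (which splits the theorem into Propositions \ref{prop:mcp-circ-grow} and \ref{prop:grow-circ-mcp}), but there are genuine gaps in each step.

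\textbf{Step 1.} Your proposed proof of minimality for the cut paths built during the growth algorithm does not work. You suggest invoking non-ellipticity and Lemma \ref{lem:region between cut paths}, but that lemma concerns the region between two cut paths \emph{already known to be minimal} ending at the \emph{same} point; it does not certify minimality of a single path. The paper instead builds, simultaneously with the cut paths, a \emph{monotone rightward flow} $\Phi_w$ on $w$ from the local models (Figure \ref{fig:flow-growth-cuts}). Each flow line is either a closed loop generating $\pi_1(\A)$ or an arc hitting $\partial\A_0'$, so the number of components of $\Phi_w$ separating $P$ from $Q_k$ is a lower bound for $I(\gamma,w)$ over all cut paths $\gamma:P\leadsto Q_k$. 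By construction $I(\gamma_k,w)=I(\gamma_k,\Phi_w)$ equals exactly this count, hence $\gamma_k$ is minimal (Lemma \ref{lem:local cut paths are minimal}). Conservation of flow then gives $I(\gamma_{k+1},w)-I(\gamma_k,w)=j_k$. Without the flow, you have no mechanism to bound intersection numbers from below.

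\textbf{Step 2.} Your induction is the right shape, but you underestimate the $F=H$ case and misidentify the tools. First, a $U$, $Y$, or $H$ face by definition does \emph{not} contain the puncture, so your ``puncture-containing outermost face'' worry is not the issue. The real difficulty is that when $F=H$, removing $F$ yields a web $w'$ whose minimal cut paths need not extend back to minimal cut paths in $w$, so the states $\mcpalg(w)$ and $\mcpalg(w')$ are not related in the obvious way. The paper handles this by locating, inside $w'$, a further outer face $F'$ (a $U$, $Y$, or \emph{good} $H$) produced by the inductive hypothesis $\growalg(\mcpalg(w'))=w'$, and then running a lengthy case analysis on how $F'$ sits relative to $F$. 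The key lemmas ruling out bad configurations are Lemma \ref{lem:no 1 -1 H} (no $(1,-1)$ state pair at a boundary $H$) and Lemma \ref{lem:no far apart cut paths} (two minimal cut paths ending in the same outer face must enter through adjacent edges); neither follows from Lemmas \ref{lem:one-balanced-two-unbalanced} or \ref{lem:region between cut paths} alone, though the latter is used in their proofs.
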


We will show in Proposition \ref{prop:mcp-circ-grow} and Proposition \ref{prop:grow-circ-mcp} that $\mcpalg \circ \growalg = \id$ and $\growalg \circ \mcpalg = \id$, respectively. We begin 
by addressing $\mcpalg \circ \growalg$, for which we will use the relationship between minimal cuts and maximal flows, following \cite{KhKup}.

\begin{definition}
    A \emph{flow} in a web $w$ is a subgraph $\Phi$ of $w$ which contains exactly two out of the three edges at each internal vertex of $w$. Components of $\Phi$ are called \emph{flow lines}. Each flow line  (either an arc or a loop) must be oriented, but the orientation need not agree with that of $w$.

\end{definition}

We imagine a flow line as the path of a particle traveling around the annulus $\A_0'$. 
For a web $w$ produced by $\growalg$ applied to the admissible pair $(S,J)$, we build a flow and a system of cut paths by gluing together the local models in Figure \ref{fig:flow-growth-cuts}. 

\begin{remark}
    Any non-elliptic web $w$ has a \emph{canonical flow} $\Phi_w$, as follows. Lemma \ref{lem:one-balanced-two-unbalanced} implies that deleting all balanced edges in $w$ results in an unoriented flow. We orient the resulting loops and arcs clockwise. It will follow from Lemma \ref{lem:adjacent cut paths}, Lemma \ref{lem:local cut paths are minimal}, and Proposition \ref{prop:mcp-circ-grow} that the flow built from Figure \ref{fig:flow-growth-cuts} is precisely $\Phi_w$. While this is not needed for any of our arguments, we nonetheless refer to the flow  built from Figure \ref{fig:flow-growth-cuts} as $\Phi_w$.
\end{remark}

The flow built from Figure \ref{fig:flow-growth-cuts} is \emph{monotone rightward}, in the sense that the particle always travels to the right or vertically, but never to the left. Moreover, particles move downward (\resp upward) along edges labeled $+1$ (\resp $-1$). In particular, particles enter the web traveling downward (where $j_k = +1$), and leave the web traveling upward (where $j_k = -1$). At boundary points labeled $j_k =0$, no particle enters or exits along that edge.

\begin{figure}
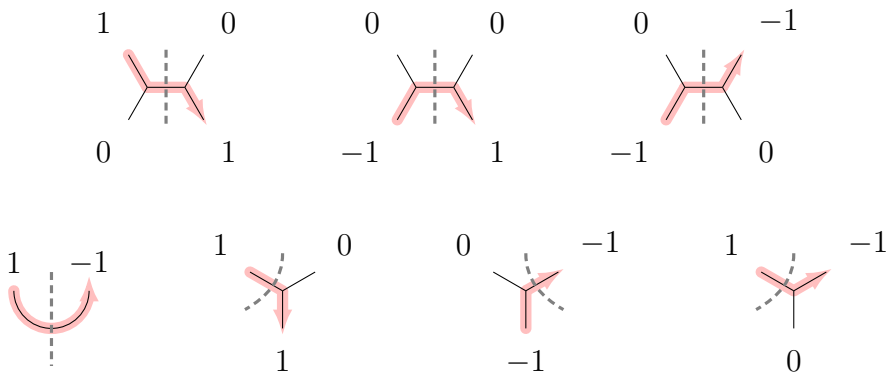

    \centering
    \includestandalone{images/flow-growth-cuts}
    \caption{The local models for the monotone rightward flow (highlighted pink) and cut paths (dashed gray) from the proof of \cite[Proposition 1]{KhKup}.
    }
    \label{fig:flow-growth-cuts}
\end{figure}

\begin{remark}
In all but one of the local models in Figure \ref{fig:flow-growth-cuts}, the choice of cut path is determined by the flow. 
However, for the web $Y_0$, a choice was made to send the cut path to the left of the $0$-labeled edge.
This is inconsequential, as the choice just tells us that the cut path that used to be between the $1$- and $-1$-labeled edges is now parallel to the cut path originally to the left of the $1$-labeled edge. 
Lemma \ref{lem:min-cut-weight-invariance} ensures that when we use $\mcpalg$ to compute $J^w$, this choice does not matter. 
\end{remark}

As in the proof of \cite[Proposition 1]{KhKup}, the dashed cut path pieces in Figure \ref{fig:flow-growth-cuts} may be replaced with any number of parallel cuts. That is, if there are $t$ parallel cut paths entering the top region of the local picture, then replace the dotted line with $t$ parallel copies of dotted lines.
We define the set of cut paths $\{\gamma_k\}_{1 \leq k \leq n}$ by gluing together these local dotted paths within regions of $\A_0'$ cut out by the web $w$.

\begin{lemma}
\label{lem:local cut paths are minimal}
    The cut paths $\gamma_k$ built from the models in Figure \ref{fig:flow-growth-cuts} are minimal. 
\end{lemma}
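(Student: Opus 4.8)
The plan is to show minimality of the $\gamma_k$ by relating the number of intersections $I(\gamma_k,w)$ with the weight $\wt(\gamma_k)$, and then invoking a lower bound on intersections of \emph{any} cut path with $w$ coming from the flow $\Phi_w$ constructed in Figure \ref{fig:flow-growth-cuts}. The key observation is that in every local model of Figure \ref{fig:flow-growth-cuts}, each dashed cut-path strand crosses exactly the flow edges, and the flow, being monotone rightward, is crossed by \emph{any} path from $P$ to $Q_k$ at least as many times as it is crossed by $\gamma_k$. More precisely, I would first record that by construction $I(\gamma_k,w)$ equals the number of flow lines of $\Phi_w$ that separate $Q_k$ from $P$ in $\A_0'$ (equivalently, that must be crossed to travel from the infinite region up to $Q_k$); this is read off directly from the local pictures, where each strand of the cut path passes through exactly one flow edge of each flow line it meets and through no balanced (non-flow) edges.

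Next I would argue the matching lower bound: any cut path $\gamma$ from $P$ to $Q_k$ must cross every flow line that separates $P$ from $Q_k$, because a flow line is a properly embedded arc or a loop in $\A_0'$ and $\gamma$ is a path between the two sides it bounds (for arc flow lines, between the two endpoints' complementary regions; for loops, between inside and outside). Since the flow $\Phi_w$ is monotone rightward and its flow lines are pairwise disjoint and disjoint-from-or-crossing the cut paths only transversally, the count of separating flow lines is a genuine lower bound for $I(\gamma,w)$ over all cut paths $\gamma:P\leadsto Q_k$. Combining with the previous paragraph, $\gamma_k$ realizes this lower bound, hence is minimal. This is the same line of reasoning as in the proof of \cite[Proposition 1]{KhKup}, adapted to the cylinder: the only genuinely new point is that loops of $\Phi_w$ encircling the puncture can also separate $P$ from $Q_k$, and crossing them is likewise forced.

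I expect the main obstacle to be bookkeeping around the $U_0$ and $Y_0$ local models, where a cut path strand either terminates (a flow line enters and leaves through the same $U_0$, contributing to no $Q_k$ with $k$ on the far side) or is rerouted by the deliberate choice mentioned in the remark after Figure \ref{fig:flow-growth-cuts}. One must check that rerouting the $Y_0$ cut path to the left of the $0$-labeled edge does not change $I(\gamma_k,w)$: it does not, since it is replaced by a path parallel to an existing minimal cut path and Lemma \ref{lem:min-cut-weight-invariance} guarantees invariance of both the weight and (via the $\Imove$-moves and the three extra moves there) the intersection count among minimal representatives. A secondary subtlety is making precise the statement ``every cut path crosses every separating flow line'': I would phrase this via the mod-$2$ (or, using orientations of the monotone flow, the honest integer) intersection number between $\gamma$ and each flow line, which is a homotopy invariant rel endpoints in $\A_0'\setminus(\text{other flow lines})$ and is nonzero exactly for the separating ones. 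Once that is set up, the equality $I(\gamma_k,w)=\#\{\text{separating flow lines}\}=\min_\gamma I(\gamma,w)$ follows, completing the proof.
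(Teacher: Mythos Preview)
Your proposal is correct and follows essentially the same approach as the paper: both use the monotone rightward flow $\Phi_w$ to bound $I(\gamma,w)$ from below by the number of separating flow lines, and then observe from the local models that $\gamma_k$ only crosses flow edges (the paper phrases this as $I(\gamma_k,w)=I(\gamma_k,\Phi_w)$), so $\gamma_k$ realizes the bound. Your worries about the $Y_0$ rerouting are unnecessary---the equality $I(\gamma_k,w)=I(\gamma_k,\Phi_w)$ holds for either routing choice since neither crosses a non-flow edge---but this does not affect correctness.
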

\begin{proof}
Observe from Figure \ref{fig:flow-growth-cuts} that $\Phi_w$ is actually a collection of arcs with endpoints at the boundary, along with closed loops, each of which generate $\pi_1(\A)$ since the flow is monotone rightward. 
We know that the minimum number of possible intersections between a cut path $P \leadsto Q_k$ is bounded below by the number of components of $\Phi_w$ that separate $Q_k$ from $P$.
But by construction of $\gamma_k$, we precisely have $I(\gamma_k, w) = I(\gamma_k, \Phi_w)$.
Therefore $\gamma_k$ is a minimal cut path $P \leadsto Q_k$ in $\Phi_w$, which in turn implies that $\gamma_k$ is also a minimal cut path in $w$, a graph with more edges. 
\end{proof}

\begin{proposition}
\label{prop:mcp-circ-grow}
For any admissible pair $(S,J)$, we have $(\mcpalg \circ \growalg)(S,J) = (S,J)$. 
\end{proposition}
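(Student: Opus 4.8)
The plan is as follows. That the sign strings agree, $S^w = S$, is immediate: the growth algorithm constructs $w = w^S_J$ with prescribed boundary orientations recording $S$, and $\mcpalg$ reads $S^w$ off from exactly these orientations. So the whole content is the identity $J^w = J$.

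To compute $J^w$ one is free to use any system of pairwise-disjoint minimal cut paths, by Lemma~\ref{lem:min-cut-weight-invariance}; I would use the specific system $\{\gamma_k\}_{1\le k\le n}$ produced alongside $w$ by gluing the local models of Figure~\ref{fig:flow-growth-cuts}, which is minimal by Lemma~\ref{lem:local cut paths are minimal}. Write $\pi_0 = 0, \pi_1, \dots, \pi_n = 0$ for the closed lattice path corresponding to $(S,J)$ and $v_k = \pi_k - \pi_{k-1} \in \Lambda^{s_k}$ for its $k$-th step, so that $j_k$ is the state attached to $v_k$ via Figure~\ref{fig:lattice and states}. By the definition of $\mcpalg$ it then suffices to prove
\[
\wt(\gamma_{k+1}) - \wt(\gamma_k) \;=\; v_k, \qquad 1 \le k \le n,
\]
indices taken $\bmod\ n$; this says precisely that the $k$-th step of the lattice path recovered by $\mcpalg$ is $v_k$, hence $j_k^w = j_k$.

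I would prove this weight-difference identity by induction on the number of growth moves used to build $w$ (well-defined by Lemma~\ref{lem:growth-algorithm-order-independence}). Picking a valid first move $M$ on $(S,J)$ — which exists unless $(S,J)$ is empty, by Lemma~\ref{lem:growth alg terminates} — yields a shorter admissible pair $(S',J')$, and $w$ is obtained from $w' := \growalg(S',J')$ by gluing the corresponding $H$-, $Y$-, or $U$-type growth web $G$ of Figure~\ref{fig:flow-growth-cuts} on top, at the positions acted on by $M$. By the inductive hypothesis the identity holds for the cut paths $\{\gamma'_j\}$ of $w'$. The cut paths of $w$ restrict to those of $w'$ below $G$ and, inside $G$, are exactly the local pieces drawn in Figure~\ref{fig:flow-growth-cuts} (with the convention that each dashed strand may be taken with any multiplicity); a direct inspection of each of the seven local models shows that crossing a top endpoint of $G$ with state label $j$ changes the cut-path weight by precisely the vector $v\in\Lambda^\pm$ that $j$ encodes, while crossing any endpoint not involved in $M$ leaves the corresponding weight difference equal to its value in $w'$. (The weights of $w$ may differ from those of $w'$ by a global additive constant coming from closed flow loops, but this is irrelevant since only successive differences enter.) Assembling these local computations gives $\wt(\gamma_{k+1}) - \wt(\gamma_k) = v_k$ for every $k$.

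Two points deserve care, and the bookkeeping around them is where the real work lies. First, unlike the planar situation of \cite{KhKup} there is no canonical ``weight-zero'' cut path: in the annulus a closed flow loop meets every radial cut path, so the $\wt(\gamma_k)$ are pinned down only up to a common shift. This is harmless because $\mcpalg$ reads off only successive differences, and admissibility of $(S,J)$ — equivalently $\sum_k v_k = 0$ — is exactly what makes these differences consistent with the cyclic identification $\gamma_{n+1} = \gamma_1$. Second, one must track how gluing $G$ on top of $w'$ alters the cut-path system: an $H$-move leaves the number of cut paths unchanged, a $Y$-move creates one new cut path, and a $U$-move creates two, and in each case one must check that the newly created cut paths, as drawn in Figure~\ref{fig:flow-growth-cuts}, are the minimal ones and contribute the claimed increments. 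I expect this indexing and multiplicity bookkeeping — matching the cyclically ordered positions of $(S,J)$ to the local pictures and keeping the parallel cut strands straight — to be the main obstacle; the geometric input (minimality of the cut paths, the classification of regions between adjacent cut paths, weight invariance) is already supplied by Lemma~\ref{lem:local cut paths are minimal}, Lemma~\ref{lem:adjacent cut paths}, and Lemma~\ref{lem:min-cut-weight-invariance}.
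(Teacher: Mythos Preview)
Your proposal is correct and takes essentially the same approach as the paper: both use the specific cut-path system built from the local models of Figure~\ref{fig:flow-growth-cuts}, invoke Lemma~\ref{lem:local cut paths are minimal} for minimality, and verify the desired identity by induction on growth moves. The paper streamlines the presentation by working with intersection counts $I(\gamma_{k+1},w)-I(\gamma_k,w)$ rather than full weights (using the observation $j_k = I(\gamma_{k+1},w)-I(\gamma_k,w)$ noted just before the proposition) and phrasing the inductive step as ``conservation of flow'' in the region $R_k$ between $\gamma_k$ and $\gamma_{k+1}$, but this is a packaging difference rather than a different argument.
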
 
\begin{proof}
Now let $R_k$ denote the region in $\A_0'$ bounded by $\gamma_k$ on the left and $\gamma_{k+1}$ on the right
(the region $R_n$ is infinite).
By construction, we ensured that every intersection of $\gamma_k$ with $w$ corresponds to a particle traveling from $R_k$ to $R_{k+1}$ by crossing $\gamma_k$. 

By conservation of flow, the difference
\[  
    I(\gamma_{k+1}, w) -  I(\gamma_k, w) 
\]
is equal to the signed number of particles entering ($-1$) or exiting ($+1$) the web at $\partial \A_0'$ along the interval $[Q_k, Q_{k+1}]$ (with indices considered cyclically). 
Now by induction on the number of steps of the growth algorithm, we see that the label $j_k$ on $B_k$ is precisely $I(\gamma_{k+1}, w) -  I(\gamma_k, w)$. 
(The base case is the $U$-web.)
\end{proof}

The following example illustrates the above arguments. 

\begin{example}
\label{eg:example-correspondence}
Let $S = (+,+,+)$ and $J = (-1,0,1)$. 
The corresponding path in the weight lattice is shown in Figure \ref{fig:eg-lattice-path} as the orange loop, beginning at the origin. 

\begin{figure}
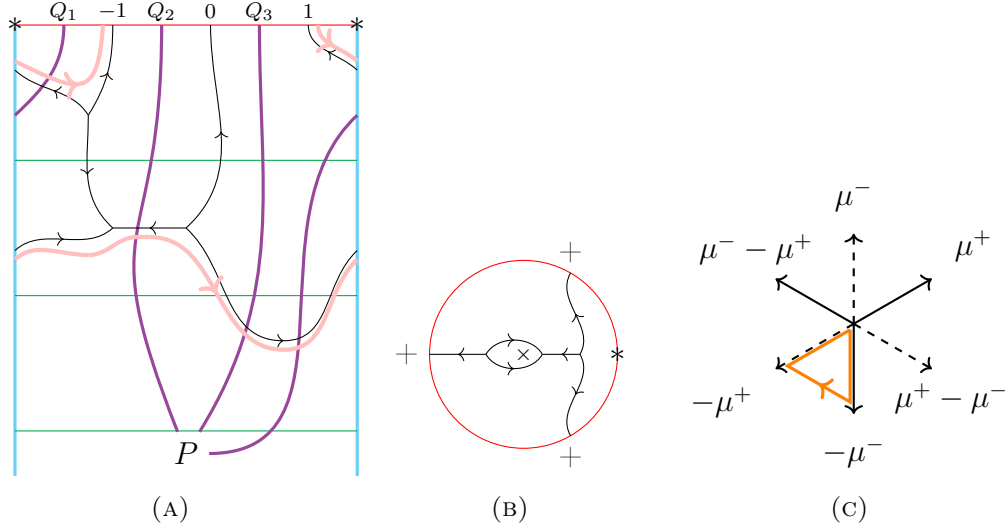

\centering 
\subcaptionbox{\label{fig:flat-annulus-example}}[.28\linewidth]
{\includestandalone{images/flat-annulus-growth-example}
}
\subcaptionbox{\label{fig:disk-like-example}}[.25\linewidth]
{\includestandalone{images/disk-annulus-growth-example}}
\subcaptionbox{\label{fig:eg-lattice-path}}[.28\linewidth]
{\includestandalone{images/eg-lattice-path}}
\caption{
Left: The web corresponding to $(S,J) = ((+,+,+), (-1,0,1))$ drawn on $\A_0'$, along with minimal cut paths and flow lines discussed in Example \ref{eg:example-correspondence}.
Middle: The web corresponding to $(S,J) = ((+,+,+), (-1,0,1))$, drawn on $\A_0$.
Right: The corresponding lattice path. 
}
\label{fig:annulus-example}
\end{figure}

In Figure \ref{fig:flat-annulus-example}, we obtain the web $w$ drawn in black by beginning with the data at the very top, including the state string and the boundary orientations (i.e.\ sign string), and applying the growth algorithm. The three steps are separated by {\color{Green} green} horizontal lines in the figure: first we apply a type $Y$ move, then type $H$, then type $U$. Finally, draw $P$ at the bottom. The corresponding web on the disk-like annulus $\A_0$ is shown in Figure \ref{fig:disk-like-example}.

Conversely, we may begin with the web $w$ and consider a set of minimal cut paths, drawn in {\color{\cutcolor} purple}.
The rightward flow is depicted in {\color{\flowcolor} pink} and drawn slightly offset from the web $W$ for clarity. 
The reader may verify that the number of particles leaving the web at each of the boundary points exactly recovers the state string $J = (-1,0,1)$. 

Here is the computation of the weights of the min-cuts as well as their differences, which determine the state string $J^w = (j_1, j_2, j_3)$. 

\renewcommand{\arraystretch}{1.5}
\begin{center}
\begin{tabular}{| c | l | l | c |}
    \hline
    $k$ & $\pi_k$ 
        & $\pi_{k+1} - \pi_{k}$ 
        &  $j_k$ \\ 
    \hline
    $1$ & $\pi_1 = \mu^+ + \mu^-$ 
        & $\pi_2 - \pi_1 = -\mu^-$ 
        & $-1$ \\
    $2$ & $\pi_2 = \mu^+$ 
        & $\pi_3 - \pi_2 =  \mu^- - \mu^+$
        & $0$ \\
    $3$ & $\pi_3 = \mu^-$ 
        & $\pi_1 - \pi_3 = \mu^+$
        & $1$ \\
    \hline
\end{tabular}
\end{center}
\end{example}

We now begin to address $\growalg \circ \mcpalg$. To that end, we have the following lemmas. In what follows, a label (typically $k, k-1$, or $k+1$) above the endpoint of a cut path means the number of intersection points between the path and the web.

\begin{lemma}
    \label{lem:no 1 -1 H}
    For any  non-elliptic annular web $w$, minimal cut paths never have the following intersection configuration at any boundary $H$ of $w$:
  \begin{center}
    \begin{tikzpicture}
        \draw (0,0) -- (0,-1);
        \draw (1,0) -- (1,-1); 
        \draw (0,-.5) -- (1,-.5);
        \draw[densely dashed] (-.5,0) -- (-.5,-1);
        \draw[densely dashed] (.5,0) -- (.5,-.25);
        \draw[densely dashed] (1.5,0) -- (1.5,-1);
        \node[above] at (-.5,0) {$\mathsmaller{k}$};
        \node[above] at (.5,0) {$\mathsmaller{k+1}$};
        \node[above] at (1.5,0) {$\mathsmaller{k}$};
    \end{tikzpicture}
\end{center}
\end{lemma}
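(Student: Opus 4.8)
The plan is to argue by contradiction: suppose $w$ is a non-elliptic annular web possessing a boundary $H$ face with the displayed intersection pattern, so that the cut path $\gamma_k$ hits $w$ exactly $k$ times on each of the two outer vertical edges of the $H$, while $\gamma_{k+1}$ enters through the top boundary interval between the two legs of the $H$, crosses the horizontal edge once, and then must be extended downward toward $P$. The key observation is that the state string label at this boundary $H$ is forced: reading the $H$ face from left to right, the orientations at the two legs are necessarily opposite (so the boundary substring is $+\,-$ or $-\,+$), and the cut-path configuration records exactly the growth data $j_k = I(\gamma_{k+1},w) - I(\gamma_k,w)$. I would compute this difference: $\gamma_{k+1}$ crosses the horizontal edge of the $H$ once and then, by Lemma~\ref{lem:adjacent cut paths}, runs parallel to $\gamma_k$ below the $H$ down to $P$, picking up $k-1$ further intersections on that common portion (one of the two legs crossed by $\gamma_k$ is now ``used up'' by the horizontal edge); hence $I(\gamma_{k+1},w) = k$, giving $j_k = 0$. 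But the $H$-type growth webs contributing $j_k = 0$ are precisely $H_0$, and inspecting the growth dictionary (Figure~\ref{fig:growth dictionary}) together with the state assignment shows that this local picture corresponds instead to an $H_{\pm 1}$ configuration, i.e.\ the states at the top of the $H$ would have to be $1$ and $-1$ on the two legs — contradicting the internal consistency forced by working backwards through $\growalg$, or equivalently contradicting that the web is non-elliptic.

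More concretely, the cleanest route is as follows. First, by Lemma~\ref{lem:adjacent cut paths}, I may assume (after $\Imove$-moves) that the region of $w$ between $\gamma_k$ and $\gamma_{k+1}$ is one of the three standard types in \eqref{eq:three options between adj paths}; the displayed configuration, with $B_k$ sitting on a horizontal $H$-edge and $\gamma_{k+1}$ crossing that edge exactly once, forces this region to be a ``ladder'' whose only feature near the boundary is that single crossing. Second, I would track the intersection counts along $\gamma_k$ and $\gamma_{k+1}$ from the bottom upward: since $P$ lies in the infinite region, both paths start with $I = 0$ and each crossing of a web edge increments $I$ by one; the hypothesis says both legs of the $H$ are crossed by $\gamma_k$, contributing to its count $k$, whereas $\gamma_{k+1}$ only crosses the horizontal edge. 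A bookkeeping argument (using that $\gamma_k$ and $\gamma_{k+1}$ are disjoint and that below the $H$ they can be taken parallel) shows $I(\gamma_{k+1},w)=I(\gamma_k,w)$, so $j_k = 0$. Third, I invoke the description of the canonical flow $\Phi_w$ (Lemma~\ref{lem:one-balanced-two-unbalanced} and the discussion of monotone-rightward flows): at a boundary point with $j_k = 0$ no flow particle enters or leaves, so the balanced/unbalanced pattern at the $H$ is incompatible with both legs being crossing edges for the same minimal cut path. This contradiction completes the proof.

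The step I expect to be the main obstacle is the careful bookkeeping of intersection counts — making precise, without hand-waving, that the two outer vertical edges of the $H$ together with the ``return'' of $\gamma_{k+1}$ down to $P$ force $I(\gamma_{k+1},w) = I(\gamma_k,w)$ rather than $I(\gamma_k,w)\pm 1$. The subtlety is that $\gamma_k$ passes on the outside of both legs of the $H$ whereas $\gamma_{k+1}$ passes between them, so one must verify that the portions of $\gamma_k$ and $\gamma_{k+1}$ below the $H$ really can be taken to run parallel with equal intersection number; this is exactly where Lemma~\ref{lem:adjacent cut paths} and the ladder description from Lemma~\ref{lem:region between cut paths} do the work, and I would lean on them rather than re-deriving the local picture. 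Once $j_k = 0$ is established, deriving the contradiction from the growth dictionary (only $H_0$ has bottom-right label pattern consistent with $j_k=0$, and $H_0$ does not produce this boundary intersection configuration) is routine.
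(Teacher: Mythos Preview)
Your argument has two genuine gaps, and they are related.

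First, the bookkeeping is incorrect. In the displayed configuration the labels $k,\,k+1,\,k$ are the total intersection numbers of the three (already minimal) cut paths, so the states at the two legs of the $H$ are $(k+1)-k=1$ and $k-(k+1)=-1$, i.e.\ the pair $(1,-1)$, not $j_k=0$. Your claim that the middle path can be routed through $F$ (the face below the horizontal edge) with only $k$ total crossings fails: the left path $\gamma_L$ never crosses any edge of the $H$ (it lies entirely to its left), so no leg is ``used up''. One can check directly that the depth of $F$ is $k$, hence any path from $P$ to $Q_{\mathrm{mid}}$ through $F$ uses at least $k+1$ crossings; there is no contradiction to minimality here. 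For the same reason, the balanced/unbalanced analysis via Lemma~\ref{lem:one-balanced-two-unbalanced} is consistent at both trivalent vertices of the $H$ and does not yield a contradiction on its own.

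Second, and more seriously, even once you correctly read off the state pair $(1,-1)$, the natural conclusion --- that $\growalg$ applied to those states would produce a $U$ rather than the $H$ that is present --- presupposes $\growalg\circ\mcpalg=\id$ for the annular web $w$. But that is precisely Proposition~\ref{prop:grow-circ-mcp}, whose proof \emph{uses} this lemma; so your argument is circular. The paper avoids this by choosing a minimal cut path $\gamma$ ending outside the $H$ (which by minimality cannot enter the $H$ face), cutting $w$ open along $\gamma$ to obtain a non-elliptic web $w'$ in a \emph{disk} still containing the boundary $H$ with the same $(1,-1)$ state pattern, and then invoking the planar identity $\growalg\circ\mcpalg=\id$ from \cite[Proposition~1]{KhKup}, which is already available. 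That reduction to the planar case is the key idea your proposal is missing.
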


\begin{proof}
    Suppose that this intersection configuration does occur at a boundary $H$ of $w$. Take any minimal cut path $\gamma$ that ends outside of this $H$. By minimality, $\gamma$ does not pass inside the $H$:
    \begin{center}
        \begin{tikzpicture}
                 \draw (0,0) -- (0,-1);
        \draw (1,0) -- (1,-1); 
        \draw (0,-.5) -- (1,-.5);
          \draw[densely dashed] (.5,-1.25) .. controls (.5,-.5).. (-.5,0);
        \end{tikzpicture}
    \end{center}
    Cutting $w$ along $\gamma$ results in a non-elliptic web $w'$ in the disk (with, in general, more boundary points than $w$). Note that this boundary $H$ in $w$ is also present in $w'$. However, we see that $\mcpalg(w')$ gives $(1,-1)$ at this boundary $H$ in $w'$. Applying $\growalg$ would produce a $U$, contradicting the fact that $(\growalg \circ \mcpalg)(w') = w'$ from \cite[Proposition 1]{KhKup}.
\end{proof}

\begin{lemma}
    \label{lem:no far apart cut paths} Let $F$ be an outer face which does not contain the puncture. Let $\gamma$ be a minimal cut path ending in $F$, and let $e(\gamma)$ be the unique edge of $F$ which intersects $\gamma$. If $\gamma'$ is another minimal cut path ending in $F$, then $e(\gamma')$ and $e(\gamma)$ share a vertex. 
\end{lemma}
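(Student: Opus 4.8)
The plan is to argue by contradiction, assuming $e(\gamma)$ and $e(\gamma')$ are disjoint edges of the outer face $F$. First I would set things up in the $\A_0$ model, viewing $\A_0 \sqcup \{\times\}$ as a disk $\DD^2$ with the puncture $P = \times$, so that both $\gamma$ and $\gamma'$ are simple arcs from $\times$ to $Q_k$ (the same $Q_k$, since both paths end in $F$, or to adjacent $Q$'s at the two sides of $F$—in either case one can reduce to a common endpoint by shrinking the terminal portions of the paths slightly along $\partial \A$ without creating new intersections). By the convention in force, $\gamma$ and $\gamma'$ are disjoint except at their endpoints, and by Lemma~\ref{lem:min-cut-weight-invariance} they are related by a sequence of $\Imove$-moves (plus the three boundary/puncture moves), and bound a bigon $B$ with $w_B = w\cap B$ non-elliptic. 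By Lemma~\ref{lem:region between cut paths}, every internal face of $w_B$ is a balanced hexagon, and $w_B$ has the ladder structure of Figure~\ref{fig:horizontal ladder}: horizontal lines $L_1,\dots,L_m$ with vertical rungs, where $m = I(\gamma,w) = I(\gamma',w)$.

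Next I would look at the face $F$ itself inside this picture. Since $\gamma$ and $\gamma'$ both terminate in $F$, the edges $e(\gamma)$ and $e(\gamma')$ are the first web-edges each path meets as it comes into $F$ from the bigon $B$; equivalently, they are the outermost rungs/segments of the ladder $w_B$ on the $Q_k$ side, both lying at level $m$ (depth $m$ faces border $L_m$). The key point is that $F$, being an \emph{outer} face not containing the puncture, is a $U$, $Y$, or $H$ in the terminology of \eqref{eq:U Y and H faces}, so it has at most $4$ sides and at most $4$ edges; more importantly, $F$ is a single face of $w$, so its boundary is connected. If $e(\gamma)$ and $e(\gamma')$ do not share a vertex, then between them along $\partial F$ there must be at least one further edge and vertex of $w$ on each of the two boundary arcs of $F$ joining $e(\gamma)$ to $e(\gamma')$—but the portion of $\partial F$ lying on $\partial \A_0$ (between the two consecutive boundary points of $w$ adjacent to $F$) is an arc of the circle containing no web vertices, so one of these two arcs of $\partial F$ is entirely in $\partial \A_0$ and hence contributes no internal edge. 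The other arc, lying in the interior, must then pass through an internal vertex of $w_B$ strictly between the two outermost level-$m$ rungs. Such a vertex would lie on $L_m$ and be incident to a rung at level $m-1$ and, by the no-square condition on $w_B$, force the presence of a face of $w_B$ adjacent to $F$; tracking the depths, this face would have to be $F$ itself revisited or an internal face of $w_B$ touching $\partial \A_0$, contradicting either that $F$ has $\le 4$ sides (if it is revisited the side count blows past $4$) or that $w_B$'s internal faces are hexagons disjoint from $\partial\A_0$.

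The cleanest way to finish, and the step I expect to be the main obstacle, is to make the last paragraph's counting rigorous: one must rule out the configuration where $\gamma$ enters $F$ through one rung, the boundary of $F$ wanders through the ladder, and $\gamma'$ exits through a different, non-adjacent rung. I would handle this by invoking Lemma~\ref{lem:adjacent cut paths}: after $\Imove$-moves the region between $\gamma$ and $\gamma'$ (when they end at the same $Q_k$) is governed by \eqref{eq:three options between adj paths}, and in each of the three cases the two paths' last intersections with $w$ occur on edges meeting at a single vertex (the two arc-legs of a tripod, the two strands of an arc, or adjacent nested arcs). Translating back: $e(\gamma)$ and $e(\gamma')$ are exactly these two last edges, and they always share a vertex. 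If instead $\gamma,\gamma'$ end at distinct adjacent $Q_k, Q_{k+1}$ with $F$ the face between them containing the boundary point $B_k$, the same \eqref{eq:three options between adj paths} analysis applies directly with $F$ playing the role of the face $F_1$-and-$F_2$-merged, and again the two edges $e(\gamma), e(\gamma')$ are adjacent rungs sharing the apex vertex. The remaining subtlety—when $\gamma$ and $\gamma'$ are \emph{not} adjacent, i.e.\ other cut paths $\gamma_{k+1},\dots$ end inside $F$—cannot occur, because a minimal cut path cannot terminate in the interior of an edge of $F$ on the far side of another edge of $F$ from $\times$ without that first edge already witnessing non-minimality; so all cut paths ending in $F$ end on edges adjacent to a common vertex, completing the proof.
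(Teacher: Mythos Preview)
Your argument contains a genuine error: you assert that an outer face $F$ not containing the puncture ``is a $U$, $Y$, or $H$ \ldots so it has at most $4$ sides.'' This is false. Lemma~\ref{lem:U Y or H} only guarantees that \emph{some} outer face of a non-elliptic web has at most $4$ sides, not that every such face does; a generic outer face can have arbitrarily many sides. Since your counting argument in the second paragraph hinges on $F$ having $\le 4$ sides, it collapses once $F$ is allowed to be large.

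Your fallback via Lemma~\ref{lem:adjacent cut paths} is also misapplied. That lemma concerns two minimal cut paths ending at \emph{adjacent} points $Q_k$ and $Q_{k+1}$, whereas here $\gamma$ and $\gamma'$ both end in the same outer face $F$ and hence at the \emph{same} $Q_k$. The tripod/arc trichotomy of \eqref{eq:three options between adj paths} therefore does not describe the region between $\gamma$ and $\gamma'$; the relevant structural result for two paths with a common endpoint is Lemma~\ref{lem:region between cut paths}, whose ladder description you invoke but do not actually use to finish. Your final paragraph about ``non-adjacent'' cut paths is similarly off target, since adjacency of the paths is not at issue.

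The paper's proof proceeds differently and avoids any assumption on the size of $F$. Assuming $e(\gamma)$ and $e(\gamma')$ are separated along $\partial F$ by at least one intermediate vertex, it picks an intermediate edge $(v_k,v_{k+1})$ of $F$ and examines the face $F'$ on its far side. A short topological argument (cutting the annulus along a suitable arc through $F$ and $F'$) shows $F'$ must be an inner face not containing the puncture. Because $\gamma$ and $\gamma'$ are related by $\Imove$-moves across the vertices $v_{i+1},\dots,v_j$, the two edges of $F'$ incident to $v_k$ and $v_{k+1}$ (other than $(v_k,v_{k+1})$ itself) are balanced. But Lemma~\ref{lem:region between cut paths} forces every inner face between $\gamma$ and $\gamma'$ to be a \emph{balanced hexagon}, whose pattern of balanced edges (Definition~\ref{def:balanced}) is incompatible with having two balanced edges adjacent to a common unbalanced edge in this way. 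This is the contradiction.
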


\begin{remark}
 For instance, the above lemma says that the configuration of minimal cut paths shown in \eqref{eq:far apart cut paths example} is impossible:
  \begin{equation}
  \label{eq:far apart cut paths example}
  \begin{aligned}
        \begin{tikzpicture}
        \draw[\boundarycolor] (-.25,0) -- (1.25,0);
                 \draw (0,0) -- (0,-1);
        \draw (1,0) -- (1,-1); 
        \draw (0,-.5) -- (1,-.5);
          \draw[densely dashed] (.25,0) .. controls (.15,-.25) .. (-.25,-.3);
          \draw[densely dashed] (.75,0) .. controls (.85,-.25) .. (1.25,-.3);
        \end{tikzpicture}
    \end{aligned}
    \end{equation}
    Note that this also implies Lemma \ref{lem:no 1 -1 H}. 
\end{remark}

\begin{proof}[Proof of Lemma \ref{lem:no far apart cut paths}]
    Let us begin by establishing some notation; see Figure \ref{fig:no far apart cut paths figure 1} for a summary. Order the vertices of $F$ as $v_1, \ldots, v_n$ so that $v_1, v_n \in \partial \A$, $v_1$ is connected to $v_n$ by an interval in $\partial \A$, and $v_i$ is connected to $v_{i+1}$ by an edge of $w$ for $1\leq i \leq n-1$. Suppose $\gamma, \gamma'$ exist where $e(\gamma)$ and $e(\gamma')$ do not share a vertex. Then $e(\gamma)$ connects $v_i, v_{i+1}$, $e(\gamma')$ connects $v_{j}, v_{j+1}$, and without loss of generality $v_{i+1} < v_j$. Pick any $i+1 \leq k < j$, and consider the face $F'$ below the edge connecting $v_k$ and $v_{k+1}$. 

    First, minimality implies that $F'$ does not contain the puncture. Next we argue that $F'$ is an inner face. Suppose instead that $F'$ is outer. Take a properly embedded arc $\alpha \subset \A_0$ as indicated in Figure \ref{fig:no far apart cut paths figure 2}. Exactly one of the two components of $\A\setminus \alpha$ contains the puncture, and without loss of generality assume that it is also the component containing $v_i$. Then $\gamma'$ must eventually pass through $F'$, intersecting exactly two of its boundary sides. We can then find a cut path that intersects $w$ fewer times than $\gamma'$ does, which is a contradiction. 

    Therefore $F'$ is an inner face. Since $\gamma$ and $\gamma'$ are related by a sequence of $\Imove$-moves, we see that the two edges of $F'$ that are incident to $v_k$ and $v_{k+1}$ are balanced, as indicated in Figure \ref{fig:no far apart cut paths figure 3}. But this is impossible by Lemma \ref{lem:region between cut paths}. 
\end{proof}

\begin{figure}
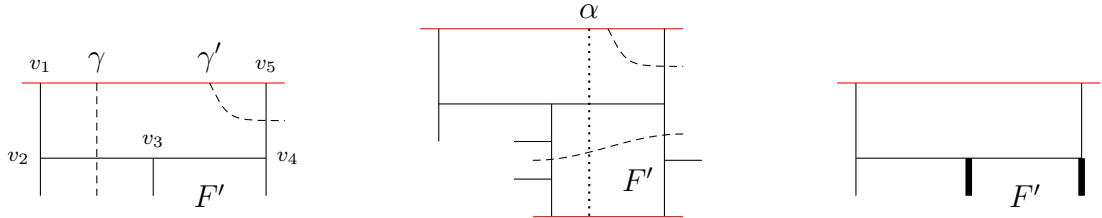

\centering 
\subcaptionbox{Here, $n=5$, $i=2$, $j=4$, and we take $k=3$.  \label{fig:no far apart cut paths figure 1}}[.32\linewidth]
{\includestandalone{images/no_far_apart_cut_paths_figure1}
}
\subcaptionbox{The arc $\alpha$ is dotted, and the dashed segments are parts of $\gamma'$. \label{fig:no far apart cut paths figure 2}}[.32\linewidth]
{\includestandalone{images/no_far_apart_cut_paths_figure2}}
\subcaptionbox{ The two bold edges of $F'$ must be balanced. \label{fig:no far apart cut paths figure 3}}[.32\linewidth]
{
\includestandalone{images/no_far_apart_cut_paths_figure3}
}
\caption{A schematic of the notation in the proof of Lemma \ref{lem:no far apart cut paths}. }\label{fig:no far apart cut paths}
\end{figure}

The following and Proposition \ref{prop:mcp-circ-grow} complete the proof of Theorem \ref{thm:MCP-GROW-inverses}. 

\begin{proposition}
\label{prop:grow-circ-mcp}
    For any non-elliptic annular $SL(3)$ web $w$, we have $(\growalg \circ \mcpalg)(w) = w$. 
\end{proposition}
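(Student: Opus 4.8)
The plan is to reverse-engineer the growth algorithm from a set of minimal cut paths. Start with a non-elliptic annular web $w$, set $(S^w, J^w) = \mcpalg(w)$, and fix a system of pairwise disjoint minimal cut paths $\{\gamma_k : P \leadsto Q_k\}_{1 \le k \le n}$, so that $j_k = I(\gamma_{k+1},w) - I(\gamma_k, w)$. I will argue by induction on the number of internal vertices of $w$ that $\growalg(S^w, J^w) = w$. The base case is $w$ with no internal vertices: here $w$ is a disjoint union of arcs joining boundary points (plus possibly no closed components, by the remark after Definition \ref{def:non-elliptic web}), and by examining which arcs can be cut minimally one checks the only non-elliptic possibility is a single $U$-web on $S^w = (+,-)$ (or its reverse), which $\growalg$ reproduces. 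For the inductive step, I need to locate, near the top boundary, a growth web whose removal yields a smaller non-elliptic web $w'$ whose $\mcpalg$-data is exactly the result of peeling off the corresponding growth step from $(S^w, J^w)$.

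The key step is producing this topmost growth web. By Lemma \ref{lem:U Y or H}, $w$ has an outer face that is a $U$, $Y$, or $H$ (and when the puncture is in an inner face, I will want one of these that is genuinely ``at the boundary,'' i.e.\ disjoint from the puncture). I then analyze the minimal cut paths passing through the two or three boundary edges of this face. This is exactly where Lemma \ref{lem:no far apart cut paths} (and its special case Lemma \ref{lem:no 1 -1 H}) does the work: it forces the cut paths adjacent to an outer $U/Y/H$ face into one of a small number of configurations, and each configuration pins down the state labels at those boundary points. For a boundary $U$: the two edges must be crossed by cut paths whose intersection numbers force the local state to be $+,- \mapsto \varnothing$, matching $U_0$. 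For a boundary $Y$ (signs agreeing): the three incident edges and their cut paths force the local states to one of $(1,0,1)$, $(0,-1,-1)$, $(1,-1,0)$, matching $Y_1, Y_{-1}, Y_0$. For a boundary $H$ (signs differing): Lemma \ref{lem:no 1 -1 H} rules out the ``$(1,-1)$ at the top'' configuration, leaving exactly the three $H$-type patterns $H_1, H_0, H_{-1}$. In each case the identified growth web is precisely the one $\growalg$ would apply, by its definition.

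Having matched the top growth web, let $w'$ be $w$ with that growth web removed (for $H$-webs this keeps all boundary points; for $U$ it removes two; for $Y$ it replaces two with one). The cut paths $\gamma_k$ restrict to cut paths on $w'$, and by Lemma \ref{lem:local cut paths are minimal}-style reasoning — more precisely, because removing the growth web decreases each relevant $I(\gamma_k, w)$ in the controlled way dictated by Figure \ref{fig:flow-growth-cuts} — these restrictions are still minimal for $w'$. Hence $\mcpalg(w')$ equals $(S^w, J^w)$ with the top growth step undone, and $w'$ is still non-elliptic (removing a boundary $U/Y/H$ cannot create a new elliptic face, as in \cite{KhKup}). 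By the inductive hypothesis $\growalg(\mcpalg(w')) = w'$, and re-attaching the growth web gives $\growalg(S^w, J^w) = w$. The main obstacle I anticipate is the case analysis verifying that the cut-path configurations around the boundary $U/Y/H$ face genuinely force the claimed state labels: this requires carefully tracking intersection counts across the local picture and invoking Lemmas \ref{lem:adjacent cut paths}, \ref{lem:region between cut paths}, and \ref{lem:no far apart cut paths} to exclude the spurious configurations, and handling the annular subtlety where the ``boundary face'' might a priori surround the puncture (which is excluded by minimality exactly as in the proof of Lemma \ref{lem:no far apart cut paths}).
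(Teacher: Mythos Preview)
Your overall inductive strategy (find a boundary $U/Y/H$, show its $\mcpalg$-labels match a growth rule, peel it off, and recurse) is the same as the paper's, and the $U$ and $Y$ cases go through essentially as you describe. The genuine gap is in the $H$ case. You assert that Lemma~\ref{lem:no 1 -1 H} ``rules out the $(1,-1)$ at the top configuration, leaving exactly the three $H$-type patterns $H_1,H_0,H_{-1}$.'' But the nine a~priori possibilities for the state pair $(j_k,j_{k+1})$ at a boundary $H$ are all of $\{-1,0,1\}^2$; Lemma~\ref{lem:no 1 -1 H} eliminates only $(1,-1)$, and you give no argument excluding $(-1,0)$, $(0,1)$, $(-1,1)$, $(1,1)$, or $(-1,-1)$. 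None of the lemmas you cite (Lemmas~\ref{lem:adjacent cut paths}, \ref{lem:region between cut paths}, \ref{lem:no far apart cut paths}) do this for you: Lemma~\ref{lem:no far apart cut paths} concerns two cut paths ending in the \emph{same} outer face, whereas here $\gamma_k$ and $\gamma_{k+2}$ end outside the $H$-face. So the claim that an arbitrary boundary $H$ in $w$ already has the states of a growth rule is unjustified.

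A second, related gap: you assert that after removing the matched growth web, the cut paths $\gamma_k$ ``restrict'' to minimal cut paths in $w'$, invoking ``Lemma~\ref{lem:local cut paths are minimal}-style reasoning.'' That lemma is about the specific cut paths built from the local models in Figure~\ref{fig:flow-growth-cuts} during $\growalg$, not about arbitrary minimal cut paths; it does not transfer. In fact the paper explicitly flags the opposite direction (``minimal cut paths in $w'$ do not necessarily extend to $w$'') as the reason the $H$ case is delicate. Without this, you cannot conclude that $\mcpalg(w')$ agrees with the bottom labels of the growth rule you removed. The paper handles the $H$ case quite differently: it removes an \emph{arbitrary} boundary $H$ to get $w'$, uses the inductive hypothesis on $w'$ to locate a $U$, $Y$, or \emph{good} $H$ face $F'$ in $w'$, and then does a substantial case analysis (separating the cases where $F'$ is disjoint from or overlaps the removed $H$, and invoking Lemmas~\ref{lem:no 1 -1 H} and~\ref{lem:no far apart cut paths} to eliminate several subcases) to show that $F'$ is also a valid growth step in $w$. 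This detour through $w'$ is exactly what your argument is missing. (A minor additional point: your induction on internal vertices does not decrease when you remove a $U$; the paper inducts on the total number of vertices.)
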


\begin{proof}
    We proceed by induction on the number of vertices $N$ (including boundary vertices) of $w$. The base case is $N=2$, and Lemma \ref{lem:2 boundary points} implies there are two options for $w$, for which it is straightforward to check that the statement holds. 

    Next, Lemma \ref{lem:U Y or H} implies that $w$ has an outer face $F$ which is a $U$, a $Y$, or an $H$. Let $w'=w\setminus F$ be the non-elliptic web obtained by deleting $F$. We have $(\growalg\circ \mcpalg)(w') = w'$ by inductive hypothesis. We consider the three possibilities for $F$.
    \vskip1em
    \noindent
    \underline{$F=U$.} Minimal cut paths in $w'$ extend naturally to minimal cut paths of $w$: 
    \begin{center}
    \begin{tikzpicture}
        \draw (0,0) arc (0:-180: .5 and .5);
        \draw[densely dashed] (-.7,-1) -- (-1.5,0);
        \draw[densely dashed] (-.5,-1) -- (-.5,0);
        \draw[densely dashed] (-.3,-1) -- (.5,0);
        \node[above] at (-1.5,0) {$\mathsmaller{k}$};
        \node[above] at (-.5,0) {$\mathsmaller{k+1}$};
        \node[above] at (.5,0) {$\mathsmaller{k}$};
    \end{tikzpicture}
\end{center}
 We see that $\mcpalg(w)$ produces states $(1,-1)$ at $F$. Applying one step of $\growalg$ produces $F$ followed by the state $\mcpalg(w')$. Continuing to apply $\growalg$ produces $w'$ by inductive hypothesis, which concludes the proof in this case. 
        \vskip1em
    \noindent
    \underline{$F=Y$.} Minimal cut paths in $w'$ extend again to minimal cut paths in $w$, except there are three cases depending on the state at the boundary point of $w'$ corresponding to the bottom point of $F$:
    \begin{center}
    \begin{tikzpicture}
        \draw (0,0) -- (0,-1);
        \draw[densely dashed] (-.75,0) -- (-.75,-1);
        \draw[densely dashed] (.75,0) -- (.75,-1);
        \node[above] at (-.75,0) {$\mathsmaller{k}$};
        \node[above] at (.75,0) {$\mathsmaller{k-1}$};
        \node[above] at (0,0) {$\mathsmaller{-1}$};
        \node[] at (0,-1.25) {$w'$};
\node at (1.5,-.5) {$\Rightarrow$};
 \begin{scope}[shift={(2.75,0)}]
           \draw (0,0) -- (.5,-.5);
        \draw (1,0) -- (.5,-.5);
        \draw (.5,-.5) -- (.5,-1);
        \draw[densely dashed] (-.5,0) -- (-.5,-1);
        \draw[densely dashed] (.5,0)-- (.85,-1);
        \draw[densely dashed] (1.5,0) -- (1.5,-1);
        \node[above] at (-.5,0) {$\mathsmaller{k}$};
        \node[above] at (.5,0) {$\mathsmaller{k}$};
        \node[above] at (1.5,0) {$\mathsmaller{k-1}$};
        \node[] at (.5,-1.27) {$w$};
 \end{scope}
    \end{tikzpicture}\vskip1em
      \begin{tikzpicture}
        \draw (0,0) -- (0,-1);
        \draw[densely dashed] (-.75,0) -- (-.75,-1);
        \draw[densely dashed] (.75,0) -- (.75,-1);
        \node[above] at (-.75,0) {$\mathsmaller{k}$};
        \node[above] at (.75,0) {$\mathsmaller{k}$};
        \node[above] at (0,0) {$\mathsmaller{0}$};
        \node[] at (0,-1.25) {$w'$};
\node at (1.5,-.5) {$\Rightarrow$};
 \begin{scope}[shift={(2.75,0)}]
        \draw (0,0) -- (.5,-.5);
        \draw (1,0) -- (.5,-.5);
        \draw (.5,-.5) -- (.5,-1);
        \draw[densely dashed] (-.5,0) -- (-.5,-1);
        \draw[densely dashed] (.5,0)-- (.85,-1);
        \draw[densely dashed] (1.5,0) -- (1.5,-1);
        \node[above] at (-.5,0) {$\mathsmaller{k}$};
        \node[above] at (.5,0) {$\mathsmaller{k+1}$};
        \node[above] at (1.5,0) {$\mathsmaller{k}$};
        \node[] at (.5,-1.27) {$w$};
        \node at (2.5,-.5) {or};
 \end{scope}
 \begin{scope}[shift={(6.5,0)}]
           \draw (0,0) -- (.5,-.5);
        \draw (1,0) -- (.5,-.5);
        \draw (.5,-.5) -- (.5,-1);
        \draw[densely dashed] (-.5,0) -- (-.5,-1);
        \draw[densely dashed] (.5,0)-- (.15,-1);
        \draw[densely dashed] (1.5,0) -- (1.5,-1);
        \node[above] at (-.5,0) {$\mathsmaller{k}$};
        \node[above] at (.5,0) {$\mathsmaller{k+1}$};
        \node[above] at (1.5,0) {$\mathsmaller{k}$};
        \node[] at (.5,-1.27) {$w$};
 \end{scope}
    \end{tikzpicture}\vskip1em
      \begin{tikzpicture}
        \draw (0,0) -- (0,-1);
        \draw[densely dashed] (-.75,0) -- (-.75,-1);
        \draw[densely dashed] (.75,0) -- (.75,-1);
        \node[above] at (-.75,0) {$\mathsmaller{k}$};
        \node[above] at (.75,0) {$\mathsmaller{k+1}$};
        \node[above] at (0,0) {$\mathsmaller{1}$};
         \node[] at (0,-1.25) {$w'$};
\node at (1.5,-.5) {$\Rightarrow$};
 \begin{scope}[shift={(2.75,0)}]
           \draw (0,0) -- (.5,-.5);
        \draw (1,0) -- (.5,-.5);
        \draw (.5,-.5) -- (.5,-1);
        \draw[densely dashed] (-.5,0) -- (-.5,-1);
        \draw[densely dashed] (.5,0)-- (.15,-1);
        \draw[densely dashed] (1.5,0) -- (1.5,-1);
        \node[above] at (-.5,0) {$\mathsmaller{k}$};
        \node[above] at (.5,0) {$\mathsmaller{k+1}$};
        \node[above] at (1.5,0) {$\mathsmaller{k+1}$};
         \node[] at (.5,-1.27) {$w$};
    \end{scope}
    \end{tikzpicture}
    \end{center}
    In the second case, either of the two extensions give minimal cut paths for $w$. By examining the states at $F$ given by $\mcpalg(w)$ and comparing with the $Y$-type growth webs, we see that in all three cases we can apply $\growalg$ once to $\mcpalg(w)$ to produce $F$, and the resulting state string is precisely $\mcpalg(w')$. Continuing to apply $\growalg$ produces $w'$ by inductive hypothesis, finishing the proof in this case. 
        \vskip1em
    \noindent
    \underline{$F=H$.} Note that minimal cut paths in $w'$ do not necessarily extend to $w$. Since $\growalg$ produces $w'$, we must have that $w'$ has a $U$, a $Y$ or a \emph{good} $H$ at its boundary, where a good $H$ has states $(1,0)$, $(0,-1)$, or $(0,0)$ according to $\mcpalg(w')$ at its top boundary points. Let $F'$ denote one of these $U$'s, $Y$'s, or good $H$'s in $w'$. Denote by $p_1, p_2$ the two bottom points of $F=H$ in $w$, so that $p_1, p_2$ are boundary points of $w'$. Let $q_1, q_2$ denote the boundary points of $F'$.

    Suppose first that $\{p_1, p_2\} \cap \{q_1 , q_2\} =\varnothing$. In this case, we may view $F'$ as an outer face of $w$. If $F' = U$ or $F' = Y$ then this reduces to an earlier case. Suppose that $F'$ is a good $H$ in $w'$. 
    Minimal cut paths in $w'$ near $F'$ have one of the three intersection configurations shown in \eqref{eq:if good H is far away}. 
    \begin{equation}
    \label{eq:if good H is far away}
    \begin{aligned}
        \begin{tikzpicture}
            \draw(0,0) -- (0,-1);
            \draw (1,0) -- (1,-1);
            \draw (0,-.5) -- (1,-.5);
            \draw[densely dashed] (-.5,0) -- (-.5,-1);
             \draw[densely dashed] (.5,0) -- (.5,-1);
              \draw[densely dashed] (1.5,0) -- (1.5,-1);
              \node[above] at (-.5,0) {$\mathsmaller{k-1}$};
              \node[above] at (.5,0) {$\mathsmaller{k}$};
              \node[above] at (1.5,0) {$\mathsmaller{k}$};
        \end{tikzpicture}
        \hskip3em
                \begin{tikzpicture}
            \draw(0,0) -- (0,-1);
            \draw (1,0) -- (1,-1);
            \draw (0,-.5) -- (1,-.5);
            \draw[densely dashed] (-.5,0) -- (-.5,-1);
             \draw[densely dashed] (.5,0) -- (.5,-1);
              \draw[densely dashed] (1.5,0) -- (1.5,-1);
              \node[above] at (-.5,0) {$\mathsmaller{k}$};
              \node[above] at (.5,0) {$\mathsmaller{k}$};
              \node[above] at (1.5,0) {$\mathsmaller{k-1}$};
        \end{tikzpicture}
        \hskip3em        \begin{tikzpicture}
            \draw(0,0) -- (0,-1);
            \draw (1,0) -- (1,-1);
            \draw (0,-.5) -- (1,-.5);
            \draw[densely dashed] (-.5,0) -- (-.5,-1);
             \draw[densely dashed] (.5,0) -- (.5,-1);
              \draw[densely dashed] (1.5,0) -- (1.5,-1);
              \node[above] at (-.5,0) {$\mathsmaller{k}$};
              \node[above] at (.5,0) {$\mathsmaller{k}$};
              \node[above] at (1.5,0) {$\mathsmaller{k}$};
        \end{tikzpicture}
        \end{aligned}
    \end{equation}
    Lemma \ref{lem:adjacent cut paths} implies that a minimal cut path for the middle region can always be chosen to intersect the horizontal rung of $F'$. Moreover, since $\{p_1, p_2\} \cap \{q_1 , q_2\} =\varnothing$, we can take the same minimal cut paths near $F'$ in $w$.  Setting $w''  = w\setminus F'$, we have the following intersection configurations in $w''$, corresponding to the three possibilities in \eqref{eq:if good H is far away}.
    \begin{center}
    \begin{tikzpicture}
        \draw (0,0) -- (0,-.5); 
        \draw (1,0) -- (1,-.5);
        \draw[densely dashed] (-.5,0) -- (-.5,-.5);
        \draw[densely dashed] (.5,0) -- (.5,-.5);
        \draw[densely dashed] (1.5,0) -- (1.5,-.5);
        \node[above] at (-.5,0) {$\mathsmaller{k-1}$};
        \node[above] at (.5,0) {$\mathsmaller{k-1}$};
        \node[above] at (1.5,0) {$\mathsmaller{k}$};
        \end{tikzpicture}
        \hskip3em
           \begin{tikzpicture}
        \draw (0,0) -- (0,-.5); 
        \draw (1,0) -- (1,-.5);
        \draw[densely dashed] (-.5,0) -- (-.5,-.5);
        \draw[densely dashed] (.5,0) -- (.5,-.5);
        \draw[densely dashed] (1.5,0) -- (1.5,-.5);
        \node[above] at (-.5,0) {$\mathsmaller{k}$};
        \node[above] at (.5,0) {$\mathsmaller{k-1}$};
        \node[above] at (1.5,0) {$\mathsmaller{k-1}$};
        \end{tikzpicture}
        \hskip3em
           \begin{tikzpicture}
        \draw (0,0) -- (0,-.5); 
        \draw (1,0) -- (1,-.5);
        \draw[densely dashed] (-.5,0) -- (-.5,-.5);
        \draw[densely dashed] (.5,0) -- (.5,-.5);
        \draw[densely dashed] (1.5,0) -- (1.5,-.5);
        \node[above] at (-.5,0) {$\mathsmaller{k}$};
        \node[above] at (.5,0) {$\mathsmaller{k-1}$};
        \node[above] at (1.5,0) {$\mathsmaller{k}$};
        \end{tikzpicture}
    \end{center}
Comparing each one with the $H$-type growth web and using that $(\growalg\circ \mcpalg)(w'') = w''$ by induction completes the proof in this case.

Assume now that $\{p_1, p_2\} \cap \{q_1 , q_2\}\neq \varnothing$.  We may then also assume that $\{p_1, p_2\} \cap \{q_1 , q_2\}$ consists of exactly one point. Otherwise we have $\{p_1, p_2\} = \{q_1 , q_2\}$. This is impossible if $F'=Y$ for orientation reasons. If $F' = U$ then $w$ must be the web shown in the middle of \eqref{eq:length two ex}, for which the statement is easily verified. Finally, arguing as in the proof of Lemma \ref{lem:2 boundary points} shows that $F'=H$ is impossible. 

We now have six further cases depending on $F'$. Three are shown in \eqref{eq:three cases for F'}, and the other three are reflections of these through a vertical line. 
\begin{equation}
\label{eq:three cases for F'}
\begin{aligned}
    \begin{tikzpicture}
         \draw (0,0) -- (0,-1.5);
        \draw (1,0) -- (1,-1); 
        \draw (0,-.5) -- (1,-.5);
        \draw (2,-1) arc (0:-180:.5 and .5);
        \draw (2,0) -- (2,-1);
        \node at (1,-2) {$F'=U$};
    \end{tikzpicture} \hskip4em
        \begin{tikzpicture}
         \draw (0,0) -- (0,-1.5);
        \draw (1,0) -- (1,-.5); 
        \draw (0,-.5) -- (1,-.5);
        \draw (1,-.5) -- (1.5,-1);
        \draw (2,0) -- (1.5,-1);
        \draw (1.5,-1) -- (1.5,-1.5);
        \node at (1,-2) {$F'=Y$};
    \end{tikzpicture}\hskip4em
        \begin{tikzpicture}
         \draw (0,0) -- (0,-1.5);
        \draw (1,0) -- (1,-1.5); 
        \draw (0,-.5) -- (1,-.5);
        \draw (2,0) -- (2,-1.5);
        \draw (1,-1) -- (2,-1);
        \node at (1,-2) {$F'=H$};
    \end{tikzpicture}
    \end{aligned}
\end{equation}
We will only address the three depicted cases; the arguments for their reflections are analogous and are left to the reader. 

In the first case $F'=U$, we see that $w$ has a boundary $Y$, which reduces to a previous case. Suppose then $F'=Y$. Considering a minimal cut path $\gamma'$ in $w'$ shown below on the left, which intersects $w'$ in $k$ points, we see that there are nine possibilities for labeling the nearby regions. 
\begin{center}
    \begin{tikzpicture}
    \draw (-1,0) -- (-1,-1);
         \draw (0,0) -- (.5,-.5);
        \draw (1,0) -- (.5,-.5);
        \draw (.5,-.5) -- (.5,-1);
        \draw[densely dashed] (-.5,0) -- (-.5,-1);
        \node[above] at (-.5,0) {$\mathsmaller{k}$};
    \end{tikzpicture} \hskip4em
    \begin{tikzpicture}
    \draw (-1,0) -- (-1,-1);
         \draw (0,0) -- (.5,-.5);
        \draw (1,0) -- (.5,-.5);
        \draw (.5,-.5) -- (.5,-1);
        \draw[densely dashed] (-.5,0) -- (-.5,-1);
        \node[above] at (-.5,0) {$\mathsmaller{k}$};
           \node[above] at (-1.5,0) {$\mathsmaller{a}$};
              \node[above] at (.5,0) {$\mathsmaller{b}$};
                 \node[above] at (1.5,0) {$\mathsmaller{c}$};
    \end{tikzpicture}
\end{center}
The tuple $(a,b,c)$ indicated on the right (we do not draw minimal cut paths to avoid clutter) must be one of 
\begin{align}
\begin{aligned}
\label{eq:nine cases for Y}
    (k+1, k+1, k+1), & & (k,k+1, k+1), &  & (k-1, k+1,k+1), \\
    (k+1, k, k-1), &  & (k,k, k-1), & & (k-1, k ,k-1), \\
    (k+1, k+1, k), & & (k,k+1, k), & & (k-1, k+1 ,k).
    \end{aligned}
\end{align}
For each of these we can read off the states given by $\mcpalg(w')$; for instance, $(k+1, k, k-1)$ corresponds to
\begin{equation}
\begin{aligned}
\label{eq:ex case for Y}
    \begin{tikzpicture}
    \draw (-1,0) -- (-1,-1);
         \draw (0,0) -- (.5,-.5);
        \draw (1,0) -- (.5,-.5);
        \draw (.5,-.5) -- (.5,-1);
        \node[above] at (-1,0) {$\mathsmaller{-1}$};
         \node[above] at (0,0) {$\mathsmaller{0}$};
          \node[above] at (1,0) {$\mathsmaller{-1}$};
    \end{tikzpicture} .
    \end{aligned}
    \end{equation}
The three minimal cut paths in $w'$ corresponding to $a,b,c$ extend to $w$. Assume first that $\gamma'$ extends across the horizontal rung of $F=H$ in $w$ to a minimal cut path in $w$:
\begin{center}
     \begin{tikzpicture}
         \draw (0,0) -- (0,-1.5);
        \draw (1,0) -- (1,-.5); 
        \draw (0,-.5) -- (1,-.5);
        \draw (1,-.5) -- (1.5,-1);
        \draw (2,0) -- (1.5,-1);
        \draw (1.5,-1) -- (1.5,-1.5);
    \draw[densely dashed] (.5,0) -- (.5,-1.5);
    \node[above] at (.5,0) {$\mathsmaller{k+1}$};
     \node[above] at (-.75,0) {$\mathsmaller{a}$};
              \node[above] at (1.5,0) {$\mathsmaller{b}$};
                 \node[above] at (2.25,0) {$\mathsmaller{c}$};
    \end{tikzpicture}
\end{center}
Of the nine options in \eqref{eq:nine cases for Y}, the three in the last column (when $a=k-1$) are then impossible. Moreover, $(k,k,k-1)$ is also impossible by Lemma \ref{lem:no 1 -1 H}. This leaves five cases, and for each of them one checks that we can apply the growth algorithm once to produce $F=H$ and such that the resulting state string matches $\mcpalg(w')$. For instance, $(k+1, k, k-1)$ gives 
\begin{center}
    \begin{tikzpicture}
        \node at (0,0) {$\pm$};
        \node at (1,0) {$\mp$};
        \node at (2,0) {$\pm$};
        \node[above] at (0,.2) {$\mathsmaller{0}$};
        \node[above] at (1,.2) {$\mathsmaller{-1}$};
        \node[above] at (2,.2) {$\mathsmaller{-1}$};

        \draw[->] (3,0) -- (4,0) node[above,midway] {$\growalg$};
        \begin{scope}[shift={(5,.5)}]
 \draw (0,0) -- (0,-1);
 \draw (1,0) -- (1,-1);
 \draw (0,-.5) -- (1,-.5);
 \draw (2,0) -- (2,-1);
         \node[above] at (0,0) {$\mathsmaller{0}$};
        \node[above] at (1,0) {$\mathsmaller{-1}$};
        \node[above] at (2,0) {$\mathsmaller{-1}$};
 \node[below] at (0,-1) {$\mathsmaller{-1}$};
 \node[below] at (1,-1) {$\mathsmaller{0}$};
 \node[below] at (2,-1) {$\mathsmaller{-1}$};
        \end{scope}
    \end{tikzpicture}
\end{center}
where the bottom states match \eqref{eq:ex case for Y}. We leave the other four cases to the reader. 

Next, assume $\gamma'$ does not extend. Then in $w$ we have one of two options:
\begin{center}
 \begin{tikzpicture}
         \draw (0,0) -- (0,-1.5);
        \draw (1,0) -- (1,-.5); 
        \draw (0,-.5) -- (1,-.5);
        \draw (1,-.5) -- (1.5,-1);
        \draw (2,0) -- (1.5,-1);
        \draw (1.5,-1) -- (1.5,-1.5);
    \draw[densely dashed] (.5,0) -- (2.25,-.5);
    \node[above] at (.5,0) {$\mathsmaller{k}$};
     \node[above] at (-.75,0) {$\mathsmaller{a}$};
              \node[above] at (1.5,0) {$\mathsmaller{b}$};
                 \node[above] at (2.25,0) {$\mathsmaller{c}$};
                \node at (4,-.75) {or};
    \end{tikzpicture}\hskip3em
         \begin{tikzpicture}
         \draw (0,0) -- (0,-1.5);
        \draw (1,0) -- (1,-.5); 
        \draw (0,-.5) -- (1,-.5);
        \draw (1,-.5) -- (1.5,-1);
        \draw (2,0) -- (1.5,-1);
        \draw (1.5,-1) -- (1.5,-1.5);
    \draw[densely dashed] (.5,0) -- (-.5,-.5);
    \node[above] at (.5,0) {$\mathsmaller{k}$};
     \node[above] at (-.75,0) {$\mathsmaller{a}$};
              \node[above] at (1.5,0) {$\mathsmaller{b}$};
                 \node[above] at (2.25,0) {$\mathsmaller{c}$};
    \end{tikzpicture} 

\end{center}
The first option forces $(b,c) = (k-1, k-2)$, which is impossible, and the second option forces $a=k-1$, which gives three possible cases. However, note that $(b,c) = (k+1, k)$ contradicts Lemma \ref{lem:no 1 -1 H}, and $(b,c) = (k,k-1)$ implies that $\gamma'$ does extend by Lemma \ref{lem:adjacent cut paths}. 

This leaves only $(a,b,c)=(k-1,k+1, k+1)$. Let $w''$ denote $w'\setminus F'$. Note that $(\growalg \circ \mcpalg)(w'') = w''$ by inductive hypothesis. Applying one step of $\growalg$ to $\mcpalg(w')$ gives 
\begin{center}
    \begin{tikzpicture}
        \node at (0,0) {$\mp$};
        \node at (1,0) {$\pm$};
        \node at (2,0) {$\pm$};
        \node[above] at (0,.2) {$\mathsmaller{1}$};
        \node[above] at (1,.2) {$\mathsmaller{1}$};
        \node[above] at (2,.2) {$\mathsmaller{0}$};

        \draw[->] (3,0) -- (4,0) node[above,midway] {$\growalg$};
        \begin{scope}[shift={(5,.5)}]
        \draw (0,0) -- (0,-1);
        \draw (1,0) -- (1.5,-.5);
        \draw (2,0) -- (1.5,-.5);
        \draw (1.5,-.5) -- (1.5,-1);
             \node[above] at (0,0) {$\mathsmaller{1}$};
            \node[above] at (1,0) {$\mathsmaller{1}$};
            \node[above] at (2,0) {$\mathsmaller{0}$};
        \node[below] at (0,-1) {$\mathsmaller{1}$};
        \node[below] at (1.5,-1) {$\mathsmaller{1}$};
        \end{scope}
    \end{tikzpicture} .
\end{center}
  
In this case, applying two steps of the growth algorithm to $\mcpalg(w)$ yields
\begin{center}
    \begin{tikzpicture}[yscale=.6]
        \node at (0,0) {$\pm$};
        \node at (1,0) {$\mp$};
        \node at (2,0) {$\pm$};
        \node[above] at (0,0.2) {$\mathsmaller{1}$};
        \node[above] at (1,0.2) {$\mathsmaller{1}$};
        \node[above] at (2,0.2) {$\mathsmaller{0}$};

        \draw[->] (3,0) -- (4,0) node[above,midway] {$\growalg$};
        \begin{scope}[shift={(5,.5)}]
            \draw (0,0) -- (0,-1);
            \draw (1,0) -- (1,-1);
            \draw (1,-.5) -- (2,-.5);
            \draw (2,0) -- (2,-2);
            \node[above] at (0,0) {$\mathsmaller{1}$};
            \node[above] at (1,0) {$\mathsmaller{1}$};
            \node[above] at (2,0) {$\mathsmaller{0}$};
            \node[below,left] at (0,-1) {$\mathsmaller{1}$};
            \node[below,right] at (1,-1) {$\mathsmaller{0}$};
            \node[below] at (2,-2) {$\mathsmaller{1}$};
            \draw (0,-1) -- (.5,-1.5);
            \draw (1,-1) -- (.5,-1.5);
            \draw (.5,-1.5) -- (.5,-2);
            \node[below] at (.5,-2) {$\mathsmaller{1}$};
        \end{scope}
    \end{tikzpicture} .
\end{center}
We see that we arrive at a state string matching $\mcpalg(w'')$, allowing us to complete the proof by induction. 

Finally, we consider when $F'$ is a good $H$. Take a minimal cut path in $w'$ as shown below, and consider possibilities in the surrounding regions. 

\begin{center}
    \begin{tikzpicture}
    \draw (-1,0) -- (-1,-1);
        \draw(0,0) -- (0,-1);
        \draw (1,0) -- (1,-1);
        \draw (0,-.5) -- (1,-.5); 
        \draw[densely dashed] (-.5,0) -- (-.5,-1);
        \node[above] at (-.5,0) {$k$};
        \node[above] at (-1.5,0) {$a$};
        \node[above] at (.5,0) {$b$};
        \node[above] at (1.5,0) {$c$};
    \end{tikzpicture}
\end{center}
Since $F'$ is good, we have nine possibilities for $(a,b,c)$:
\begin{align}
\begin{aligned}
\label{eq:nine cases for H}
    (k+1, k+1, k+1), & & (k,k+1, k+1), &  & (k-1, k+1,k+1), \\
    (k+1, k, k-1), &  & (k,k, k-1), & & (k-1, k ,k-1), \\
    (k+1, k, k), & & (k,k, k), & & (k-1, k ,k).
    \end{aligned}
\end{align}
Assume first that the depicted cut path extends to $w$:
\begin{center}
       \begin{tikzpicture}
         \draw (0,0) -- (0,-1.5);
        \draw (1,0) -- (1,-1.5); 
        \draw (0,-.5) -- (1,-.5);
        \draw (2,0) -- (2,-1.5);
        \draw (1,-1) -- (2,-1);
         \draw[densely dashed] (.5,0) -- (.5,-1.5);
        \node[above] at (.5,0) {$k+1$};
    \end{tikzpicture}
\end{center}
Then $a=k-1$ is not possible. Moreover, $(a,b) = (k,k)$ contradicts Lemma \ref{lem:no 1 -1 H}, leaving four options. For each of these we see that $F$ can be produced by $\growalg$ and that the resulting states match those in $w'$. For instance, $(k, k+1 ,k+1)$ gives 
\begin{center}
    \begin{tikzpicture}
        \node at (0,0) {$\pm$};
        \node at (1,0) {$\mp$};
        \node at (2,0) {$\mp$};
        \node[above] at (0,.2) {$\mathsmaller{1}$};
        \node[above] at (1,.2) {$\mathsmaller{0}$};
        \node[above] at (2,.2) {$\mathsmaller{0}$};

        \draw[->] (3,0) -- (4,0) node[above,midway] {$\growalg$};
        \begin{scope}[shift={(5,.5)}]
            \draw (0,0) -- (0,-1);
            \draw (1,0) -- (1,-1);
            \draw (0,-.5) -- (1,-.5);
            \draw (2,0) -- (2,-1);
            \node[above] at (0,0) {$\mathsmaller{1}$};
            \node[above] at (1,0) {$\mathsmaller{0}$};
            \node[above] at (2,0) {$\mathsmaller{0}$};
            \node[below] at (0,-1) {$\mathsmaller{0}$};
            \node[below] at (1,-1) {$\mathsmaller{1}$};
            \node[below] at (2,-1) {$\mathsmaller{0}$};
        \end{scope}
    \end{tikzpicture} .
\end{center}
Suppose then that the cut path does not extend to $w$. In $w$ we have two possibilities:
\begin{center}
\begin{tikzpicture}
    \draw (0,0) -- (0,-1.5);
    \draw (1,0) -- (1,-1.5); 
    \draw (0,-.5) -- (1,-.5);
    \draw (2,0) -- (2,-1.5);
    \draw (1,-1) -- (2,-1);
    \draw[densely dashed] (.5,0) -- (1.5,-.5);
    \node[above] at (.5,0) {$k$};
    \node at (4,-.75) {or};
\end{tikzpicture}\hskip3em
\begin{tikzpicture}
    \draw (0,0) -- (0,-1.5);
    \draw (1,0) -- (1,-1.5); 
    \draw (0,-.5) -- (1,-.5);
    \draw (2,0) -- (2,-1.5);
    \draw (1,-1) -- (2,-1);
    \draw[densely dashed] (.5,0) -- (-.5,-.5);
    \node[above] at (.5,0) {$k$};
\end{tikzpicture} 
\end{center}
The first option forces $b=k-1$ which is not possible, and the second forces $a=k-1$, which gives three options. Of these three, the two cases where $b=k$ imply that the path does extend by Lemma \ref{lem:adjacent cut paths}, leaving only $(a,b,c) = (k-1,k+1, k+1)$. This means we can arrange two minimal cut paths in $w$ to look like 
\begin{center}
\begin{tikzpicture}
    \draw (0,0) -- (0,-1.5);
    \draw (1,0) -- (1,-1.5); 
    \draw (0,-.5) -- (1,-.5);
    \draw (2,0) -- (2,-1.5);
    \draw (1,-1) -- (2,-1);
    \draw[densely dashed] (1.25,0) .. controls (1.25, -.25) .. (-.5,-.25);
    \draw[densely dashed] (1.75,0) -- (1.75,-1.5);
\end{tikzpicture} .
\end{center}
However, Lemma \ref{lem:no far apart cut paths} rules out this possibility. 
\end{proof}

\subsection{Counting paths}

Given a sign string $S$, let $M^S$ be the free $\Z[q,q^{-1}]$-module generated by all  annular $SL(3)$ webs with boundary $S$, modulo ambient planar isotopy and the local relations show in Figure \ref{fig:sl3 relations}. Recall that $B^S$ denotes a set of representatives for planar isotopy classes of non-elliptic annular $SL(3)$ webs with boundary $S$.  Any annular web $w$ with boundary $S$ may be reduced to a $\Z[q,q^{-1}]$-linear combination of elements of $B^S$ by applying the local relations. The argument in \cite{KuperbergQuantumG2} showing that the resulting linear combination is well-defined (independent of choices of simplification) applies here without changes. Therefore $B^S$ forms a basis for $M^S$.

\begin{figure}
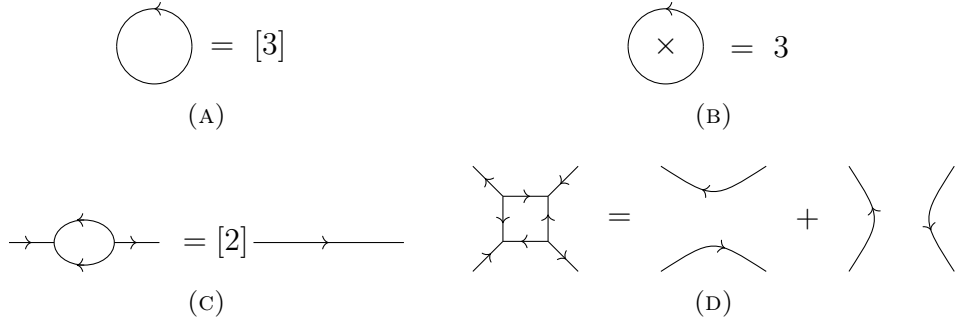

\centering 
\subcaptionbox{ \label{fig:circle relation}}[.4\linewidth]
{\includestandalone{images/circle_relation}
}
\subcaptionbox{\label{fig:essential circle relation}}[.4\linewidth]
{\includestandalone{images/essential_circle_relation}
}
\vskip1em
\subcaptionbox{\label{fig:bigon relation}}[.4\linewidth]
{\includestandalone{images/bigon_relation}
}
\subcaptionbox{\label{fig:square relation}}[.4\linewidth]
{\includestandalone{images/square_relation}}
\caption{The local $SL(3)$ web relations. Each relation also holds if all orientations are reversed. Here, $[3] = q^2 + 1 + q^{-2}$ and $[2] = q+q^{-1}$ are quantum integers.}\label{fig:sl3 relations}
\end{figure}

If $-S$ is the sign sequence obtained by negating all entries of $S$, then there is an evident bijection $B^S \cong B^{-S}$ given by reversing the orientation of each web.

\begin{lemma}
\label{lem:switching signs}
    Suppose that two consecutive entries in $S$ are of opposite sign, and let $S'$ be the sign string obtained from $S$ by switching these two signs. That is, 
    \begin{align*}
        S &= (s_1, \ldots, s_{i-1}, \pm, \mp, s_{i+2}, \ldots, s_n),\\
        S' &=  (s_1, \ldots, s_{i-1}, \mp, \pm, s_{i+2}, \ldots, s_n).
    \end{align*}
    Then there is a bijection $B^S \cong B^{S'}$. 
\end{lemma}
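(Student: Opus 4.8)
The plan is to exhibit an explicit web surgery realizing the bijection, in parallel with the strategy used for the $SL(2)$ swap isomorphism and the $H$-move arguments elsewhere in the paper. Fix the two consecutive boundary points carrying opposite signs in $S$, say at positions $i$ and $i+1$. Near these two points a web $w\in B^S$ looks, after applying the growth algorithm, like one of the local pictures in which the two boundary edges either connect to each other (a boundary $U$), pass through a boundary $H$, or (since the signs are opposite) cannot form a $Y$. The idea is to define a map $B^S\to B^{S'}$ by the local modification that replaces the outgoing germs of $w$ at positions $i,i+1$ with the mirror-image germ (swapping which of the two boundary edges is oriented inward vs. outward) and absorbing the resulting defect into the web via a single $H$-type local move, analogous to the $H_0$ path-replacement in Figure \ref{fig:growth dictionary}; then re-reduce to a non-elliptic representative if necessary.

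The cleanest way to make this precise and to prove it is a bijection is to pass through Theorem \ref{thm:MCP-GROW-inverses}: $B^S$ is in bijection with the set of closed $S$-paths $\pi_0=0,\pi_1,\dots,\pi_n=0$ with $\pi_j-\pi_{j-1}\in\Lambda^{s_j}$, and likewise $B^{S'}$ is in bijection with closed $S'$-paths. So it suffices to produce a bijection between closed $S$-paths and closed $S'$-paths. The two path sets differ only in the two steps $\pi_{i-1}\to\pi_i\to\pi_{i+1}$: for $S$ these are a step in $\Lambda^{\pm}$ followed by a step in $\Lambda^{\mp}$, and for $S'$ the order is reversed, while $\pi_{i-1}$ and $\pi_{i+1}$ are unchanged (being determined by the common prefix and the requirement of closure together with the common suffix). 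So I need: for each pair $(u,v)$ with $u\in\Lambda^{\pm}$, $v\in\Lambda^{\mp}$, there is a canonical pair $(v',u')$ with $u'\in\Lambda^{\mp}$, $v'\in\Lambda^{\pm}$, and $u+v=u'+v'$, and this assignment $(u,v)\mapsto(v',u')$ is a bijection of the relevant finite sets. Concretely, $\Lambda^+=\{\mu^+,\mu^--\mu^+,-\mu^-\}$ and $\Lambda^-=\{\mu^-,\mu^+-\mu^-,-\mu^+\}$; one checks directly that $\Lambda^+ +\Lambda^-$ and $\Lambda^- +\Lambda^+$ are the same multiset of $9$ values of $\mathbb{Z}^2$, and in fact the map sending $(u,v)$ to the unique $(v',u')$ with the same sum and with $v'\in\Lambda^{\pm}$, $u'\in\Lambda^{\mp}$ is a well-defined involution-type bijection. (When $u+v$ admits two decompositions of the required type one makes a fixed choice; the key point is that the total count $9=|\Lambda^\pm|\cdot\text{(fiber)}$ matches on both sides because addition is commutative, so any bookkeeping bijection of fibers extends.) Replacing the two steps in any closed $S$-path by the new pair keeps it closed and yields a closed $S'$-path, and the inverse is the symmetric construction; hence $B^S\cong B^{S'}$.

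I would organize the writeup as follows: (1) recall via Theorem \ref{thm:MCP-GROW-inverses} that $B^S$ (resp.\ $B^{S'}$) is in bijection with closed $S$-paths (resp.\ $S'$-paths); (2) observe the two path sets agree outside positions $i,i+1$ and reduce the claim to a bijection $\{(u,v): u\in\Lambda^{\pm},v\in\Lambda^{\mp},\ u+v=\delta\}\cong\{(v',u'): v'\in\Lambda^{\pm},u'\in\Lambda^{\mp},\ v'+u'=\delta\}$ for every fixed $\delta\in\Lambda$; (3) verify this by the finite case check on the six-element set $\Lambda^+\cup\Lambda^-$, noting the two multisets of pairwise sums coincide by commutativity; (4) conclude and, optionally, translate the path-level bijection back to the web-level local move described in the first paragraph so the statement reads geometrically.

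The main obstacle is step (3)/(2): the naive "just swap the order of the two steps" map $(u,v)\mapsto(v,u)$ does \emph{not} land in the right set, since $\Lambda^+$ and $\Lambda^-$ are different, so one genuinely has to match up fibers of the sum map $\Lambda^{\pm}\times\Lambda^{\mp}\to\Lambda$ over each value $\delta$, check the fiber cardinalities agree on the two sides, and make a consistent choice. This is an entirely finite and routine check, but it is the only place real content enters; everything else is a packaging argument through the path bijection. An alternative, if one prefers to stay geometric and avoid the path calculus, is to mimic the proof of Lemma \ref{lem:2 boundary points}: show directly that the local $H_0$-move (swapping the two opposite-signed boundary strands and reconnecting) takes non-elliptic webs to webs reducible to non-elliptic ones, is reversible, and respects isotopy classes; but tracking non-ellipticity by hand is more delicate than invoking Theorem \ref{thm:MCP-GROW-inverses}, so I would lead with the path-lattice argument.
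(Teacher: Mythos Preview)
Your overall strategy---reduce to closed lattice paths via Theorem~\ref{thm:MCP-GROW-inverses} and exhibit a bijection between closed $S$-paths and closed $S'$-paths---is exactly the paper's approach. The paper's entire proof is one sentence: ``There is an evident bijection between the closed $S$-paths and the closed $S'$-paths, so this follows from Theorem~\ref{thm:MCP-GROW-inverses}.''

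The place you go wrong is your ``main obstacle.'' The naive swap $(u,v)\mapsto(v,u)$ \emph{does} land in the right set, and this is precisely the evident bijection. If $s_i=+$ and $s_{i+1}=-$, an $S$-path has $(u,v)\in\Lambda^+\times\Lambda^-$ at positions $(i,i+1)$. For $S'$ we have $s'_i=-$ and $s'_{i+1}=+$, so we need the pair at positions $(i,i+1)$ to lie in $\Lambda^-\times\Lambda^+$; but $(v,u)\in\Lambda^-\times\Lambda^+$ on the nose, and $u+v=v+u$ keeps $\pi_{i-1}$ and $\pi_{i+1}$ fixed. No fiber-matching, no case check, no choices are needed. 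Your concern that ``$\Lambda^+$ and $\Lambda^-$ are different'' is exactly why the swap works: the signs at positions $i$ and $i+1$ are \emph{also} swapped in $S'$, so the constraints swap along with the steps. Once you see this, steps (2)--(3) of your plan collapse to a triviality, and the geometric $H$-move discussion in your first paragraph becomes unnecessary for the proof (though the paper does record a related alternative argument, via explicit ``braiding'' maps $\beta_{+-},\beta_{-+}$ between the web modules $M^S$ and $M^{S'}$, in a remark following the lemma).
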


\begin{proof}
 There is an evident bijection between the closed $S$-paths and the closed $S'$-paths, so this follows from Theorem \ref{thm:MCP-GROW-inverses}. 
\end{proof}

\begin{remark}
 Alternatively, suppose $S'$ is obtained from $S$ by switching consecutive opposite signs $+-$ to $-+$. Define $\beta_{+-}: M^S \to M^{S'}$ and $\beta_{-+} : M^{S'} \to M^S$ as in Figure \ref{fig:braiding}. It is straightforward to check using the relations in Figure \ref{fig:sl3 relations} that $\beta_{+-}$ and $\beta_{-+}$ are mutually inverse isomorphisms. In fact, $\beta_{+-}$ and $\beta_{-+}$ are obtained by resolving the crossings 
 \begin{center}
     \begin{tikzpicture}[decoration={
    markings,
    mark=at position 0.85 with {\arrow{Stealth}}}]
        \draw[postaction={decorate}] (0,0) -- (1,1); 
        \draw[postaction={decorate},preaction={draw, line width=12pt, white}]  (0,1) -- (1,0);
        \node at (2,.5) {and};
        \draw[postaction={decorate}] (4,0) -- (3,1);
        \draw[postaction={decorate},preaction={draw, line width=12pt, white}]  (4,1) -- (3,0);
     \end{tikzpicture}
 \end{center}
according to \cite[Figure 10]{sl3-link-homology}. That $\beta_{+-}$ and $\beta_{-+}$ are mutually inverse isomorphisms is then analogous to the proof of invariance of the $SL(3)$ link polynomial under Reidemeister II moves. Since  $M^S$ (resp. $M^{S'}$) is free over $\Z[q,q^{-1}]$ with basis $B^S$ (resp. $B^{S'}$), it follows that there is a bijection $B^S \cong B^{S'}$. 
\end{remark}

\begin{figure}
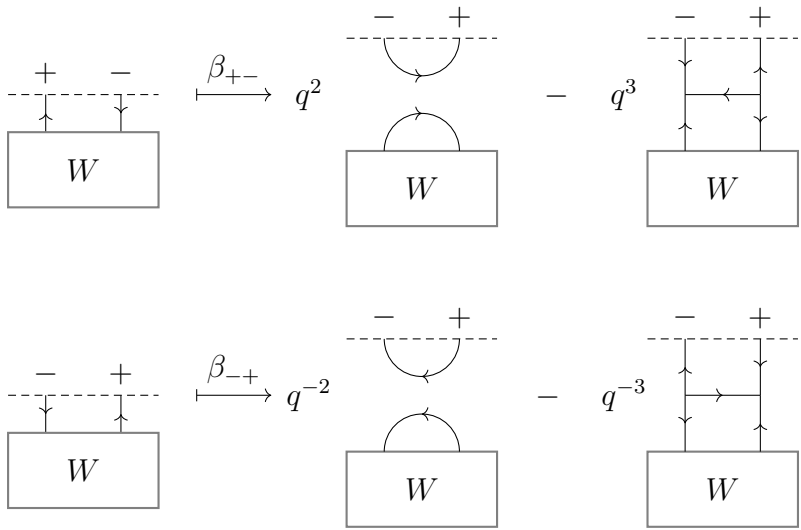

     \centering
     \includestandalone{images/braiding}
     \caption{Mutually inverse isomorphisms $\beta_{+-}: M^S \to M^{S'}$ and $\beta_{-+} : M^{S'} \to M^S$.}
     \label{fig:braiding}
\end{figure}

We are now ready to count the number of admissible pairs $(S,J)$ for fixed $S$, by counting paths in the weight lattice. Recall that 
\[
    \binom{\ell}{i,j,k} = \frac{\ell!}{i! j! k!}
\]
denotes the trinomial coefficient.

\begin{proposition}
\label{prop:count-paths}
    Given a fixed sign sequence $S$ with $n_+$ pluses and $n_-$ minuses, the number of admissible pairs $(S,J)$ is 
    \begin{multline}
    \label{eq:count-SJ-fixed-S}
        \sum_{(a_1,a_2) \in \Hull \{(0,0), (n_+,0), (0, n_+)\}}
        \binom{n_+ }{{a_1, \  a_2, \  n_+ - a_1 -a_2}}
        \\
        \cdot 
        \binom{n_-
        }{
        a_1 - \frac{1}{3} (n_+-n_-), \  
        a_2 - \frac{1}{3} (n_+-n_-), \  
        n_+ - a_1 -a_2 - \frac{1}{3} (n_+ - n_-)
        }.
    \end{multline}
\end{proposition}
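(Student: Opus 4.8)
```latex
The plan is to count admissible pairs $(S,J)$ by counting closed $S$-paths in the weight lattice $\Lambda$ directly, since by Definition of admissibility and the correspondence fixed in the previous subsection, the number of admissible $(S,J)$ for fixed $S$ equals the number of sequences $v_1, \dots, v_n \in \Lambda^\pm$ with $v_i \in \Lambda^{s_i}$ and $\sum_i v_i = 0$. The positions $i$ with $s_i = +$ contribute steps from $\Lambda^+ = \{\mu^+,\ \mu^- - \mu^+,\ -\mu^-\}$, and the positions with $s_i = -$ contribute steps from $\Lambda^- = \{\mu^-,\ \mu^+ - \mu^-,\ -\mu^+\}$. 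Writing everything in the basis $(\mu^+, \mu^-)$, label the three vectors of $\Lambda^+$ as $P_1 = \mu^+$, $P_2 = \mu^- - \mu^+$, $P_3 = -\mu^-$, and the three vectors of $\Lambda^-$ as $N_1 = \mu^-$, $N_2 = \mu^+ - \mu^-$, $N_3 = -\mu^+$. Note the key relation $N_1 = -P_3$, $N_2 = -P_2$, $N_3 = -P_1$, so each $N_j = -P_{4-j}$.

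First I would stratify by the \emph{type vector}: among the $n_+$ plus-positions let $a_1$ of them use $P_1$, $a_2$ use $P_2$, and the remaining $n_+ - a_1 - a_2$ use $P_3$; among the $n_-$ minus-positions let $b_1$ use $N_1$, $b_2$ use $N_2$, $b_3 = n_- - b_1 - b_2$ use $N_3$. The number of $S$-paths realizing a given type vector is the product of multinomial coefficients $\binom{n_+}{a_1, a_2, n_+ - a_1 - a_2}\binom{n_-}{b_1, b_2, b_3}$, since the plus-positions and minus-positions are distinguishable slots and we are distributing labels among them. Next I would impose the closure condition $\sum v_i = 0$. Using $P_1 = (1,0)$, $P_2 = (-1,1)$, $P_3 = (0,-1)$ and the corresponding coordinates for the $N_j$ (namely $N_1 = (0,1)$, $N_2 = (1,-1)$, $N_3 = (-1,0)$), the vanishing of the $\mu^+$-coordinate and of the $\mu^-$-coordinate gives two linear equations in $a_1, a_2, b_1, b_2$ (after eliminating $b_3$). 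Solving these, together with the identity $\sum$ of all coordinates being forced, one finds $b_j$ is determined by $a_1, a_2$: explicitly $b_1 = a_1 - \tfrac13(n_+ - n_-)$, $b_2 = a_2 - \tfrac13(n_+ - n_-)$, and $b_3 = (n_+ - a_1 - a_2) - \tfrac13(n_+ - n_-)$. (Here one uses that admissibility of $S$, i.e. $n_+ \equiv n_- \pmod 3$, guarantees $\tfrac13(n_+ - n_-) \in \Z$; this is why the sum is a priori over integer points only.)

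Then the count becomes a single sum over $(a_1, a_2)$ of the product of the two multinomials, exactly as in \eqref{eq:count-SJ-fixed-S}, and it remains only to identify the range of summation. The constraints are that all six entries $a_1, a_2, n_+ - a_1 - a_2, b_1, b_2, b_3$ be nonnegative integers. The three constraints $a_1 \ge 0$, $a_2 \ge 0$, $a_1 + a_2 \le n_+$ say precisely that $(a_1, a_2)$ lies in the triangle with vertices $(0,0)$, $(n_+, 0)$, $(0, n_+)$, i.e. in $\Hull\{(0,0), (n_+,0), (0,n_+)\}$. The other three constraints ($b_j \ge 0$) cut out a sub-polytope, but one observes that the multinomial coefficient $\binom{n_-}{b_1, b_2, b_3}$ is interpreted as $0$ whenever any $b_j$ is negative (or non-integer), so extending the sum to the whole triangle does not change its value; hence writing the sum over $\Hull\{(0,0),(n_+,0),(0,n_+)\}$ is legitimate. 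This completes the proof.

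The main obstacle I anticipate is purely bookkeeping: setting up the coordinate identification of $\Lambda^+$ and $\Lambda^-$ consistently with the conventions of Figure \ref{fig:lattice and states} and Figure \ref{fig:weight lattice}, and then carefully solving the two closure equations so that the $b_j$ come out in the stated symmetric form $a_i - \tfrac13(n_+ - n_-)$. There is a small amount of care needed to confirm that the parametrization is a bijection between type vectors satisfying closure and integer points $(a_1,a_2)$ with the stated $b_j$ nonnegative — i.e. that $(a_1, a_2)$ alone (not the $b_j$) is a free parameter. Once the linear algebra is pinned down, the combinatorial identity is immediate from the multinomial theorem applied separately to the plus-slots and minus-slots, and the final packaging into \eqref{eq:count-SJ-fixed-S} follows by the convention that out-of-range multinomials vanish.
```
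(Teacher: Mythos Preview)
Your argument is correct and is essentially the paper's: stratify closed $S$-paths by the multiset of steps used from $\Lambda^+$ and from $\Lambda^-$, impose $\sum v_i = 0$, solve the resulting linear system to express the $b_j$ in terms of $(a_1,a_2)$, and sum the product of trinomials. The paper first invokes Lemma~\ref{lem:switching signs} to assume $S = (+^{n_+}, -^{n_-})$, whereas your slot-distribution observation makes that reduction unnecessary; one minor slip is that with your convention $N_j = -P_{4-j}$ the system actually solves to $(b_1,b_2,b_3) = (a_3-c,\, a_2-c,\, a_1-c)$ rather than $(a_1-c,\, a_2-c,\, a_3-c)$ (where $c=\tfrac13(n_+-n_-)$), but symmetry of the trinomial coefficient leaves \eqref{eq:count-SJ-fixed-S} unchanged.
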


\begin{proof}
By Lemma \ref{lem:switching signs},  we may assume that $S$ is of the form
\[
    (\underbrace{+,+\ldots,+}_{n_+}, \underbrace{-,-,\ldots,-}_{n_-}).
\] 
We will use more convenient notation for the weights $\Lambda^+$ and $\Lambda^-$ of $V^+$ and $V^-$ (compare with Figure \ref{fig:weight lattice}):

\begin{center}
\begin{tikzpicture}[scale=.8]
    \draw[thick, ->] (0,0) -- +(30:2cm) 
        node[label={30:$\mu_1$}]  {};
    \draw[thick, ->] (0,0) -- +(150:2cm)
        node[label={150:$\mu_2$}]  {};
    \draw[thick, ->] (0,0) -- +(270:2cm)
        node[label={270:$\mu_3 := -\mu_1-\mu_2$}]  {};
    \draw[thick, ->, dashed] (0,0) -- +(90:2cm) 
        node[label={90:$\nu_3 := -\nu_1-\nu_2$}]  {};
    \draw[thick, ->, dashed] (0,0) -- +(210:2cm)
        node[label={210:$\nu_1$}]  {};
    \draw[thick, ->, dashed] (0,0) -- +(330:2cm)
        node[label={330:$\nu_2$}]  {};
\end{tikzpicture}
\end{center}
Given an $S$-path in $\Lambda$, we may write it as a word in these six letters. Precisely, $\gamma = \gamma_\mu \gamma_\nu$ where $\gamma_\mu \in \{\mu_1, \mu_2, \mu_3\}^{n_+}$ and  
$\gamma_\nu \in \{\nu_1, \nu_2, \nu_3\}^{n_-}$.

Let $\origin$ denote the origin of $\Lambda$.
We first focus on $\gamma_\mu$, viewed as a path from $\origin$ to $P$, where $P\in \Lambda$ is determined as follows. For $1\leq i \leq 3$, let $a_i$ denote the number of $\mu_i$'s in $\gamma_\mu$. Then 
\[
    P=a_1 \mu_1 + a_2 \mu_2 + a_3 \mu_3
    = (a_1 - a_3) \mu_1 + (a_2 - a_3) \mu_2
\]
is the terminus of the path $\gamma_\mu$. 
There is a two-parameter family of triples $(a_1, a_2, a_3 = n_+ - a_1 - a_2)$ such that $P$ is reachable by a path of length $n_+$ given by words in the letters $\{\mu_1, \mu_2, \mu_3\}$. 
The constraints are that $0 \leq a_1, a_2$ and $a_1+a_2 \leq n_+$. 
Stated differently, 
$(a_1,a_2) \in \Hull \{(0,0), (n_+,0), (0, n_+)\} \subset \Z^2$.
Given such a point $P$, the number of paths $\gamma_\mu$ with terminus $P$ is counted by the trinomial coefficient 
\[
    \binom{n_+}{a_1,\  a_2,\  a_3} 
    = 
    \binom{n_+}{a_1, \  a_2, \  n_+ - a_1 -a_2}.
\]

We now count the paths $\gamma_\nu$ from  $P$ to $\origin$ of length $n_-$ corresponding to words in $\{\nu_1, \nu_2, \nu_3\}$. 
Given such a path, let $b_i$ denote the number of $\nu_i$'s in $\gamma_\nu$. 
We have the following constraints:
\begin{enumerate}
    \item $a_3 = n_+ - a_1 - a_2$
    \item $b_3 = n_- - b_1 -b_2$
    \item $a_1 - a_3 = b_1 - b_3$
    \item $a_2 - a_3 = b_2 - b_3$. 
\end{enumerate}
The last two constraints come from first writing $P$ in terms of the basis $\{\mu_1, \mu_2\}$ and then recalling that $\nu_i = -\mu_i$ for each $i \in \{1,2,3\}$. By substitution, these constraints reduce to the following:
\begin{enumerate}
    \item[(3')] $2a_1 - a_2 - n_+ = 2b_1 -b_2 - n_-$
    \item[(4')] $2a_2 - a_1 - n_+ = 2b_2 -b_1 - n_-$
\end{enumerate}
Solving this system, we find that
\[
    b_i = a_i - \frac{1}{3}(n_+ - n_-)   
    \qquad 
    \text{for } i = 1,2,3.
\]
In summary, the total number of admissible $(S,J)$ pairs is equal to 
\begin{equation}
    \label{eq:intuitive-count-SJ-fixed-S}
        \sum_{(a_1,a_2) \in \Hull \{(0,0), (n_+,0), (0, n_+)\}}
        \binom{n_+}{a_1, \  a_2, \  a_3}
        \\
        \cdot 
        \binom{n_-}{b_1, \  b_2, \  b_3}
\end{equation}
with the values $a_3, b_1, b_2, b_3$ determined by $a_1, a_2$ as above.
\end{proof}

\begin{example}
    As a first family of examples we may consider sign sequences $S_{3m}^+$ consisting of only $n_+ = 3m$ pluses ($n_- = 0$), and consider the number of admissible pairs $(S_{3m}^+,J)$ as a function of $m$.
The second trinomial coefficient in  \eqref{eq:count-SJ-fixed-S} 
reduces to 
\[
    \binom{0}{
        a_1 - m,\ 
        a_2 - m,\ 
        a_3 - m},
\]
indicating that the word in the $\mu_i$ must return to the origin (i.e.\ $P = \origin$, so $a_1 = a_2 = a_3 = m$). 
Therefore for $S = (\underbrace{+,+\ldots, +}_{3m})$, the number of admissible pairs $(S,J)$ is given by 
\[
    \binom{3m}{m,\ m,\ m},
\]
the only surviving term in \eqref{eq:count-SJ-fixed-S}.
\end{example}

\begin{example}
    When $S$ has $n_+  = n_-=n$,  the number of paths is given by the integer sequence \url{https://oeis.org/A002893} and \eqref{eq_found_seq}, as follows. Using Lemma \ref{lem:switching signs} we may assume that signs alternate in $S$, so that $S = (+,-,+,\ldots, +,-)$. In this case a closed $S$-path is the same as a path  of length $2n$ in the honeycomb (hexagonal) lattice starting and ending at the origin. The number of such paths is given by 
\begin{equation}\label{eq_found_seq}
    \sum_{k=0}^n \binom{2k}{k} \binom{n}{k}^2.
\end{equation}
See, for instance, \cite[Lemma 2.1]{Honeycomb}. The above expression is easily seen to agree with Proposition \ref{prop:count-paths} by writing \eqref{eq:count-SJ-fixed-S} as a sum over diagonals, 
\[
    \sum_{k=0}^n \sum_{a = 0}^k \binom{n}{a,k-a,n-k}^2,
\]
and observing that 
\[
    \sum_{a = 0}^k \binom{n}{a,k-a,n-k}^2 =  \binom{n}{k}^2 \sum_{a=0}^k \binom{k}{a}^2 =  \binom{n}{k}^2 \binom{2k}{k}.
\]
\end{example}

\subsection{Annular \texorpdfstring{$SL(3)$}{SL(3)} web algebras}

Given sign strings $R$ and $S$, an \emph{$(R,S)$-web} is an $SL(3)$ web $w\subset \A$ such that the orientation of $w$ at the boundary component $\partial_1 \A = \SS^1\times \{1\}$ (resp. $\partial_0\A = \SS^1\times \{0\}$) is given by $R$ (resp. $S$), according to the convention in Figure \ref{fig:edge orientation at boundary}. Two $(R,S)$ webs $w_1$ and $w_2$ are isotopic if there is an orientation-preserving diffeomorphism of $\A$ which fixes $\partial \A$ pointwise, is isotopic
to the identity, and takes $w_1$ to $w_2$.

For an $(R,S)$-web $w$, let $\b{w}$ denote the $(S,R)$-web given by reflecting $w$ across $\SS^1\times \{1/2\}$ and reversing all orientations. A \emph{closed} web is a $(\varnothing, \varnothing)$-web. If $Q$ is another sign string, $v$ is a $(Q,R)$-web, and $w$ is an $(R,S)$-web, then we let $vw$ denote the  $(Q,S)$-web obtained by stacking $u$ on top of $w$. 

The construction in \cite{AK}  gives a functorial assignment of modules to closed webs, which we now describe. In this section, let 
\[
R_\alpha = \Z[\alpha_1, \alpha_2, \alpha_3]
\]
be the graded ring of polynomials in three variables with $\deg(\alpha_i) = 2$ for $i=1,2,3$. Consider also the abelian group 
\[
\Lambda = \Z c_1 \oplus \Z c_2 \oplus \Z c_2 /(c_1 + c_2 + c_3)
\]
with three generators and one relation. Note that $\Lambda$ is a free abelian group of rank two. Let $R_\alpha \gggmod$ be the category of $\Z\oplus \Lambda$-graded $R_\alpha$-modules. The $\Z$-grading is the quantum grading $\qdeg$ and the $\Lambda$-grading is the \emph{annular} grading $\adeg$. Variables $\alpha_i$ increase the quantum grading by two and preserve the annular grading. 

Below we give a brief summary of anchored $SL(3)$ foams. We refer the reader to \cite[Definition 4.1, Definition 4.9]{AK} for more details.
\begin{definition}
    An \emph{anchored $SL(3)$ foam} is a 2-dimensional CW complex $F$ that is PL embedded into $\R^3$. Every point of $F$ has a neighborhood homeomorphic to either an open disk or $T\times (0,1)$, where $T$ is the \emph{tripod}, obtained by gluing three copies of $[0,1)$ along $\{0\}$. Points of the second type are called \emph{seam points}, and the complement of the seam points is a disjoint union of orientable surfaces called \emph{facets}. Facets of $F$ may be decorated by some finite number of dots. Moreover, $F$ intersects the anchor line $\line$ transversely in the interior of its facets, and each such intersection point carries a label in $\{1,2,3\}$.  We also consider anchored $SL(3)$ foams \emph{with boundary}, viewed as cobordisms between annular webs. 
\end{definition}

Let $\AFoam$ be the category whose objects are closed annular $SL(3)$ webs and whose morphisms are linear combinations of anchored $SL(3)$ foam cobordisms, viewed up to ambient isotopy fixing $\line$ setwise.  Morphism spaces in $\AFoam$ are $\Z\oplus \Lambda$-graded (see \cite[Equations (89) and (91)]{AK}). 
The construction in \cite[Section 4]{AK} gives a functor
\[
    \brak{-} : \AFoam \to R_\alpha \gggmod.
\]
By \cite[Theorem 4.24]{AK}, every closed annular web $w$ is assigned a free finitely-generated $\Z\oplus \Lambda$-graded $R_\alpha$-module $\brak{w}$. The functor $\brak{-}$ satisfies categorified versions of the relations in Figure \ref{fig:sl3 relations}, so its graded rank $\rk_q(\brak{w})$ (with respect to the quantum grading) can be recursively computed by using these relations. 

For a sign string $S$, recall that $B^S$ denotes a set of representatives for planar isotopy classes of non-elliptic annular $SL(3)$ webs $w$ whose boundary signs are equal to $S$. Let $\ell(S)$ denote the length of $S$. We  view elements of $B^S$ as $(S, \varnothing)$-webs. If $v,w \in B^S$, then $\b{v} w$ is a closed web, so we may consider its associated $R_\alpha$-module $\brak{\b{v} w}$. 

\begin{definition}
    Given a sign string $S$, the \emph{annular $SL(3)$ web algebra of $S$} is 
    \[
    H^S_\A = \displaystyle \bigoplus_{v,w\in B^S} \brak{\b{v} w} \{\ell(S)\}.
    \]
    Multiplication on the direct summand $\brak{\b{v} w} \o_{R_\alpha} \brak{\b{x} y} $ of $H^S_\A \o_{R_\alpha} H^S_\A$ is defined to be $0$ unless $w= x$, in which case it is induced by the  foam that couples $\b{w}$ to $w$ through the standard cobordism to the identity web $\id_S$.
    
    In analogy with the $SL(2)$ setting, there are idempotents $1_w \in \brak{\b{w} w} \{n\}$ for each   $w\in B^S$, and the unit in $H^S_\A$ is given by $\sum_{w\in B^S} 1_w$. 
\end{definition} 

Multiplication in $H^S_\A$ is degree-preserving with respect to both the quantum and annular gradings. For the quantum grading, this follows from computing degrees of the relevant cobordisms (see, for instance \cite[Proposition 3.6]{MPTwebalgebra}) and the degree shift $\{\ell(S)\}$.

Theorem \ref{thm:MCP-GROW-inverses} implies that $B^S$ is finite, with cardinality equal to the number of closed $S$-paths. It follows that $H^S_\A$ is a free, finitely generated $\Z \oplus \Lambda$-graded $R_\alpha$-module. 

\subsection{Annular \texorpdfstring{$SL(3)$}{SL(3)} bimodules} 

Given $w\in B^S$, we have left and right $H^S_\A$-modules 
\[
    P_w := \bigoplus_{v\in B^S} \brak{\b{v} w} \{\ell(S)\} \hskip2em \text{ and } \hskip2em _w P := \bigoplus_{v\in B^s} \brak{\b{w} v} \{\ell(S)\} 
\]
respectively, which are each projective since $H^S_\A = \bigoplus_{w\in B^S} P_w$ as a left $H^S_\A$-module and $H^S_\A = \bigoplus_{w \in B^S} {_w P}$ as a right $H^S_\A$-module.

In contrast with the $SL(2)$ setting, the web module $P_w$ may be decomposable. 
For planar web algebras this behavior was first observed by Morrison-Nieh \cite{MorrisonNieh}.
This was further studied extensively by Robert \cite{Robert-thesis} who characterized precisely which webs yield indecomposable modules. 

\begin{lemma}
\label{lem:indecomposability condition}
    Let $w\in B^S$. If $\rk_q(\brak{\b{w}w})$ is monic of degree $\ell(S)$, then $P_w$ is indecomposable. 
\end{lemma}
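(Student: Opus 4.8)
The strategy is to show that $\End_{H^S_\A}(P_w)$, as a graded $R_\alpha$-algebra, is concentrated in degree zero and there isomorphic to $R_\alpha$; indecomposability then follows because a graded projective module whose endomorphism ring has no nontrivial idempotents is indecomposable. The key computational input is that $\End_{H^S_\A}(P_w) \cong 1_w H^S_\A 1_w = \brak{\b{w}w}\{\ell(S)\}$ as graded $R_\alpha$-algebras, where the ring structure on the right is the one coming from the multiplication in $H^S_\A$ (couple $\b{w}$ to $w$, cobordism to $\id_S$, then to $\b{w}w$ again). This is the standard identification of $\End$ of an indecomposable projective with a corner of the algebra, and it holds here by the decomposition $H^S_\A = \bigoplus_{v} P_v$ together with $\Hom_{H^S_\A}(P_w, P_v) \cong \brak{\b{v}w}\{\ell(S)\}$.

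\textbf{Main steps.} First I would record that $\brak{\b{w}w}$ is a free graded $R_\alpha$-module supported in non-negative quantum degrees (this follows from \cite[Theorem 4.24]{AK} and the analysis already used to build $H^S_\A$), with $1_w$ in degree zero spanning the degree-zero part over $R_\alpha$. After the shift $\{\ell(S)\}$, the lowest quantum degree of $\brak{\b{w}w}\{\ell(S)\}$ is $-\ell(S)$; by the Frobenius/duality structure (the trace $\eps$ on the $SL(3)$ web algebra has degree $-2\ell(S)$, mirroring the $SL(2)$ case), $\brak{\b{w}w}\{\ell(S)\}$ is a graded Frobenius $R_\alpha$-algebra, hence self-dual up to a shift by $2\ell(S)$. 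Thus its graded rank $\rk_q(\brak{\b{w}w}\{\ell(S)\})$ is palindromic, and $\rk_q(\brak{\b{w}w}) = q^{\ell(S)}\cdot(\text{palindromic in } q^{\pm 2}\text{-type polynomial})$; the hypothesis says $\rk_q(\brak{\b{w}w})$ is monic of degree $\ell(S)$, i.e. the top term is $q^{\ell(S)}$ with coefficient $1$. Combined with self-duality this forces the bottom term to be $q^{-\ell(S)}$ with coefficient $1$ as well, and more importantly it forces $\brak{\b{w}w}\{\ell(S)\}$ to have graded rank $1$ in the extreme degree $-\ell(S)$ over $R_\alpha$ — which after the shift is exactly the statement that the degree-$0$ part of $H^S_\A$-corner $1_w H^S_\A 1_w$ is $R_\alpha\cdot 1_w$. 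Actually what I need is sharper: monic of degree $\ell(S)$ means $\rk_q(\brak{\b{w}w}) = q^{\ell(S)} + (\text{lower order terms})$, so after shifting, $1_w H^S_\A 1_w$ lives in degrees $\geq 0$ with its degree-$0$ piece a free rank-one $R_\alpha$-module generated by $1_w$.

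\textbf{Finishing.} Now suppose $P_w \cong P' \oplus P''$ is a nontrivial decomposition of graded modules. This gives a nontrivial idempotent $e \in \End_{H^S_\A}(P_w)_0 = (1_w H^S_\A 1_w)_0 = R_\alpha\cdot 1_w$. But $R_\alpha$ is a domain (a polynomial ring over $\Z$), so it has no idempotents other than $0$ and $1$; hence $e \in \{0, 1_w\}$, a contradiction. Therefore $P_w$ is indecomposable. The one point requiring care — and the main obstacle — is justifying that the degree-zero part of the endomorphism ring is exactly $R_\alpha\cdot 1_w$ and contains no "extra" degree-zero endomorphisms: this is where the hypothesis "$\rk_q(\brak{\b{w}w})$ monic of degree $\ell(S)$" is used in an essential way, and one must be careful that the quantum grading shift by $\{\ell(S)\}$ in the definition of $H^S_\A$ aligns the extreme quantum degree of $\brak{\b{w}w}$ with degree $0$ of the corner algebra. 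I would also note for completeness that $\brak{\b{w}w}$ always contains $1_w$ in the relevant extreme degree (so the leading coefficient is at least $1$); the hypothesis is precisely that it is exactly $1$, which is equivalent to $\Hom$-degree-$0$ endomorphisms forming a rank-one free $R_\alpha$-module.
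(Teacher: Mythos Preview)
Your approach is correct and is precisely the standard argument the paper defers to (via \cite[Lemma 2.2.22]{Robert-thesis}): identify degree-zero endomorphisms of $P_w$ with the degree-zero piece of $1_w H^S_\A 1_w = \brak{\b{w}w}\{\ell(S)\}$, show this has rank one over $\Z$ using the hypothesis together with palindromy of the graded rank, and conclude there are no nontrivial idempotents.

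Two points of bookkeeping need correcting, though neither is a conceptual gap. First, the unshifted $\brak{\b{w}w}$ is supported in degrees $[-\ell(S),\ell(S)]$, not in non-negative degrees; the identity cup foam $1_w$ sits in degree $-\ell(S)$, and it is only after the \emph{upward} shift $\{\ell(S)\}$ that the minimum becomes $0$. Consequently your ``what I need is sharper'' paragraph cannot bypass palindromy: the monic hypothesis controls the coefficient at the \emph{top} degree $\ell(S)$ of $\rk_q(\brak{\b{w}w})$, which after shifting lands at $2\ell(S)$, and you genuinely need $\rk_q(\brak{\b{w}w})(q)=\rk_q(\brak{\b{w}w})(q^{-1})$ to translate this into the coefficient at the bottom degree $-\ell(S)$ (hence at degree $0$ after shifting). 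Second, the degree-zero piece of $1_w H^S_\A 1_w$ is $\Z\cdot 1_w$, not $R_\alpha\cdot 1_w$, since each $\alpha_i$ has degree $2$; this does not affect the conclusion, as $\Z$ already has no nontrivial idempotents.
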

\begin{proof}
    The same argument as in the proof of \cite[Lemma 2.2.22]{Robert-thesis} applies.
\end{proof}

In the planar setting, the smallest web that yields a decomposable module (over the planar $SL(3)$ algebra as defined in \cite{Robert-thesis,MPTwebalgebra}) has 12 boundary points; see \cite[Theorem 5.3]{MorrisonNieh} and \cite[Proposition 3.1.1]{Robert-thesis}. In the annular setting we can find a much smaller example. 

\begin{proposition}
\label{prop:decomposable web}
    Consider the webs $w_0$ and $w$ shown below.
    \begin{center}
        \begin{tikzpicture}[decoration={markings,mark=at position .8 with
    {\arrow[scale=1,>=latex']{>}}}]
\node at (0,0) {$\times$};
\draw (-1.4,-.25) arc (-90:90:.25 and .25);
        \draw (1.4,-.25) arc (-90:-270:.25 and .25);

\draw[->,>=latex',dashed,line width=.4pt]
(-1.15,0) --++ (0,.05);
\draw[->,>=latex',dashed,line width=.4pt]
(1.15, 0) --++ (0,.05);

\begin{scope}[shift={(5,0)}]
    \node at (0,0) {$\times$};
    \draw (0,0) circle[radius=.25];
 
    \draw[->,>=latex',dashed,line width=.4pt]
    (-.25,0) --++ (0,-.05);
    \draw[->,>=latex',dashed,line width=.4pt]
    (.25,0) --++ (0,-.05);
    
    \draw[postaction=decorate] (-1.2,.25) -- (-1.4,.25);
    \draw[postaction=decorate] (-1.4,-.25) -- (-1.2,.-.25);
    \draw[postaction=decorate] (-1.2,.25) -- (-1.2,-.25);
    
    \draw[postaction=decorate] (1.2,.25) -- (1.4,.25);
    \draw[postaction=decorate] (1.4,-.25) -- (1.2,-.25);
    \draw[postaction=decorate] (1.2,.25) -- (1.2,-.25);

    \draw[postaction=decorate] (-1.2,.25) -- (0,.75);
    \draw[postaction=decorate] (0,-.75) -- (-1.2,-.25);

    \draw[postaction=decorate] (1.2,.25) -- (0,.75);
    \draw[postaction=decorate] (0,-.75) -- (1.2,-.25);

    \draw[postaction=decorate] (0,.25) -- (0,.75);
    \draw[postaction=decorate] (0,-.75) -- (0,-.25);
\end{scope}
\node at (0,-1) {$w_0$};
\node at (5,-1) {$w$};
\end{tikzpicture}
\end{center}
After extending from $\Z$ to $\Q$ (or to any commutative domain where $2$ is invertible), the projective $H^{(+,-,-,+)}_\A$ module $P_w$ is decomposable and has $P_{w_0}$ as a direct summand. 
\end{proposition}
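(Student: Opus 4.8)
The plan is to exhibit an explicit idempotent $e \in \End_{H^S_\A}(P_w)_0$ (the degree-zero part of the endomorphism ring of $P_w$, where $S = (+,-,-,+)$) with $e \neq 0, 1$, and to identify its image with $P_{w_0}$. The natural source of such an idempotent is a composition of foam cobordisms $P_w \to P_{w_0} \to P_w$. Concretely, there is a foam cobordism $f \colon w \to w_0$ (pinching off the central ``theta'' part of $w$ and merging the two remaining trivalent pieces, so that only the two small essential circles of $w_0$ remain) and a foam cobordism $g \colon w_0 \to w$ going the other way. Both induce maps of left $H^S_\A$-modules via the $2$-functoriality of the $SL(3)$ construction (the analogue of Proposition~\ref{prop:2-functor sl(2)}; alternatively one works directly with the functor $\brak{-}$ applied to the closed webs $\b{v}w$ and $\b{v}w_0$). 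After a grading shift, $f$ and $g$ can be arranged to be degree-zero on the nose. The key computation is that $f \circ g = \lambda \cdot \id_{P_{w_0}}$ for some nonzero scalar $\lambda \in R_\alpha$ which is in fact a nonzero integer (generically $\lambda = 2$, coming from a ``neck-cutting''/bubble-removal evaluation on the facets that get capped off). Once $2$ (or $\lambda$) is invertible, $e := \tfrac{1}{\lambda}\, g \circ f \in \End(P_w)_0$ is idempotent, $eP_w \cong \im(g) \cong P_{w_0}$ as a direct summand, and $P_w \cong P_{w_0} \oplus \ker e$.

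First I would pin down the two webs precisely: $w_0 \in B^{(+,-,-,+)}$ is a disjoint union of two small essential circles (one on each side of the puncture, with the indicated orientations), while $w$ is the non-elliptic web built by coupling an ``$H$-with-a-doubled-middle-edge'' type configuration — one checks directly from Definition~\ref{def:non-elliptic web} that $w$ has no elliptic inner faces (in the annulus the hexagonal face around the puncture is allowed). Then I would write down the closed webs $\b{w}w$ and $\b{w_0}w_0 = \b{w_0}w_0$ and compute their quantum graded ranks using the relations of Figure~\ref{fig:sl3 relations} categorified by $\brak{-}$: $\b{w_0}w_0$ is a union of two essential circles, each contributing $[2] = q+q^{-1}$, so $\rk_q \brak{\b{w_0}w_0} = (q+q^{-1})^2$; since $\ell(S)=4$ this is \emph{not} monic of degree $4$ after the shift $\{4\}$, so Lemma~\ref{lem:indecomposability condition} does not apply to $w_0$ and in fact $P_{w_0}$ will itself be decomposable, but that is fine — we only need $P_{w_0}$ to be a nonzero summand of $P_w$. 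For $\b{w}w$ one applies the square and bigon relations to reduce; the point is to check that $\rk_q\brak{\b{w}w}$ is \emph{strictly larger} than $\rk_q\brak{\b{w_0}w_0}$ coefficientwise, so that $P_w \not\cong P_{w_0}$ and the decomposition $P_w \cong P_{w_0}\oplus(\text{something nonzero})$ is genuine.

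Next I would construct $f$ and $g$ as explicit elementary foams: $g \colon w_0 \to w$ is obtained by a sequence of ``unzip'' (digon-creation) and seam-creation moves that grow the $H$-configuration out of the two circles, and $f \colon w \to w_0$ is the reverse sequence of ``zip'' moves; one tracks the Euler characteristic and dot count to compute $\qdeg$ and fixes the overall normalization so both are degree $0$ maps $P_w \leftrightarrows P_{w_0}$ (using the shifts $\{\ell(S)\}$ in the definition of $H^S_\A$ and of the projectives). The composite $f\circ g$ is a foam from $w_0$ to itself; after isotopy it is a disjoint union, over the two essential circles of $w_0$, of a theta-foam with the middle facet capped — and evaluating via the local relations of \cite{AK} (the neck-cutting relation for anchored $SL(3)$ foams, analogous to \cite[Equations (17)--(19)]{AK}) shows this closed-up piece acts as multiplication by an integer scalar $\lambda$, which after extending scalars so that $2$ is invertible is a unit. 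Hence $e = \lambda^{-1} g\circ f$ is a nontrivial idempotent with $eP_w = \im g \cong P_{w_0}$, establishing both claims.

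The main obstacle I anticipate is the explicit evaluation $f\circ g = \lambda\,\id_{P_{w_0}}$: this requires carefully isotoping the composite anchored foam (respecting the anchor line $\line$, since $w_0$'s circles are essential and so their facets meet $\line$) into a standard form to which the local foam relations of \cite{AK} apply, and then computing the resulting scalar and checking it is a nonzero integer (equivalently, that $2$ — or whatever prime divides it — is the only obstruction). The anchored/annular bookkeeping is the genuinely new ingredient compared to the planar Morrison--Nieh and Robert arguments; everything else (the idempotent-splitting formalism, the reduction to $\End(P_w)_0$, the computation of graded ranks) is a routine adaptation of \cite[Section 2.2]{Robert-thesis} and \cite[Section 5]{MorrisonNieh} via Lemma~\ref{lem:indecomposability condition}.
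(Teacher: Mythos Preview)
Your overall strategy is exactly the paper's: build degree-zero foams $g\colon w_0\to w$ and $f\colon w\to w_0$, compute the composite on $w_0$ to be $2\cdot\id$, and take $e=\tfrac12 gf$ as the splitting idempotent. The paper takes $g=F$ to be ``two cups and four zips'' and $f=\overline{F}$ its reflection, then verifies $\overline{F}F=2\,\id_{w_0}$ by a short planar-shorthand computation using the zip/unzip and neck-cutting relations; that $e\neq\id$ follows because $P_w$ and $P_{w_0}$ have different graded ranks.

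There is, however, a genuine misreading in your proposal that would derail the explicit computation. The web $w_0$ is \emph{not} ``two small essential circles'': it is two small \emph{arcs} (caps), each joining two adjacent boundary points on the same side of the puncture. Consequently $\overline{w_0}w_0$ consists of two \emph{contractible} circles, so $\rk_q\brak{\overline{w_0}w_0}=[3]^2=(q^2+1+q^{-2})^2$, not $(q+q^{-1})^2$. Your description of the composite $f\circ g$ as ``a theta-foam over each essential circle of $w_0$'' is therefore based on the wrong picture; the actual composite is a foam over two arcs, and the paper reduces it using the identities of \cite[Lemma~4.18]{AK} and the (RD) relation of \cite{MVuniversal}. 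Relatedly, your worry about anchor-line bookkeeping is largely misplaced: the cups in $F$ do create the essential circle in $w$ (so they meet $\line$), but the local foam relations used in the computation of $\overline{F}F$ hold uniformly by \cite[Theorem~4.26]{AK}, and the paper's calculation proceeds without any explicit anchor-point manipulation. Once you correct the description of $w_0$, your outline becomes the paper's proof.
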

In the proposition, the sign sequence $(+,-,-,+)$ can be cyclically rotated; that only changes the starting point on the boundary circle from which we read the sequence. 
\begin{proof}
    Let $F : w_0 \to w$ be the foam cobordism (with corners) shown below.
\begin{center}
   \includestandalone{images/degree_zero_foam}
\end{center}
In words, $F$ is formed from two cups and four singular saddles (zips). Note that $F$ is a degree-zero foam. Let $\b{F}: w \to w_0$ be its horizontal reflection. We claim that $\b{F}F = 2 \id_{w_0}$.

We will use the following planar diagram shorthand for foams. 

\begin{center}
    \begin{tikzpicture}[decoration={
    markings,
    mark=at position 0.55 with {\arrow{Stealth}}}]
        \draw (0,0) -- (2,0);
        \draw (0,-1) -- (2,-1);
        \draw[postaction=decorate] (1,-1) -- (1,0);

        \draw (3,0) -- (5,0);
        \draw (3,-1) -- (5,-1);

        \draw   (3,-.5) ellipse (.25 and .5);
        
        \draw[postaction={decorate},fill=gray, fill opacity=.5] (4,-1) arc (-90:90:.25 and .5);
        \draw[densely dashed, fill=gray, fill opacity=.5] (4,0) arc (90:270:.25 and .5);

        \draw (5,-1) arc (-90:90:.25 and .5);
        \draw[densely dashed] (5,0) arc (90:270:.25 and .5);

        \node at (2.35,-.5) {$:=$};
    \end{tikzpicture}
    \hskip4em
    \begin{tikzpicture}
        \draw (0,0) arc (90:-90:.5 and .5);
        
        \draw (1.5,-.5) ellipse (.25 and .5);
        \draw (1.5,0) arc (90:-90:.75 and .5);

        \node at (.85,-.5) {$:=$};
    \end{tikzpicture}
    \hskip4em
    \begin{tikzpicture}
        \draw (0,0) -- (1,0);
        \draw (0,-1) -- (1,-1);
\node at (.5,-.5) {$\bullet$};

\draw (2,0) --(3,0);
\draw (2,-1) -- (3,-1);
        \draw (2,-.5) ellipse (.25 and .5);
          \draw (3,-1) arc (-90:90:.25 and .5);
        \draw[densely dashed] (3,0) arc (90:270:.25 and .5);
        \node at (2.5,-.5) {$\bullet$};
        \node at (1.35,-.5) {$:=$};
    \end{tikzpicture}
\end{center}
The following identities hold:  
\begin{center}
\begin{tikzpicture}[decoration={
    markings,
    mark=at position 0.55 with {\arrow{Stealth}}}]
    \draw (0,0) -- (.5,0);
    \draw (0,-1) -- (.5,-1);
    \draw (.5,0) arc (90:-90:.5 and .5);
    \draw[postaction=decorate] (.5,0) -- (.5,-1);
    \node at (.75,-.5) {$\bullet$};

    \begin{scope}[shift={(2.25,0)}]
         \draw (0,0) -- (.5,0);
    \draw (0,-1) -- (.5,-1);
    \draw (.5,0) arc (90:-90:.5 and .5);
    \draw[postaction=decorate] (.5,-1) -- (.5,0);
    \node at (.75,-.5) {$\bullet$};
\node at (-.25,-.5) {$-$};
    \end{scope}
    \begin{scope}[shift={(4.25,0)}]
         \draw (0,0) -- (.5,0);
    \draw (0,-1) -- (.5,-1);
    \draw (.5,0) arc (90:-90:.5 and .5);
    \end{scope}
\node at (1.5,-.5) {$=$};
\node at (3.75,-.5) {$=$};
\end{tikzpicture},
\hskip2em
\begin{tikzpicture}[decoration={
    markings,
    mark=at position 0.55 with {\arrow{Stealth}}}]
    \draw (0,0) -- (.5,0);
    \draw (0,-1) -- (.5,-1);
    \draw (.5,0) arc (90:-90:.5 and .5);
    \draw[postaction=decorate] (.5,0) -- (.5,-1);   
    \node at (1.6,-.5) {$=\ 0 $};
\end{tikzpicture},
\hskip1.8em
        \begin{tikzpicture}[decoration={
    markings,
    mark=at position 0.55 with {\arrow{Stealth}}}]

        \draw (0,0) -- (1.5,0);
        \draw (0,-1) -- (1.5,-1);
        \draw[postaction=decorate] (.75,-1) -- (.75,0);
        \node at (2,-.5) {$=$};

        \draw (2.5,0) arc (90:-90:.5 and .5); 
        \draw (4,0) arc (90:270:.5 and .5); 
        \node at (3.75,-.5) {$\bullet$};

 \draw (5,0) arc (90:-90:.5 and .5); 
        \draw (6.5,0) arc (90:270:.5 and .5); 
        \node at (4.5,-.5) {$-$};
        \node at (5.25,-.5) {$\bullet$};
    \end{tikzpicture} .
\end{center}
For the first two, see \cite[Lemma 4.18]{AK}, and for the third see relation (RD) in \cite[Lemma 2.3]{MVuniversal} as well as \cite[Theorem 4.26]{AK} for a discussion about the relationship between the $SL(3)$ foam evaluation defined in \cite{AK} and \cite{MVuniversal}. 
Then $\b{F}F$ is given by 
\begin{center}
    \begin{tikzpicture}[xscale=.95, decoration={
    markings,
    mark=at position 0.55 with {\arrow{Stealth}}}]
            \draw(0,0) -- (2,0);
        \draw (0,-1) -- (2,-1);
        \draw[postaction=decorate] (.25,0) -- (.25,-1);
        \draw[postaction=decorate] (1,-1) -- (1,0);
        \draw[postaction=decorate] (1.75,0) -- (1.75,-1);
        \node at (1.33,-.5) {$\bullet$};
    
        \begin{scope}[shift={(3,0)}]
            \draw(0,0) -- (2,0);
        \draw (0,-1) -- (2,-1);
        \draw[postaction=decorate] (.25,0) -- (.25,-1);
        \draw[postaction=decorate] (1,-1) -- (1,0);
        \draw[postaction=decorate] (1.75,0) -- (1.75,-1);
        \node at (.66,-.5) {$\bullet$};
        \end{scope}
        
        \begin{scope}[shift={(6,0)}]
             \draw (0,0) -- (.25,0);
    \draw (0,-1) -- (.25,-1);
    \draw (.25,0) arc (90:-90:.5 and .5);
    \draw[postaction=decorate] (.25,0) -- (.25,-1);

    \draw (2,0) -- (1.75,0);
    \draw (2,-1) -- (1.75,-1);
    \draw (1.75,0) arc (90:270:.5 and .5);
    \draw[postaction=decorate] (1.75,0) -- (1.75,-1);
    \node at (1.5,-.35) {$\bullet$};
    \node at (1.5,-.65) {$\bullet$};
        \end{scope}

         \begin{scope}[shift={(9,0)}]
             \draw (0,0) -- (.25,0);
    \draw (0,-1) -- (.25,-1);
    \draw (.25,0) arc (90:-90:.5 and .5);
    \draw[postaction=decorate] (.25,0) -- (.25,-1);
    \node at (.5,-.5) {$\bullet$};

    \draw (2,0) -- (1.75,0);
    \draw (2,-1) -- (1.75,-1);
    \draw (1.75,0) arc (90:270:.5 and .5);
    \draw[postaction=decorate] (1.75,0) -- (1.75,-1);
    \node at (1.5,-.5) {$\bullet$};
   
        \end{scope}
         \begin{scope}[shift={(12,0)}]
             \draw (0,0) -- (.25,0);
    \draw (0,-1) -- (.25,-1);
    \draw (.25,0) arc (90:-90:.5 and .5);
    \draw[postaction=decorate] (.25,0) -- (.25,-1);
    \node at (.5,-.5) {$\bullet$};

    \draw (2,0) -- (1.75,0);
    \draw (2,-1) -- (1.75,-1);
    \draw (1.75,0) arc (90:270:.5 and .5);
    \draw[postaction=decorate] (1.75,0) -- (1.75,-1);
    \node at (1.5,-.5) {$\bullet$};
        \end{scope}
         \begin{scope}[shift={(15,0)}]
             \draw (0,0) -- (.25,0);
    \draw (0,-1) -- (.25,-1);
    \draw (.25,0) arc (90:-90:.5 and .5);
    \draw[postaction=decorate] (.25,0) -- (.25,-1);
    \node at (.5,-.35) {$\bullet$};
     \node at (.5,-.65) {$\bullet$};

    \draw (2,0) -- (1.75,0);
    \draw (2,-1) -- (1.75,-1);
    \draw (1.75,0) arc (90:270:.5 and .5);
    \draw[postaction=decorate] (1.75,0) -- (1.75,-1);
   
        \end{scope}

\begin{scope}[shift={(6.25,-1.5)}]

            \draw (0,0) -- (.25,0);
    \draw (0,-1) -- (.25,-1);
    \draw (.25,0) arc (90:-90:.5 and .5);
  
    \draw (2,0) -- (1.75,0);
    \draw (2,-1) -- (1.75,-1);
    \draw (1.75,0) arc (90:270:.5 and .5);

\end{scope}
        \node at (2.5,-.5) {$-$};
    \node at (5.5,-.5) {$=$};
    \node at (8.5,-.5) {$-$};
    \node at (11.5,-.5) {$-$};
    \node at (14.5,-.5) {$+$};
    \node at (5.5,-2) {$=$};
    \node at (6,-2
    ) {$2$};
    \end{tikzpicture}
\end{center}
which shows $\b{F}F = 2 \id_{w_0}$.

It is then straightforward to see that $e : = \frac{1}{2}(F\circ \b{F})$ is a nonzero idempotent endomorphism of $w$. Moreover, $e$ is not the identity since $P_w$ and $P_{w_0}$ have different graded ranks, which completes the proof.
\end{proof}

\begin{remark}
  If $\l(S)\leq 3$ then there are essentially two cases, $S=(+,-)$ and $S=(+,+,+)$, which have $3$ and $6$ non-elliptic webs respectively. Using Lemma \ref{lem:indecomposability condition} one can check by hand that $P_w$ is indecomposable whenever $w\in B^S$ for $\ell(S) \leq 3$. 
\end{remark}

\begin{remark}
    Robert \cite{Robert-thesis} proved that the converse of Lemma \ref{lem:indecomposability condition} holds in the planar setting. Moreover, it is shown that if $w\in \DD^2$ is a non-elliptic web and all faces of $w$ are adjacent to an outer face, then $P_w$ is an indecomposable module \cite[Theorem 3.2.4]{Robert-thesis}. Note that the web $w$ in Proposition \ref{prop:decomposable web} does not satisfy this condition. We hope to explore this in future work.  
\end{remark}

We now discuss bimodules assigned to (flat) annular webs. Let $R$ and $S$ be sign strings, and let $t$ be an $(R,S)$-web. Define an $(H^R_\A, H^S_\A)$-bimodule $\F(t)$ as follows. As an $R_\alpha$-module, we have 
\[
\F(t) = \bigoplus_{v\in B^R, u \in B^S} \brak{\b{v} t u}\{\ell(S)\}.
\]
The left and right actions are given by minimal foam cobordisms as usual. (Note that $R_{\alpha}$ denotes the ground ring and $R$ a sign string.)  

We have a decomposition 
\[
\F(t) = \bigoplus_{u\in B^S} \F(tu) \{\ell(S)\}
\]
of $\F(t)$ as a left $H^R_\A$-module, where we view $tu$ as an $(R,\varnothing)$-web. Reducing $tu$ to a sum of non-elliptic webs via the relations in Figure \ref{fig:sl3 relations}, we can write $\F(tu)$ as a direct sum (with grading shifts) of copies of projective modules $P_{u'}$ over some subset of $B^S$.  It follows that $\F(t)$ is a projective left $H^R_\A$-module. Similarly, $\F(t)$ is a projective right $H^S_\A$-module. 

\begin{proposition}
Let $Q, R$, and $S$ be sign strings.    If $t_0$ is an $(Q,R)$-web and $t_1$ is a $(R,S)$-web then there is a grading-preserving isomorphism of $(H^Q_\A, H^S_\A)$-bimodules
    \[
\F(t_0 t_1) \cong \F(t_0) \o_{H^R_\A} \F(t_1).
    \]
\end{proposition}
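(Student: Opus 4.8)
The plan is to mimic the proof of Proposition~\ref{prop:composition of tangles} from the $SL(2)$ setting, replacing anchored surface cobordisms with anchored $SL(3)$ foams throughout. The essential input is the same: a ``coupling'' foam that merges $\b{u}u$ (for $u\in B^R$) with the standard cobordism to $\id_R$, together with the canonical identification $\brak{\b{v} t_0 u \b{u} t_1 w} \cong \brak{\b{v}t_0 u}\o_{R_\alpha}\brak{\b{u} t_1 w}$ and far-commutativity of disjoint foams. First I would write, for fixed $v\in B^Q$, $u\in B^R$, $w\in B^S$, the $R_\alpha$-linear map
\[
\brak{\b{v}t_0 u}\o_{R_\alpha}\brak{\b{u}t_1 w} \longrightarrow \brak{\b{v}t_0 t_1 w}
\]
induced by applying $\brak{-}$ to the foam that zips $u\b{u}$ down to the identity web $\id_R$ (glued with product foams on $\b{v}t_0$ and on $t_1 w$). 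Summing over all $v,u,w$ gives a map $\F(t_0)\o_{R_\alpha}\F(t_1)\to\F(t_0 t_1)$; far-commutativity of the coupling foams with the left/right multiplication foams shows this descends to
\[
\psi : \F(t_0)\o_{H^R_\A}\F(t_1)\longrightarrow \F(t_0 t_1)
\]
and that $\psi$ is a map of $(H^Q_\A,H^S_\A)$-bimodules. The grading shifts $\{\ell(R)\}$ in the tensor factor $H^R_\A$ and $\{\ell(S)\}$ elsewhere are arranged exactly so that $\psi$ is grading-preserving for both $\qdeg$ and $\adeg$; this is a bookkeeping check on Euler characteristics of the coupling foams, as in the degree formula \eqref{eq_qdeg_S} and the $SL(3)$ degree conventions from \cite{MPTwebalgebra}.

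To see that $\psi$ is an isomorphism I would reduce, just as in Proposition~\ref{prop:composition of tangles}, to the ``one-sided'' statement: it suffices to show that for each fixed $v\in B^Q$ and $w\in B^S$ the restriction
\[
\F(\b{v}t_0)\o_{H^R_\A}\F(t_1 w)\longrightarrow \brak{\b{v}t_0 t_1 w}
\]
is an isomorphism. Now $t_1 w$ is an $(R,\varnothing)$-web, so by the relations in Figure~\ref{fig:sl3 relations} it reduces to a $\Z[q,q^{-1}]$-combination of non-elliptic webs, and correspondingly (applying $\brak{-}$, which categorifies those relations) $\F(t_1 w)$ is, as a left $H^R_\A$-module, a direct sum of shifted copies of the projectives $P_{d}$ for $d$ ranging over a multiset of elements of $B^R$; similarly $\F(\b{v}t_0)$ is a direct sum of shifted copies of right projectives ${}_e P$ for $e\in B^R$. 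Since $\psi$ is natural and additive in each variable, it is enough to check it on each summand pair, i.e.\ to show $\F({}_e\!\b{\,}\,)\o_{H^R_\A}\F(d)\to\brak{\b{e}d}$ is an isomorphism for $d,e\in B^R$. But $\F(d)=P_d = H^R_\A 1_d$ and ${}_eP = 1_e H^R_\A$, and the map in question is obtained from the canonical isomorphism $H^R_\A\o_{H^R_\A}H^R_\A\cong H^R_\A$ by multiplying on the left by $1_e$ and on the right by $1_d$; hence it is an isomorphism.

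The main obstacle is verifying that the reduction of $\F(t_1 w)$ to projectives is compatible with $\psi$, i.e.\ that the isomorphism $\F(t_1 w)\cong\bigoplus P_{d_i}$ may be realized by foam cobordisms intertwining the multiplication maps, so that the argument can genuinely be run summand-by-summand. Concretely one needs that the categorified $SL(3)$ relations of Figure~\ref{fig:sl3 relations} (the circle removals, the essential-circle relation, bigon, and square) are implemented by explicit foam isomorphisms that commute with the coupling foam used to build $\psi$; this is where far-commutativity and associativity of foam composition (already used to define $H^S_\A$ and the bimodules) do the real work. Once this compatibility is in place, everything else is formal: functoriality of $\brak{-}$ on $\AFoam$, bilinearity over $R_\alpha$, and the idempotent-truncation trick reduce the claim to the identity $H^R_\A\o_{H^R_\A}H^R_\A\cong H^R_\A$. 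I would also remark, as in the $SL(2)$ case, that canonicity of $\psi$ follows because it is defined by genuine foam cobordisms, with no choices beyond the standard zip cobordism $u\b{u}\to\id_R$.
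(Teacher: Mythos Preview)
Your proposal is correct and follows essentially the same approach as the paper: define $\psi$ via the standard coupling foam, check it is a graded bimodule map by far-commutativity, then reduce to the idempotent-truncated statement $1_e H^R_\A \o_{H^R_\A} H^R_\A 1_d \cong 1_e H^R_\A 1_d$ by using the categorified relations of Figure~\ref{fig:sl3 relations} to decompose $\F(t_1 w)$ and $\F(\b{v}t_0)$ into projectives. The paper's proof is simply a terse pointer to Proposition~\ref{prop:composition of tangles} and to \cite[Lemma 2.2.16]{Robert-thesis}, and your write-up is a faithful expansion of exactly that argument.
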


\begin{proof}
    Just as in the proof of Proposition \ref{prop:composition of tangles}, one may reduce this to the case where $t_0$ is an $(\varnothing, R)$-web, $t_1$ is an  $(R, \varnothing)$-web, and both are non-elliptic.
    This simple case follows from multiplying the canonical isomorphism $H^R_\A \o_{H^R_\A} H^R_\A \cong H^R_\A$ on the left and the right by the idempotents $1_{\b{t_0}}$ and $1_{t_1}$.  See also \cite[Lemma 2.2.16]{Robert-thesis} for the planar case.   
\end{proof}

We have the following analogue of Definition \ref{def:ATan} for $SL(3)$ webs and foams. 

\begin{definition}
    Let $\AWeb$ denote the $2$-category of flat annular webs and foams (with corners) between them. Objects of $\AWeb$ are sign strings $S \in \{\pm 1\}^n$ over all $n\in \Z_{\geq 0}$. Recall that for each $m\in \Z_{\geq 0}$ we have fixed $m$ points $\u{m} \subset \SS^1$. Given sign strings $R$ and $S$, the set of $1$-morphisms from $S$ to $R$ is the collection of all $(R,S)$-webs. Given two $(R,S)$-webs $t_0, t_1$, a $2$-morphism from $t_0$ to $t_1$ is an $SL(3)$ foam with corners $F \subset \A\times [0,1]$ whose boundary decomposes into the following four pieces:
\begin{itemize}
    \item $\partial F \cap \left( \A \times \{1\}  \right) = t_1$ and $\partial F \cap \left( \A \times \{0\}  \right) = -t_0$, where $-t_0$ denotes $t_0$ with reversed orientation. 
    \item $\partial F \cap \left( \SS^1 \times \{0\} \times [0,1]\right) = \u{\ell(S)} \times \{0\} \times [0,1]$,
        \item $\partial S \cap \left( \SS^1 \times \{1\} \times [0,1]\right) = \u{\ell(R)} \times \{1\} \times [0,1]$.
\end{itemize}
    We consider $2$-morphisms up to ambient isotopy of $\A\times [0,1]$ which fixes  $\partial (\A\times [0,1])$ pointwise. 
\end{definition}

A $2$-morphism $F$ in $\AWeb$ from an $(R,S)$-web $t_0$ to an $(R,S)$-web $t_1$ induces a morphism of bimodules 
\[
\F(F) : \F(t_0) \to \F(t_1)
\]
as follows. For $w\in B^S, v\in B^R$, there is a foam cobordism  $_v F_w: \b{v}t_0 w \to \b{v} t_1 w$ between closed webs obtained by gluing $F$ to the identity foam cobordisms $\id_{w}$ and $\id_{\b{v}}$ along their common boundary intervals. Then $\F(F)$ is the direct sum of the induced maps  
\[
\brak{_v F_w} : \brak{ \b{v}t_0 w} \to \brak{\b{v} t_1 w}
\]
over all $w\in B^S, v\in B^R$. It preserves $\adeg$ and increasing $\qdeg$ by 
\[
-2\chi(F) + \chi(t_0) + \chi(t_1) + 2d(F),
\]
where $d(F)$ is the number of dots on $F$. 

Denote by $\gggBimod$ the $2$-category whose objects are $\Z\oplus \Lambda$-graded $R_{\alpha}$-algebras, $1$-morphisms are graded bimodules, and $2$-morphisms are homogeneous maps of graded bimodules. Our results imply the following. 

\begin{proposition}
\label{prop:2-functor sl(3)}
The above construction assembles into a $2$-functor 
    \[
        \F : \AWeb \to \gggBimod
    \]
    sending an object $S$ to $H^S_\A$, an $(R,S)$-web ($1$-morphism) $t$ to $\F(t)$, and a $2$-morphism $F: t_0 \to t_1$ to $\F(F)$. 
\end{proposition}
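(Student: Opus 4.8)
The plan is to verify the three defining properties of a $2$-functor in turn: (i) it sends identity $1$-morphisms to identity bimodules, (ii) it respects composition of $1$-morphisms up to the canonical coherent isomorphisms, and (iii) it sends identity $2$-morphisms to identity maps and respects both vertical and horizontal composition of $2$-morphisms. Most of the substantive work has already been done: property (ii) on $1$-morphisms is exactly the content of the preceding proposition, $\F(t_0t_1)\cong \F(t_0)\o_{H^R_\A}\F(t_1)$, and one checks directly that $\F(\id_S)\cong H^S_\A$ because $\b{v}\,\id_S\,w = \b{v}w$. So the remaining content is the bookkeeping for $2$-morphisms, which I would organize as follows.

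\textbf{Functoriality on $2$-morphisms.} First I would note that for the identity foam $\id_{t}$ on an $(R,S)$-web $t$, the glued foam ${}_v(\id_t)_w$ is the identity cobordism on $\b{v}tw$, and since $\brak{-}:\AFoam\to R_\alpha\gggmod$ is a functor, $\brak{{}_v(\id_t)_w}=\id_{\brak{\b{v}tw}}$; summing over $v,w$ gives $\F(\id_t)=\id_{\F(t)}$. For vertical composition, given foams $F:t_0\to t_1$ and $G:t_1\to t_2$ in $\AWeb$, the stacked foam ${}_v(G\circ F)_w$ equals ${}_vG_w\circ{}_vF_w$ (gluing with the product cobordisms $\id_{\b v},\id_w$ commutes with stacking in the $[0,1]$-direction), so functoriality of $\brak{-}$ gives $\F(G\circ F)=\F(G)\circ\F(F)$. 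For horizontal composition, given $F:t_0\to t_1$ over $(Q,R)$ and $F':t_0'\to t_1'$ over $(R,S)$, I would check that the diagram
\[
\begin{tikzcd}
\F(t_0)\o_{H^R_\A}\F(t_0') \arrow[r,"\F(F)\o\F(F')"]\arrow[d,"\cong"] & \F(t_1)\o_{H^R_\A}\F(t_1')\arrow[d,"\cong"] \\
\F(t_0 t_0') \arrow[r,"\F(F\ast F')"] & \F(t_1 t_1')
\end{tikzcd}
\]
commutes, where the vertical maps are the canonical isomorphisms from the previous proposition and $F\ast F'$ is the side-by-side (horizontal) glued foam. As in the proof of that proposition, this reduces to the case where $t_0,t_1$ are $(\varnothing,R)$-webs and $t_0',t_1'$ are $(R,\varnothing)$-webs and all are non-elliptic, where it becomes a statement about multiplying the identity $H^R_\A\o_{H^R_\A}H^R_\A\cong H^R_\A$ by idempotents $1_{\b{t_0}},1_{t_0'}$ on one side and $1_{\b{t_1}},1_{t_1'}$ on the other, together with far-commutativity of the coupling foam with $F$ and $F'$; the commutativity then follows from functoriality of $\brak{-}$ and the associativity/unitality of multiplication in $H^R_\A$.

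\textbf{Degree bookkeeping and coherence.} I would then record that $\F(F)$ has the quantum degree stated just above the proposition, namely $-2\chi(F)+\chi(t_0)+\chi(t_1)+2d(F)$ with the grading shifts $\{\ell(S)\}$ absorbed, and that it preserves $\adeg$; these are additive under vertical composition by the additivity of $\chi$, $d$, and the grading conventions in \cite{AK}, so $\F$ lands in $\gggBimod$ with homogeneous $2$-morphisms. Finally, since $\AWeb$ and $\gggBimod$ are only weak $2$-categories, I would remark that the associativity and unit constraints must be respected: the associator $\F((t_0t_1)t_2)\cong\F(t_0t_1)\o\F(t_2)\cong\F(t_0)\o\F(t_1)\o\F(t_2)\cong\F(t_0)\o\F(t_1t_2)\cong\F(t_0(t_1t_2))$ and the unitors are all built from the canonical isomorphisms of the previous proposition, and their compatibility is inherited from the corresponding coherences for tensor products of bimodules over the $H^S_\A$. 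All of this is parallel to \cite[Theorem 1]{KhFunctorValued} and its $SL(3)$ analogue in \cite{Robert-thesis}.

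\textbf{Main obstacle.} The only step requiring genuine care is the horizontal-composition square for $2$-morphisms: one must check that gluing a foam $F$ to the coupling foam $b\b b\to\id$ and then applying $\brak{-}$ gives the same map as first applying $\brak{-}$ and then composing, which is where far-commutativity of foams is used in an essential way — the same principle that makes multiplication in $H^S_\A$ well-defined and associative. Everything else is a direct consequence of functoriality of $\brak{-}:\AFoam\to R_\alpha\gggmod$ together with the already-established isomorphism $\F(t_0t_1)\cong\F(t_0)\o_{H^R_\A}\F(t_1)$, so the proof is short and I would present it as such, citing the $SL(2)$ argument for the routine parts.
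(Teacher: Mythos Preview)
Your proposal is correct and takes essentially the same approach as the paper, which in fact gives no explicit proof at all: the proposition is stated with the preamble ``Our results imply the following,'' meaning it is treated as an immediate consequence of the preceding composition isomorphism $\F(t_0t_1)\cong\F(t_0)\o_{H^R_\A}\F(t_1)$ together with functoriality of $\brak{-}:\AFoam\to R_\alpha\gggmod$. Your writeup simply unpacks these implications in detail (identity $1$-morphisms, vertical and horizontal composition of $2$-morphisms via far-commutativity), which is more than the paper does but exactly in its spirit.
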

 
Now let $D$ be a diagram for an  oriented $(n,m)$-tangle $T\subset \A\times [0,1]$ (see Definition \ref{def:annular tangle}). For $i=0,1$, let $S_i$ be the sign string (length $n$ for $i=1$, length $m$ for $i=0$) determined by the orientation of $T$ at its boundary points in $\A\times \{i\}$.

\begin{figure}
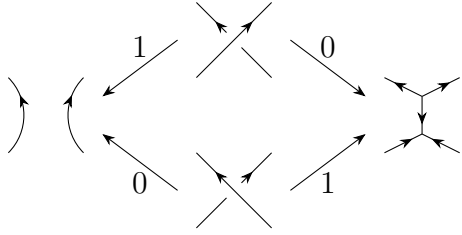

\centering 
\includestandalone{sl3_cube_of_resolutions}
\caption{The $0$- and $1$-smoothings used to define the $SL(3)$ chain complex.}\label{fig:sl3 crossing resolution for cube}
\end{figure}

Following \cite[Section 4]{sl3-link-homology}, form the cube of resolutions of $D$ according to Figure \ref{fig:sl3 crossing resolution for cube}.
Canonical cobordisms between the left and the right smoothings in Figure~\ref{fig:sl3 crossing resolution for cube} induce homogeneous homomorphisms between the corresponding bimodules. 
Applying functor $\F(-)$ to the above construction  yields a commutative cube of $(H^{S_1}_\A, H^{S_0}_\A)$-bimodules. Introducing signs to make the cube anticommute, collapsing to a chain complex, and adding homological and quantum grading shifts as in \cite[Figure 30]{sl3-link-homology} yields a chain complex $C(D)$ of $\Z\oplus \Lambda$-graded $(H^{S_1}_\A, H^{S_0}_\A)$-bimodules. 

\begin{theorem}
\label{thm_isotopy_invariance sl3}
    The chain homotopy type of the complex of $\Z\oplus \Lambda$-graded bimodules $C(D)$ depends only on the ambient isotopy class of $T$.
\end{theorem}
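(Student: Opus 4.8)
The plan is to follow the now-standard Bar-Natan/Khovanov argument for invariance of a cube-of-resolutions complex, adapted to the annular $SL(3)$ setting, exactly parallel to the proof of Theorem \ref{thm_isotopy_invariance} in the $SL(2)$ case. The key input is that two diagrams $D, D'$ representing ambiently isotopic annular tangles $T, T' \subset \A \times [0,1]$ are related by a finite sequence of planar isotopies together with the three Reidemeister moves, each supported inside a small disk in the interior of $\A$ (this is the analogue of \cite{APStangles} used for the $SL(2)$ case; since the moves are supported in a disk, they are ``local'' and do not see the puncture). Planar isotopy of $D$ changes neither the cube nor the resulting complex $C(D)$ up to the canonical isomorphisms of bimodules coming from isotopic flat webs and the $2$-functoriality of $\F$ (Proposition \ref{prop:2-functor sl(3)}), so it suffices to check that $C(D)$ and $C(D')$ are chain homotopy equivalent when $D'$ is obtained from $D$ by a single Reidemeister move R1, R2, or R3 supported in a disk $d \subset \mathrm{int}(\A)$.

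First I would reduce each such move to a purely local, planar computation. Since the move is supported in a disk $d$ disjoint from $\partial \A$ and from the puncture, everything outside $d$ is inert: by Proposition \ref{prop:2-functor sl(3)} the complex $C(D)$ is built from the complex of the tangle-diagram fragment inside $d$ by tensoring (over the appropriate web algebras) with the fixed bimodules coming from the part of $D$ outside $d$, and tensoring with bimodules is an additive functor, hence preserves chain homotopy equivalences and Gaussian elimination. Therefore it is enough to exhibit, inside $d$, the same local chain homotopy equivalences that establish invariance of the planar $SL(3)$ homology in \cite{sl3-link-homology} (Bar-Natan--style delooping and Gaussian elimination arguments, using the local foam relations of Figure \ref{fig:sl3 relations} and their categorified lifts in \cite{AK}). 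Concretely: for R1 one uses the ``digon'' (bigon) relation together with a dot-moving / neck-cutting foam relation to cancel an acyclic subcomplex; for R2 one uses the square relation to deloop and then Gaussian-eliminate the two identity summands; for R3 one uses the resolution of both sides into the same complex after R2-type simplifications, again by Gaussian elimination. Each of these is the categorified version of the corresponding relation in Figure \ref{fig:sl3 relations}, and all the required foam identities are available in \cite{AK} (see the relations cited in the proof of Proposition \ref{prop:decomposable web}, e.g. \cite[Lemma 4.18]{AK}, \cite[Theorem 4.26]{AK}).

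The one genuinely new point relative to the planar case is that the ambient surface is the annulus rather than the disk, so I must make sure the ``inert outside $d$'' reduction is legitimate: that the cube of resolutions, the edge foams, and the sign conventions all commute with the operation of cutting out $d$ and re-gluing. This is handled exactly as in the $SL(2)$ proof of Theorem \ref{thm_isotopy_invariance} --- the $2$-functor $\F$ of Proposition \ref{prop:2-functor sl(3)} sends the cube of flat-web resolutions and edge foams of $D$ to the cube of bimodules and bimodule maps defining $C(D)$, and the annular TQFT of \cite{AK} evaluates foams supported in $d$ by the same local rules as in the planar theory (the puncture only affects webs and foams meeting the anchor line $\line$, which is disjoint from $d$). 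So no new computation is needed for the annular-specific part; one simply invokes \cite{KhovanovJones, Bar-Natan, sl3-link-homology} for the three local equivalences and the gluing/naturality formalism.

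The main obstacle I expect is bookkeeping rather than conceptual: carefully matching the quantum and homological grading shifts (and the $\Lambda$-grading, which is preserved by all edge foams since they are disjoint from $\line$), and matching the sign assignment on the cube, so that the local chain homotopies assemble into genuine bimodule-chain-homotopy equivalences of the full complexes. One must also confirm that the relevant local complexes really do decompose as claimed after delooping --- i.e. that the indecomposable-summand subtleties observed in Proposition \ref{prop:decomposable web} do not obstruct Gaussian elimination; they do not, because the elimination steps used in R1--R3 only involve cancelling \emph{identity} morphisms between \emph{isomorphic} summands coming from the bigon and square relations, which hold over $R_\alpha$ (indeed over $\Z$) without any need to pass to $\Q$. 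Once these routine checks are in place, the theorem follows, and we may write $C(T)$ for the resulting chain homotopy type.
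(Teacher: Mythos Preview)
Your proposal is correct and takes essentially the same approach as the paper: reduce to Reidemeister moves supported in a disk away from the puncture, then import the local foam-based chain homotopy equivalences from the planar $SL(3)$ theory, which are valid in the anchored setting by \cite[Theorem 4.26]{AK}. The only cosmetic difference is that the paper cites \cite{MVuniversal} (the universal/equivariant $SL(3)$ construction, matching the ground ring $R_\alpha$) rather than \cite{sl3-link-homology} for the local equivalences, and compresses your detailed discussion of R1--R3 and the gluing formalism into a single sentence.
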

\begin{proof}
    Two diagrams representing $T$ are related by Reidemeister moves supported in the interior of $\A$.  
    The proof of invariance in \cite{MVuniversal} uses foam cobordisms, is entirely local, and all local foam relations imposed in \cite{MVuniversal} also hold for anchored foam evaluation (where they occur away from the anchor line $\line$) by \cite[Theorem 4.26]{AK}. It follows that the local chain homotopy equivalences defined in \cite{MVuniversal} give homotopy equivalences in our setting. 
\end{proof}


\bibliographystyle{amsalpha}
\bibliography{references}

\end{document}